\newcommand{\chRounak}[1]{{#1}}
\definecolor{purplish}{rgb}{0.41, 0.41, 0.64}
\definecolor{webbrown}{rgb}{.6,0,0}
\definecolor{NiceRed}{rgb}{0.41,0, 0}
\definecolor{Neonpink}{rgb}{1, 0.0627, 0.9412}
\definecolor{Neongreen}{rgb}{0.0588, 1, 0.3137}
\definecolor{Neonblue}{rgb}{0.1215686 0.3176471 1.0000000}
\definecolor{Neonorange}{rgb}{1, 0.3725, 0.1216}
\newcolumntype{x}[1]{>{\centering\let\newline\\\arraybackslash\hspace{0pt}}p{#1}}
\titleformat{\chapter}[display]{\normalfont}{\vspace{-2cm}\Large\itshape\chaptertitlename~\thechapter}{0.7pc}{\raggedleft\vspace{0.7pc}\huge}[\vspace{0.5pc}{\titlerule[1pt]}]
\newcommand{\emptychapterdist}{-2cm}
\newif\ifMaterial
\newlength\LabelSize
\newcommand{\chapterNum}{%
\if \thechapter A%
	8%
\else%
	\if \thechapter B%
		9%
	\else%
		\thechapter%
	\fi%
\fi%
}
\definecolor{thesisthumbs}{RGB}{214, 214, 214}
\newcommand{\StartThumbs}{%
\Materialtrue%
\AddEverypageHook{%
\ifMaterial%
	\ifodd\value{page} %
		\backgroundsetup{
		angle=0,
		position={current page.east|-current page text area.north east},
		hshift=-0.5cm,
		vshift=-1cm-((\chapterNum-1)/7)*15cm,
		contents={%
			\tikz\node[fill=thesisthumbs, anchor=west, text width=18mm,
			align=center, text height=0.5\LabelSize, text depth=20pt,font=\Huge\normalfont] 
		{{\centering \color{white}\thechapter\hspace{7mm}}};
		}%
		}
	\else
		\backgroundsetup{
		angle=0,
		position={current page.west|-current page text area.north west},
		hshift=0.5cm,
		vshift=-1cm-((\chapterNum-1)/7)*15cm,
		contents={%
			\tikz\node[fill=thesisthumbs,anchor=west, text width=18mm,
			align=center, text height=0.5\LabelSize, text depth=20pt,font=\Huge\normalfont] 
		{{\centering \hspace{7mm}\color{white}\thechapter}};
		}%
		}
	\fi
\BgMaterial%
\else\relax\fi
}%
}
\newcommand{\EndThumbs}{\Materialfalse}
\numberwithin{equation}{section}
\pretocmd{\chapter}{\addtocontents{toc}{\protect\addvspace{14\p@}}}{}{}
\begin{document}

\pagestyle{empty}

\vspace*{\fill}
\begin{center}
\huge{Stochastic processes on preferential attachment models
}
\end{center}
\begin{center}
\large{Understanding global structures from local properties}
\end{center}
\vspace*{\fill}
\clearpage



This work is supported in part by the Netherlands Organisation for Scientific Research (NWO) through the Gravitation {\sc Networks} grant \(024.002.003\) and European Union's Horizon \(2020\) research and innovation programme under the Marie Sk\l{}odowska-Curie grant agreement no.\ \(945045\).

    	\begin{figure}[!htb]%
    		\hspace{2cm}
    		{\includegraphics[height=3cm]{./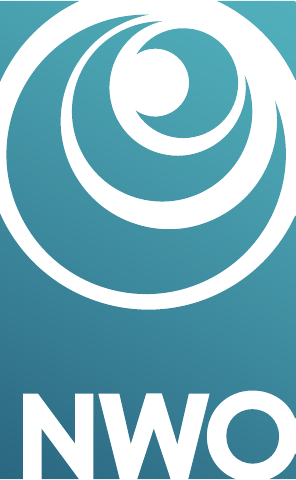}}
    		\hspace{2cm}
		{\includegraphics[height=3cm]{./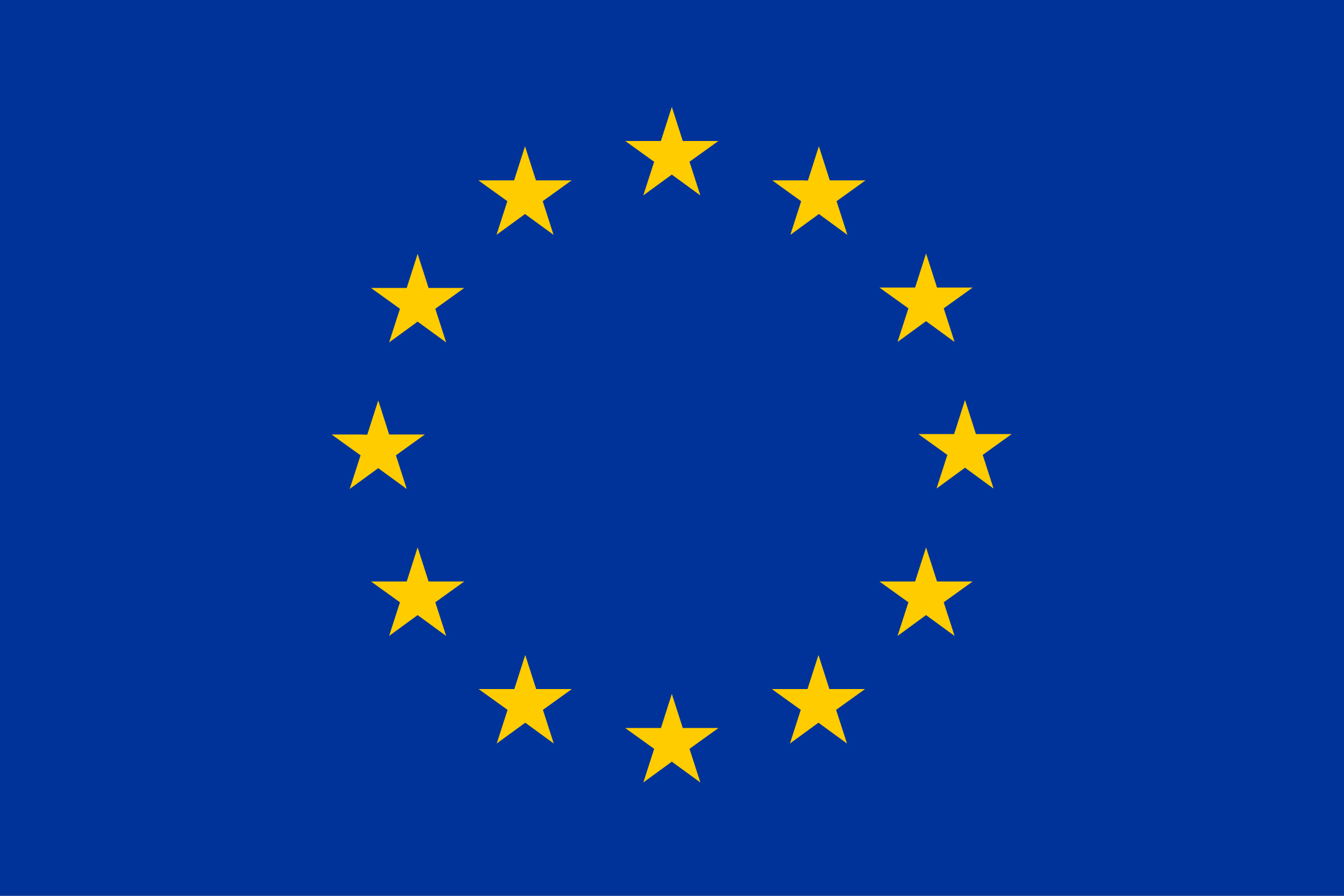}}
    		\newline
    		
    		\vspace{0.5cm}
    		\hspace{4cm}
		{\includegraphics[height=3cm]{./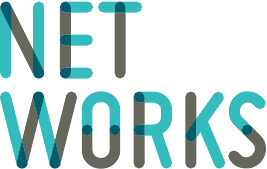}}
    		\hspace{1cm}                   
    	\end{figure}

\vspace*{\fill}

\begin{flushleft}
\textcopyright\, Rounak Ray, 2024\\
Stochastic processes on preferential attachment models
\end{flushleft}

\begin{flushleft}
A catalogue record and digital copies (PDF) are available from the \\ Eindhoven University of Technology library\\
ISBN: 978-90-386-6211-4\\
\end{flushleft}


\begin{flushleft}
Printed by ProefschriftMaken \(\mid\mid\) \url{www.proefschriftmaken.nl} \\
Cover by Prasmita Dey 
\end{flushleft}

\vspace*{\fill}
\begin{center}
\Large
Stochastic processes on preferential attachment models
\end{center}
\vspace{10mm}
\begin{center}
\uppercase{Proefschrift}
\end{center}
\vspace{16mm}
\hspace{0.05\textwidth}%
\begin{minipage}{0.9\textwidth}
\begin{center}
ter verkrijging van de graad van doctor aan de Technische Universiteit
Eindhoven, op gezag van de rector magnificus prof.\hspace{1.5pt}dr.~ S.K. Lenaerts,
voor een commissie aangewezen door het College voor Promoties, in het
openbaar te verdedigen op woensdag 4 december 2024 om 16:00 uur
\end{center}
\end{minipage}
\vspace{14mm}
\begin{center}
door
\end{center}
\vspace{8mm}
\begin{center}
Rounak Ray
\end{center}
\vspace{8mm}
\begin{center}
geboren te Coochbehar, India
\end{center}
\vspace*{\fill}
\clearpage

\noindent\hspace{0.05\textwidth}%
\begin{minipage}{0.9\textwidth}
\noindent%
Dit proefschrift is goedgekeurd door de promotoren en de samenstelling van de promotiecommissie is als volgt:

\vspace{5ex}
\begin{tabular}{ll}
voorzitter:  &prof.\hspace{1.5pt}dr.~J.N. Kok\\
Promotoren: &prof.\hspace{1.5pt}dr.\hspace{1.5pt} R. W. van der Hofstad\\
			&
dr.\hspace{1.5pt} R. S. Hazra (Universiteit Leiden)\\
Promotiecommissieleden:       &prof.\hspace{1.5pt}dr.\hspace{1.5pt} R.W. van der Hofstad\\
			&\hspace{1.5pt}dr.\hspace{1.5pt} R. S. Hazra (Universiteit Leiden)\\

		&prof.\hspace{1.5pt}dr.~P. M\"{o}rters (Universit\"{a}t zu K\"{o}ln)\\
             &prof.\hspace{1.5pt}dr.~M. Deijfen (Universitet Stockholms)\\
             &prof.\hspace{1.5pt}dr.~A.P. Zwart\\
\end{tabular}
\vspace{2ex}

\end{minipage}

\vspace*{\fill}
\noindent%
\textit{Het onderzoek dat in dit proefschrift wordt beschreven is uitgevoerd in overeenstemming met de TU/e Gedragscode Wetenschapsbeoefening.}

\pagenumbering{Roman}
\cleardoublepage

\vspace{10cm}


\begin{center}
	\includegraphics[width=0.95\linewidth]{./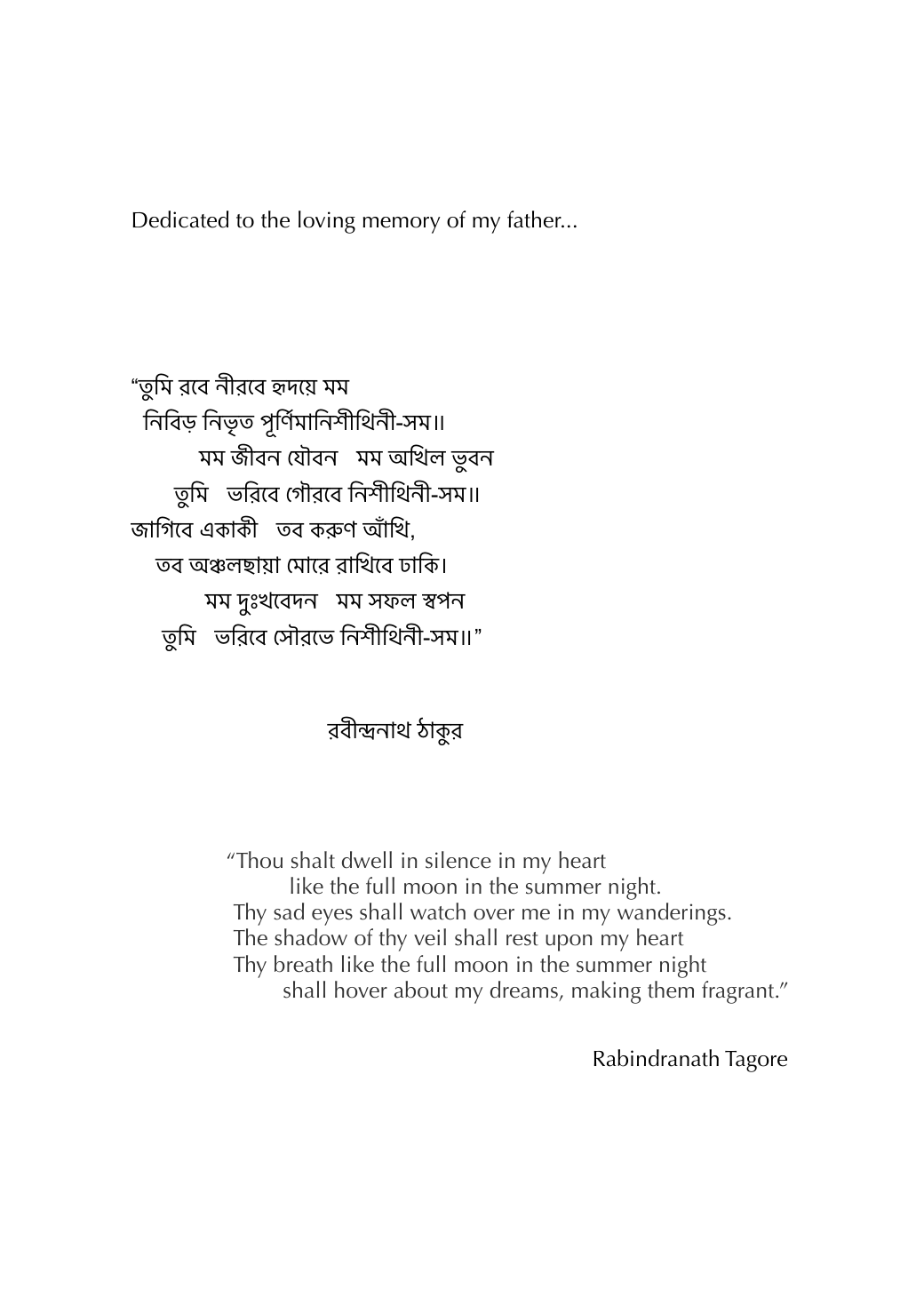}
\end{center}

\pagestyle{plain}

\cleardoublepage\currentpdfbookmark{Acknowledgements}{acknowledgements}

\chapter*{ 
\vspace*{\emptychapterdist}Acknowledgements}
\vspace{10pt}

As I write this Acknowledgement, my defence is exactly a month away. I delayed writing this, both intentionally and unintentionally. I wanted to ensure I didn’t miss the chance to thank anyone who contributed to this thesis, and, admittedly, I am also habitually behind schedule. Looking back, I realise that every person I’ve met has shaped this journey. Pursuing a PhD in the Netherlands has been a remarkable learning experience in both mathematics and life, and I would like to thank all of you through these brief stories and lessons we shared.

Since childhood, I’ve always been drawn to research. For PhD applications abroad, I needed recommendations. One of my seniors, \emph{Sohom Bhattacharya} (Sohom B), advised me to do a summer project with \emph{Prof. Manjunath Krishnapur} at IISc. \emph{Prof. Krishnapur} introduced me to the fascinating world of random walks and trees. Looking back, this was the beginning of my PhD journey in probability theory.

During my Master’s degree at the Indian Statistical Institute (ISI) in Kolkata, I was actively seeking someone who could provide a recommendation for my PhD applications. I admit that my initial reason for meeting \emph{Rajat daa} was this purpose alone, but when he suggested I attend his summer course on random graphs, I was immediately captivated. My first impression was, “It’s tough maths, but not tougher than Statistics,” and it was then that I decided to pursue Probability as a field of research. At that time, I had only a vague sense of the research world and a somewhat naïve vision of my future.

Toward the end of my first year, I approached \emph{Rajat daa} for a Master’s project, specifically in random walks. He entrusted me with a project close to his heart, and I enjoyed working with my collaborators \emph{Dr. Alessandra Cipriani} and \emph{Dr. Biltu Dan}. However, I soon encountered my first reality check: preparing for the GRE and TOEFL, necessary for US PhD applications. Despite multiple attempts, I struggled with these exams, and with the onset of COVID, all US universities imposed a hiring freeze. With rejection letters piling up, I felt stranded with no PhD offers.

I turned again to \emph{Rajat daa}, who encouraged me to apply for open position at the NETWORKS COFUND PhD programme. He gave me invaluable support and recommendations. I was extremely nervous during my interview with NETWORKS COFUND, where I met \emph{Remco (van der Hofstad)} and \emph{Frank (den Hollander)}. I was struck by how approachable these eminent professors were and later realised it is their genuine nature to bring out the best in their students. After my selection at NETWORKS COFUND, I began to feel that perhaps this was where I was meant to be, and that all those rejections had led me to the right place.

Just as my career was taking off, my father (\emph{Baba}) was diagnosed with terminal-stage cancer. He had sparked my interest in mathematics and research during my childhood, but by the time I started my research journey, he was nearing the end. Despite his condition, he urged me to go to the Netherlands to join the programme, while my mother (\emph{Maa}) accompanied him to Mumbai for treatment. The doctors there gave him a median life expectancy of just six months, meaning he likely wouldn’t live to see me complete my PhD, a dream we had shared. Six months later, when he passed away, the entire SPOR community was there to support me. Upon returning from India, I received a heartfelt card from everyone in SPOR, and that’s when this place truly began to feel like family.

The support of \emph{Suman daa, Manish, Arpan daa}, and \emph{Neeladri} was invaluable during those times. \emph{“WhatsApp is better than emails”} became our motto, helping us organise meet-ups and cooking challenges. Although I did my theoretical cooking course before coming to the Netherlands, I actually cooked for the first time here, with \emph{Arpan daa} as my guinea pig. My passion for food soon made cooking a favourite pastime, and \emph{Arpan daa}'s `critical' feedback motivated me to keep experimenting.

In the first few months, these friends were my only companions, but as lockdowns eased, I met more fantastic people in the department, like \emph{Joost, Martijn}, and \emph{Zsolt}. Since \emph{Joost} also works on preferential attachment models, we share a unique bond. Together, we went on long bike rides, travelled to Paris for a conference, and I presented my work there for the first time—masks on and all. It was daunting but made easier by their support. I also fondly remember Friday evenings, where I had my first drinks, spurred on by this fantastic group under the able leadership of \emph{Pim (van der Hoorn)} and \emph{Joost}.

When the restrictions were lifted, I met my other officemates, \emph{Samaneh} and \emph{Marta}. I’m grateful to \emph{Samaneh} for being my tea partner — our tea-time conversations ranged from international politics to the similarities between Hindi and Urdu. Our `dart-throw' competitions, even during times of intense pressure, were both spirited and, surprisingly, relaxing.

This was also when I joined a fantastic teaching team for a unique course with \emph{Alberto, Sandra, Tim, Martijn}, and \emph{Hans}. Special thanks to \emph{Tim, Sandra}, and \emph{Hans} for sparing me the task of learning SAS! Over three years, \emph{Martijn} was succeeded by \emph{Beno\^{i}t} and \emph{Stan}, but they all kindly allowed me to focus on the theoretical aspects of the course. \emph{Mike}’s course was also invaluable, helping me revise tools like coupling and stochastic dominance, which I used frequently during my PhD.

Winning arguments with \emph{Manish} wouldn’t have been as entertaining without \emph{Tom, Fatuma}, and \emph{Mark}. Your office became my go-to spot for trash-talking. Special thanks to \emph{Tom} for regularly bringing brownies and cakes, which were my afternoon snacks most of the time.

\emph{Haodong} is someone I can always rely on for finding loopholes in proofs or identifying counterintuitive examples. I hope we can get at least one research paper from our work at ASML. 

I’ve rarely seen anyone as enthusiastic as \emph{Nelly} when tackling new challenges. I’m also impressed by her simple yet effective technique for remembering family relations in Bengali—a perspective I hadn’t thought of before. Her teaching philosophy inspires me, and I aspire to be as dedicated a teacher while also excelling as a researcher.
I feel fortunate to have \emph{Pranab daa} in the neighbouring office. Our conversations in Bengali about teaching, the Dutch healthcare system, elections, Durga Puja, thesis work, and late-night office were always both refreshing and enjoyable.

\emph{Remco}, words cannot express my gratitude for standing by me at every stage of these four years of my PhD. You took the time and effort to clarify every single doubt, even the ones that felt stupid (though you always say, ``No doubts are stupid!"). I’ve learned countless life lessons from you. Whenever I needed help, whether with maths or other issues, your door was always open. Our discussions—whether while biking, driving, or travelling by train—have consistently sparked fruitful ideas. Daylong conversations with \emph{Rajat daa} also helped immensely, filling in gaps in my arguments and catching up on ISI gossips. \emph{Frank}, although you weren’t my direct supervisor, you were nothing short of one to me. I have always benefited greatly from your advice and support.

Next, I would like to thank my committee members; \emph{Bert Zwart, Peter M\"{o}rters, Maria Deijfen} and the chair \emph{J.N. Kok}. Thank you all for taking time out of your busy schedule to read the thesis, join the defence and preparing questions.

Although I wasn’t regular at the EURANDOM lunch table, the conversations were often interesting, enriched by the insights of \emph{Purva, Mike, Connor, Stan, Sanne, Ellen, Ivo, Thomas, Ignachio (Nacho), Alexander}, and \emph{Nelly}.
Special thanks go to \emph{Noela} and \emph{Pim} for organising events and activities for the Probability cluster. Although my interactions with \emph{Qiu, Isabel,} and \emph{Kai-Chun}, the newest members of the cluster, have been brief, I’ve enjoyed their company.  \emph{Lorenzo}, I trust you’ll lead the office with excellence. And \emph{Vinay}, now that we’ve found a shared interest in card tricks and games, I think it’s time to make it a tradition!

The NETWORKS training weeks were a unique experience for all affiliates. Although informally dubbed `NOTWORKS' weeks, we engaged in some serious activities—bowling, dart-throwing, card games, swimming, sauna, and of course, partying and lastly attending mini-courses. The `after \(11\)' parties at our secret hideouts are unforgettable, with \emph{Gianluca, Tim, Roshan,} and \emph{Marta} as the core club members. \emph{Marieke}, I don’t know how you manage to remember every detail about each NETWORKS affiliate, but you do so effortlessly, and your support with travel-budget tracking and event organisation is unmatched.

Attending conferences with our gang—\emph{Nandan, Federico, Daoyi, Twan, Oliver}, and \emph{Manish}—was a lot of fun. Although there were some consequences, I am glad that you didn't care at all about those. Highlights like walking to the geographic centre of the Netherlands, getting drunk and breaking glass, late-night chats outside the hotel in Lunteren, and our time in Saint Flour were made all the more memorable with friends like \emph{Riccardo, Lars}, and \emph{Maike}.

I would like to express my deep gratitude for the invaluable support from \emph{Chantal, Astrid, Marianne}, and \emph{Ellen}, the secretaries of the SPOR cluster. During the organisation of the lecture series in 2023, \emph{Marianne} and \emph{Marieke} handled all administrative tasks seamlessly, ensuring the process went smoothly. \emph{Chantal} and \emph{Astrid} remain my go-to contacts for meeting scheduling, office-related matters, and arranging my defence ceremony. Despite your heavy workloads and the fact that I am not directly your PhD student, \emph{Sem}, you provided valuable guidance whenever I needed it during the lecture series organisation. Thank you all for your exceptional support.

Outside the exceptional workplace at TU/e, I was fortunate to be surrounded by a wonderful Indian community, including \emph{Suman daa, Sreetama di, Sampad, Manish, Arpan daa, Shivesh, Purva, Nitant, Sreehari, Bharti, Sanjana, Rajeshwari}, and \emph{Neeladri}. Our weekly market trips, birthday celebrations, and Holi and Diwali festivities, along with our Durga Puja excursions to Blixembosch, made me feel almost at home. Playing cricket on the field and in our living room at night, and staying up late with \emph{Arpan daa, Manish, Shivesh, Nitant}, and \emph{Purva} playing \(29\) and other card games, was an incredible amount of fun. Making important life decisions became easier thanks to the constant guidance from \emph{Suman daa, Nivedita di}, and \emph{Sreetama di}. 

Thanks to seniors, like \emph{Arnab daa (both Chowdhury and Chakraborty), Sandipan daa, Indranil daa, Arkajyoti daa, Shaswat daa, Mayukh daa, Tamal daa, Rajarshi daa, PPG, Chinmay daa, Deepan daa, Sagnik daa, Dipanjan daa, Aliva di, Shaswati di, Srijita di, Basundhara di, Tulip di, Suprita di} and \emph{Agnishikha di}, celebrating Durga Puja in the Netherlands felt just like home.

I could not have reached this point without friends like \emph{Manaswini, Sayantan}, and \emph{Sayan} from ISI, Kolkata, who supported me through the intense curriculum. I fondly miss those good old ISI days, though they are somewhat compensated by our occasional evening texts and WhatsApp video calls. Although the Portugal plan with \emph{Sayan} didn’t work out, I hope the Berkeley plan will be a success.

This acknowledgement would be incomplete without expressing my heartfelt gratitude to \emph{Maa, Baba, Bhai} (my brother), and \emph{Prasmita}. Coming from a small city in India, my journey to the Indian Statistical Institute was far from easy. \emph{Baba} envisioned this path and went beyond his capacity to equip me with the knowledge needed to succeed in those competitive exams. \emph{Maa} ensured that everything at home was well under control while I was away. Without her presence, I could not have even contemplated leaving India when \emph{Baba} was diagnosed with cancer. She has been the source of all my mental strength, teaching me how to stay calm and make decisions in challenging situations, a skill that greatly aided me during difficult phases of my PhD.

When I left home, \emph{Bhai} was still an immature boy, but when the situation demanded it, he demonstrated an extraordinary maturity far beyond his years. His unwavering support at home was unmatched, especially after \emph{Baba} passed away.

\emph{Prasmita}, your constant support over the past 11 years through every challenge has been invaluable. You tolerated all my tantrums and stood by me in my difficult times as well as during moments of success. Without you, I would not have overcome those moments of self-doubt or questioned my own abilities.
The four of you have been the pillars of support and inspiration throughout my journey.

Without all of you, completing this work would not have been possible.

\begin{flushright}
	Rounak Ray,\\
	Eindhoven, November 2024.
\end{flushright}
	
\cleardoublepage\currentpdfbookmark{Contents}{contents}
\renewcommand*\contentsname{\vspace*{\emptychapterdist}Contents}

\hypersetup{linkcolor=darkagenta}
 {\setstretch{1.0}\tableofcontents}
\emergencystretch 3em
\newpage
\cleardoublepage

\pagenumbering{arabic}

\chapter{Introduction}
\StartThumbs
\pagestyle{headings}

In the modern era of information technology, the concept of networks has transcended its traditional boundaries to permeate almost every aspect of our lives. From the Internet, social media, and telecommunications to biological systems, transportation infrastructures, and financial markets, networks provide a robust framework for understanding the complex interactions within and between different entities. Real-life networks exhibit fascinating structural properties that have profound implications for their functionality, resilience, and evolution. Empirical studies on real-life networks reveal that most of these networks {(a)} grow with time; {(b)} are small worlds, meaning that typical distances in the network are small; and {(c)} have power-law degree sequences. 

\paragraph{Real-life networks: an overview}

Real-life networks are composed of nodes (or vertices) and edges (or links) that connect these nodes. These networks can vary greatly in scale and complexity. For example, in a social network, nodes represent individuals or organizations, while edges represent social ties, such as friendships or collaborations. In a biological network, nodes might represent genes or proteins, and edges represent biochemical interactions. Despite the diversity of these systems, they often share common structural features that are critical to their behavior and efficiency.

\paragraph{Sparse random graphs}

To better understand and predict the behavior of real-life networks, researchers have turned to the study of random graphs. A random graph is a graph that is generated by some random process, and it serves as a probabilistic model for understanding the structural properties of networks. Among the various types of random graphs, \textit{sparse random graphs} are of particular interest because they provide a more realistic model for large-scale networks where each node has only a few connections compared to the total number of vertices.

A sparse random graph has a number of edges that is proportional to the number of vertices, rather than the square of the number of vertices, which is common in dense graphs. This sparsity reflects the reality of many networks where each vertex interacts with only a small fraction of other vertices, yet the overall network remains interconnected.

\paragraph{Power-law random graphs: capturing real-world complexity}

Power-law random graphs represent a significant advancement in the modeling of real-life networks, particularly in capturing their degree distributions.
Voitalov et al.\ showed in \cite{voitalov2019} that real-world scale-free networks are definitely not as rare as one would conclude based on the popular but unrealistic assumption that real-world data comes from power laws of pristine purity, void of noise and deviations, defying the concerns raised in \cite{BC19}.
In a power-law distribution, the probability that a node has degree \( k \) follows the relation 
\[ P(k) = L(k)k^{-\tau}~, \] 
where \( \tau \) is a positive constant typically between $2$ and $3$, and \(L(k)\) is a slowly-varying function. This implies that most vertices have a small degree, while a few vertices, often called \emph{hubs}, have a very high degree. These hubs play a critical role in the network’s structure and dynamics, making the network both resilient and vulnerable to targeted attacks.

The power-law degree distribution is a defining characteristic of \textit{scale-free networks}, a class of networks that includes many real-life examples, such as the Internet, social networks, and biological systems. The term \emph{scale-free} refers to the fact that the network’s structural properties are similar across different scales. For instance, the same degree distribution pattern holds whether we examine the network at a local, regional, or global level.

\paragraph{Stochastic processes on graphs}

We are typically not only interested in networks, but also in their functionality. Indeed,
we wish to know how quickly a disease or a rumor spreads, how products compete for
the market in social networks, how damage restricts the network’s functionality, how
people reach consensus when talking to one another, etc. Here we model networks
using random graphs, while we model their functionality by stochastic processes on them. In
recent decades, a tremendous body of work was developed discussing stochastic processes
on random graphs

\begin{center}
	{\textbf{Organisation of this chapter}}
\end{center}

This chapter is organised into three sections, each laying the groundwork for the subsequent analysis of random graphs and stochastic processes on these graphs.

{Section~\ref{chap:intro:sec:random-graph}} begins with a discussion of classical static random graphs, providing a foundational understanding of these models. Following this, we introduce the affine preferential attachment model in a general form, highlighting its dynamic nature and relevance to the real-world networks.
Section~\ref{chap:intro:sec:stoc-proc} shifts focus to stochastic processes on graphs, with an emphasis on random graphs. In this section, we introduce and explore two key processes: percolation and the Ising model. These processes are crucial for understanding the behaviour of complex networks and form a significant part of the analysis presented in this thesis.
Section~\ref{chap:into:sec:tools-techniques} summarises various tools and techniques used to study stochastic processes on random graphs. We discuss the concept of local limits, branching processes, P\'{o}lya urn representations, and the collapsing operator. These tools are essential for approaching the challenges posed by the analysis of stochastic processes on preferential attachment models and other types of random graphs.
This chapter provides a comprehensive introduction to the key concepts, models, and techniques that underpin the research presented in the thesis, setting the stage for the detailed discussions in the subsequent chapters.


\section{Random graphs}\label{chap:intro:sec:random-graph}

A graph $G = (V,E)$ consists of a vertex set $V$, and a set of edges $E\subset \{\{i,j\}: i,j\in V\}$ specifying the connections between different vertices.
For a multigraph, $E$ is a multi-set possibly consisting of multiple-edges between vertices, as well as self-loops. 
Throughout, we will assume that $|V|,|E|<\infty$. 
A random graph model specifies a probability distribution over the space of graphs.
We will consider $n$ vertices labeled by $[n]:=\{1,2,...,n\}$, which will serve as the vertex set of the random graph.

\subsection{Static random graphs}
We now discuss some classical static random graph models, and then move on to define the preferential attachment model in great generality. These graphs has a very rich, but still growing literature, and for detailed investigations of these models and recent developments, we refer the reader to \cite{vdH1,vdH2}.

\paragraph{The Erdős-Rényi model}

One of the most well-known models for generating random graphs is the Erdős-Rényi (ER) model, introduced by Paul Erdős and Alfréd Rényi in \cite{ER59,ER60}. The ER model is simple and elegant, providing a foundational framework for the study of random graphs.

The ER model comes in two closely related versions:

\begin{itemize}
	\item \textbf{G(n, p) model:} In this version, the graph \( G(n, p) \) is constructed by starting with a set of \( n \) vertices. Each possible pair of vertices is then connected by an edge with independent probability \( p \). The probability \( p \) is a fixed parameter of the model, and it determines the expected density of the graph. When \( p \) is close to 0, the graph is sparse, while when \( p \) is close to 1, the graph is dense.
	\item \textbf{G(n, m) model:} In this version, a graph \( G(n, m) \) is formed by choosing exactly \( m \) edges uniformly at random from the set of all possible \( \binom{n}{2} \) edges between \( n \) vertices. This model is often used when the exact number of edges is known, rather than the probability of edge formation.
\end{itemize}

However, for $m\approx np$, the two models are asymptotically equivalent \cite{JLR00}. 
Note that the degree of each vertex is distributed as a $\mathrm{Bin}(n-1,p)$ random variable. 
Thus for sparse regime $p = \lambda/n$, the asymptotic degree of each vertex is $\mathrm{Poisson}(\lambda)$, with fixed average degree~$\lambda$.

\paragraph{Configuration model}

While the Erd\H{o}s-R\'{e}nyi model is valuable for its simplicity, it assumes that all vertices have approximately the same degree in distribution (number of connections), and very few vertices of high degree (i.e.\ no power-law), which is not representative of many real-world networks. The \textit{configuration model} addresses this limitation by allowing for more flexibility in the degree distribution of the graph.

Consider a non-increasing sequence of degrees $\boldsymbol{d} = ( d_i )_{i \in [n]}$ such that $\ell_n = \sum_{i \in [n]}d_i$ is even. 
The configuration model on $n$ vertices having degree sequence $\boldsymbol{d}$ is constructed as follows \cite{B80,BC78}:
\begin{itemize}
	\item[] Equip vertex $j$ with $d_{j}$ stubs, or \emph{half-edges}. Two half-edges create an edge once they are paired. Therefore, initially we have $\ell_n=\sum_{i \in [n]}d_i$ half-edges. Pick any one half-edge and pair it with a uniformly chosen half-edge from the remaining unpaired half-edges and keep repeating the above procedure until all the unpaired half-edges are exhausted. 
\end{itemize}
Let $\rm{CM}$ denote the graph constructed by the above procedure.
Note that $\rm{CM}$ may contain self-loops or multiple edges. It was shown in \cite{AHH19,JL18} that, under very general assumptions, the asymptotic probability of the graph being simple is positive.

The configuration model is particularly useful for studying networks with heterogeneous degree distributions, such as power-law distributions, which are common in many real-life networks, including social, biological, and technological systems. Configuration model is used to study the impact of degree heterogeneity on various network properties, such as connectivity, resilience, and the spread of information or diseases in \cite{T07,BST09,BST10}.

\paragraph{Inhomogeneous random graph}

Inhomogeneous random graphs generalize the idea of random graphs by allowing the probability of edge formation to vary across different pairs of vertices. This model is more flexible and can capture the varying strengths of interactions or affinities between vertices that are often observed in real-life networks. A detailed analysis of the properties of this random graph model has been provided in \cite{BJR07} under a very general setup. 

In an inhomogeneous random graph, each vertex \( i \) is assigned a weight \( w_i \), and the probability \( p_{ij} \) that an edge exists between vertices \( i \) and \( j \) depends on their weights. A common form for this probability is given by

\[
p_{ij} = f(w_i, w_j)~,
\]

where \( f \) is a function that typically increases with \( w_i \) and \( w_j \). For example, in some models, \( p_{ij} \) might be proportional to the product of the weights, \( p_{ij} \propto w_i  w_j \), which can lead to graphs where vertices with higher weights (representing more \emph{important} or \emph{active} entities) are more likely to be connected. Depending on the choice of the function \( f \), there are several models in the literature namely \emph{Norros-Reittu model} \cite{NR06}, \emph{Chung-Lu model} \cite{CL02,CL03}, \emph{Generalised random graph} \cite{BDM06}. Choosing \( f \) function to be a constant \( p\in (0,1) \), we obtain the Erd\H{o}s-R\'{e}nyi model back from inhomogeneous random graphs.

There are several other models available in the literature to accommodate different features of real-life networks, e.g.\ the random geometric graph was introduced in \cite{G61} to capture the underlying geometry of the networks. This network has been in use to study social networks in \cite{WPR06,NA73,LC18,WW90} as it can encode actual physical distance, such as two servers in adjacent cities, as well as a more abstract form of similarity, e.g., users with similar interests or hobbies.

\subsection{Preferential attachment models}\label{chap:intro:sec:PA}

The Barab\'asi-Albert model of \cite{ABrB,albert1999} is {one of the most popular random graph model for dynamic real-life networks} due to the fact that, through a simple dynamic, its properties resemble real-world networks. This model has been generalised in many different ways, creating a wide variety of {\em preferential attachment models} (PAMs).

By preferential attachment models we denote a class of random graphs with a common dynamics: At every time step, a new vertex appears in the graph, and it connects {to} $m\geq1$ existing vertices, with probability proportional to a function of the degrees \FC{of the vertices}. In other words, when vertex $n\in\N$ appears, all of its edges connects to vertex $i\leq n$ with probability
\eqn{
	\label{for:heur:PAfunction}
	\pr(n\rightsquigarrow  i \mid \mathrm{PA}_{n-1}) \propto f(d_i(n-1)),
}	
where \FC{$\PA_{n}$ is the preferential attachment graph with $n$ vertices,} $d_i(n-1)$ denotes the degree of vertex $i$ in the graph $\PA_{n-1}$, and $f$ is some \FC{preferential attachment} function. We {thus} in fact {consider a whole} PA class of random graphs since every function $f$ defines a different model.
The original Barab\'asi-Albert model is retrieved with $f(k) = k$. The literature often considers the so-called {\em affine} PAM, where $f(k) = k+\delta$, for some constant $\delta>-m$.

The constant $\delta$ allows for flexibility in the graph structure. In fact, the power-law exponent of the degree distribution is given by $\tau= 3+\delta/m$ \cite{BRST01, Der2009,vdH1}, and in general, for $m\geq 2$, the typical distance and the diameter are of order $\log\log n$ when $\tau\in(2,3)$, while they are of order $\log n$ when $\tau>3$. When $\tau=3$, distances and diameter are of order $\log n/\log\log n$ {instead} \cite{BolRio04b,CarGarHof,DSMCM,DSvdH}.

\begin{figure}
	\centering
	\includegraphics[width=0.8\linewidth]{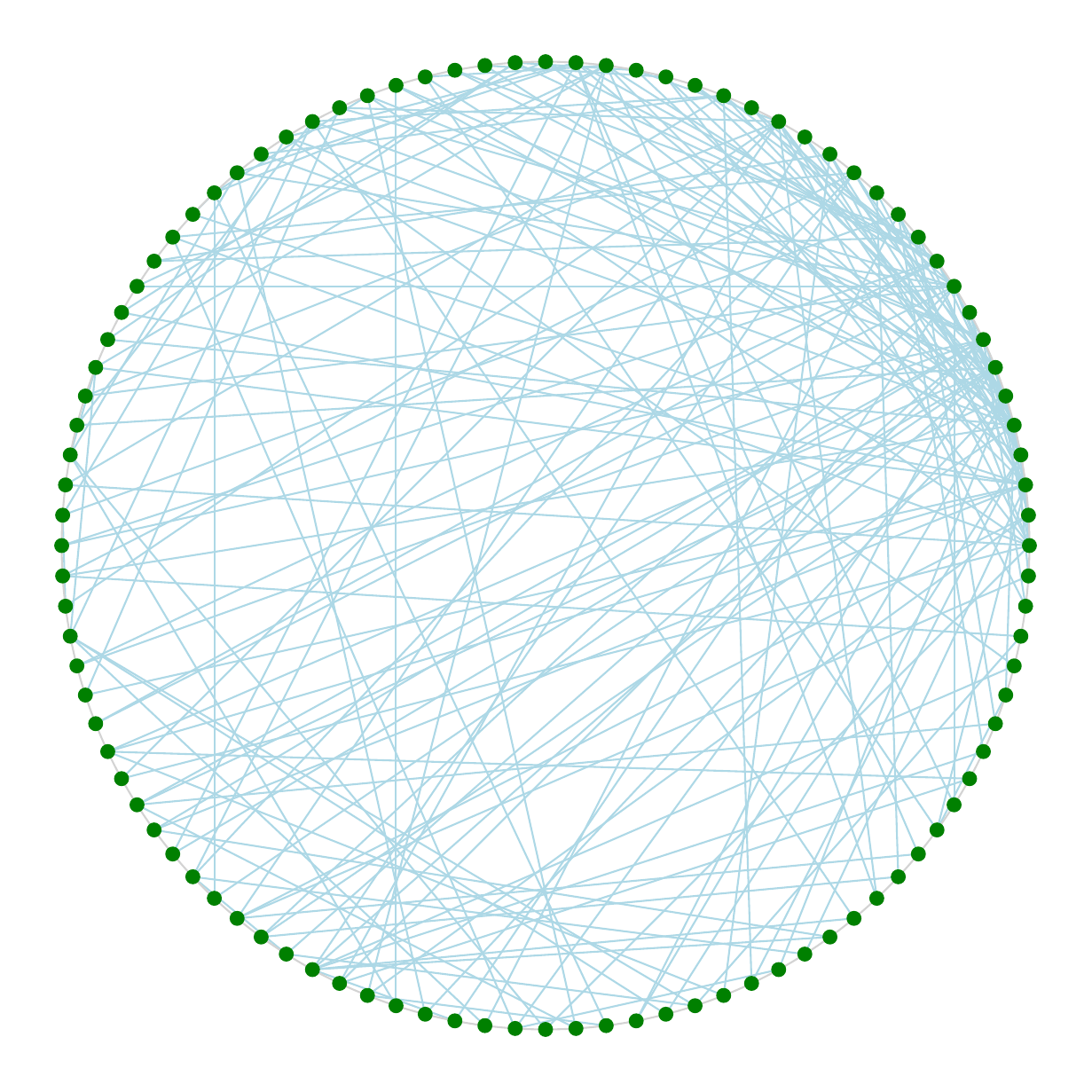}
	\caption{Realisation of preferential attachment model on \(100\) vertices}	
\end{figure}

In {PAMs}, the degree of a vertex increases over time and higher degree vertices are prone to attract the edges incident to new vertices, increasing their degrees even further. In literature, this is sometimes referred {to as} \textit{rich-get-richer} effect. The models where the vertex degrees are determined by a \textit{weight} associated to it are sometimes called \textit{rich-by-birth} {models}. 

In \cite{DEH09}, the authors have considered a model incorporating both of these effects. The model is a {PAM} with random {out-degrees}, i.e., every vertex joins the existing network with a random number of edges that it connects to the existing vertices preferentially to their degrees at that time, as done in usual preferential attachment models. Here, the authors have shown that this system also shows the power-law degree distribution. {Jordan \cite{jordan06} considered} a special case of this particular model to analyze their results on degree distribution of the random graph. 

Cooper and Frieze \cite{cooper2003} have shown  that a further generalised version of this PAM also {has a} power-law degree distribution. Since most real-life networks are dynamic in nature and have power-law degrees, the PAM is often used to model them. However, in such networks it is never the case that every vertex joins with exactly the same out-degree, and instead i.i.d.\ out-degrees are more realistic. Gao and van der Vaart \cite{GV17} have shown that the maximum likelihood estimator of the fitness parameter $\delta$ of PAM with random out-degrees is asymptotically normal and has asymptotically minimal variance.
There have been further generalisations by allowing younger vertices to have higher degrees in PAMs through a random {\em fitness} parameter. More precisely, individual fitness factors \((\eta_i)_{i\geq 1}\) can be assigned to vertices, resulting in the probabilities in \eqref{for:heur:PAfunction} being proportional to \(\eta_i f(d_i(n-1))\), thus yielding PAMs \emph{with multiplicative fitness}, or proportional to \(d_i(n-1) + \eta_i\), giving rise to \emph{additive fitness} \cite{B07,Bianconi,Borgs,der16,der2014}. Usually, \((\eta_i)_{i\geq 1}\) is chosen as a sequence of i.i.d.\ random variables.

\paragraph{Model definition}\label{chap:intro:sec:model}
{Several versions} of preferential attachment models are available in the literature. We generalize these definitions to the case of a random initial {number of edges that connect to the already available graph. We refer to these edge numbers as {\em out-degrees}, even though we consider our model to be {\em undirected.}} 
Let $(m_i)_{i\ge 3}$ be {a sequence of i.i.d.\ copies of $M$ which is a $\N$-valued random variable} with finite $p$-th moment for some $p>1$, and $\delta >-{\infsupp}(M)$ be a fixed real number, {where $\operatorname{supp}(M)$ denotes the support of the random variable $M$}.
In our models, every new vertex $v$ joins the graph with $m_v$ many edges incident to it. 
In case of self-loops, degree of the new vertex increases by $2$.
{Thus,} we can consider the existing models as a degenerate case of ours. Define $m_1=m_2=1$, $\boldsymbol{m}=(m_1,m_2,m_3,\ldots)$ and
\[
m_{[l]} = \sum\limits_{i\leq l} m_i.
\]
To describe the {edge-connection probabilities} and to simplify our calculations, we {frequently work with the conditional law given $\boldsymbol{m}=(m_i)_{i\ge 1}$}. 
The conditional measure is denoted by $\prob_m$, i.e., for any event $\mathcal{E}$,
\eqn{\label{eq:conditional_measure:M} \prob_m(\mathcal{E})= \prob\left( \left. \mathcal{E} \right| \boldsymbol{m} \right).}
{Conditionally} on $\boldsymbol{m}$, we define some {versions of the preferential attachment models,} special cases of which {are} equivalent to the models~(a-g) described in \cite{GarThe}. We consider the initial graph to be $G_0$ with $2$ vertices with degree $a_1$ and $a_2$, respectively. {Throughout the remainder of this thesis, we fix}  $\delta>-\infsupp(M)$.
\paragraph{\bf Model (A)} {This is} the generalised version of \cite[Model (a)]{vdH1}. Conditionally on $\boldsymbol{m}$, for $v\in\N$ and $j=1,\ldots,m_v$, the attachment probabilities are given by
\eqn{
	\label{eq:edge_connecting:model(rs):2}
	\prob_m\left(\left. v\overset{j}{\rightsquigarrow} u\right| \PArs_{v,j-1}(\boldsymbol{m},\delta) \right) = \begin{cases}
		\frac{d_u(v,j-1)+\delta}{c_{v,j}}\qquad\qquad \mbox{ for $v>u$},\\
		\frac{d_u(v,j-1)+1+\frac{j}{m_u}\delta}{c_{v,j}}\qquad \mbox{ for $v=u$},
	\end{cases}
}
where $v\overset{j}{\rightsquigarrow} u$ denotes that vertex $v$ connects to $u$ with its $j$-th edge, $\PArs_{v,j}(\boldsymbol{m},\delta)$ denotes the graph with $v$ vertices, \FC{with} the $v$-th vertex \FC{having} $j$ out-edges, and $d_u(v,j)$ denotes the degree of vertex $u$ in $\PArs_{v,j}(\boldsymbol{m},\delta)$. We identify $\PArs_{v+1,0}(\boldsymbol{m},\delta)$ with $\PArs_{v,m_v}(\boldsymbol{m},\delta)$. The normalizing constant $c_{v,j}$ in \eqref{eq:edge_connecting:model(rs):2} equals
\eqn{\label{eq:normali}
	c_{v,j} := a_{[2]}+2\left( m_{[v-1]}+j-2 \right) - 1+ (v-1)\delta + \frac{j}{m_v}\delta \, ,
}
where $a_{[2]}=a_1+a_2$. We denote the above model by $\PArs_v(\boldsymbol{m},\delta)$. If we consider $M$ as a degenerate distribution {that is equal to $m$ a.s.}, then $\PArs_v(\boldsymbol{m},\delta)$ is essentially equivalent to $\PA_v^{\sss ({m},\delta)}(a)$ defined in \cite{vdH2}. $\PA_v^{({m,0})}(a)$ was {informally introduced in \cite{albert1999} and first studied rigorously in \cite{BolRio04b}.}
Later the model for general $\delta$ was described in \cite{ABrB}.
\smallskip
\paragraph{\bf Model (B)} {This is} the generalised version of \cite[Model (b)]{vdH1}. Conditionally on $\boldsymbol{m}$, the attachment probabilities are given by
\eqn{\label{eq:edge_connecting:model(rt):2}
	\prob_m\left(\left. v\overset{j}{\rightsquigarrow} u\right| \PArt_{v,j-1}(\boldsymbol{m},\delta) \right) = \begin{cases}
		\frac{d_u(v,j-1)+\delta}{c_{v,j}}\qquad\qquad \mbox{ for $v>u$},\\
		\frac{d_u(v,j-1)+\frac{(j-1)}{m_u}\delta}{c_{v,j}}\qquad \mbox{ for $v=u$},
\end{cases}}
where again $\PArt_{v,j}(\boldsymbol{m},\delta)$ denotes the graph with $v$ vertices, \FC{with} the $v$-th vertex \FC{having} $j$ out-edges, and $d_u(v,j)$ denotes the degree of vertex $u$ in $\PArt_{v,j}(\boldsymbol{m},\delta)$. We identify $\PArt_{v+1,0}(\boldsymbol{m},\delta)$ with $\PArt_{v,m_v}(\boldsymbol{m},\delta)$. The normalizing constant $c_{v,j}$ in \eqref{eq:edge_connecting:model(rt):2} now equals
\[
c_{v,j} = a_{[2]}+2\left( m_{[v-1]}+j-3 \right) +(v-1)\delta + \frac{(j-1)}{m_v}\delta \,.
\]
We denote the above model by $\PArt_v(\boldsymbol{m},\delta)$. For $M$ a degenerate distribution {which is equal to $m$ a.s.}, we obtain $\PA_v^{\sss ({m},\delta)}(b)$ described in \cite{vdH2} from $\PArt_v(\boldsymbol{m},\delta)$.
\FC{\begin{remark}[Difference between models (A) and (B)]
		\rm{From the definition of the models (A) and (B), the models are different in that the first edge from every new vertex can create a self-loop in model (A) but not in model (B). Note that the edge probabilities are different for all $j$.}\hfill$\blacksquare$
\end{remark}}
\paragraph{\bf Model (D)} This model is the generalised {version} of the sequential model described in \cite{BergerBorgs}. We start with a fixed graph $G_0$ of size $2$ and degrees $a_1$ and $a_2$, respectively. Denote the graph {by} $\PAri_{v}(\boldsymbol{m},\delta)$ when {the graph has $v$ vertices.}

For $v\geq 3$, vertex $v$ enters the system with $m_v$ many out-edges {whose} edge-connection probabilities are given by
\eqn{\label{sec:model:eq:1}
	\prob_m\left(\left. v\overset{j}{\rightsquigarrow} u \right| \PAri_{v,j-1} \right) = 
	\frac{d_u(v,j-1)+\delta}{c_{v,j}}\quad \text{for}\quad v>u,
}
where $\PAri_{v,j}(\boldsymbol{m},\delta)$ denotes the graph with $v$ vertices, \FC{with} the $v$-th vertex \FC{having} $j$ out-edges, $d_u(v,j)$ is the degree of the vertex $u$ in the graph $\PAri_{v,j}$, and
\begin{equation*}
	c_{v,j}= a_{[2]}+2\left( m_{[v-1]}-2 \right)+(j-1) + (v-1)\delta\,.
\end{equation*}
Again we identify $\PAri_{v+1,0}(\boldsymbol{m},\delta)$ with $\PAri_{v,m_v}(\boldsymbol{m},\delta)$. For $M$ a degenerate random variable {that is equal to $m$ a.s.}, we obtain $\PA_v^{\sss ({m},\delta)}(d)$, {as} described in \cite{vdH2}.
{\paragraph{\bf Model (E)} This model is the independent model proposed {by Berger et al.\ in \cite{DEH09}}. We start with the same initial graph $G_0$. We name the graph $\PArf_{v}(\boldsymbol{m},\delta)$ when there are $v$ vertices in the graph. Conditionally on $\boldsymbol{m},$ every new vertex $v$ is added to the graph with $m_v$ many {out-edges}. Every {out-}edge $j\in[m_v]$ from $v$ connects to the vertex $u\in[v-1]$ independently with the {probabilities}
	\eqn{\label{eq:model:f:1}
		\prob_m\left( \left.v\overset{j}{\rightsquigarrow}u\right|\PArf_{v-1}(\boldsymbol{m},\delta) \right) = \frac{d_u(v-1)+\delta}{a_{[2]}+2(m_{[v-1]}-2)+(v-1)\delta},}
	where $d_{u}(v-1)$ is the degree of the vertex $u$ in $\PArf_{v-1}$. Note that the intermediate degree updates are not there in this graph. Similarly as {in} model (D), no self-loops are allowed here.}

\paragraph{\bf Model (F)} The independent model does not allow for self-loops, but multiple {edges} between two vertices {may still occur}. {Such multigraphs are, in many applications, unrealistic, and we next propose a model where the graphs obtained are {\em simple}, i.e., without self-loops and multiple edges.} Model (F) is a variation of the independent model where the new vertices connect to the existing vertices in the graph independently {but {\em without replacement}} and the edge-connection probabilities are {similar to} \eqref{eq:model:f:1}. Indeed, for $j\geq 2$, the normalization factor changes a little due to the fact that vertices that have already appeared as neighbours are now forbidden. The degenerate case of our PAM, i.e., with fixed out-degree, is studied in \cite{basu2014} for analyzing the largest connected component in the strong friendship subgraph of evolving online social networks.

\begin{remark}[{The missing model (c)}]
	{\rm There is a description of model (c) in \cite[Section 8.2]{vdH1}, which {reduces to model (a), so we refrain from discussing it further in this Chapter}. 
	}\hfill$\blacksquare$ 
\end{remark}

\section{Stochastic processes}\label{chap:intro:sec:stoc-proc}

Stochastic processes on random graphs combine the randomness inherent in graph structures with dynamic processes that live on these graphs. A stochastic process is a collection of random variables that evolves over time or space, and when these processes are applied to random graphs, they model a variety of phenomena in complex networks, such as random walks, the spread of information, diseases, or cascading failures. 

Random graphs, provide a flexible framework for studying how these processes behave in networks with different structural properties. The interplay between the topology of the graph and the stochastic dynamics can lead to rich and often surprising behaviors. For instance, the speed and pattern of disease spread in a population can be highly sensitive to the degree distribution of the underlying social network. In the next section, we study properties of percolation and Ising models on graphs.

\subsection{Percolation on graphs}

\paragraph{Percolation theory}

Percolation theory is a fundamental concept in the study of random processes living on networks and graphs. It examines the behaviour of connected clusters in a random graph as edges or vertices are added or removed randomly. Percolation is particularly useful in understanding the robustness and connectivity of networks, the spread of diseases, and various phase transitions in statistical physics.

Given a graph $G$, bond (or site) percolation refers to the deletion of each edge (or vertex) independently with probability $1-p$. Throughout this discussion, we focus on bond percolation, hence we will simply refer to it as percolation. The resulting graph is denoted by $G(p)$. In the case of percolation on random graphs, the deletion of edges is independent of the structure of the underlying random graph.

The \emph{percolation process} is defined as the graph-valued stochastic process $(G(p))_{p \in [0,1]}$, which is coupled through the so-called Harris coupling. More precisely:

\begin{itemize}
	\item[] Associate an independent uniform $[0,1]$ random variable $U_e$ to each edge $e$ of the graph $G$. The graph $G(p)$ is generated by retaining edge $e$ if and only if $U_e \leq p$. By keeping the uniform random variables fixed while varying $p$, we obtain a coupling between the graphs $(G(p))_{p \in [0,1]}$.
\end{itemize}

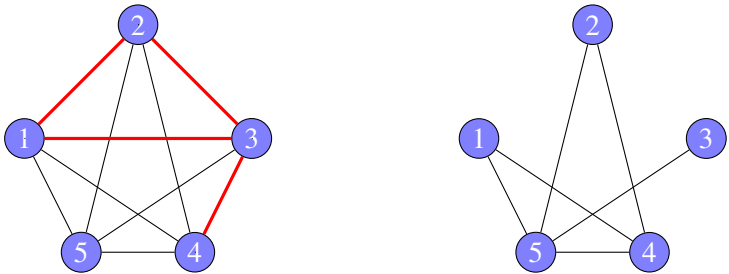
\begin{figure}[h!]
	\centering
	\begin{tikzpicture}[scale=1, auto, every node/.style={circle, draw, fill=blue!50, inner sep=2pt, text=white}]
		\node (1) at (0, 1.5) {1};
		\node (2) at (1.5, 3) {2};
		\node (3) at (3, 1.5) {3};
		\node (4) at (2.25, 0) {4};
		\node (5) at (0.75, 0) {5};
		
		\foreach \i in {1,2,3,4}
		\foreach \j in {\i,...,5}
		\draw (\i) -- (\j);
		
		\draw[red, very thick] (1) -- (2);
		\draw[red, very thick] (1) -- (3);
		\draw[red, very thick] (2) -- (3);
		\draw[red, very thick] (3) -- (4);
		
		\begin{scope}[xshift=6cm]
			\node (1) at (0, 1.5) {1};
			\node (2) at (1.5, 3) {2};
			\node (3) at (3, 1.5) {3};
			\node (4) at (2.25, 0) {4};
			\node (5) at (0.75, 0) {5};
			
			\draw (1) -- (4);
			\draw (1) -- (5);
			\draw (2) -- (4);
			\draw (2) -- (5);
			\draw (3) -- (5);
			\draw (4) -- (5);
		\end{scope}
	\end{tikzpicture}
	\caption{Percolation on a complete graph.}
	\label{fig:percolation}
\end{figure}

Percolation has been extensively studied on infinite connected graphs, such as the hypercubic lattice. This is a very simple model that exhibits a \emph{phase transition}. For an in-depth exploration of percolation theory, we refer the reader to \cite{JL18, FvdHdHH17, CHL09, Grimmet99, RvdHSF} and the references therein.


Percolation is also employed to model vaccination strategies on a network, aiming to prevent the spread of an epidemic. A detailed account of these applications can be found in \cite{Newman-book, BBCS05, N02}.

\paragraph{Percolation on infinite trees}

Percolation on infinite trees offers a simpler setting to study phase transition phenomena, due to the absence of loops and the regular structure of trees. An infinite tree is a graph where any two nodes are connected by exactly one path, and the number of nodes is infinite.

For an infinite tree, the percolation process begins by assigning each edge an independent probability \( p \) of being open. The key question is whether an infinite connected component of open edges exists. The critical probability \( p_c \) is defined as the infimum of the values of \( p \) for which an infinite connected component appears with non-zero probability. Let \( (\Tbold,o) \) be an infinite tree rooted at $o$, and \( (\Tbold(p),o) \) denote the percolated cluster of \( \Tbold(p) \) containing \(o\). Then, \(p_c\) is formally defined as
\[
	p_c = \inf\big\{ p\in[0,1]:~ \prob\big((\Tbold(p),o)~\mbox{is infinite}\big)>0 \big\}~.
\]

For $d$-regular infinite trees, where each vertex has the same degree \( d \), the critical probability \( p_c \) can be calculated explicitly as

\[
p_c = \frac{1}{d-1}.
\]

When \( p < p_c \), all connected components are finite, and no infinite cluster exists, whereas for \( p > p_c \), an infinite cluster appears with positive probability.
Percolation on branching processes is a useful technique for analysing the emergence of large connected components in random structures. It is important to note that, if the root is retained in site percolation, performing bond and site percolation on branching processes are equivalent operations.
While percolating a branching process, we retain or delete individuals (or equivalently, edges in the associated tree) independently, with a given probability \( p \). 

The critical probability \( p_c \) in percolation on branching processes is the threshold above which an infinite cluster (corresponding to an infinitely large connected component) emerges. Below this threshold, all components remain finite. The analysis of percolation on branching processes is particularly elegant because the branching structure allows for precise calculations of critical probabilities and cluster sizes, and due to the fact that a percolated branching process remains a branching process.

For a simple Galton-Watson branching process with a mean offspring number \( \mu \), the critical percolation threshold \( p_c \) is determined by the condition that the expected number of surviving offspring per individual equals \(1\), i.e., \( p_c \mu = 1 \). Thus,

\[
p_c = \frac{1}{\mu}.
\]

Heuristically, the inverse of the growth rate is the critical percolation threshold, for an exponentially growing branching process.

\paragraph{Percolation on finite random graphs}

In contrast to infinite trees, finite random graphs introduce additional complexity due to their finite size and the presence of loops. Percolation on finite random graphs involves studying the formation of large connected components as edges are randomly included with probability \( p \).

In this setting, the concept of a phase transition is still relevant but manifests differently due to the finite size of the graph. As \( p \) increases, a critical threshold \( p_c \) is reached, beyond which a \emph{giant} connected component emerges, containing a substantial fraction of the total number of vertices. Below \( p_c \), most components remain small and fragmented. 

Now, we define the critical percolation threshold for a sequence of graph \(\{G_n\}_{n\geq 1}\). For any \(p\in[0,1]\), let \(\Ccal_1^{(n)}(p)\) denote the largest connected component of the \(p\)-percolated graph \(G_n\). Then the critical percolation threshold \(p_c\) for the graph sequence \(\{G_n\}_{n\geq 1}\) is defined as
\[
	p_c=\inf\Big\{ p\in[0,1]:~\chRounak{\liminf\limits_{n\to\infty}\prob\Big(\frac{|\Ccal_1^{(n)}(p)|}{n}>\vep\Big)=1}~\mbox{for some }\vep>0 \Big\}~.
\]

The behavior of percolation on finite random graphs is often studied in the context of specific random graph models, such as the Erd\H{o}s-R\'{e}nyi models, or configuration models. It's noteworthy that Erd\H{o}s-R\'{e}nyi random graphs are essentially percolated complete graphs, and hence a percolated Erd\H{o}s-R\'{e}nyi graph is again an Erd\H{o}s-R\'{e}nyi graph. In the Erd\H{o}s-R\'{e}nyi graph \( G(n, p) \), where \( n \) is the number of vertices and each edge is included with probability \( p \), \cite{ER59,G59} proved that the critical probability \( p_c \) is given by

\[
p_c = \frac{1}{n-1}.
\]

For \( p > p_c(1+\vep) \) and any $\vep>0$, a giant component emerges, typically containing \( O(n) \) vertices. For \( p < p_c \), the graph consists mostly of small components. 

For random $d$-regular graphs \cite{ABS04}, $p_c = 1/(d-1)$.
The percolation phase transition on \emph{random regular graphs} and configuration models were studied in \cite{F07,J09} when the empirical degree distribution has a finite second moment in the large network limit, as $n\to\infty$.
See also \cite{FJP22,JP18} for some recent results on more general degree sequences.
In this thesis, we study our recent work on computing the critical percolation threshold for preferential attachment models explicitly.


\subsection{Ising model}\label{subsec:intro:def-ising}
The Ising model originated in the early \(20\)-th century as a simple yet powerful framework to study phase transitions and collective behaviour in magnetic systems. It was first introduced by Wilhelm Lenz in \cite{L1920} and further developed by his student, Ernst Ising, in \cite{I1925}. The model was initially conceived to explain ferromagnetism, a phenomenon where atomic spins align to create a permanent magnet.


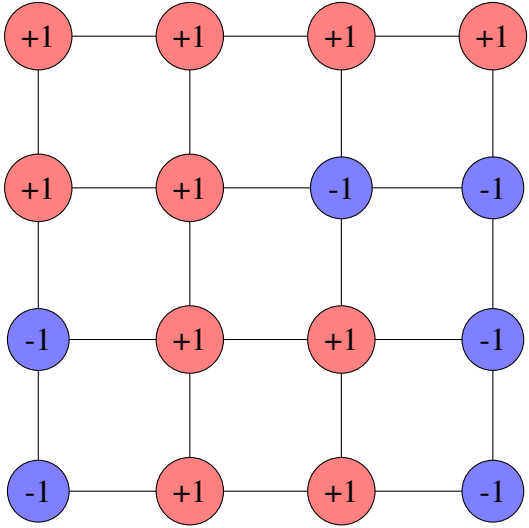
\begin{figure}[H]
\centering
\begin{tikzpicture}
    \foreach \x in {0, 2, 4} {
        \foreach \y in {0, 2, 4, 6} {
            \draw (\x, -\y) -- (\x+2, -\y);
        }
    }
    \foreach \x in {0, 2, 4, 6} {
        \foreach \y in {0, 2, 4} {
            \draw (\x, -\y) -- (\x, -\y-2);
        }
    }

    \node[draw, circle, minimum size=0.5cm, fill=red!50, font=\large] at (0, 0) {+1};
    \node[draw, circle, minimum size=0.5cm, fill=red!50, font=\large] at (2, 0) {+1};
    \node[draw, circle, minimum size=0.5cm, fill=red!50, font=\large] at (4, 0) {+1};
    \node[draw, circle, minimum size=0.5cm, fill=red!50, font=\large] at (6, 0) {+1};
    
    \node[draw, circle, minimum size=0.5cm, fill=red!50, font=\large] at (0, -2) {+1};
    \node[draw, circle, minimum size=0.5cm, fill=red!50, font=\large] at (2, -2) {+1};
    \node[draw, circle, minimum size=0.5cm, fill=blue!50, font=\large] at (4, -2) {-1};
    \node[draw, circle, minimum size=0.5cm, fill=blue!50, font=\large] at (6, -2) {-1};
    
    \node[draw, circle, minimum size=0.5cm, fill=blue!50, font=\large] at (0, -4) {-1};
    \node[draw, circle, minimum size=0.5cm, fill=red!50, font=\large] at (2, -4) {+1};
    \node[draw, circle, minimum size=0.5cm, fill=red!50, font=\large] at (4, -4) {+1};
    \node[draw, circle, minimum size=0.5cm, fill=blue!50, font=\large] at (6, -4) {-1};
    
    \node[draw, circle, minimum size=0.5cm, fill=blue!50, font=\large] at (0, -6) {-1};
    \node[draw, circle, minimum size=0.5cm, fill=red!50, font=\large] at (2, -6) {+1};
    \node[draw, circle, minimum size=0.5cm, fill=red!50, font=\large] at (4, -6) {+1};
    \node[draw, circle, minimum size=0.5cm, fill=blue!50, font=\large] at (6, -6) {-1};

\end{tikzpicture}
\caption{Ising model realisation on a \(4\times4\) grid.}
\end{figure}

In its most basic form, the Ising model considers a graph structure where each site (or node) hosts a spin that can take one of two values, typically represented as \(+1\) or \(-1\). These spins interact with their nearest neighbours, favouring alignment to minimise the system's energy. The interplay between thermal fluctuations, that tend to randomise spin orientation, and the interactions, which seek to align spins, gives rise to interesting collective phenomena.

In the Ising model, we can observe a phase transition in terms of the abrupt change in the system's macroscopic properties as the temperature crosses a critical threshold, often known as the \emph{Curie temperature} in physics literature. At high temperatures, thermal fluctuations dominate, causing the spins to orient randomly, resulting in a disordered phase with no net magnetisation. However, as the temperature decreases and falls below a critical value (known as the critical temperature), the system undergoes a phase transition to an ordered phase. 
In the low-temperature phase, the spins tend to align, leading to a spontaneous magnetisation where the system exhibits a net magnetic moment. This phase transition is a hallmark of the Ising model, illustrating how microscopic interactions can lead to emergent macroscopic behaviour.

\paragraph{Ising model on finite graphs}
Let $(G_n)_{n\geq 1}$ be a sequence of (possibly random) graphs, where $G_n=(V_n,E_n)$ has vertex set $V_n=\{1,2,\ldots,n\}=[n]$, and some (possibly random) set of edges $E_n$. Each vertex in $G_n$ is assigned an Ising spin value in $\{-1,+1\}$. A spin configuration of $G_n$ is denoted by $\sigma=(\sigma_1,\ldots,\sigma_n)\in\{-1,+1\}^n$. For any $\beta\geq 0$ and $B_v\in\R$ for all $v\in V_n$, the Ising model on $G_n$ is given by the Boltzmann distribution
\begin{equation}\label{eq:def:boltzmann:seq}
	\mu_n(\sigma)=\frac{1}{Z_n(\beta,\underline{B})}\exp\left\{ \beta\sum\limits_{\{u,v\}\in E_n}\sigma_u\sigma_v +\sum\limits_{v\in V_n}B_v\sigma_v \right\},
\end{equation}
where $Z_n({\beta,\underline{B}})$ is the normalising constant, also known as the partition function, given by
\begin{equation}\label{eq:def:partition-function}
	Z_n({\beta,\underline{B}})=\sum\limits_{\sigma\in\{-1,+1\}^n}\exp\left\{ \beta\sum\limits_{\{u,v\}\in E_n}\sigma_u\sigma_v +\sum\limits_{v\in V_n}B_v\sigma_v \right\}.
\end{equation}
For any function $f:\{-1,+1\}^n\to \R,$ define $\langle f(\sigma) \rangle_{\mu_n}$ as
\begin{equation}\label{eq:def:quenched-expectation}
	\langle f(\sigma) \rangle_{\mu_n} =\sum\limits_{\sigma\in\{-1,+1\}^n} f(\sigma)\mu_n(\sigma).
\end{equation}
Lastly, we define the thermodynamic quantity of main interest, \textit{pressure per particle}, as
\begin{equation}\label{eq:def:pressure-per-particle:initial}
	\psi_n(\beta,\underline{B})=\frac{1}{n}\log Z_n(\beta,\underline{B}).
\end{equation}

\paragraph{Ising model on the lattice}

Ernst Ising proved in \cite{I1925} that the Ising model on \(\mathbb{Z}\) does not exhibit a phase transition, meaning the critical temperature in this case is \(\infty\). For the Ising model on \(\mathbb{Z}^2\), Onsager obtained the explicit thermodynamic limits of several quantities in \cite{O44}, which were later extensively generalised by Montroll, Potts, and Ward in \cite{MPW63}. From two dimensions onwards, the Ising model is known to exhibit a phase transition. The behaviour of the Ising model in higher dimensions is explored in detail in \cite{BKLS89}.

\paragraph{Ising model on the random graphs}
Alongside the Ising model on lattice structures, it has also been studied on complete graphs and other types of graphs, as discussed in \cite{KPW13, EE88, CG16}. The Ising model on random graphs introduces a second level of randomness due to the inherent variability of the graph itself. This leads to two distinct settings: \emph{quenched} and \emph{annealed}. In the \emph{annealed} setting, the Ising model is examined after averaging out the randomness of the graph. In contrast, the \emph{quenched} setting involves studying the Ising model conditioned on a specific realisation of the random graph. For a detailed analysis of Ising models on random graphs, see \cite{B06, CG13, E06}.

Beyond its original application to ferromagnetism, the Ising model has become relevant in a wide range of disciplines, including biology, sociology, and computer science. It is employed to model binary systems with local interactions, such as gene regulation networks, social dynamics, and optimisation problems. For applications in biological sciences, see \cite{KTS20,WB16}; for applications in social sciences, refer to \cite{FS19}; and for applications in computer science and optimisation, see \cite{CZSHT18,BKNKOS22}. The Ising model's mathematical elegance and capacity to model critical behaviour ensure its status as a cornerstone of theoretical physics and applied mathematics.

\section{Tools and techniques}\label{chap:into:sec:tools-techniques}

In the study of sparse random graphs, various tools and techniques have been developed to analyse their complex structures and dynamics. Due to the sparse edge density characteristic of these graphs, the neighbourhood of a uniformly chosen vertex often resembles a branching process, particularly when considering small, fixed neighbourhoods. This resemblance was rigorously formalised through the concept of local convergence, introduced by Benjamini and Schramm in \cite{Benjamini_LL}. Local convergence has since become a fundamental method for studying the limiting behaviour of local structures in random graphs, such as degree distribution and clustering.

Beyond local properties, some global characteristics such as the critical percolation threshold or the behaviour of the Ising model can also be derived using local limits, albeit under specific model constraints. These global properties provide crucial insights into the overall behaviour of the graph, bridging the gap between local structure and global dynamics.

The preferential attachment model presents additional challenges due to its dynamic nature, where edges are added sequentially and depend on the existing network structure. This sequential dependence introduces complexities that require specialized techniques to handle. One common approach is the collapsing method, which transforms preferential attachment trees into graphs, simplifying the analysis. Another useful technique is the P\'{o}lya urn representation from \cite{BBCS05}, which is employed to achieve conditional independence between edge-connection events, facilitating the study of these models. Additionally, the continuous-time branching process representation is often utilised to analyse preferential attachment models, as the literature on branching processes is more mature and offers a wealth of applicable results.

In this section, we will explore these tools and techniques in detail, discussing how they are applied to the study of sparse random graphs and, more specifically, preferential attachment models. These methods are essential for gaining a deeper understanding of the structure and behaviour of these complex networks.

\subsection{Branching processes}

Branching processes are a class of random models that describe the evolution of populations where individuals reproduce independently according to some probability distribution. Branching processes are classified into two major settings: the discrete-time branching process and the continuous-time branching process.

\paragraph{Discrete-time branching process}
The simplest discrete-time branching process is the Galton-Watson process, which models a population where each individual in one generation produces a random number of offspring in the next generation, independently of other individuals. The process starts with a single ancestor, and the population evolves through generations.

In general, when a branching process is infinite, it tends to grow exponentially (and sometimes even doubly exponentially). For branching processes with exponential growth, the exponential growth parameter is denoted by \( \lambda \). In cases where the mean number of offspring \( \mu > 1 \), the population typically grows exponentially, and \( \lambda \) represents the rate of this growth. Heuristically, if \( Z_n \) denotes the size of the \( n \)-th generation, the exponential growth parameter satisfies

\[
\lim_{n \to \infty} \E[Z_n]^{1/n} = \lambda~,
\]
\noindent
indicating that \( \E[Z_n] \) grows approximately as \( \lambda^{ n} \) for large \( n \). The exponential growth parameter plays a crucial role in determining the long-term behaviour of the process, including whether the population will grow indefinitely or face extinction.

While the basic Galton-Watson process assumes a single type of individual, many real-world populations are more complex, involving individuals of different types, each with its own reproduction behaviour. This leads to multitype branching processes, where the population consists of individuals of several types, and the offspring distribution depends on the type of the parent.

In a multitype branching process with countably many types, the evolution of the population can be described by a vector \( \mathbf{Z}_n \) representing the number of individuals of each type in the \( n \)-th generation. The process is governed by a matrix of reproduction probabilities, where each entry specifies the expected number of offspring of a particular type produced by an individual of another type.

The exponential growth parameter generalises to multitype branching processes with countably many types, where it is defined as the largest eigenvalue of the mean offspring matrix. As discussed in \cite{AN72,H63}, this parameter again determines the asymptotic growth rate of the population, providing insights into the long-term dynamics of multitype populations.

A further extension of the multitype branching process with countably many types is the multitype branching process with a continuous type space. The mean offspring matrix ensemble for this extension is known as the mean offspring operator. Under certain regularity conditions, \cite{AN72,H63,A2000,LPP95} proved that the spectral radius of the mean offspring operator determines the exponential growth of this branching process.

\paragraph{Continuous-time branching process}

A continuous-time branching process (CTBP) is a stochastic model used to describe the evolution of populations where individuals reproduce and die according to random processes that occur continuously over time. Unlike discrete-time branching processes, where events happen at fixed intervals, in CTBPs, the time between events (such as births and deaths) is typically modelled by exponential distributions, allowing for greater flexibility in capturing real-world dynamics.

In a CTBP, each individual lives for a random amount of time before either reproducing or dying, and the offspring produced by an individual can also reproduce or die, creating a branching structure over time. The process starts with an initial population, and the size and composition of the population evolve as individuals are added or removed. The key parameters of a CTBP include the birth rate, death rate, and the distribution of the number of offspring produced.

One of the fundamental properties of CTBPs is the \emph{Malthusian parameter}, which determines the long-term growth rate of the population. This parameter is analogous to the exponential growth parameter in discrete-time branching processes. If this parameter is positive, the population tends to grow exponentially over time; if it is negative, the population is likely to die out. Significant contributions in this area have been made by Jagers and Nerman in \cite{JN84} and Nerman in \cite{N81}.

CTBPs are widely used in various fields, including biology, epidemiology, and network theory, to model phenomena where the timing of events is crucial. Since preferential attachment trees grow over time, \cite{AGS08} provided a continuous-time branching process ensemble for preferential attachment trees. This CTBP representation of preferential attachment trees has been utilised in analysing degree distributions and PageRank asymptotics in preferential attachment trees in \cite{BO22, BDO23, GHL20, RTV07}.

Branching processes and their multitype extensions are useful tools in various fields. In biology, they model the spread of genes or species; in epidemiology, they describe the transmission of infectious diseases; and in network theory, they help to understand the growth of complex networks. For applications of branching processes in biological sciences, see for example \cite{KA02,CMMP06}. The ability to capture both the probabilistic nature and the heterogeneity of populations makes branching processes an essential component of modern applied mathematics.


\subsection{Local limit}
Motivated by the growing importance of dynamic networks in real-world applications, the mathematics community has devoted significant attention to the study of graph limits over the past five years. Much of this work has focused on dense graphs, particularly through the lens of graph homomorphisms. A notable series of papers \cite{BCLSV06, BCLSV08, BCLSV12, LS06} has developed a comprehensive framework for graph limits defined via graph homomorphisms, demonstrating their equivalence to limits defined through other approaches \cite{BCLSV08, BCLSV12}. While the majority of these results pertain to dense graphs, the seminal paper \cite{BCLSV06} also introduces a notion of graph limits for sparse graphs with bounded degrees, again using graph homomorphisms. Furthering this line of inquiry, Borgs et al. \cite{BCKL13}, employing expansion methods from mathematical physics, established general results on graph limits for sparse graphs. Another significant contribution is found in \cite{BR11}, which addresses the limits of graphs that are neither dense nor sparse in the conventional sense, focusing on graphs whose average degrees tend to infinity.

Local convergence techniques have become a key methodology to study sparse
random graphs. These important properties include the thermodynamic quantities like pressure per particle, magnetization, internal energy etc.
However, convergence of many random graph properties does not directly
follow from local convergence. A notable, and important, such random graph property is the
size and uniqueness of the giant component.

\paragraph{Giant is almost local}
In \cite[Conjecture~1.2]{BNP11}, Schramm conjectured that the critical percolation threshold exhibits locality for a sequence of vertex-transitive infinite graphs converging locally to a limiting graph. Various attempts have been made to prove this conjecture for different subclasses of infinite graphs \cite{BNP11,S21,ABS22,KLS20,vdH23}. In particular, van der Hofstad in \cite{vdH23} provided a simple necessary and sufficient condition for the conjecture to hold, while \cite{BNP11,S21,KLS20} proved it for bounded-degree expanders. Recently, Easo and Hutchcroft proved Schramm's conjecture in \cite{EH23}. Furthermore, \cite{ABS22} established the conjecture for large-set expander graphs with bounded average degree and extended the result to include directed percolation as well. This result extends Schramm's conjecture beyond the scope of vertex-transitive graphs. All these results provide an estimate of the asymptotic size of the largest and second largest connected components in the percolated graph sequence. Since this asymptotic result is not a direct consequence of local convergence, the giant is said to be \emph{almost local}. van der Hofstad and Pandey have shown in \cite{HP24} that under certain connectivity condition, giants in random directed graphs are also \emph{almost local}.

\paragraph{Locality of Ising pressure per particle}

Unlike the critical percolation threshold or the largest connected component, Ising thermodynamic quantities are local properties of random graphs in quenched settings. Dembo and Montanari proved in \cite{DM10} that these properties are local for random graphs with finite variance degree sequences in quenched settings. Later, Dommers et al. extended these results in \cite{DGvdH10} to cases with infinite variance degree sequences. Finally, Dembo, Montanari, and Sun generalised this result to all random graphs under a uniform integrability condition on the degree sequence in \cite{DMS13}. They considered factor models, including Ising and Potts models as special cases, where this locality result holds true.

The annealed setting is often easier to handle and is therefore used as an approximation to the quenched setting. Examples include spin glasses \cite{CG13}, such as the Sherrington-Kirkpatrick model, where the annealed partition function becomes trivial due to the independence of the couplings, and random polymers, where the comparison between quenched and annealed settings is used to distinguish between relevant and irrelevant disorder \cite{CdHP12}. In the physics literature, it is generally believed that the annealed Ising model on a random graph is equivalent to an inhomogeneous mean-field model where the coupling between spins is proportional to their degrees; see Bianconi \cite{B02} and Dorogovtsev et al. \cite{dorogovtsev2008}. The annealed setting of the Ising model was first studied on random graphs in \cite{DGGvdHP16}. Leveraging the locally tree-like property, Can examined the annealed Ising model on random regular graphs in \cite{C17}, and later Can et al. \cite{CGGvdH22} studied the model on configuration models in both random and fixed degree settings. For fixed degree settings, they found the physics folklore to hold true, whereas for random degree settings of the configuration model, the conjecture did not hold.

\subsection{Tree-like structure of preferential attachment models}
Sparse Erd\H{o}s–R\'{e}nyi graphs, where the average degree remains constant as the number of vertices grows, the local limit is a Poisson branching process. This means that, locally, the graph around any randomly chosen vertex resembles a tree where the number of offspring (neighbours) of each node follows a Poisson distribution. Similarly, other classes of sparse random graphs, such as configuration models and generalised random graphs, also admit local limits that are branching processes. In these cases, the degree distribution of the vertices influences the branching process, leading to different types of local tree-like structures that capture the graph's local behaviour as the graph size tends to infinity.

\cite{BergerBorgs} identified the P\'{o}lya point tree as the local limit of preferential attachment models for fixed \( m \) and positive \( \delta \). \cite{tiffany2021} proved local convergence for preferential attachment trees with i.i.d.\ fitness parameters and also determined the rate of convergence. These results consistently identify the local limit as a multi-type branching process with a continuous type space, a topic that we explore in detail in Chapter~\ref{chap:LWC}. Additionally, \cite{DM13} identified a killed branching random walk as the local limit of the sub-linear Bernoulli preferential attachment model, as defined in \cite{vdH2}. Collectively, these findings suggest that, locally, almost all preferential attachment models have tree-like structures.

Continuous time branching processes (CTBPs) are known to embed preferential attachment trees, which are typically defined as a discrete sequence of random graphs, into continuous time \cite{A07, AGS08, B07, RTV07}. The heuristic idea is to construct a CTBP whose tree in continuous time coincides with a preferential attachment tree along the sequence of birth times. This approach offers the advantage of leveraging the extensive theory of CTBPs to investigate the properties of preferential attachment trees.

\subsection{Tools for preferential attachment models}

Contrary to other random graph models, preferential attachment models are dynamic graphs with a power-law degree distribution. Furthermore, the edge-connection events in preferential attachment models are highly dependent. While these properties make the model more representative of real-life networks, they also complicate the analysis of its local and global properties. To tackle problems related to preferential attachment models, two key tools are essential: the P\'{o}lya urn representation and the collapsing operator.

\paragraph{P\'{o}lya urn scheme}

In early twentieth century, P\'{o}lya proposed and analyzed the following model known as the P\'{o}lya urn model in \cite{EP1923}. This is a classic stochastic process that models the evolution of a system where the likelihood of certain outcomes increases as those outcomes occur more frequently. The process begins with an urn containing balls of different colours. At each step, a ball is drawn at random, its colour is noted, and then it is returned to the urn along with additional balls of the same colour. This reinforcement mechanism makes it more likely to draw balls of the same colour in future steps, leading to a form of positive feedback. Observe that, despite being correlated, the ball drawing events exchangeable. We can then use de Finnetti's lemma to show that this system is equivalent to the following simpler to analyse stochastic process:
\begin{itemize}
	\item[]For every colour \(i\), choose a parameter (which we call \emph{strength} or \emph{attractiveness}) \( p_i \), and at each step, independently of our decision in previous steps, put the new ball in urn \(i\) with probability \(p_i\). With suitably chosen prior distribution for \(p_i\), this procedure is equivalent to the P\'{o}lya urn process.
\end{itemize}

The P\'{o}lya urn process is a fundamental tool in probability theory, with applications ranging from the study of network growth models to understanding preferential attachment mechanisms. It captures the essence of "rich-get-richer" dynamics, where initial advantages can amplify over time, making it particularly useful in modelling systems with self-reinforcing behaviours.

The edge-connection process in preferential attachment models is closely related to the P\'{o}lya urn scheme in the following sense: every new connection that a vertex gains can be represented by a new ball added in the urn corresponding to that vertex, and every new vertex creates a new colour in the urn. Berger et al. used this idea to obtain the P\'{o}lya urn representation of the sequential models of the preferential attachment models in \cite{BBCS05}, where conditionally on a sequence of $\Beta$ random variables, the edge-connection events are independent.

\paragraph{Collapsing operator}\label{chap:intro:sec:collapsing-operator}
Collapsing is a valuable technique for studying affine preferential attachment models. This facilitates the transition from tree-based versions of these models to their non-tree counterparts.
Here we {generalize} the collapsing {procedure} discussed in \cite{vdH2}. Let us choose $\boldsymbol{r} = (r_1,r_2,\ldots)\in\N^\N$ and {let} $\mathcal{H}$ be the set of all finite vertex-labelled {graphs}. Then the collapsing operator $\mathcal{C}_{\boldsymbol{r}}$, {acting} on $\mathcal{H}$ {and collapsing groups of vertices of size $r_i$ into vertex $v_i$,} is defined as follows:
\begin{itemize}
	\item[{\btr}] \FC{l}et $G\in\mathcal{H}$ be a vertex-labelled graph of size $n$ and $n\in \left(r_{[k-1]}, r_{[k]}\right]$ for some $k\in\N$, where 
	\[
	r_{[l]}=\sum\limits_{i\leq l} r_i~;
	\]
	\item[{\btr}] \FC{g}roup the vertices $\left\{ r_{[i-1]}+1,\ldots, r_{[i]} \right\}$ and name the groups $v_i$ for all $i< k$, while the group $v_k$ contains the vertices $\left\{r_{[k-1]}+1, \ldots , n\right\}$;
	\item[{\btr}] \FC{c}ollapse the vertices of each of these $v_k$ groups {into one vertex,} and name the new vertex $k$;
	\item[{\btr}] \FC{e}dges originating and ending in same group form self-loops in the new graph;
	\item[{\btr}] \FC{e}dges between two different groups form edges between the respective vertices in the new graph.
\end{itemize}
Note that if we fix $r_1=r_2=1$ and $r_i=m$ for all $i\geq 3$ and suitable $G_0$, then we get back the collapsing procedure used first in \cite{BolRio04b}, and further discussed in \cite{vdH2}.
We now discuss the construction of models (A) and (B) through this collapsing operator, {conditionally on the i.i.d.\ out-degrees described by $\boldsymbol{m}$.}

\begin{figure}[h!]
	\centering	
	\begin{tikzpicture}
		\node[circle, draw, fill=blue!50] (1) at (0,0) {1};
		\node[circle, draw, fill=blue!50] (2) at (-2,-1.5) {2};
		\node[circle, draw, fill=blue!50] (3) at (0,-1.5) {3};
		\node[circle, draw, fill=purple!50] (4) at (0,-3) {4};
		\node[circle, draw, fill=purple!50] (5) at (2,-1.5) {5};
		\node[circle, draw, fill=purple!50] (6) at (-3.5,-3) {6};
		\node[circle, draw, fill=yellow!50] (7) at (-1,-4.5) {7};
		\node[circle, draw, fill=yellow!50] (8) at (1.5,-3.5) {8};
		\node[circle, draw, fill=yellow!50] (9) at (-2.5,-4.5) {9};
		\node[circle, draw, fill=green!50] (10) at (2.5,-4) {10};
		\node[circle, draw, fill=green!50] (11) at (3.5,-4.5) {11};
		\node[circle, draw, fill=green!50] (12) at (0,-5) {12};
		
		\draw[->] (2) -- (1);
		\draw[->] (3) -- (1);
		\draw[->] (4) -- (3);
		\draw[->] (5) -- (1);
		\draw[->] (6) -- (2);
		\draw[->] (7) -- (4);
		\draw[->] (8) -- (5);
		\draw[->] (9) -- (2);
		\draw[->] (10) -- (5);
		\draw[->] (11) -- (5);
		\draw[->] (12) -- (4);
		
		\node[below=of 7.south, align=center] { (a) Pre-collapsed tree};
		
		\node[circle, draw, fill=blue!50] (C1) at (6.25,-0.5) {1};
		\node[circle, draw, fill=purple!50] (C2) at (8.25,-2.5) {2};
		\node[circle, draw, fill=yellow!50] (C3) at (6.25,-4.5) {3};
		\node[circle, draw, fill=green!50] (C4) at (4.25,-2.5) {4};
		
		\draw[->, loop above, looseness=10] (C1) to (C1); 
		\draw[->, loop left, looseness=14] (C1) to (C1); 

		\draw[->, bend left=20] (C2) to (C1); 
		\draw[->, bend right=20] (C2) to (C1); 
		\draw[->] (C2) to (C1); 
		
		\draw[->] (C3) to (C2); 
		\draw[->, bend right=20] (C3) to (C2); 
		
		\draw[->] (C3) to (C1); 
		
		\draw[->, bend left=20] (C4) to (C2); 
		\draw[->] (C4) to (C2); 
		\draw[->, bend right=20] (C4) to (C2); 
		
		\node[below=of C3.south, align=center] { (b) Collapsed branching process};
		
		\end{tikzpicture}
\label{fig:collapsing}
\caption{Collapsing process with \( \bfr=(3,3,3,3,\ldots)\)}
\end{figure}
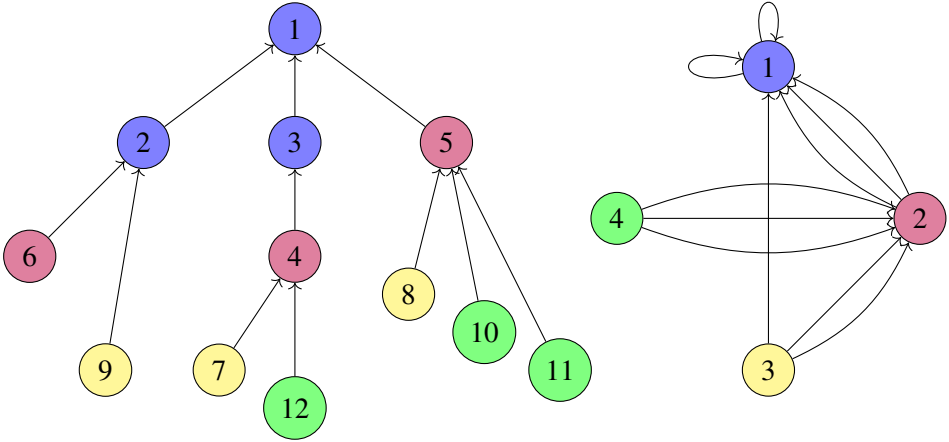

\paragraph{\textbf{PAM construction by collapsing}}

{We start with a vertex-labelled graph $G_0$ of size $2$ and degrees $a_1$ and $a_2$, respectively. First, we explain the construction of model (A) using collapsing.
	
	Every $v\geq 3$ comes with exactly one edge. Given $\boldsymbol{m}=(1,1,m_3,\ldots)$, the incoming vertex $v=m_{[i-1]}+j$ for some $i\in[3,n)$ and $j\leq m_i$, connects to $u\in [v]$ with probability
	\eqn{\label{eq:edge_connecting:model(rs):1}
		\prob_m\left( v\rightsquigarrow u\mid \PArs_{v-1}(\boldsymbol{m},1,\delta) \right) = \begin{cases}
			\frac{d_u(v-1)+\delta(u)}{c_{v,j}}&\mbox{ when $v>u$,}\\
			\frac{1+\delta(u)}{c_{v,j}}&\mbox{ when $v=u$}.
	\end{cases}}
	Here $\delta(u) = {\delta/m_k}$ when $u\in\left( m_{[k-1]},m_{[k]} \right],$
	\[
	c_{i,j} = a_{[2]}+2\left( m_{[i-1]}+j-2 \right) - 1+ (i-1)\delta + \frac{j}{m_i}\delta~\FC{,}
	\]
	\FC{and }$\PArs_{v-1}(\boldsymbol{m},1,\delta)$ denotes the graph formed after the $(v-1)$-st vertex is added, with $d_u(v-1)$ denoting the degree of the vertex $u$ in $\PArs_{v-1}(\boldsymbol{m},1,\delta)$. Continue the process until the $m_{[n]}$-th vertex is added. 
	We obtain $\PArs_n(\boldsymbol{m},\delta)$ by {applying} $\mathcal{C}_{\boldsymbol{m}}$ on $\PArs_{m_{[n]}}(\boldsymbol{m},1,\delta)$.
	Therefore, for model (A), the conditional edge-connection probabilities are given by \eqref{eq:edge_connecting:model(rs):2}, {as required.}
	
	{To construct $\PArt_n(\boldsymbol{m},\delta)$ by a collapsing procedure}, we do not allow {for} self-loops for $m=1$ as we did in \eqref{eq:edge_connecting:model(rs):1}.
	The rest of the process remains the same. Starting from the same initial graph $G_0$, conditionally on $\boldsymbol{m}$, every $v\geq 3$ comes with exactly one edge. The incoming vertex $v=m_{[i-1]}+j$ for some $i\in[3,n]$ and $j\leq m_i$, connects to $u\in [v-1]$ {with the same probability as in \eqref{eq:edge_connecting:model(rs):1}. Since here we do not allow for any self-loop, the normalising constant in the denominator now is} 
	\[
	c_{i,j} = a_{[2]}+2\left( m_{[i-1]}+j-3 \right) +(i-1)\delta + \frac{(j-1)}{m_i}\delta~\FC{,}
	\]
	\FC{and} $\PArt_{v}(\boldsymbol{m},1,\delta)$ denotes the graph formed after the $v$-th vertex is added, with $d_u(v)$ denoting the degree of the vertex $u$ in $\PArt_{v}(\boldsymbol{m},1,\delta)$. Continue the process until the $m_{[n]}$-th vertex is added. We obtain $\PArt_n(\boldsymbol{m},\delta)$ by applying $\mathcal{C}_{\boldsymbol{m}}$ on $\PArt_{m_{[n]}}(\boldsymbol{m},1,\delta)$.
	Therefore, for model (B), the edge-connection probabilities are given by \eqref{eq:edge_connecting:model(rt):2}. 
	\begin{remark}[Initial graph is preserved in collapsing]
		{\rm Observe that we always choose $r_1=r_2=1$ while performing the collapsing operator on the pre-collapsed preferential attachment graphs. This is done intentionally to preserve the structure of the initial graph $G_0$. If we start with an initial graph of size $\ell\geq 1$ then we can choose $r_1=r_2=\ldots=r_\ell=1$ to preserve the initial graph structure in both collapsed and pre-collapsed preferential attachment graphs.}\hfill$\blacksquare$
	\end{remark}
}
\medskip
\begin{center}
	{\textbf{Structure of the thesis}}
\end{center}

The structure of this thesis is organised into two main parts, each addressing a distinct aspect of preferential attachment models. 

\textbf{Part I} focuses on the local limit of generalised preferential attachment models and consists of three chapters. In Chapter~\ref{chap:LWC}, we introduce the concept of local limit in detail and describe the random P\'{o}lya point tree. We also demonstrate that the P\'{o}lya point tree described in \cite{BergerBorgs} is a special case of our more general random P\'{o}lya point tree. Chapter~\ref{chap:PU_representation} presents the P\'{o}lya urn representation of sequential preferential attachment models and proves that this representation is distributionally equivalent to the model. Chapter~\ref{chap:convergence} concludes Part I by proving the local convergence of generalised preferential attachment models to the random P\'{o}lya point tree. As a consequence of this local limit, we compute the degree distribution of a uniformly chosen vertex and its neighbours.

\textbf{Part II} investigates stochastic processes on preferential attachment models, with a particular emphasis on percolation and the Ising model. Chapter~\ref{chap:percolation_threshold_PPT} begins by computing the explicit critical percolation threshold for P\'{o}lya point trees and proves that the inverse of the spectral radius of the mean offspring operator of P\'{o}lya point tree is the critical percolation threshold in the positive $\delta$ regime. During this proof, we establish the continuity of the survival probability of the P\'{o}lya point tree with respect to the percolation probability. In Chapter~\ref{chap:local-giant}, we extend this result to preferential attachment models, showing that they share the same critical percolation threshold as the P\'{o}lya point tree, leveraging their properties as large-set expanders with bounded average degree and the continuity of the survival probability of their local limit. Chapter~\ref{chap:subcritical} studies the size of the largest connected component in sub-critical percolated preferential attachment models, demonstrating that it is significantly larger than the maximum degree in the graph. This chapter also includes simulation results to support our lower bound argument. Finally, Chapter~\ref{chap:ising} examines the quenched Ising model on preferential attachment models, where we derive explicit thermodynamic limits for pressure per particle, magnetisation per vertex, and internal energy. We also identify the explicit inverse critical temperature for the quenched Ising model on these graphs.


\EndThumbs
\cleardoublepage
\part{The Local limit}
\StartThumbs

\chapter{Local Convergence}
\label{chap:LWC}
\begin{flushright}
	\footnotesize{}Based on:\\
	\textbf{Universality of the local limit of\\
		preferential attachment models}\cite{RRR22}\\
\end{flushright}
\vspace{0.1cm}
\begin{center}
	\begin{minipage}{0.7 \textwidth}
		\footnotesize{\textbf{Abstract.}
		This chapter describes the local limit definitions for rooted graphs. Here we describe both the marked and un-marked local convergence. Later, we define the \emph{random P\'{o}lya point tree}, which we prove to be the local limit of preferential attachment models in general. We also describe the relation between local limit described in \cite{BergerBorgs} and random P\'{o}lya point tree.}
	\end{minipage}
\end{center}
\vspace{0.1cm}

\section{Introduction}

Local convergence of rooted graphs was introduced by Benjamini and Schramm in \cite{Benjamini_LL} and by Aldous and Steele in \cite{Aldous2004}. A large class of random graphs are referred to as \emph{locally tree-like graphs}, meaning that the neighbourhood of the majority of vertices is structured like a tree (up to a certain distance). This concept can be formalised using the notion of local convergence (i.e., the Benjamini-Schramm limit), introduced in \cite{Aldous2004,Benjamini_LL}. Local convergence has proven to be an extremely versatile tool for understanding the geometry of graphs. For example, the asymptotic number of spanning trees, the partition function of the Ising model, and the spectral distribution of the adjacency matrix of the graph can all be computed in terms of the local limit. The basic idea is to explore the neighbourhood of a uniformly chosen vertex of the graph up to a finite distance to understand its distributional and geometric properties.

Marked local convergence of random graphs is relatively new to the literature. \cite{GHL20} introduced this notion to analyse the local convergence of PageRanks in a directed setting. For an overview of the theory and applications of this concept, we refer to, e.g., \cite[{Chapter 2}]{vdH2}.

\newpage
\begin{center}
	{\textbf{Organisation of the chapter}}
\end{center}
This chapter is organised as follows. In Section~\ref{chap:LWC:sec:local-convergence:def}, we define local convergence in detail. In Section~\ref{chap:LWC:sec:RPPT}, we provide a detailed description of the random P'{o}lya point tree. Lastly, in Section~\ref{chap:LWC:sec:relation-literature}, we establish the connection between the P'{o}lya point tree defined in Section~\ref{chap:LWC:sec:RPPT} and the one defined in \cite{BergerBorgs}.
\section{Local convergence}\label{chap:LWC:sec:local-convergence:def}

A graph $G=(V(G), E(G))$ ({possibly} infinite) is called \textit{locally finite} if every vertex $v\in V(G)$ has finite degree (not necessarily uniformly bounded). A pair $(G,o)$ is called a {{\em rooted graph}, where $G$ is} rooted at $o\in V(G)$. For any two vertices $u,v\in V(G),~d_G(u,v)$ is defined as the length of the shortest path from $u$ to $v$ in $G$, {i.e., the minimal number of edges needed to connect $u$ and $v$}. 

For a rooted graph $(G,o)$, the {\em $r$-neighbourhood} of the root $o$, {denoted by $B_r^{\sss(G)}(o)$, is defined as the graph rooted at $o$ with vertex and edge sets given by}
\eqn{\label{eq:def:nbhd:root}
	\begin{split}
		V\big( B_r^{\sss(G)}(o) \big) =& \{ v\in V(G)\colon d_G(o,v)\leq r \},\quad\text{and}\\
		E\big( B_r^{\sss(G)}(o) \big) =& \left\{ \{ u,v \}\in E(G)\colon u,v\in V\big( B_r^{\sss(G)}(o) \big) \right\}\,.
\end{split}}

Let $(G_1,o_1)$ and $(G_2,o_2)$ be two rooted graphs with $G_1=(V(G_1),E(G_1))$ and $G_2=(V(G_2),E(G_2))$, which are locally finite. Then we say that $(G_1,o_1)$ is {\em{rooted isomorphic}} to $(G_2,o_2)$, {which we denote as $(G_1,o_1)\simeq (G_2,o_2)$}, {when} there exists a graph isomorphism between $G_1$ and $G_2$ that maps $o_1$ to $o_2$, i.e., {when} there exists a bijection $\phi\colon V(G_1)\mapsto V(G_2)$ such that 
\eqn{\label{def:rooted:isomorphism}
	\begin{split}
		&\phi(o_1)=o_2,\quad\text{and} \quad\{u,v\}\in E(G_1) \iff \{\phi(u),\phi(v)\}\in E(G_2).
\end{split}} 

{\em Marks} are generally images of an injective function $\mathcal{M}$ acting on the vertices of a graph $G,$ as well as the edges. 
\sloppy{These} marks take values in a complete separable metric space $(\Xi,d_{\Xi})$. Rooted graphs having only vertex marks, are called \textit{vertex-marked} rooted graphs, {and are denoted} by $(G,o,\mathcal{M}(G))= (V(G),E(G),o,\mathcal{M}(V(G)))$. We say that two graphs $(G_1,o_1,\mathcal{M}_1(G_1))$ and $(G_2,o_2,\mathcal{M}_2(G_2))$ are vertex-marked-isomorphic, {which we denote as $(G_1,o_1,\mathcal{M}_1(G_1))\overset{\star}{\simeq}(G_2,o_2,\mathcal{M}_2(G_2))$,} {when} there exists a bijective function $\phi\colon V(G_1)\mapsto V(G_2)$ such that
\begin{itemize}
	\item[$\rhd$] $\phi(o_1)=o_2$;
	\item[$\rhd$] $(v_1,v_2)\in E(G_1)$ implies that $(\phi(v_1),\phi(v_2))\in E(G_2)$;
	\item[$\rhd$] $\mathcal{M}_1(v)=\mathcal{M}_2(\phi(v))$ for all $v\in V(G_1)$.
\end{itemize}

We {let} $\mathcal{G}_\star$ be the vertex-marked isomorphism invariant class of rooted graphs. Similarly as {\cite[Definition 2.11]{vdH2}} that one can define a metric $d_{\mathcal{G}_\star}$ as
\eqn{\label{for:def:distance:1}
	d_{\mathcal{G}_\star}\left( (G_1,o_1,\mathcal{M}_1(G_1)),(G_2,o_2,\mathcal{M}_2(G_2)) \right) = \frac{1}{1+R^\star}\, ,}
where
\eqan{\label{for:def:distance:2}
	R^\star = \sup\{ r\geq 0: B_r^{(G_1)}(o_1)\simeq B_r^{(G_2)}(o_2),\mbox{and there exists a bijective} &\nn\\
	\mbox{function $\phi$ from $V(G_1)$ to $V(G_2)$ such that }&\\
	d_{\Xi}(\mathcal{M}_1(u),\mathcal{M}_2(\phi(u)))\leq \frac{1}{r},
	\forall u\in V(B_r^{(G_1)}(o_1))&\},\nn}
which makes $\left(\mathcal{G}_\star,d_{\mathcal{G}_\star}\right)$ a Polish space.

We {next} define the notion of marked local convergence of vertex-marked random graphs on this space. {\cite[Theorem~2.14]{vdH2} describes various} notions of local convergence.
For the next two definitions of local convergence, we consider that $\{ G_n \}_{n\geq 1}$ is a sequence of (possibly random) vertex-marked graphs with $G_n=\left( V(G_n),E(G_n),\mathcal{M}_n(V(G_n)) \right)$ that are finite (almost surely). Conditionally on $G_n,$ let $o_{n}$ be chosen uniformly at random from $V(G_n)$. Note that $\{ (G_n,o_n) \}_{n\geq 1}$ is a sequence of random variables defined on $\mathcal{G}_\star$. Then {vertex-marked local convergence is defined as follows:}
\begin{Definition}[Vertex marked local  convergence]\label{def:vertex:marked:local:convergence}
	\smallskip\noindent
	{ \begin{itemize}
			\item[(a)] {\bf (Vertex-marked local weak convergence)} The sequence of (possibly random) graphs, $\{(G_n,o_n)\}_{n\geq 1}$ is said to converge {{{vertex-marked locally weakly}}} to a random element $(G,o,\mathcal{M}(V(G)))\in \mathcal{G}_\star$ having probability law $\mu_\star$, if, for every $r>0$, and every $(H_\star,\mathcal{M}_{H_\star}(H_\star))\in\mathcal{G}_\star$, as $n\to\infty$,
			\eqan{\label{eq:def:mark:local:weak:convergence}
				&\prob\left( d_{\mathcal{G}_\star}\big( (B_r^{\sss(G_n)}({o_n}),o_n,\mathcal{M}(V(B_r^{\sss(G_n)}({o_n}))),(H_\star,\mathcal{M}_{H_\star}(H_\star)) \big)\leq \frac{1}{1+r} \right)\nn\\
				&\hspace{0.35cm}\to \mu_\star\left( d_{\mathcal{G}_\star}\big( (B_r^{\sss(G)}({o}),o,\mathcal{M}(V(B_r^{\sss(G)}({o}))),(H_\star,\mathcal{M}_{H_\star}(H_\star)) \big)\leq \frac{1}{1+r} \right).}
			\item[(b)] {\bf (Vertex-marked local convergence)}
			The sequence of (possibly random) graphs $\{ G_n \}_{n\geq 1}$ is said to converge {{{vertex-marked locally}}} in probability to a random element $(G,o,\mathcal{M}(V(G)))\in \mathcal{G}_\star$ having probability law $\mu_\star$, when for every $r>0$, and for every $(H_\star,\mathcal{M}_{H_\star}(H_\star))\in\mathcal{G}_\star$, as $n\to\infty$,
			\eqan{\label{eq:def:mark:local:convergence}
				&\frac{1}{n}\sum\limits_{\omega\in[n]}\one_{\left\{ d_{\mathcal{G}_\star}\big( (B_r^{\sss(G_n)}({\omega}),\omega,\mathcal{M}(V(B_r^{\sss(G_n)}({\omega}))),(H_\star,\mathcal{M}_{H_\star}(H_\star)) \big)\leq \frac{1}{1+r} \right\}}\nn\\
				&\hspace{0.1cm}\overset{\prob}{\to} \mu_\star\left( d_{\mathcal{G}_\star}\big( (B_r^{\sss(G)}({o}),o,\mathcal{M}(V(B_r^{\sss(G)}({o}))),(H_\star,\mathcal{M}_{H_\star}(H_\star)) \big)\leq \frac{1}{1+r} \right).}
	\end{itemize}}
	\hspace{-0.5cm}We require the convergence in \eqref{eq:def:mark:local:weak:convergence} and \eqref{eq:def:mark:local:convergence} to hold true for continuity points of $\mu_\star$.
	\hfill$\blacksquare$
\end{Definition}
\section{Definition of random P\'{o}lya point tree}\label{chap:LWC:sec:RPPT}

In this section, {we define the random P\'olya point tree (RPPT) that will act as the vertex-marked local limit of our preferential attachment graphs. We start by defining the vertex set of this RPPT:}

\begin{Definition}[Ulam-Harris set and its ordering]\label{def:ulam}
	{\rm Let $\N_0=\N\cup\{0\}$. {The {\em Ulam-Harris set}} is
		\begin{equation*}
			\mathcal{U}=\bigcup\limits_{n\in\N_0}\N^n.
		\end{equation*}
		For $x = x_1\cdots x_n\in\N^n$ and $k\in\N$ we denote {the element $x_1\cdots x_nk$ by $xk\in\N^{n+1}$.} The~{\em root} of the Ulam-Harris set is denoted by $\emp\in\N^0$.
		
		For any $x\in \mathcal{U}$, we say that $x$ has length $n$ if $x\in\N^n$. {The lexicographic \textit{ordering}} between the elements of the Ulam-Harris set {is as follows:}
		\begin{itemize}
			\item[(a)] for any two elements $x,y\in\mathcal{U}$, {$x>y$ when the length of $x$ is more than that of $y$;}
			\item[(b)] if $x,y\in \N^n$ for some $n$, then $x>y$ if there exists $i\leq n,$ such that $x_j=~y_j$, for all $j<i$ and $x_i>~y_i$.\hfill$\blacksquare$
	\end{itemize}}
\end{Definition}

We use the elements of the Ulam-Harris set to identify nodes in a rooted tree, since the notation in Definition~\ref{def:ulam} allows us to denote the relationships between children and parents, where for $x\in\mathcal{U}$, we denote the $k$-th child of $x$ by element $xk$.
\smallskip

\paragraph{\bf Random P\'olya point tree (RPPT)}
The $\RPPT(M,\delta)$ is an {\em infinite multi-type rooted random tree} where $M$ is an $\N$-valued random variable and $\delta>-\infsupp(M)$. {It is a multi-type branching process, with a mixed continuous and discrete type space. We now describe its properties one by one.}

\paragraph{ \bf Descriptions of the distributions and parameters used}
\begin{enumerate}
	\item[{\btr}] {Define} $\chi = \frac{\E[M]+\delta}{2\E[M]+\delta}$.
	\smallskip
	\item[{\btr}] {Let} $\Gamma_{in}(m)$ denote a Gamma distribution with parameters $m+\delta$ and $1$, where $m\in \N$. 
	\smallskip
	\item[{\btr}] {For $\delta>-\infsupp(M),$ let}  $M^{(\delta)}$ be an $\N$-valued random variable such that $$\prob\left(M^{(\delta)}=m\right) = \frac{m+\delta}{\E[M]+\delta}\prob(M=m),$$ i.e., $M^{(\delta)}+\delta$ is a size-biased version of $M+\delta$.
	\smallskip
	\item[{\btr}] {In particular,} $M^{(0)}$ is the size-biased distribution of $M$.
\end{enumerate}

\paragraph{\bf Feature of the vertices of RPPT}
Below, to avoid confusion, we use `node' for a vertex in the RPPT and `vertex' for a vertex in the PAM. We now discuss the properties of the nodes in $\RPPT(M,\delta)$. Every node except the root in $\RPPT$ has
\begin{enumerate}
	\item[{\btr}] a {\em label} $\omega$ in the Ulam-Harris set $\mathcal{U}$ (recall Definition~\ref{def:ulam});
	\smallskip
	\item[{\btr}] an {\em age} $A_{\omega}\in[0,1]$;
	\smallskip
	\item[{\btr}] a positive number $\Gamma_{\omega}$ called its {\em strength};
	\smallskip
	\item[{\btr}] a {label in $\{{\Old},{\Young}\}$} depending on the age of the {node} and its parent in the tree, {with $\Young$ denoting that the node is younger than its parent, and $\Old$ denoting that the node is older than its parent.}
\end{enumerate}

Note that \emph{age} stands for the birth-time of the given node. Hence, $\omega$ is labeled $\Young$, when $A_{\omega}$ is larger than its parent's age in the tree.
Based on its label being $\Old$ or $\Young$, every {node} $\omega$ has an independent $\N$-valued random variable $m_{-}(\omega)$ associated to it. If $\omega$ has label $\Old$, then
\begin{enumerate}
	\item[{\btr}] $m_{-}(\omega)$ is distributed as $M^{(\delta)}$;
	\smallskip
	\item[{\btr}] given $m_{-}(\omega), \Gamma_{\omega}$ is distributed as $\Gamma_{in}\left( m_{-}(\omega)+1 \right)$.
\end{enumerate}
On the other hand, if $\omega$ has label $\Young$, then
\begin{enumerate}
	\item[{\btr}] $m_{-}(\omega)$ is distributed as $M^{(0)}-1$;
	\smallskip
	\item[{\btr}] given $m_{-}(\omega), \Gamma_{\omega}$ is distributed as $\Gamma_{in}\left( m_{-}(\omega)+1 \right)$.
\end{enumerate}
The root in $\RPPT$ has no label, and its strength $\Gamma_\emp$ is distributed as $\Gamma_{in}(M)$.
\begingroup
\allowdisplaybreaks
\paragraph{\bf Construction of the RPPT} {We next use the above definitions to construct the RPPT using an {\em exploration process}.}
The root $\emp$ is special in the tree. Its age $A_\emp$ is an independent uniform random variable in $[0,1]$. Since $\emp$ has no label, $m_{-}(\emp)$ is distributed as $M$, and $\Gamma_\emp$ is distributed as $\Gamma_{in}(m_{-}(\emp))$.
Then the {children of the root in the} random P\'{o}lya point tree {are} constructed as follows:
\endgroup	
\begin{enumerate}
	\item Sample $U_1,\ldots,U_{m_{-}(\emp)}$ uniform random variables on $[0,1]$, independent of the rest;  
	\item To nodes $\emp1,\ldots, \emp m_{-}(\emp)$ assign ages $U_1^{1/\chi} A_\emp,\ldots,U_{m_{-}(\emp)}^{1/\chi} A_\emp$ and label $\Old$;
	\item \sloppy Assign ages $A_{\emp(m_{-}(\emp)+1)},\ldots, A_{\emp(m_{-}(\emp)+\din_\emp)}$ to the nodes $\emp(m_{-}(\emp)+1),\ldots, \emp(m_{-}(\emp)+\din_\emp)$. These ages are the \FC{occurrence times} given by a conditionally independent Poisson point process on $[A_\emp,1]$ defined by the intensity
	\eqn{\label{def:rho_emp}
		\rho_{\emp}(x) = {(1-\chi)}{\Gamma_\emp}\frac{x^{-\chi}}{A_\emp^{1-\chi}},}
	where $\din_\emp$ is the total number of points of this process. Assign label $\Young$ to them;
	\item Draw an edge between $\emp$ and each of $\emp 1,\ldots, \emp(m_{-}(\emp)+\din_\emp)$; 
	\item Label $\emp$ as explored and the nodes $\emp1,\ldots, \emp(m_{-}(\emp)+\din_\emp)$ as unexplored.
\end{enumerate}	

Then, recursively over the elements in the set of unexplored nodes we perform the following breadth-first exploration:
\begin{enumerate}
	\item Let $\omega$ denote the smallest currently unexplored node;
	\item Sample $m_{-}(\omega)$ i.i.d.\ random variables $U_{\omega1},\ldots,U_{\omega m_{-}(\omega)}$ independently from all the previous steps and from each other, uniformly on $[0,1]$. To nodes $\omega 1,\ldots,\omega m_{-}(\omega)$ assign the ages $U_{\omega1}^{1/\chi}A_\omega,\ldots,U_{\omega m_{-}(\omega)}^{\FC{1/\chi}}A_\omega$ and label $\Old$ and set them unexplored;
	\item Let $A_{\omega(m_{-}(\omega)+1)},\ldots,A_{\omega(m_{-}(\omega)+\din_\omega)}$ be the random $\din_\omega$ points given by a conditionally independent Poisson process on $[A_{\omega},1]$ with intensity
	\eqn{
		\label{for:pointgraph:poisson}
		\rho_{\omega}(x) = {(1-\chi)}{\Gamma_\omega}\frac{x^{-\chi}}{A_\omega^{1-\chi}}.
	}
	Assign these ages to $\omega(m_{-}(\omega)+1),\ldots,\omega(m_{-}(\omega)+\din_\omega)$, {assign them label $\Young$ } and set them unexplored;
	\item Draw an edge between $\omega$ and each one of the nodes $\omega 1,\ldots,\omega(m_{-}(\omega)+\din_\omega)$;
	\item Set $\omega$ as explored.
\end{enumerate}

We call the resulting tree the {\em random P\'olya point tree with parameters $M$ and $\delta$,} and denote it by $\RPPT(M,\delta)$. Occasionally we drop $M$ and $\delta$ while mentioning $\RPPT(M,\delta)$. When $M$ is degenerate {and equal to $m$ a.s.,} we {call this the P\'olya point tree with parameters $m$ and  $\delta$.}

\section{P\'{o}lya point tree: A special case}\label{chap:LWC:sec:PPT}

The P\'{o}lya point tree arises as the local limit of a sequence of preferential attachment models in \cite{BergerBorgs}. In this section, we define the P\'{o}lya point tree as a special case of the random P\'{o}lya point tree, defined in Section~\ref{chap:LWC:sec:RPPT}. To obtain the P\'{o}lya point tree from the random P\'{o}lya point tree, we consider $M$ to have a unit mass at some $m \in \mathbb{N}$ and fix $\delta > -m$. We begin by defining the distributions and parameters used to define the P\'{o}lya point tree and then proceed to define the multi-type branching process.

\paragraph{\bf Descriptions of the distributions and parameters used}
\begin{enumerate}
	\item[{\btr}] Define $\chi = \frac{m+\delta}{2m+\delta}$.
	\smallskip
	\item[{\btr}] Let $\Gamma_{\sf{in}}(m)$ denote a Gamma distribution with parameters $m+\delta$ and $1$.
	\smallskip
	\item[{\btr}] Let $\Gamma_{\sf{in}}^\prime(m)$ denote the size-biased distribution of $\Gamma_{\sf{in}}(m)$, which is also a Gamma distribution with parameters $m+\delta+1$ and $1$.
\end{enumerate}
\smallskip

\paragraph{\bf Features of the nodes in $\PPT$}
To avoid confusion, we use the term `node' for a vertex in the PPT and `vertex' for a vertex in the PAM. We now discuss the properties of the nodes in $\PPT(m,\delta)$. Every node, except the root in the $\PPT$, has:
\begin{enumerate}
	\item[{\btr}] a {\em label} $\omega$ in the Ulam-Harris set $\mathcal{N}$ (recall Definition~\ref{def:ulam});
	\smallskip
	\item[{\btr}] an {\em age} $A_{\omega} \in [0,1]$;
	\smallskip
	\item[{\btr}] a positive number $\Gamma_{\omega}$ called its {\em strength};
	\smallskip
	\item[{\btr}] a label in $\{{\Old},{\Young}\}$ depending on the age of the node and its parent, with $\Young$ denoting that the node is younger than its parent and $\Old$ denoting that the node is older than its parent.
\end{enumerate}
Based on its type, either $\Old$ or $\Young$, every node $\omega$ has a number $m_{-}(\omega)$ associated with it. If $\omega$ is of type $\Old$, then $m_{-}(\omega) = m$, and $\Gamma_{\omega}$ is distributed as $\Gamma_{\sf{in}}^\prime(m)$. If $\omega$ is of type $\Young$, then $m_{-}(\omega) = m - 1$, and given $m_{-}(\omega)$, $\Gamma_{\omega}$ is distributed as $\Gamma_{\sf{in}}(m)$.
\smallskip

\begingroup
\allowdisplaybreaks
\paragraph{\bf Construction of the PPT} Next, we use the above definitions to construct the PPT using an {\em exploration process}. The root is special in the tree. It has label $\emp$ and its age $A_\emp$ is an independent uniform random variable in $[0,1]$. The root $\emp$ has no label in $\{\Old,\Young\}$, and we set $m_{-}(\emp) = m$. The children of the root in the P\'{o}lya point tree are then constructed as follows:
\endgroup
\begin{enumerate}
	\item Sample $U_1, \ldots, U_{m_{-}(\emp)}$ uniform random variables on $[0,1]$, independent of the rest;  
	\item Assign to nodes $\emp1, \ldots, \emp m_{-}(\emp)$ the ages $U_1^{1/\chi} A_\emp, \ldots, U_{m_{-}(\emp)}^{1/\chi} A_\emp$ and type $\Old$;
	\item Assign ages $A_{\emp(m_{-}(\emp)+1)}, \ldots, A_{\emp(m_{-}(\emp)+\din_\emp)}$ to nodes $\emp(m_{-}(\emp)+1), \ldots, \emp(m_{-}(\emp)+\din_\emp)$. These ages are the occurrence times given by a conditionally independent Poisson point process on $[A_\emp,1]$ defined by the random intensity
	\eqn{\label{def:rho_emp:PPT}
		\rho_{\emp}(x) = {(1-\chi)}{\Gamma_\emp}\frac{x^{-\chi}}{A_\emp^{1-\chi}},}
	with $\din_\emp$ being the total number of points of this process. Assign them the type $\Young$;
	\item Draw an edge between $\emp$ and each of $\emp1, \ldots, \emp(m_{-}(\emp)+\din_\emp)$; 
	\item Label $\emp$ as explored and nodes $\emp1, \ldots, \emp(m_{-}(\emp)+\din_\emp)$ as unexplored.
\end{enumerate}	
Then, recursively over the elements in the set of unexplored nodes, we perform the following breadth-first exploration:
\begin{enumerate}
	\item Let $\omega$ denote the smallest currently unexplored node in the Ulam-Harris ordering;
	\item Sample $m_{-}(\omega)$ i.i.d.\ random variables $U_{\omega1}, \ldots, U_{\omega m_{-}(\omega)}$ independently from all previous steps and from each other, uniformly on $[0,1]$. Assign to nodes $\omega 1, \ldots, \omega m_{-}(\omega)$ the ages $U_{\omega1}^{1/\chi}A_\omega, \ldots, U_{\omega m_{-}(\omega)}^{1/\chi}A_\omega$ and type $\Old$, and set them as unexplored;
	\item Let $A_{\omega(m_{-}(\omega)+1)}, \ldots, A_{\omega(m_{-}(\omega)+\din_\omega)}$ be the random $\din_\omega$ points given by a conditionally independent Poisson process on $[A_{\omega},1]$ with random intensity
	\eqn{
		\label{for:pointgraph:poisson:PPT}
		\rho_{\omega}(x) = {(1-\chi)}{\Gamma_\omega}\frac{x^{-\chi}}{A_\omega^{1-\chi}}.
	}
	Assign these ages to $\omega(m_{-}(\omega)+1), \ldots, \omega(m_{-}(\omega)+\din_\omega)$. Assign them the type $\Young$ and set them as unexplored;
	\item Draw an edge between $\omega$ and each of the nodes $\omega 1, \ldots, \omega(m_{-}(\omega)+\din_\omega)$;
	\item Set $\omega$ as explored.
\end{enumerate}
We call the resulting tree the {\em P\'olya point tree with parameters $m$ and $\delta$}, and denote it by $\PPT(m,\delta)$. The vertex marks of $\PPT$ here differ from those in \cite{BergerBorgs}, but the vertex marks in the definition from \cite{BergerBorgs} can always be retrieved from the vertex marks used here, and vice versa.

\section{Relation with literature}\label{chap:LWC:sec:relation-literature}
Berger et al. provided a description of P\'{o}lya point tree in \cite[Section~2.3.2]{BergerBorgs}. In this section, we show that our description of P\'{o}lya point tree, coincides with the description of P\'{o}lya point tree provided in \cite[Section~2.3.2]{BergerBorgs}.

Fix $m \in \mathbb{N}$, and $\delta > -m$. Let us denote the P\'{o}lya point tree from Section~\ref{chap:LWC:sec:PPT} by $\G$, and the one from \cite[Section~2.3.2]{BergerBorgs} by $\G^\prime$. The type space in $\G$ is given by $\Scal = [0,1] \times \{\Old, \Young\}$, where the first component represents the age of the node, and the second component represents the label of the node. The type space of $\G^\prime$ is given by $\Scal^\prime = [0,1] \times \{ L,R\}$, where the first component represents the position of the node, and the second component represents its label. 

We show that $\G$ can be obtained from $\G^\prime$ by performing a change of variables. For any node $\omega^\prime = (x, t)$ in $\G^\prime$, we perform the following change of variables:
\begin{itemize}
	\item Set $a = x^{1/\chi}$;
	\item Define 
	\[
	s = \begin{cases}
		\Old & \text{if } t = L, \\
		\Young & \text{if } t = R;
	\end{cases}
	\]
	\item Define $\delta=2u/m$.
\end{itemize} 
Note that $\chi$ for both $\G$ and $\G^\prime$ are now equal. Then, $\omega = (a, s)$ is the representation of $\omega^\prime$ in $\G$. Note that in $\G^\prime$, the root has position $U^{\chi}$, whereas in $\G$, the root has age $U$, where $U \sim \Unif[0,1]$. In both representations, the root has no label, proving that the definitions are equivalent at the root. Furthermore, note that for any $\omega^\prime \in \Scal^\prime$, $m_{-}(\omega^\prime) = m_{-}(\omega)$, where $\omega$ is the representation of $\omega^\prime$ in $\G$. Moreover, $F$ and $F^\prime$ are equal in distribution to $\Gamma_{(in)}(m)$ and $\Gamma_{(in)}^\prime(m)$, respectively. From the construction, it also follows that $\gamma_{\omega^\prime}$ is equal in distribution to $\Gamma_\omega$.

Now, we still need to show that offspring generation is also preserved under this change of variables between $\G$ and $\G^\prime$.

Let $\omega^\prime$ be a node in $\G^\prime$, and suppose $\omega^\prime$ has position $x$. Then, the ages of the $L$-labelled offspring of $\omega$ are given by $\{U_1 x,\ldots,U_{m_{-}(\omega^\prime)} x\}$, where $\{U_i\}_{i\in\N}$ are i.i.d.\ $\Unif[0,1]$ random variables. The ages of the representative nodes of these offspring in $\G$ are given by $\big\{U_1^{1/\chi} x^{1/\chi}, \ldots, U_{m_{-}(\omega)}^{1/\chi} x^{1/\chi}\big\}$. Denoting $a = x^{1/\chi}$ as the age of the representative of $\omega^\prime$ in $\G$, we obtain the ages of the $\Old$-labelled offspring of $\omega$ as $\big\{U_1^{1/\chi} a, \ldots, U_{m_{-}(\omega)}^{1/\chi} a\big\}$, which matches the construction of the $\Old$-labelled offspring $\omega$ in $\G$, as defined in Section~\ref{chap:LWC:sec:PPT}. Lastly, we need to prove the same for the $\Young$-labelled offspring of $\omega$. Following the construction of $\G^\prime$, we find that the positions of $R$-labelled offspring are the points of a Poisson process with intensity given by
\eqn{\label{eq:equivalence:PPT:1}
	\rho_{\omega^\prime}(y)\,dy=\gamma_{\omega^\prime}\frac{(1-\chi)}{\chi}\frac{y^{(1-\chi)/\chi-1}}{x^{(1-\chi)/\chi}}\,dy~.
}
Now, performing the change of variables $z = y^{1/\chi}$ and $x^{1/\chi} = a$ in \eqref{eq:equivalence:PPT:1}, we obtain
\eqn{\label{eq:equivalence:PPT:2}
	\rho_{\omega^\prime}(y)\,dy = (1-\chi) \Gamma_{\omega} \frac{z^{-\chi}}{a^{1-\chi}}\,dz~,
}
which matches the intensity parameter $\rho_{\omega}(z)$ in \eqref{def:rho_emp:PPT} and \eqref{for:pointgraph:poisson:PPT}. Therefore, the representatives of $R$-labelled offspring of $\omega^\prime$ in $\G$ match the construction of the $\Young$-labelled offspring of $\omega$ in $\G$. Hence, both representations of the P\'{o}lya point tree are equivalent. This essentially proves that our representation is consistent with the literature.

\chapter[P\'{o}lya urn representation of Preferential attachment models]{P\'{o}lya Urn Representation of Preferential Attachment Models}

\label{chap:PU_representation}
\begin{flushright}
\footnotesize{}Based on:\\
\textbf{Universality of the local limit of\\
	preferential attachment models}
	\cite{RRR22}
\end{flushright}
\vspace{0.1cm}
\begin{center}
	\begin{minipage}{0.7 \textwidth}
		\footnotesize{\textbf{Abstract.}
			This chapter establishes the P\'{o}lya urn representation of the generalised preferential attachment models. This representation exploits the inherent exchangeability property of the edge-connection events, despite being heavily dependent. This representation of our preferential attachment models holds only when the connection rules allow for intermediate updates.}
	\end{minipage}
\end{center}
\vspace{0.1cm}

\section{Introduction}
The first and foremost difficulty in dealing with preferential attachment models {is} that the edge-connection {events} are highly dependent. {We now give a representation of our PAMs where these events are \emph{conditionally independent}, extending the results
of Berger et al.\ in \cite{BBCS05} that apply to $\PA_n^{\sss ({m},\delta)}(d)$.} Intuitively, every new edge-connection in the preferential attachment model with intermediate degree updates can be viewed as drawing a ball uniformly from a P\'olya urn with balls {having multiple colours corresponding to the various vertices in the graph.} This is the place where the intermediate degree update comes as a blessing in disguise to us. As a result of this intermediate degree update, the edge-connection events turn out to be exchangeable. Berger et al.\ in \cite{BBCS05} used this exchangeability property to establish a coupling with P\'{o}lya urn graphs, where the edge-connection events are independent conditionally on a sequence of $\Beta$ random variables. They used \emph{de Finetti's theorem} to exploit the exchangeability property. 

Our proof for the P\'{o}lya urn representation relies on an explicit computation of the graph probabilities for both preferential attachment models as well as their respective P\'{o}lya urn representations. This explicit computation allows us to extend the result in \cite{BBCS05} to the generalised preferential attachment models (A), (B), and (D). S\'{e}nizergues also provided a P\'{o}lya urn representation for model (D) in \cite{Delphin21}. Our representation for the same model differs in the choice of parameters for conditioning $\Beta$ random variables.

This P\'{o}lya urn representation is available only for models with intermediate degree updates, i.e., models (A), (B), and (D). If the degrees are not updated after every edge-connection step (such as in models (E) and (F)), the models may seem simpler, but in fact, they are harder to work with since we do not obtain a P\'{o}lya urn description for them.

\begin{center}
	{\textbf{Organisation of the chapter}}
\end{center}
This chapter is organized as follows. In Section~\ref{chap:PUR:sec:PUG}, we introduce the generalised version of the P\'{o}lya urn graphs. In Section~\ref{chap:PUR:sec:CPU}, collapsed P\'{o}lya urn graphs are defined using previously defined P\'{o}lya urn graphs and the collapsing operator defined in Section~\ref{chap:intro:sec:collapsing-operator}. In Section~\ref{chap:PUR:sec:equivalence-model(A)}, we establish the distributional equivalence between model (A) and $\CPU^{\rm (SL)}$. In Section~\ref{chap:PUR:sec:equivalence-other-models}, we extend the equivalence result to models (B) and (D) as well.
\section{P\'{o}lya urn graphs}\label{chap:PUR:sec:PUG}
The sequential preferential attachment models \FC{(A), (B) and (D)} in Section~\ref{chap:intro:sec:model} can be interpreted as an experiment with $n$ urns {corresponding to the vertices in the graph}, where the number of balls in each urn represents the degree of the corresponding vertex in the graph. In this section, 

\begin{Definition}[P\'olya urn graphs]
	\label{def:PU}
	{\rm Define the following:
		\begin{itemize}
			\item[{\btr}] $\left(U_{k} \right)_{k\geq 1}$ is a sequence of i.i.d.\ uniform random variables in $[0,1]$;
			\item[{\btr}] {given $\boldsymbol{m}=(1,1,m_3,m_4,\ldots)$}, $\boldsymbol{\psi}=( \psi_k )_{k\in\N}$ is a sequence of conditionally independent $\Beta$ random variables with support in $[0,1]$, such that $\prob(\psi_k=1)=0$ for all $k\geq 2$ and $\psi_1 = 1$ almost surely;
			\item[{\btr}] let $G_0$ be the initial graph of size $2$ and $n\in\N$ be the size of the graph.
		\end{itemize}
		Define $\mathcal{S}_0^{\sss(n)}=0$ and $\mathcal{S}_n^{\sss(n)}=1$ and for $k\in [n-1]$, define
		\eqn{\begin{split}
				\mathcal{S}_k^{\sss(n)} = (1-\psi)_{(k,n]},\qquad\text{and}\qquad
				\mathcal{I}_k^{\sss(n)} = \left[\mathcal{S}_{k-1}^{\sss(n)}, \mathcal{S}_{k}^{\sss(n)}\right),
		\end{split}}
		where, for $A\subseteq[n]$,
		\eqn{(1-\psi)_A = \prod\limits_{a\in A}(1-\psi_a).}
		Then, {$\PU^{\sss\rm{(SL)}}_n(\boldsymbol{m},\boldsymbol{\psi})$ and $\PU^{\sss\rm{(NSL)}}_n(\boldsymbol{m},\boldsymbol{\psi})$, the {\em P\'olya urn graph} of size $n$ with and without self-loops, respectively, are} defined as follows: 
		\begin{itemize}
			\item[$\rhd$]
			{\FC{In} $\PU^{\sss\rm{(NSL)}}_n(\boldsymbol{m},\boldsymbol{\psi})$, the} $j$-th edge from vertex $k$ is attached to vertex $u\in[k]$ {precisely when}
			\eqn{
				\label{for:PU:NSL}
				U_{m_{[k-1]}+j}\mathcal{S}^{\sss(n)}_{k-1}\in \mathcal{I}_u^{\sss (n)}.
			}
			Observe that self-loops are absent here since  $\left(0,\mathcal{S}^{\sss(n)}_{k-1}\right)$ and $\mathcal{I}_k^{\sss(n)}$ are two disjoint sets\FC{;}
			\item[$\rhd$] \FC{F}or $\PU^{\sss\rm{(SL)}}_n(\boldsymbol{m},\boldsymbol{\psi})$,
			the condition \eqref{for:PU:NSL} is replaced by
			\eqn{
				\label{for:PU:SL}
				U_{m_{[k-1]}+j}\mathcal{S}^{\sss(n)}_{k}\in \mathcal{I}_u^{\sss (n)}.
			}
		\end{itemize}
	}\hfill$\blacksquare$
\end{Definition}

To specify a P\'olya urn graph $\PU_n(\boldsymbol{m,\psi})$, we need to specify the out-edge distribution $M$, and the parameters of the $\Beta$ variables $\boldsymbol{\psi}$.
{Berger et al.\ in \cite{BergerBorgs}} have shown that $\PA_n^{\sss ({m},\delta)}{{(d)}}$ is equal in distribution to $\PU^{\sss\rm{(NSL)}}_n(\boldsymbol{m}^{(1)},\boldsymbol{\psi})$, where $\boldsymbol{\psi}$ is taken as a sequence of {independent} $\Beta$ random variables with certain parameters and $M$ is degenerate at $m$, i.e.\ $\boldsymbol{m}^{(1)} = (1,1,m,m,\ldots)$. Since $\PAri_n(\boldsymbol{m},\delta)$ is a generalised version of $\PA^{{(m,\delta)}}_n(d)$ with i.i.d.\ random out-{degrees}, {we show that }{this model} also has a P\'olya urn description. We sketch an outline in Section~\ref{chap:PUR:sec:equivalence-other-models} {[Theorem~\ref{thm:equiv:PU:PAri}]} that conditionally on $\boldsymbol{m}$, $\PAri_n(\boldsymbol{m},\delta)$ and $\PU^{\sss\rm{(NSL)}}_n(\boldsymbol{m},\boldsymbol{\psi})$ are also equal in distribution. 
On the other hand, it is evident that {the} deterministic versions of model (B) and (D) are equivalent for $m=1$. From the edge-connection probabilities of model (A) and (B), we can observe that they are different only in {whether they give rise to self-loops or not}. {van der Hofstad in \cite[Chapter~5]{vdH2} shows} that model (A) and (B) can be obtained by a collapsing procedure for degenerate $M$. Therefore for obtaining a P\'olya urn graph equivalence for models (A) and (B), a generalisation of this collapsing procedure is helpful, {that we have discussed already in Section~\ref{chap:intro:sec:collapsing-operator}.}

\section{Collapsed P\'{o}lya urn graphs}\label{chap:PUR:sec:CPU}

$\PArt_n(\boldsymbol{m},1,\delta)$ is essentially the same as model (D) when every vertex comes with exactly one out-edge, but $\delta$ is different for every vertex. {Therefore,} we expect that {the} P\'olya urn graph {extends to this graph}. {Similarly to the construction of model (B) through the collapsing procedure defined in \ref{chap:intro:sec:collapsing-operator}}, we collapse the P\'olya urn graph.

{Conditionally on $\boldsymbol{m},$ we construct the collapsed P\'olya Urn graph by using our collapsing construction on the P\'olya Urn Graph defined in Definition~\ref{def:PU} as follows:}

\begin{Definition}[Collapsed Pólya urn graph]
	\label{def:CPU}
	{\rm We first construct $\PU_{m_{[n]}}(\boldsymbol{1},\boldsymbol{\psi})$ with every new vertex having exactly one out-edge and initial graph $G_0$ of size $2$. {Conditionally} on $\boldsymbol{m}=(1,1,m_3,m_4,\ldots),$ the graph $\CPU_n(\boldsymbol{m},\boldsymbol{\psi})$ is defined as $\mathcal{C}_{\boldsymbol{m}}\big( \PU_{m_{[n]}}(\boldsymbol{1},\boldsymbol{\psi}) \big)$. The label SL or NSL for the $\CPU_n$ will be determined by the label of the $\PU_{m_{[n]}}$. We denote the two $\CPU$'s with and without self-loops as $\CPU_n^{\sss\rm{(SL)}}$ and $\CPU_n^{\sss\rm{(NSL)}}$ respectively.}\hfill$\blacksquare$
\end{Definition}

\begin{remark}[{Self-loops for $\CPU_n^{\sss (\mathrm{NSL})}$}]
	{\rm $\CPU_n^{\sss (\mathrm{NSL})}$ may contain self-loops because of collapsing.}\hfill$\blacksquare$
\end{remark}
We end this section by deriving the connection probabilities for $\CPU_n^{\rm{\sss{(SL)}}}$ and $\CPU_n^{\rm{\sss{(NSL)}}}$.
For $k\geq 1$ and $j\in [m_{k}]$, define
\begin{align}\label{eq:def:position:CPU}
	\mathcal{S}_{k,j}^{\sss(n)} = \prod\limits_{l=m_{[k-1]}+j+1}^{m_{[n]}} (1-\psi_l),\qquad\text{and}\qquad
	\mathcal{S}_k^{\sss(n)} =  \mathcal{S}_{k,m_k}^{\sss(n)};
\end{align}
and the intervals $\mathcal{I}_k^{\sss(n)}=\left[\mathcal{S}_{k-1}^{\sss(n)}, \mathcal{S}_{k}^{\sss(n)}\right)$. {Let $\prob_m^{\sss\rm{(SL)}}$ and $\prob_m^{\sss\rm{(NSL)}}$ denote the conditional law given $\boldsymbol{m}$ of $\CPU_n^{\sss\rm{(SL)}}$ and $\CPU_n^{\sss\rm{(NSL)}}$, respectively.} Then from the construction of the $\CPU_n^{\sss\rm{(SL)}}$ it follows that, for $u\geq 3$,
\eqn{\label{eq:edge-connecting_probability:CPU:SL}
	\prob_m^{\sss\rm{(SL)}}\left( \left.u\overset{j}{\rightsquigarrow}v\right|\left( \psi_k \right)_{k\in\N} \right) = \begin{cases}
		\frac{\mathcal{S}_{v}^{\sss(n)}-\mathcal{S}_{v-1}^{\sss(n)}}{\mathcal{S}_{u,j}^{\sss(n)}}& \text{for }u>v,\\
		\frac{\mathcal{S}_{u,j}^{\sss(n)}-\mathcal{S}_{u-1}^{\sss(n)}}{\mathcal{S}_{u,j}^{\sss(n)}}& \text{for }u=v.
\end{cases}}
This probability is for {$\CPU_n^{\sss\rm{(SL)}}$. For $\CPU_n^{\sss\rm{(NSL)}}$ instead}, the expression in \eqref{eq:edge-connecting_probability:CPU:SL} becomes
\eqn{\label{eq:edge-connecting_probability:CPU:NSL}
	\prob_m^{\sss\rm{(NSL)}}\left( \left.u\overset{j}{\rightsquigarrow}v\right|\left( \psi_k \right)_{k\in\N} \right) = \begin{cases}
		\frac{\mathcal{S}_{v}^{\sss(n)}-\mathcal{S}_{v-1}^{\sss(n)}}{\mathcal{S}_{u,j-1}^{\sss(n)}}& \text{for }u>v,\\
		\frac{\mathcal{S}_{u,j-1}^{\sss(n)}-\mathcal{S}_{u-1}^{\sss(n)}}{\mathcal{S}_{u,j-1}^{\sss(n)}}& \text{for }u=v.
\end{cases}}
{Now that we have introduced the relevant random graph models, in the next section, we will show that the collapsed versions of the P\'olya graph models have the same distribution as our preferential attachment models.}
\section{{Equivalence for model} (A)}\label{chap:PUR:sec:equivalence-model(A)}
{Recall that model (A) has i.i.d.\ out-degrees for every vertex. Thus, in order to couple it with our $\CPU$, it must have the same out-degrees.} Given $\boldsymbol{m}= (m_1,m_2,m_3,\ldots)$, with $m_1=m_2=1$, we {thus aim} to couple $\PArs_n(\boldsymbol{m},\delta)$ and $\CPU^{\sss\rm{(SL)}}$.

For $i\in[2],$ {$a_i$ denotes the} degree of vertex $i$ in the initial graph $G_0$, {while for $i>2$ define $a_i\equiv 1$.} Let $\mathcal{H}_n$ be the set of all finite vertex-labelled graphs $G$ of size $n$ and {let} $\mathcal{H}_{\boldsymbol{m}}(G)= \big\{ G_e\in {\mathcal{H}}_{m_{[n]}} \colon \mathcal{C}_{\boldsymbol{m}}(G_e) {\overset{\star}{\simeq}} G \big\}$ {denote the set of graphs that are mapped to $G$ by the collapsing operator $\mathcal{C}_{\boldsymbol{m}}$.} 
For $k\geq 3$ and $l\in\left[ m_{k} \right]$, {define}
\eqn{\label{def:psi:1}
	\psi_{m_{[k-1]}+l} \sim {\Beta}\Big( 1+\frac{\delta}{m_k},\, a_{[2]} + 2\big( m_{[k-1]}+l-3 \big)+(k-1)\delta+\frac{(l-1)}{m_k}\delta \Big),
}
where $a_{[2]}=a_1+a_2$ and, for $k\leq 2$,
\eqn{\label{def:psi:2}
	\begin{split}
		\psi_1 \equiv~1,~
		\mbox{and}\quad \psi_{2} \sim {\Beta}\left( a_2+\delta,a_1+\delta \right).
\end{split}}
Observe that the second parameter of the $\Beta$ random variables in \eqref{def:psi:1} is nothing but $c_{v,l-1}+1$, as defined in \eqref{eq:normali}. {We abbreviate $\boldsymbol{\psi} = (\psi_i)_{i\geq 1}$ for the collection of $\Beta$ variables, where we emphasize that these variables are {\em conditionally independent} given the random out-degrees $\boldsymbol{m}$. Our main result concerning the relation between collapsed P\'olya graphs and model (A) is as follows:}
\begin{Theorem}[Equivalence of model (A) and $\CPU^{\sss\rm{(SL)}}$]\label{thm:equiv:CPU:PArs}
	For any graph $G\in\mathcal{H}_{n}$,
	\eqn{\label{eq:thm:equiv:PA:CPU}
		\prob_m\left( \PArs_n(\boldsymbol{m},\delta) \overset{\star}{\simeq} G \right) = \prob_m\left( \CPU_{n}^{\sss\mathrm{(SL)}}(\boldsymbol{m},\boldsymbol{\psi}) \overset{\star}{\simeq} G \right).}
\end{Theorem}

{We emphasize that Theorem \ref{thm:equiv:CPU:PArs} describes a {\em conditional} result given $\boldsymbol{m}$, i.e., it holds conditionally on $\boldsymbol{m}$.
}

{We prove Theorem \ref{thm:equiv:CPU:PArs} by proving it for the pre-collapsed version of both graphs, and equating their conditional distributions.} The following proposition helps us in equating the conditional probabilities of the pre-collapsed graphs:
\begin{Proposition}[Equivalence of pre-collapsed model (A) and $\PU^{\sss\rm{(SL)}}$]\label{prop:equiv:CPU:PArs}
	Conditionally on $\boldsymbol{m}$, for any graph $H\in\mathcal{H}_{m_{[n]}}$,
	\eqn{\label{prop:eq:equiv:CPU:PArs}
		\prob_m\left( \PArs_n(\boldsymbol{m},1,\delta) \overset{\star}{\simeq} H \right) = \prob_m\left( \PU_{m_{[n]}}^{\sss\mathrm{(SL)}}(\boldsymbol{1},\boldsymbol{\psi}) \overset{\star}{\simeq} H \right).}
\end{Proposition}
\begin{proof}[\textbf{Proof of Theorem~\ref{thm:equiv:CPU:PArs} subject to Proposition~\ref{prop:equiv:CPU:PArs}.}]
	Proposition~\ref{prop:equiv:CPU:PArs} essentially provides us with the pre-collapsing equivalence of the graphs. Since, conditionally on $\boldsymbol{m},~ \left\{\PArs_{m_{[n]}}(\boldsymbol{m},1,\delta) \overset{\star}{\simeq} H\right\} $ are disjoint events for $H\in\mathcal{H}_{\boldsymbol{m}}(G)$, the probability on the RHS ({right hand side}) of \eqref{eq:thm:equiv:PA:CPU} for model (A) can be written in terms of the pre-collapsed graphs as
	\eqn{\label{for:equiv:CPU:PArs:1}
		\begin{split}
			\prob_m\left( \PArs_n(\boldsymbol{m},\delta) \overset{\star}{\simeq} G \right) =& \prob_m\left( \bigcup\limits_{H\in\mathcal{H}_{\boldsymbol{m}}(G)}\left\{ \PArs_{m_{[n]}}(\boldsymbol{m},1,\delta) \overset{\star}{\simeq} H \right\} \right)\\
			=& \sum\limits_{H\in\mathcal{H}_{\boldsymbol{m}}(G)}\prob_m\left(  \PArs_{m_{[n]}}(\boldsymbol{m},1,\delta) \overset{\star}{\simeq} H \right).
	\end{split}}
	Similarly the probability on the LHS ({left hand side}) of \eqref{eq:thm:equiv:PA:CPU} for $\CPU_n^{\sss\rm{(SL)}}$ can be written in terms of $\PU_{m_{[n]}}^{\sss\rm{(SL)}}$ as
	\eqn{\label{for:equiv:CPU:PArs:2}
		\begin{split}
			\prob_m\left( \CPU_n^{\sss\mathrm{(SL)}}(\boldsymbol{m},\boldsymbol{\psi}) \overset{\star}{\simeq} G \right) =& \prob_m\left( \bigcup\limits_{H\in\mathcal{H}_{\boldsymbol{m}}(G)}\left\{\PU_{m_{[n]}}^{\sss\mathrm{(SL)}}(\boldsymbol{1},\boldsymbol{\psi}) \overset{\star}{\simeq} H\right\} \right)\\
			=& \sum\limits_{H\in\mathcal{H}_{\boldsymbol{m}}(G)}\prob_m\left( \PU_{m_{[n]}}^{\sss\mathrm{(SL)}}(\boldsymbol{1},\boldsymbol{\psi}) \overset{\star}{\simeq} H \right).
	\end{split}}
	Now from Proposition~\ref{prop:equiv:CPU:PArs} it follows that the summands in \eqref{for:equiv:CPU:PArs:1} and \eqref{for:equiv:CPU:PArs:2} are equal. Hence, conditionally on $\boldsymbol{m},~\PArs_n$ and $\CPU_n^{\sss\rm{(SL)}}$ are equal in distribution.
\end{proof}

We now move towards the proof of Proposition~\ref{prop:equiv:CPU:PArs}. Berger et al.\ \cite{BergerBorgs} have proved a version of Theorem~\ref{thm:equiv:CPU:PArs} for model (D) and degenerate {out-degrees} using an extension to multiple urns of the P\'olya urn {characterization in terms of conditionally independent events by de Finetti's Theorem. We could adapt this proof. Instead, we} prove Proposition~\ref{prop:equiv:CPU:PArs} by explicitly calculating the graph probabilities of both random graphs and equating them term by term. This proof is interesting in its own right.

Let $v(u)$ denote the vertex to which the out-edge from $u$ connects in $H$. 
Then from \eqref{eq:edge_connecting:model(rs):1},
\eqn{\label{for:equiv:CPU:PArs:4}
	\begin{split}
		&\prob_m\left(  \PArs_{m_{[n]}}(\boldsymbol{m},1,\delta) \overset{\star}{\simeq} H \right)\\
		&\qquad =\prod\limits_{u\in[3,n]}\prod\limits_{j\in[m_{u}]} \frac{d_{v(m_{[u-1]}+j)}(m_{[u-1]}+j-1)+\delta(v(m_{[u-1]}+j))}{a_{[2]}+2(m_{[u-1]}+j-2)+\left( (u-1)+\frac{j}{m_u} \right)\delta-1}~.
\end{split}}
The following lemma simplifies and rearranges the factors in the numerator of \eqref{for:equiv:CPU:PArs:4}:
\begin{Lemma}[Rearrangement of the numerator of \eqref{for:equiv:CPU:PArs:4}]\label{lem:equiv:num:rearrange}
	The numerator of \eqref{for:equiv:CPU:PArs:4} can be rearranged as
	\eqn{\label{eq:lem:num:rearrange:equiv}
		\begin{split}
			&\prod\limits_{u\in[3,n]}\prod\limits_{j\in[m_{u}]}\left( d_{v(m_{[u-1]}+j)}(m_{[u-1]}+j-1)+\delta(v(m_{[u-1]}+j)) \right)\\
			&\qquad=\prod\limits_{k\in[n]}\prod\limits_{l\in[m_{u]}} \prod\limits_{i=a_{m_{[k-1]}+l}}^{d_{m_{[k-1]}+l}(H)-1}\left( i+\frac{\delta}{m_k} \right),
	\end{split}}
	where $d_v(H)$ denotes the degree of the vertex $v$ in the graph $H$.
\end{Lemma}
\begin{proof}
	{Observe that the factors in the numerator of RHS of \eqref{for:equiv:CPU:PArs:4} depend on the receiver's degree. Since the edges from the new vertices connect to one of the existing vertices (or itself), the product in the numerator of \eqref{for:equiv:CPU:PArs:4} can be rewritten as
		\eqn{\label{for:equiv:CPU:PArs:4-1}
			\begin{split}
				&\prod\limits_{u\in[3,n]}\prod\limits_{j\in[m_{u}]}\left( d_{v(m_{[u-1]}+j)}(m_{[u-1]}+j-1)+\delta(v(m_{[u-1]}+j)) \right)\\
				=& \prod\limits_{s\in[1,m_{[n]}]} \prod\limits_{\substack{u\in[3,n],\, j\in[m_u]\\v(m_{[u-1]}+j)=s}} \big( d_s(m_{[u-1]}+j-1)+\delta(s) \big)~.
		\end{split}}
		For the very first incoming edge to $s\in[m_{[n]}]$, we have the factor of $(a_s+\delta(s))$ and for the remaining ones, we have a factor in the RHS of \eqref{for:equiv:CPU:PArs:4-1} with an increment of $1$. On the other hand, for the last incoming edge to $s$ in $H$, we have the factor $(d_s(H)-1+\delta(s))$. For any $s\in[m_{[n]}]$, there exists a unique $k\in[n]$ and $l\in[m_u]$ such that $s=m_{[k-1]}+l$ and $\delta(s)=\delta/m_k$. Therefore the LHS of \eqref{for:equiv:CPU:PArs:4-1} can be further simplified as
		\eqan{\label{for:equiv:CPU:PArs:5}
			&\prod\limits_{s\in[1,m_{[n]}]} \prod\limits_{\substack{u\in[3,n],\, j\in[m_u]\\v(m_{[u-1]}+j)=s}} \big( d_s(m_{[u-1]}+j-1)+\delta(s) \big)\nn\\
			=&\prod\limits_{k\in[n]}\prod\limits_{l\in[m_{u}]} \prod\limits_{i=a_{m_{[k-1]}+l}}^{d_{m_{[k-1]}+l}(H)-1}\left( i+\frac{\delta}{m_k} \right)~,}
	}
	completing the proof of Lemma~\ref{lem:equiv:num:rearrange}.
\end{proof}
By Lemma~\ref{lem:equiv:num:rearrange} and a rearrangement of the numerator, the graph probability in \eqref{for:equiv:CPU:PArs:4} can be written as
\begin{align}
	\label{for:equiv:CPU:PArs:6}
	&\prob_m\left(  \PArs_{m_{[n]}}(\boldsymbol{m},1,\delta) \overset{\star}{\simeq} H \right)\nn\\
	=& \prod\limits_{u\in[n]}\prod\limits_{j\in[m_{u}]} \prod\limits_{i=a_{m_{[u-1]}+j}}^{d_{m_{[u-1]}+j}(H)-1}\left( i+\frac{\delta}{m_u} \right)\\
	&\hspace{1.5cm}\times\prod\limits_{u\in[3,n]}\prod\limits_{j\in[m_{u}]} \frac{1}{a_{[2]}+2(m_{[u-1]}+j-2)+\left( (u-1)+\frac{j}{m_u} \right)\delta-1}.\nonumber
\end{align}
Next, we calculate the graph probabilities of  $\PU_{m_{[n]}}^{\sss\rm{(SL)}}(\boldsymbol{1},\boldsymbol{\psi})$ {and show that these agree}.
To calculate $\prob_m\left( \PU_{m_{[n]}}^{\sss\mathrm{(SL)}}(\boldsymbol{1},\boldsymbol{\psi})\overset{\star}{\simeq} H \right)$, we condition on the $\Beta$ random variables {as well}. We denote the conditional measure by $\prob_{m,\psi}$, i.e., for every event $\mathcal{E}$,
\eqn{
	\label{for:equiv:CPU:PAr:7}
	\prob_{m,\psi}(\mathcal{E}) = \prob\left(\mathcal{E} \mid (m_k)_{k\geq 3},(\psi_k)_{k\geq 1} \right).
}
{Under this conditioning, the edges of $\PU_{m_{[n]}}^{\sss\rm{(SL)}}$ are independent.} First we calculate the conditional edge-connection probabilities for $\PU_{m_{[n]}}^{\sss\rm{(SL)}}(\boldsymbol{1},\boldsymbol{\psi})$:
\begin{Lemma}[Conditional edge-connection probability of $\PU^{\sss\rm{(SL)}}$]\label{lem:edge Probability:Polya Urn graph}
	Conditionally on $\boldsymbol{m}$ and $(\psi_k)_{k\geq 1}$ defined in \eqref{def:psi:1} and \eqref{def:psi:2}, the probability of connecting the edge from $u$ to $v$ in $\PU_{m_{[n]}}^{\sss\rm{(SL)}}(\boldsymbol{1},\boldsymbol{\psi})$ is given by $\psi_v (1-\psi)_{(v,u]}$.
\end{Lemma}
\begin{proof} By the construction of $\PU_{m_{[n]}}^{\sss\rm{(SL)}}(\boldsymbol{1},\boldsymbol{\psi})$,
	\begin{align*}
		\mathcal{S}_k^{\sss(m_{[n]})} &= (1-\psi)_{(k,m_{[n]}]},\qquad\text{and}\qquad 
		|I_k^{\sss(m_{[n]})}|
		=\psi_k(1-\psi)_{(k,m_{[n]}]}.
	\end{align*}
	Taking the ratio {gives}
	\begin{equation*}
		\frac{|I_v^{\sss(m_{[n]})}|}{\mathcal{S}_u^{\sss(m_{[n]})}} = \psi_v\frac{(1-\psi)_{(v,m_{[n]}]}}{(1-\psi)_{(u,m_{[n]}]}}=\psi_v(1-\psi)_{(v,u]}.
	\end{equation*}
	Therefore, $\prob_{m, \psi}\left(u\rightsquigarrow v\right)$ in $\PU_{m_{[n]}}^{\sss\rm{(SL)}}(\boldsymbol{1},\boldsymbol{\psi})$ is given by $\psi_v (1-\psi)_{(v,u]}$.
\end{proof}
In P\'olya urn graphs, conditionally on the $\Beta$ random variables, the edges are added independently, leading to the following lemma:
\begin{Lemma}[Conditional density of $\PU^{\sss\rm{(SL)}}$]\label{lem:PU:conditional:graph_probability}
	For any graph $H\in\mathcal{H}_{m_{[n]}}$,
	\begin{equation}
		\label{eq:PU:conditional:graph_probabilit}
		\prob_{m,\psi}\left( \PU_{m_{[n]}}^{\rm\sss(SL)}(\boldsymbol{1},\boldsymbol{\psi}) \overset{\star}{\simeq} H \right) =  \prod\limits_{s\in[2,m_{[n]}]} \psi_s^{p_s}(1-\psi_s)^{q_s},
	\end{equation}
	where
	\begin{align}\label{def:p:q}
		p_s=&~ d_s(H)-a_s,\qquad\text{and}\qquad 
		q_s=\sum\limits_{u \in\left( m_{[2]}, m_{[n]}\right]}\one_{\{s\in(v(u),u]\}}.
	\end{align}
\end{Lemma}
\begin{proof}
	For every {incoming edge} of the vertex $u$, there is a vertex $w\ge u$ such that $v(w)=u$. Conditionally on {$\boldsymbol{m}$ and $\boldsymbol{\psi}$}, the {edge-connection events} are independent. {We then note that} there are $p_s$ many incoming edges for vertex $s$ and the factor $\psi_s^{p_s}$ comes from that. {Every $u$ such that $s\in(v(u),u]$ gives rise to one factor $1-\psi_s$, giving rise to} $q_s$ many factors $1-\psi_s$. 
\end{proof}
\chRounak{Note that, from definition of $p_s$ and $q_s$,}
\eqan{
	p_{m_{[n]}}&=\one_{\{m_{[n]}~\mbox{creates a self-loop}\}}\label{eq:major:1:1}\\
	\mbox{and,}\qquad q_{m_{[n]}}&=\one_{\{m_{[n]}~\mbox{does not create a self-loop}\}}\label{eq:major:1:2},
}
\chRounak{which essentially leads to the fact that $p_{m_{[n]}}+q_{m_{[n]}}=1$.}
{Next we aim to take the expectation w.r.t.\ $\boldsymbol{\psi}$, and for this, we compute the expectation of powers of $\Beta$ variables:}
\begin{Lemma}[{Integer moments of $\Beta$ distribution}]\label{lemma:Beta Expectation}
	For all $a,b\in \N$ and $\psi\sim\mathrm{Beta}(\alpha,\beta)$,
	\begin{equation}\label{eq:Beta Expectation}
		\E\left[ \psi^a(1-\psi)^b \right] = \frac{(\alpha+a-1)_a(\beta+b-1)_b}{(\alpha+\beta+a+b-1)_{a+b}},
	\end{equation}
	where $(n)_k=\prod\limits_{i=0}^{k-1}(n-i)$ for $k\geq 1$.
\end{Lemma}

\begin{proof}
	A direct calculation for $\Beta$ random variables shows that
	\begin{align*}
		\E\left[ \psi^a(1-\psi)^b \right]=&\frac{B(\alpha+a,\beta+b)}{B(\alpha,\beta)} = \frac{\Gamma(\alpha+\beta)\Gamma(\alpha+a)\Gamma(\beta+b)}{\Gamma(\alpha)\Gamma(\beta)\Gamma(\alpha+\beta+a+b)}\\
		=&\frac{(\alpha+a-1)_a(\beta+b-1)_b}{(\alpha+\beta+a+b-1)_{a+b}}~,
	\end{align*}
	as required.
\end{proof}
As a consequence of Lemma~\ref{lem:PU:conditional:graph_probability} and \ref{lemma:Beta Expectation}, we can calculate $\prob_{m}\left( \PU_{m_{[n]}}^{\rm\sss(SL)}(\boldsymbol{1},\boldsymbol{\psi}) \overset{\star}{\simeq} H \right)$:
\begin{Lemma}\label{lem:PU:probability_calculation}
	For
	$H\in\mathcal{H}_{\boldsymbol{m}}(G)$,
	\eqn{\label{eq:lem:PU:probability:0}
		\prob_{m}\left( \PU_{m_{[n]}}^{\rm\sss(SL)}(\boldsymbol{1},\boldsymbol{\psi}) \overset{\star}{\simeq} H \right) = \prod\limits_{s\in \left[ 2  ,m_{[n]} \right]} \frac{(\alpha_s+p_s-1)_{p_s}(\beta_s+q_s-1)_{q_s}}{(\alpha_s+\beta_s+p_s+q_s-1)_{p_s+q_s}},
	}
	where $\alpha_s$ and $\beta_s$ are the first and second parameters of the $\Beta$ random variables defined in ~\eqref{def:psi:1} and ~\eqref{def:psi:2}, $p_s, q_s$ are {defined} in Lemma~\ref{lem:PU:conditional:graph_probability} and $\infsupp(M)$ is {the minimum of the support} of the random variable $M$.
\end{Lemma}

\begin{proof} 
	Lemma~\ref{lem:PU:conditional:graph_probability} describes the conditional probability distribution of the P\'olya Urn graphs given the independent $\Beta$ random variables. To obtain the unconditional probability, we take the expectation on the RHS of \eqref{eq:PU:conditional:graph_probabilit} with respect to the $\Beta$ random variables, to obtain
	\eqn{ \label{for:lem:PU:probability_calculation:1}
		\prob_m\left( \PU_{m_{[n]}}^{\rm\sss(SL)}(\boldsymbol{1},\boldsymbol{\psi}) \overset{\star}{\simeq} H \right)
		= \E_m \Big[ \prod\limits_{s\in[2, m_{[n]}} \psi_s^{p_s}(1-\psi_s)^{q_s} \Big].
		}
		Since $\psi_{m_{[u-1]}+j}$ are independent $\Beta$ random variables with parameters $\alpha_{m_{[u-1]}+j}$ and $\beta_{m_{[u-1]}+j}$, respectively, as defined in \eqref{def:psi:1}, \eqref{for:lem:PU:probability_calculation:1} simplifies to
		\eqan{\label{for:lem:PU:probability_calculation:2}
	&\prod\limits_{s\in\left[2, m_{[n]} \right]} \E_m\left[\psi_s^{p_s}(1-\psi_s)^{q_s}\right]\\
	=&\frac{(\alpha_2+p_2-1)_{p_2}(\beta_2+q_2-1)_{q_2}}{(\alpha_2+\beta_2+p_2+q_2-1)_{p_2+q_2}}\prod\limits_{s\in\left[ 3,m_{[n]} \right]} \frac{(\alpha_s+p_s-1)_{p_s}(\beta_s+q_s-1)_{q_s}}{(\alpha_s+\beta_s+p_s+q_s-1)_{p_s+q_s}}.\nn}
	This matches the RHS of \eqref{eq:lem:PU:probability:0}.
\end{proof}
In the next lemma, we derive an alternative expression for $q_s$ defined in \eqref{def:p:q} which helps in understanding the RHS of \eqref{eq:lem:PU:probability:0}:
\begin{Lemma}
	The $q_s$ defined in \eqref{def:p:q} can be represented as
	\eqn{\label{eq:lemma:p:q}
		q_s=\begin{cases}
			0 &\mbox{ if }s=1,\\
			d_1(H)-a_1 &\mbox{ if }s=2,\\
			d_{[s-1]}(H) - a_{[2]} - 2\left(s-3\right) &\mbox{ if }s\geq 3,
	\end{cases}}
	where
	\[
	d_{[s-1]}(H) = \sum\limits_{v\in [s-1]}d_v(H), \qquad \mbox{and} \qquad a_{[2]}= a_1+a_2.
	\]
\end{Lemma}

\begin{proof}
	For $s=1$, we observe that $q_1$ is $0$ by definition and, for $s=2$,
	\eqn{\label{for:equiv:CPU:PAr:8}
		\begin{split}
			q_{2} = \sum\limits_{u\in \left( 2,m_{[n]} \right]} \one_{[2\in(v(u),u]]} = \sum\limits_{u\in \left( 2,m_{[n]} \right]} \one_{[v(u)=1]} = d_1(H) - a_1~.
	\end{split}}
	Simplifying $q_s$ for $s\geq 3$ {gives}
	\begingroup
	\allowdisplaybreaks	
	\eqan{\label{for:equiv:CPU:PAr:9}
		q_s&= \sum\limits_{u\in \left(m_{[2]},m_{[n]}\right]} \one_{\left[ s\in (v(u),u] \right]}
		=\sum\limits_{u\in \left(2,m_{[n]}\right]} \one_{\left[ s\in \left(v(u),m_{[n]}\right] \right]} - \sum\limits_{u\in \left(2,m_{[n]}\right]} \one_{\left[ s\in \left(u,m_{[n]}\right] \right]}\nn\\
		&=\sum\limits_{u\in \left(2,m_{[n]}\right]} \one_{\left[ v(u)\in [s-1] \right]} - \sum\limits_{u\in \left(2,m_{[n]}\right]} \one_{\left[ u\in [s-1] \right]} \nn\\
		& =\Big( \sum\limits_{u\in \left[ m_{[n]}\right]} \one_{\left[ v(u)\in [s-1] \right]} - \sum\limits_{u\in \left[2\right]} \one_{\left[ v(u)\in [s-1] \right]} \Big)  \nn\\
		&\hspace{4.5cm}-\Big( \sum\limits_{u\in \left[m_{[n]}\right]} \one_{\left[ u\in [s-1] \right]}
		- \sum\limits_{u\in \left[2\right]} \one_{\left[ u\in [s-1] \right]} \Big)\nn\\
		& =\sum\limits_{v\in[s-1]}\left( d_v^{\sss\rm{(in)}}(H) - d_v^{\sss\rm{(in)}}(G_0) \right) - \left((s-1)-2\right),
	}
	\endgroup
	where $d_v^{\sss\rm{(in)}}(G)$ is the in-degree of vertex $v$ in the graph $G$ and $G_0$ is the initial graph we started with. Let $d_v^{\sss\rm{(out)}}(G)$ denote the out-degree of vertex $v$ in the graph $G$, {so that}
	\[
	d_v^{\sss\rm{(in)}}(G) + d_v^{\sss\rm{(out)}}(G) = d_v(G).
	\]
	{Note that} the vertices in the initial graph $G_0$ do not have out-edges directed to any new incoming vertices. Therefore, $d_v^{\sss\rm{(out)}}(H) = d_v^{\sss\rm{(out)}}(G_0)$ for all $v\in \left[ 2 \right]$.
	
	On the other hand, the new incoming vertices have exactly one out-edge each. Furthermore they are not {part of} the initial graph. {Hence,} by definition $d_v(G_0) = 0$ for all $v>2=m_{[2]}$. Therefore, both $d_v^{\sss\rm{(out)}}(G_0)$ and $d_v^{\sss\rm{(in)}}(G_0)$ are zero {for all $v>2=m_{[2]}$.}
	Hence, for $s\geq 3$,
	\eqan{\label{for:equiv:CPU:PAr:10}
		q_s=& \sum\limits_{v\in[s-1]}\left( d_v^{\sss\rm{(in)}}(H) - d_v^{\sss\rm{(in)}}(G_0) \right) - \left((s-1)-2\right)\nn\\
		=& \sum\limits_{v\in[s-1]}\left( d_v(H) - d_v(G_0) \right) - \sum\limits_{v\in[s-1]}\left( d_v^{\sss\rm{(out)}}(H) - d_v^{\sss\rm{(out)}}(G_0) \right) - \left(s-3\right)\nn\\
		=& \sum\limits_{v\in[s-1]}\left( d_v(H) - d_v(G_0) \right) - \sum\limits_{v\in\left[ 2 \right]}\left( d_v^{\sss\rm{(out)}}(H) - d_v^{\sss\rm{(out)}}(G_0) \right)\nn\\
		&\hspace{5cm} - \sum\limits_{v\in \left( 2,s-1 \right]} d_v^{\sss\rm{(out)}}(H)- \left(s-3\right)\nn\\
		=& \sum\limits_{v\in[s-1]}\left( d_v(H) - d_v(G_0) \right) - 2\left(s-3\right)\nn\\
		=& d_{[s-1]}(H) - a_{[2]} - 2\left(s-3\right)~,}
	as required in \eqref{eq:lemma:p:q}.
\end{proof}
Now, we have all the tools to prove Proposition~\ref{prop:equiv:CPU:PArs}:
\begin{proof}[\textbf{Proof of Proposition~\ref{prop:equiv:CPU:PArs}}]
	We start by simplifying the RHS of \eqref{eq:lem:PU:probability:0} and equating term by term. First, we consider the term corresponding to $s=2$ in the RHS of \eqref{eq:lem:PU:probability:0}.
	Using ~\eqref{for:equiv:CPU:PAr:8} and substituting the values of $\alpha_2$ and $\beta_2$,
	\eqan{\label{for:prop:equiv:PU:PAr:1}
			&\frac{(\alpha_2+p_2-1)_{p_2}(\beta_2+q_2-1)_{q_2}}{(\alpha_2+\beta_2+p_2+q_2-1)_{p_2+q_2}}\nn\\
			=~& \frac{(\delta+d_2(H)-1)_{d_2(H)-a_2}(d_1(H)+\delta-1)_{d_1(G)-a_1}}{(d_{[2]}(G)+2\delta-1)_{d_{[2]}(G)-a_{[2]}}}\nn\\
			=~& \frac{1}{\left( d_{[2]}(H)+2\delta-1 \right)_{d_{[2]}(H)-a_{[2]}}}\prod\limits_{i=a_1}^{d_1(H)-1}(i+\delta)\prod\limits_{i=a_2}^{d_2(H)-1}(i+\delta).
	}
	The factor $(\alpha_{s}+p_{s}-1)_{p_{s}}$ in the numerator of \eqref{eq:lem:PU:probability:0} can be simplified as
	\eqan{\label{for:prop:equiv:PU:PAr:2}
		&(\alpha_{s}+p_{s}-1)_{p_{s}}=\prod\limits_{i=1}^{p_{s}}= \left( \alpha_s-1+i \right)=\prod\limits_{i=a_{s}}^{d_{s}(H)-1} \left( i+ \alpha_s-1 \right).}
	The last equality in \eqref{for:prop:equiv:PU:PAr:2} is obtained by substituting the expression for $p_{m_{[u-1]}+j}$ and a simple change of variables. For any $s\in[3,m_{[n]}]$, we can find a $u\in[3,n]$ and $j\in [m_u]$ such that $s=m_{[u-1]}+j$. Therefore, 
	\eqn{\label{for:prop:equiv:PU:PAr:3}
		\prod\limits_{s\in\left[ 3,m_{[n]} \right]}(\alpha_s+p_s-1)_{p_s} =  \prod\limits_{u\in[3,n]}\prod\limits_{j\in \left[ m_u \right]}\prod\limits_{i=a_{m_{[u-1]}+j}}^{d_{m_{[u-1]}+j}(H)-1} \left( i+ \frac{\delta}{m_u} \right).}
	So far, from \eqref{for:prop:equiv:PU:PAr:1} and \eqref{for:prop:equiv:PU:PAr:3}, we have already obtained the first factor in \eqref{for:equiv:CPU:PArs:6} and an additional term in the denominator.
	Note that, for $s\geq 3$,
	\eqn{\label{for:prop:equiv:PU:PAr:4}
		\begin{split}
			\alpha_s+\beta_s = \beta_{s+1}-1,\qquad\mbox{and}\qquad
			p_s+q_s = q_{s+1}+1.
	\end{split}}
	Therefore the remaining term in \eqref{eq:lem:PU:probability:0} can be rewritten in a telescoping product form {as}
	\eqan{\label{for:prop:equiv:PU:PAr:5}
			&\prod\limits_{s\in\left[3,m_{[n]}\right]}\frac{(\beta_s+q_s-1)_{q_s}}{(\alpha_s+\beta_s+p_s+q_s-1)_{p_s+q_s}}\nn\\
			=~& \prod\limits_{s\in\left[3,m_{[n]}\right]}\frac{(\beta_s+q_s-1)_{q_s}}{(\beta_{s+1}+q_{s+1}-1)_{q_{s+1}+1}}\nn\\
			=~& \prod\limits_{s\in\left[3,m_{[n]}\right]} \frac{1}{\left( \beta_{s+1}-1 \right)}\prod\limits_{s\in\left[3,m_{[n]}\right]}\frac{(\beta_s+q_s-1)_{q_s}}{(\beta_{s+1}+q_{s+1}-1)_{q_{s+1}}}.
	}
	Since $(\beta_{3}+q_3-1)_{q_3}=\left(d_{[2]}(H)+2\delta-1\right)_{d_{[2]}(H)-a_{[2]}}$ and \chRounak{$q_{m_{[n]}}+1=0$}, {we can simplify \eqref{for:prop:equiv:PU:PAr:5} as}
	\eqan{\label{for:prop:equiv:PU:PAr:6}
		&\prod\limits_{s\in\left[3,m_{[n]}\right]}\frac{(\beta_s+q_s-1)_{q_s}}{(\alpha_s+\beta_s+p_s+q_s-1)_{p_s+q_s}}\nn\\
		=&\left(d_{[2]}(H)+2\delta-1\right)_{d_{[2]}(H)-a_{[2]}}\\
		&\qquad\qquad\times\prod\limits_{u\in[3,n]}\prod\limits_{j\in [m_u]}\frac{1}{a_{[2]}+2\left( m_{[u-1]}+j-2 \right)+(u-1)\delta+\frac{j}{m_u}\delta-1}.\nonumber
	}
	Therefore, substituting these simplified expressions from \eqref{for:prop:equiv:PU:PAr:6} and \eqref{for:prop:equiv:PU:PAr:1} in \eqref{eq:lem:PU:probability:0}, we obtain:
	\eqan{\label{for:prop:equiv:PU:PAr:7}
		&\prob_m\left( \PU_{m_{[n]}}^{\sss\rm{(SL)}}(\boldsymbol{1},\boldsymbol{\psi})\overset{\star}{\simeq} H \right)\nn \\
		=& \prod\limits_{u\in[1,n]}\prod\limits_{j\in \left[ m_u \right]}\prod\limits_{i=a_{m_{[u-1]}+j}}^{d_{m_{[u-1]}+j}(H)-1} \left( i+ \frac{\delta}{m_u} \right)\\
		&\hspace{2cm}\times \prod\limits_{u\in[3,n]}\prod\limits_{j\in [m_u]}\frac{1}{a_{[2]}+2\left( m_{[u-1]}+j-2 \right)+(u-1)\delta+\frac{j}{m_u}\delta-1}~,\nn
	}
	which, matches the expression of $\prob_m( \PArs_{m_{[n]}}\left( \boldsymbol{m},1,\delta \right)\overset{\star}{\simeq} H )$ in \eqref{for:equiv:CPU:PArs:6}.
\end{proof}
\section{Equivalence for models (B) and (D)}\label{chap:PUR:sec:equivalence-other-models}
{{Following similar calculations, we can show that models (B) and (D) are equal in distribution to $\CPU^{\sss\rm{(NSL)}}$ and $\PU^{\sss\rm{(NSL)}}$, respectively.}}
$\PArt_n (\boldsymbol{m},\delta)$ is equal in distribution {to} $\CPU_n^{\sss\rm{(NSL)}}(\boldsymbol{m},{\boldsymbol{\psi}})$ where $\boldsymbol{\psi}$ is a sequence of $\Beta$ random variables defined in \eqref{def:psi:1} and \eqref{def:psi:2}:
\begin{Theorem}[Equivalence for model (B) and $\CPU^{\sss\rm{(NSL)}}$]\label{thm:equiv:CPU:PArt}
	Conditionally on $\boldsymbol{m}$, for any graph $G\in\mathcal{H}_{n}$,
	\eqn{\label{eq:thm:equiv:PArt:CPU}
		\prob_m\left( \PArt_n(\boldsymbol{m},\delta) \overset{\star}{\simeq} G \right) = \prob_m\left( \CPU_{n}^{\sss\mathrm{(NSL)}}(\boldsymbol{m},\boldsymbol{\psi}) \overset{\star}{\simeq} G \right).}
\end{Theorem}
Next, we describe the equivalence of $\PAri_n(\boldsymbol{m},\delta)$ and $\PU_n^{\sss\rm{(SL)}}(\boldsymbol{m},\boldsymbol{\psi})$ where $\boldsymbol{\psi}$ is a sequence of independent $\Beta$ random variables defined as
\eqan{\label{eq:def:psi:1}
		&\psi_v\sim\Beta\left( m_v+\delta, a_{[2]}+2\left( m_{[v-1]}-2 \right)+m_v+(v-1)\delta \right)\quad \mbox{for }v\geq 3,\nn\\
		\text{and}\quad&\psi_2\sim\Beta(a_2+\delta,a_1+\delta),\qquad\text{and}~\psi_1=1.
}
\begin{Theorem}[Equivalence of model (D) and $\PU^{\sss\rm{(NSL)}}$]\label{thm:equiv:PU:PAri}
	Conditionally on $\boldsymbol{m}$, for any graph $G\in\mathcal{H}_{n}$,
	\eqn{\label{eq:thm:equiv:PAri:CPU}
		\prob_m\left( \PAri_n(\boldsymbol{m},\delta) \overset{\star}{\simeq} G \right) = \prob_m\left( \PU_{n}^{\sss\mathrm{(NSL)}}(\boldsymbol{m},\boldsymbol{\psi}) \overset{\star}{\simeq} G \right),}
	where $\boldsymbol{\psi}$ is the sequence of $\Beta$ random variables defined in \eqref{eq:def:psi:1}.
\end{Theorem}
The proofs of Theorem~\ref{thm:equiv:CPU:PArt} and \ref{thm:equiv:PU:PAri} follow in exactly the same way as that of Theorem~\ref{thm:equiv:CPU:PArs}. {Here we show that models (D) and (B) are equal in distribution with $\PU^{(\rm{NSL})}$ and $\CPU^{(\rm{NSL})}$ respectively. Recall that, $\CPU^{(\rm{NSL})}$ could be obtained by collapsing a special case $\PU^{(\rm{NSL})}$. So, we prove Lemma~\ref{lem:edge Probability:Polya Urn graph:NSL}-\ref{lem:PU:conditional:graph_probability:NSL} and Corollary~\ref{cor:cond:prob:CPU:NSL} for general choice of $\boldsymbol{m}$ and $\boldsymbol{\psi}$. Later, while proving Theorem~\ref{thm:equiv:CPU:PArt}, we shall use these lemmata and this corollary with $\boldsymbol{m}=\boldsymbol{1}$, whereas for proving Theorem~\ref{thm:equiv:PU:PAri}, we continue with the $\boldsymbol{m}$ of model (D).} {The following} lemmata and propositions are the $\PU^{\sss\rm{(NSL)}}$ {analogues} of the lemmata and propositions proved in Section~\ref{chap:PUR:sec:equivalence-model(A)}. The conditional {edge-connection probabilities} for $\PU^{\sss\rm{(NSL)}}$ {are} given {as follows:}
\begin{Lemma}[Conditional edge-connection {probabilities} of $\PU^{\sss\rm{(NSL)}}$]\label{lem:edge Probability:Polya Urn graph:NSL}
	Conditionally on $\boldsymbol{m}$ and $(\psi_k)_{k\geq 1}$, the probability of connecting an edge from $v$ to $u$ in $\PU_{n}^{\sss\rm{(NSL)}}(\boldsymbol{m},\boldsymbol{\psi})$, is given by $\psi_v (1-\psi)_{(v,u)}$.
\end{Lemma}
The proof to this lemma {is identical to that of} Lemma~\ref{lem:edge Probability:Polya Urn graph}, {using} the definition of  $\PU^{\sss\rm{(NSL)}}$. 
Now, we define the set of all vertex and edge-marked graphs on $n$ vertices by $\mathcal{H}_n^{ALOE}$. Let $\overline{\PU}^{\sss\rm{(NSL)}}$ and $\bar{\PAri}_{n}$ denote the edge-marked version of $\PU^{\sss\rm{(NSL)}}$ and $\PAri_{n}$ respectively.
{Similarly} as Lemma~\ref{lem:PU:conditional:graph_probability} 
the above lemma allows us to compute the conditional {law} of $\PU^{\sss\rm{(NSL)}}$:
\begin{Lemma}[Conditional graph probability of $\overline{\PU}^{\sss\rm{(NSL)}}$]\label{lem:PU:conditional:graph_probability:NSL}
	For any graph $H\in\mathcal{H}_{n}^{ALOE}$,
	\begin{equation}
		\label{eq:PU:conditional:graph_probability}
		\prob_{m,\psi}\left( \overline{\PU}_{n}^{\rm\sss(NSL)}(\boldsymbol{m},\boldsymbol{\psi}) \overset{\star}{\simeq} H \right) =  \prod\limits_{s\in[2,n]} \psi_s^{p_s}(1-\psi_s)^{q_s},
	\end{equation}
	where
	\begin{align*}
		p_s=&~ d_s(H)-f_s,\qquad\text{and}\qquad
		q_s=\sum\limits_{u \in\left( 2,n\right]}\sum\limits_{j=1}^{m_u}\one_{\{s\in(v(u,j),u)\}},
	\end{align*}
	where $v(u,j)$ is the vertex to which the $j$-th out-edge of $u$ connects and $f_s=m_s$ for all $s\geq 3$ and $f_1=a_1$ and $f_2=a_2$ denote the degrees of the vertex $1$ and $2$ in the initial graph $G_0$.
\end{Lemma}
Since the {proof strategy of} this lemma {is identical to that of} Lemma~\ref{lem:PU:conditional:graph_probability}, we omit the proof to this lemma also. {Using} Lemma~\ref{lemma:Beta Expectation}, conditionally on $\boldsymbol{m},$ the graph probability of $\overline{\PU}^{\sss\rm{(NSL)}}$ is computed as follows:

\begin{Corollary}\label{cor:cond:prob:CPU:NSL}
	For
	$H\in\mathcal{H}_n$,
	\eqn{\label{eq:lem:PU:probability}
		\prob_{m}\left( \overline{\PU}_{n}^{\rm\sss(NSL)}(\boldsymbol{m},\boldsymbol{\psi}) \overset{\star}{\simeq} H \right) = \prod\limits_{s\in \left[ 2  ,n \right)} \frac{(\alpha_s+p_s-1)_{p_s}(\beta_s+q_s-1)_{q_s}}{(\alpha_s+\beta_s+p_s+q_s-1)_{p_s+q_s}},
	}
	where $\alpha_s$ and $\beta_s$ are the first and second parameters of the $\Beta$ random variables
	and $p_s, q_s$ are defined in Lemma~\ref{lem:PU:conditional:graph_probability:NSL}.
\end{Corollary}
Following the {steps} in \eqref{for:equiv:CPU:PAr:8}-\eqref{for:equiv:CPU:PAr:10},
we can simplify $q_s$ as
\eqn{
	\label{q_s:for:PU_NSL}
	q_s = d_{[s-1]}(H) - a_{[2]}-2\left( m_{[s-1]}-2 \right)-m_{s}\qquad \text{for}~s\in [3,n],
}
{with $q_2=d_1(H)-a_1$, and $q_s$ satisfies}
\eqn{
	\label{q_:p_s:reccursion}
	p_s+q_s=q_{s+1}+m_{s+1} \qquad \text{for}~s\in[3,n).
}
Note that, each vertex in $\PU_{m_{[n]}}^{\sss\mathrm{(NSL)}}(\boldsymbol{1},\boldsymbol{\psi})$ has only one-out-edge. Therefore, edge-marks are redundant in this case. Now, {we have} all tools to {adapt} Proposition~\ref{prop:equiv:CPU:PArs} to $\PU^{\sss\rm{(NSL)}}$:
\begin{Proposition}[Equivalence of pre-collapsed model (B) and $\PU^{\sss\rm{(NSL)}}$]\label{prop:equiv:CPU:PArt}
	For any graph $H\in\mathcal{H}_{m_{[n]}}$,
	\eqn{\label{prop:eq:equiv:CPU:PArt}
		\prob_m\left( \PArt_n(\boldsymbol{m},1,\delta) \overset{\star}{\simeq} H \right) = \prob_m\left( \PU_{m_{[n]}}^{\sss\mathrm{(NSL)}}(\boldsymbol{1},\boldsymbol{\psi}) \overset{\star}{\simeq} H \right).}
\end{Proposition}
\begin{proof}
	Following a similar calculation {as the one leading to \eqref{for:equiv:CPU:PArs:5}}, 
	\eqan{\label{for:prop:equiv:PA:PU:NSL}
		&\prob_m\left( \PArt_n(\boldsymbol{m},1,\delta) \overset{\star}{\simeq} H \right)\nn\\
		=~& \prod\limits_{u\in[n]}\prod\limits_{j\in[m_{u}]} \prod\limits_{i=a_{m_{[u-1]}+j}}^{d_{m_{[u-1]}+j}(H)-1}\left( i+\frac{\delta}{m_u} \right)\\
		&\hspace{1.5cm}\times\prod\limits_{u\in[3,n]}\prod\limits_{j\in[m_{u]}} \frac{1}{a_{[2]}+2(m_{[u-1]}+j-3)+\left( (u-1)+\frac{j-1}{m_u} \right)\delta}~. \nn
	}
	{For model (B),} we consider $\PU_{m_{[n]}}^{\sss\rm{(NSL)}}(\boldsymbol{1},\boldsymbol{\psi}),$ where $\boldsymbol{\psi}$ is the sequence of $\Beta$ variables defined in \eqref{def:psi:1} and \eqref{def:psi:2}. Therefore, $p_s=d_s(H)-1$ for $s\geq 1$, and 
	\[
	q_s= d_{[s-1]}(H) - a_{[2]}-2(s-3)-1\qquad\text{for}~s\geq 3.
	\]
	By Corollary~\ref{cor:cond:prob:CPU:NSL},
	\eqn{\label{for:prop:equiv:CPU:PArt:1}
		\prob_{m}\left( \PU_{m_{[n]}}^{\rm\sss(NSL)}(\boldsymbol{1},\boldsymbol{\psi}) \overset{\star}{\simeq} H \right) = \prod\limits_{s\in \left[ 2  ,m_{[n]} \right)} \frac{(\alpha_s+p_s-1)_{p_s}(\beta_s+q_s-1)_{q_s}}{(\alpha_s+\beta_s+p_s+q_s-1)_{p_s+q_s}},
	}
	where $\alpha_s$ and $\beta_s$ are the first and second parameters of the $\Beta$ variable $\psi_s$ defined in \eqref{def:psi:1} and \eqref{def:psi:2} respectively. Then, by {\eqref{q_:p_s:reccursion}, the recursion \eqref{for:prop:equiv:PU:PAr:4} again holds.}
		Now, following the calculations {in \eqref{for:prop:equiv:PU:PAr:5}--\eqref{for:prop:equiv:PU:PAr:7},} and substituting the values of $\alpha_s,\beta_s,p_s,q_s$, it follows immediately that
		\eqan{\label{for:prop:equiv:CPU:PArt:3}
			&\prob_{m}\left( \PU_{m_{[n]}}^{\rm\sss(NSL)}(\boldsymbol{1},\boldsymbol{\psi}) \overset{\star}{\simeq} H \right)\nn\\
			=& \prod\limits_{u\in[n]}\prod\limits_{j\in[m_{u}]} \prod\limits_{i=a_{m_{[u-1]}+j}}^{d_{m_{[u-1]}+j}(H)-1}\left( i+\frac{\delta}{m_u} \right)\\
			&\hspace{1.5cm}\times\prod\limits_{u\in[3,n]}\prod\limits_{j\in[m_{u}]} \frac{1}{a_{[2]}+2(m_{[u-1]}+j-3)+\left( (u-1)+\frac{j-1}{m_u} \right)\delta}~,\nn
		}
		{as required.}
	\end{proof}
	\begin{proof}[\bf Proof of Theorem~\ref{thm:equiv:CPU:PArt}]
		Theorem~\ref{thm:equiv:CPU:PArt} follows immediately from Proposition~\ref{prop:equiv:CPU:PArt} {in exactly the same} way {as} Theorem~\ref{thm:equiv:CPU:PArs} follows from Proposition~\ref{prop:equiv:CPU:PArs}.
	\end{proof}
	Unlike model (B), in Theorem~\ref{thm:equiv:PU:PAri}, we couple model (D) to $\PU_{n}^{\rm\sss(NSL)}(\boldsymbol{m},\boldsymbol{\psi})$, where the edge-marks are relevant. Therefore, for proving Theorem~\ref{thm:equiv:PU:PAri}, we first prove that the coupling holds true for edge-marked $\PAri$ and ${\PU}^{\sss \rm{(NSL)}}$. Next, we sum over all possible edge-mark permutations to complete the proof of Theorem~\ref{thm:equiv:PU:PAri}. 
	\begin{proof}[\bf Proof of Theorem~\ref{thm:equiv:PU:PAri}]
		Conditionally on $\boldsymbol{m},$ the distribution of model (D) is
		\eqan{\label{eq:PAri:density:1}
				&\prob_m\left(  \overline{\PA}^{\sss \rm{(D)}}_{n}(\boldsymbol{m},\delta) \overset{\star}{\simeq} H \right) \nn\\
				=~& \prod\limits_{u\in[3,n]}\prod\limits_{j\in[m_{u}]} \frac{d_{v(u,j)}(u,j-1)+\delta}{a_{[2]}+2(m_{[u-1]}-2)+(j-1)+(u-1)\delta}~,
		}
		where $v(u,j)$ is the vertex in $[u-1]$ to which $u$ connects with its $j$-th edge and $d_v(u,j)$ denotes the degree of the vertex $v$ in $\overline{\PA}^{\sss \rm{(D)}}_{u,j}(\boldsymbol{m},\delta).$ Rearranging the numerators of RHS of \eqref{eq:PAri:density:1} gives,
		\eqan{\label{eq:PAri:density:2}
				&\prob_m\left(  \overline{\PA}^{\sss \rm{(D)}}_{n}(\boldsymbol{m},\delta) \overset{\star}{\simeq} H \right)\\
				=~& \prod\limits_{u\in[n]}\left(\prod\limits_{i=f_u}^{d_u(H)-1}(i+\delta)\right)\nn\\
				&\hspace{2cm}\times\prod\limits_{u\in[3,n]}\left(\prod\limits_{j=1}^{m_u}\frac{1}{a_{[2]}+2(m_{[u-1]}-2)+(j-1)+(u-1)\delta}\right).\nn
		}
		\FC{For model (D), we use the $\boldsymbol{\psi}$ defined in \eqref{eq:def:psi:1}}. Using Corollary~\ref{cor:cond:prob:CPU:NSL}, we calculate the conditional {distribution} of $\overline{\PU}_n^{\sss\rm{(NSL)}}(\boldsymbol{m},\boldsymbol{\psi})$ as
		\eqn{\label{PU:density:1}
			\prob_{m}\left( \overline{\PU}_{n}^{\rm\sss(NSL)}(\boldsymbol{m},\boldsymbol{\psi}) \overset{\star}{\simeq} H \right) = \prod\limits_{s\in \left[ 2  ,n \right)} \frac{(\alpha_s+p_s-1)_{p_s}(\beta_s+q_s-1)_{q_s}}{(\alpha_s+\beta_s+p_s+q_s-1)_{p_s+q_s}},}
		where, for $s\geq 3,$
		\eqan{\label{parameters:PU:D}
				\alpha_s = m_s+\delta,\quad&\text{and}\quad \beta_s = a_{[2]}+2\left( m_{[s]}-2 \right)+m_s+(s-1)\delta,\nn\\
				p_s = d_{s}(H)-f_s,\quad&\text{and}\quad q_s = d_{[s-1]}(H)-\left( a_{[2]}+2\left( m_{[s]}-2 \right) \right)+m_s.
		}
		{Therefore, for $s\geq 2$, the recursion relations in \eqref{for:prop:equiv:PU:PAr:4} becomes}
		\eqan{
				\alpha_s+\beta_s=&\beta_{s+1}-m_{s+1},\label{recursion:2}\\
				\mbox{and,}\qquad p_s+q_s=&q_{s+1}+m_{s+1}.\label{recursion:2-1}
		}
		{This gives us all the necessary tools} to prove Theorem~\ref{thm:equiv:PU:PAri}. From the recursion relation in \eqref{recursion:2}, it follows that
		\eqn{\label{for:thm:PU:PAri:1}
			\begin{split}
				\frac{(\beta_s+q_s-1)_{q_s}}{(\alpha_s+\beta_s+p_s+q_s-1)_{q_s+p_s}}=&\frac{(\beta_s+q_s-1)_{q_s}}{(\beta_{s+1}+q_{s+1}-1)_{q_{s+1}+m_{s+1}}}\\
				=& \frac{1}{(\beta_{s+1}-1)_{m_{s+1}}}\frac{(\beta_s+q_s-1)_{q_s}}{(\beta_{s+1}+q_{s+1}-1)_{q_{s+1}}}.
		\end{split}}
		On the other hand, the first factor in RHS of \eqref{PU:density:1} can be  rewritten as
		\eqan{\label{for:thm:PU:PAri:2}
				&\frac{(\alpha_2+p_2-1)_{p_2}(\beta_2+q_2-1)_{q_2}}{(\alpha_2+\beta_2+p_2+q_2)_{p_2+q_2}}\nn\\
				=~&\frac{1}{(\beta_3+q_3-1)_{q_3}}\prod\limits_{i=0}^{m_{3}-1}\frac{1}{a_{[2]}+i+2\delta}\prod\limits_{i\in[2]}\prod\limits_{j=f_i}^{d_i(H)-1}(j+\delta) .
		}
		Hence substituting the simplifications obtained from \eqref{for:thm:PU:PAri:1} and \eqref{for:thm:PU:PAri:2} in \eqref{PU:density:1},
		\eqan{\label{for:thm:PU:PAri:3}
				&\prob_{m}\left( \overline{\PU}_{n}^{\rm\sss(NSL)}(\boldsymbol{m},\boldsymbol{\psi}) \overset{\star}{\simeq} H \right)\\
				=& \prod\limits_{u\in[n]}\prod\limits_{i=f_u}^{d_u(H)-1}(i+\delta)\prod\limits_{u\in[3,n]}\prod\limits_{j=1}^{m_u}\frac{1}{a_{[2]}+2(m_{[u-1]}-2)+(j-1)+(u-1)\delta},\nn
		}
		{that proves both $\overline{\PA}^{\sss \rm{(D)}}_n(\bf{m},\delta)$ and $\overline{\PU}_n^{\sss \rm{(NSL)}}(\bf{m},\delta)$ have the same law. Now, summing over all possible permutations of edge-connections of $H\in\mathcal{H}_n$, Theorem~\ref{thm:equiv:PU:PAri} is proved.}
	\end{proof}
\chapter[Local convergence to random P\'{o}lya point tree]{Local convergence to \\random P\'{o}lya point tree}
\label{chap:convergence}
\begin{flushright}
\footnotesize{}Based on:\\
\textbf{Universality of the local limit of\\
preferential attachment models}\cite{RRR22}
\end{flushright}
\begin{center}
	\begin{minipage}{0.7 \textwidth}
		\footnotesize{\textbf{Abstract.}
			This chapter proves the local convergence of the generalised preferential attachment models to the random P\'{o}lya point tree. We prove this using the second moment method. Additionally, we prove that the local convergence in terms of graph-density convergence. Further, as a consequence of local convergence, we derive the power-law exponent for the degree of a uniformly chosen vertex in the graph and neighbours of this uniformly chosen vertex.
			}
	\end{minipage}
\end{center}
\vspace{0.1cm}

\section{Introduction}\label{chap:convergence:sec:introduction}

Similar to many other random graph models, such as the configuration model, PAMs are called {\em locally tree-like graphs}, meaning that the neighbourhood of the majority of vertices is structured as a tree (up to a certain distance). Berger et al.\ initiated the study of local convergence of PAMs {in \cite{BergerBorgs}}. They showed that the finite neighbourhood of the graph converges to the corresponding neighbourhood of the P\'olya point tree (see {the description of the P\'olya point tree} in Section~\ref{chap:LWC:sec:RPPT}).

The main aim of this chapter is to {extend} the local convergence proof in \cite{BergerBorgs} {to a more general class of PAMs (including random out-degrees and related dynamics), as defined in Section~\ref{chap:intro:sec:model}.} {We achieve this by explicitly computing the} density of neighbourhoods of the PAMs. This helps us to extend the result in \cite{BergerBorgs} to models where one can accommodate negative fitness parameters and random {out-degrees}. The limiting random tree is an extension of the P\'olya point tree described in \cite{BergerBorgs}, which we call the {\em random P\'olya point tree}, as defined in Section~\ref{chap:LWC:sec:RPPT}.

The randomness of the {out-degrees} provides a surprising {size-biasing effect} in the limiting random tree. We show that there is {a} universal description of the limit by considering many possible affine variants of the PAMs. Additionally, we study \FC{the} {\em vertex-marked local convergence in probability} of the PAMs, which is an extension to the local convergence shown in \cite{BergerBorgs}. Here, the marks denote the {\em ages} of the vertices in the tree, and we prove convergence of the joint \emph{densities} of these ages.

The asymptotic degree distribution is a direct consequence of our local limit results. Deijfen et al. computed the power-law-exponent of asymptotic degree distribution for generalised versions of preferential attachment models in \cite{DEH09}. We re-establish the same power-law exponent using local limit properties. We extend these power law exponent results for neighbours of uniformly chosen random vertices for preferential attachment models using our local limit result.

Results on local convergence of related PAMs have been \FC{established} by Y. Y. Lo \cite{tiffany2021}, who analysed the local limit of the preferential attachment trees with i.i.d.\ random fitness parameter $\delta$. Rudas, T\'oth and Valk\'o \cite{RTV07} proved local convergence almost surely for general preferential attachment trees, based on a continuous-time embedding, which gives a continuous-time branching process by Jagers and Nerman \cite{JN84} in full generality. 
\begin{center}
	{\textbf{Organisation of the chapter}}
\end{center}
The chapter is organized as follows. In Section~\ref{chap:convergence:sec:local-convergence-result}, we discuss the local convergence result of generalised preferential attachment models and draw an outline of the proof to this local limit theorem. In Section~\ref{chap:convergence:sec:prelim}, we prove some preliminary results that we use heavily in the proof of the local convergence theorem. Section~\ref{chap:convergence:sec:local_convergence} provides the proof of our local convergence theorem described in Section~\ref{chap:convergence:sec:local-convergence-result} for models (A), (B), and (D). 
Lastly, in Section~\ref{chap:convergence:sec:degree-distribution}, we prove the power-law exponents for the asymptotic-degree distribution as a consequence of our local limit result.

\section{Local convergence of PAM}\label{chap:convergence:sec:local-convergence-result}
In this section, we present the local convergence theorem for generalised preferential attachment models and discuss various aspects of this theorem. Additionally, we provide a brief outline of the proof of the local convergence theorem.

\begin{Theorem}[\bf Local convergence theorem for PA models]\label{theorem:PArs}
	Let $M$ be an $\N$-valued random variable with finite $p$-th moment for some $p>1$ and $\delta>-\infsupp(M)$. Then the preferential attachment models $(A), (B)$ and $(D)$ of converge vertex-marked locally {in probability} to the random P\'olya point tree with parameters $M$ and $\delta$. 
\end{Theorem}
\paragraph{\bf Observations:}We make some remarks about the above result:
\begin{enumerate}
	\item Our proof uses {the finiteness} of the $p$-th moment for the proofs of some of the concentration bounds around the mean. It would be interesting to identify the precise necessary condition for the local limit result to hold.
	\item Berger et al.\ in \cite{BergerBorgs} assumed the fitness parameter $\delta$ to be non-negative, but here {we} allow for negative $\delta$.
	Note that this accommodates infinite-variance degree distributions used in \cite{jordan06,mori2005maximum,waclaw-sokolov}, and suggested in many applied works, see e.g.\ \cite{dorogovtsev2008,dorogovtsev2000,voitalov2019} and the references therein.
	\item {The \emph{vertex-mark} of the vertex $k$ in a preferential attachment model of size $n$ is $k/n$, limit of which represents \emph{age} of nodes in the limiting graph, random P\'{o}lya point tree.}
	\item {Berger et al.\ in \cite{BergerBorgs}} have shown that $\PA_n^{\sss ({m},\delta)}(d)$ converges locally {in probability} to {the} P\'olya point tree with parameters $m$ and $\delta$. 
	{Restricting to degenerate distributions,} our result {can} be viewed as an extension of \cite{BergerBorgs} to all preferential attachment models. Moreover our model considers the case where every vertex comes with an i.i.d.\ number of out-edges which has only finite $p$-th moment for some $p>1$ and we have considered any general starting graph $G_0$ of size $2$. If we do not assume that the initial graph is of size $2,$ then it increases the computational complexity, and hence we avoid this complication.
	\item We provide a proof in detail for Model (A). {The proof for} the models (B) and (D) is very similar and we {only} indicate the {necessary} changes. The fact that all these models have the same local limit is a sign of \emph{universality}.
	\item We prove a \emph{local density} result, which is stronger than in our main result, for models (A), (B) and (D), and interesting in its own right.
\end{enumerate}
\paragraph{\bf Idea of proof of Theorem~\ref{theorem:PArs}}
For any vertex-marked finite graph $\left(\tree,(a_\omega)_{\omega\in V(\tree)}\right)$ and $r\in \N,$ define 
\eqan{
N_{r,n}\left(\tree,(a_\omega)_{\omega\in V(\tree)}\right)=&\sum\limits_{v\in[n]}\one_{\left\{ B_r^{\sss(G_n)}(v)~\simeq~ \tree, ~|v_\omega/n-a_\omega|\leq 1/r~\forall\omega~\in~V(t) \right\}}~,\nn\\
=&\sum\limits_{v\in[n]}\one_{\big\{ d_{\mathcal{G}_\star}\big( \big(B_r^{\sss(G_n)}(v),v,u\mapsto u/n\big),\big(\tree,\emp,(a_\omega)_{\omega\in V(\tree)}\big)\big)\leq 1/r \big\}}\nn
}
where $G_n$ is taken as $\PA_n$ and $v_\omega$ is the vertex in $G_n$ corresponding to $\omega \in V(\tree)$. Then, {to prove} Theorem~\ref{theorem:PArs}, {by} Definition~\ref{def:vertex:marked:local:convergence} it is enough to show that ${N_{r,n}\left(\tree,(a_\omega)_{\omega\in V(\tree)}\right)/n}$ converges in probability to $\mu\left( B_r^{\sss(G)}(\emp)\simeq~\tree, ~|A_\omega-a_\omega|\leq 1/r~\forall\omega~\in~V(t) \right)$, where $\mu$ is the law of the limiting $\RPPT$ graph, and $A_\omega$ is the mark of the vertex in $\RPPT$ corresponding to $\omega\in V(\tree)$ and $\tree$ is a tree. We aim to prove this convergence using a second moment method, i.e., {we will prove that}
\eqan{
	&\frac{1}{n}\E\left[N_{r,n}\left(\tree,(a_\omega)_{\omega\in V(\tree)}\right) \right] \nn\\
	&\hspace{2cm}\to ~\mu\left( B_r^{\sss(G)}(\emp)\simeq~\tree, ~|A_\omega-a_\omega|\leq 1/r~\forall\omega~\in~V(t) \right),\label{eq:second:moment:explain:1}\\
	\text{and}~~&\frac{1}{n^2}\E\left[ N_{r,n}\left(\tree,(a_\omega)_{\omega\in V(\tree)}\right)^2\right] \nn\\
	&\hspace{2cm}\to~ \mu\left( B_r^{\sss(G)}(\emp)\simeq~\tree, ~|A_\omega-a_\omega|\leq 1/r~\forall\omega~\in~V(t) \right)^2.\label{eq:second:moment:explain:2}}
For proving the first, we show that the {\em joint density} of the {ages of the vertices in the} $r$-neighbourhood of a uniformly chosen vertex in $\PAM$ converges to that of $\RPPT$. {Calculating this joint density explicitly is quite} involved because of {the} dependence structure in the edge-connection probabilities of $\PAM$ models. In \cite{BBCS05}, the authors provide a P\'olya urn representation of model (D), {which made the proof in \cite{BergerBorgs} possible} since this representation {implies that the edges are {\em conditionally independent}}. {An essential step in our proof is thus to} construct similar P\'olya urn descriptions for models (A), (B) and (D), which we have already provided in Chapter~\ref{chap:PU_representation} in full detail. With this distributional equivalence in hand, we {can now compute the above joint density,} and show that {it} converges to that of the $\RPPT$.

{For the second moment,} we first expand the {square of the sum arising in the} numerator. From the expansion we observe that along with some vanishing terms, we obtain the joint density of the $r$-neighbourhoods of {\em two} uniformly chosen vertices. Next we prove that the $r$-neighbourhoods of two uniformly chosen vertices are disjoint with high probability (whp). Again, for the joint density calculation, we use the P\'olya urn description of {our} models. Since the edge-connection {events} are conditionally independent {by the P\'olya urn representations}, the two neighbourhoods are {\em conditionally independent} when they are disjoint. Therefore the joint density {factorizes} and we obtain the required result.

Though our main steps for proving the theorem {are the} same as {those in} \cite{BergerBorgs}, our proof techniques differ {significantly}, for example, we avoid the induction argument in the neighbourhood size. We use a coupling of the preferential attachment model with the P\'olya urn graphs through an explicit density computation. To summarize, we have two crucial steps in proving the main theorem:
\begin{itemize}
	\item[(a)] \textbf{Equivalence: }There exist P\'olya urn representations for models (A), (B), and (D), which we have already proved in Chapter~\ref{chap:PU_representation}.
	\item[(b)] \textbf{Convergence: }The joint density of the ages of the vertices in the $r$-neighbourhood of the P\'olya urn graphs converges to that of the RPPT, and this is the main content of this chapter.
\end{itemize}


For model (E), the edges are connected independently and the degrees of the older vertices are updated only after the new vertex is included in the graph with {\em all} its out-edges. This edge-connection procedure is different from other models and we do not have a P\'olya urn representation for this graph. 
We prove the convergence of models with a P\'olya urn representation. 
We do not, however, prove local convergence for models (E) and (F) here. A \emph{heuristic approach} would be to couple models (D) and (E) such that the probabilities of observing the $r$-neighbourhoods of a uniformly chosen vertex in the graphs are asymptotically equal. Since we have proven local convergence for model (D), the convergence for model (E) will follow immediately. Note, however, that this does \emph{not} imply the convergence of the joint density of the ages of vertices in the $r$-neighbourhood for model (E). Model (F) can be addressed similarly to model (E).

\section{Preliminary results}\label{chap:convergence:sec:prelim}
For analysing the convergence to the local limit, we need some analytical results for both $\CPU$ and $\RPPT$. Some of the proofs here follow those in \cite{BergerBorgs}, while others include significant novel ideas. We explain how these results can be reproduced for $\CPU^{\sss\rm{(NSL)}}$ or $\PU$ at the end of this section. This section is organised as follows. In Section \ref{subsec-prel-exp}, we provide auxiliary results on the expected values of random variables. In Section \ref{subsec-pos-conc-Beta-gamma-coupling}, we use these results to analyse the asymptotics of the positions $\mathcal{S}_k^{\sss(n)}$ and for an effective coupling of Beta and Gamma variables. Finally, in Section \ref{sec-attachment-RPPT}, we use these results to study the asymptotics of the attachment probabilities, and we prove some regularity properties of the RPPT.
\subsection{Preliminaries on expectations of random variables}
\label{subsec-prel-exp}
Conditionally on $\boldsymbol{m},$ let $(\chi_k)_{k\in\N}$ be a sequence of independent Gamma random variables with parameters $m_k+\delta$ and $1$.
Since $M$ has finite $p$-th moment, the $p$-th moment of all $\left( \chi_k \right)_{k\geq 1}$ are {finite as well:}
\begin{Lemma}\label{lem:finite_moment}
	{The random variables} $\left( \chi_k \right)_{k\geq 1}$ have {uniformly bounded} $p$-th moment.
\end{Lemma}
\begin{proof}
	Fix $u\in\N$. Let, conditionally on $\boldsymbol{m},\ X_1,\ldots,X_{m_u}$ be independent Gamma random variables with parameters $1+{\delta/m_u}$ and $2\E[M]+\delta$. Hence
	\[
	\E_m\left[ X_1^{p} \right] = \frac{\Gamma\left( 1+\frac{\delta}{m_u}+p \right)}{\Gamma\left( 1+\frac{\delta}{m_u} \right)}\leq \frac{\Gamma(1+|\delta|+p)}{A}<K,
	\]
	where
	\begin{align*}
		A=& \min\left\{ \Gamma\left(1+\left| \frac{\delta}{m} \right|\right),\Gamma\left(1-\left| \frac{\delta}{m} \right|\right) \right\},
	\end{align*}
	and $m=\infsupp(M)$. By the triangle inequality, $\E_m\left[|X_1-\E_m[X_1]|^{p}\right]$ is bounded and the upper bound is independent of $u$ and $m_u$. {By} \cite[Corollary 8.2]{allangut}, for $X_1,\ldots, X_n$ i.i.d.\ mean $0$ random variables with finite $\ell$-th moment, there exists a constant $B_{\ell}<\infty$ depending only on $\ell$ such that
	\eqn{\label{allan:gut:result}
		\E\big[|X_1+\cdots+X_n|^\ell\big]\leq \begin{cases}
			& B_{\ell} n \E[|X_1|^\ell] \quad\mbox{for }1\leq \ell \leq 2,\\
			& B_{\ell} n^{\ell/2}\E\left[|X_1|^{\ell/2}\right] \quad\mbox{for }\ell>2.
	\end{cases}}
	Since $\chi_k=X_1+\cdots+X_{m_k}$ and $X_1,\ldots,X_{m_k}$ are i.i.d.\ random variables, there exists $B_{p}$ finite constant depending only on $p$ such that
	\eqn{\E_m\left[ |\chi_k-\E_m[\chi_k]|^{p} \right] \leq B_{p} m_k^{\FC{p}} \E_m\left[|X_1-\E_m[X_1]|^{p}\right]\leq  C_1m_k^{\FC{p}}~,}
	\FC{for some $C_1>0$}. Using {the} triangle inequality {for the  $L_{p}$-norm},
	\eqn{\label{for:lem:finite_moment:1}
		\E_m\left[ \chi_k^{p} \right] \leq \E_m\left[ \chi_k \right]^{p} + C_1m_k^{\FC{p}} \leq C m_k^{p},}
	\FC{for some $C>0$.} Since we have assumed the existence of the $p$-th moment of $M$, \eqref{for:lem:finite_moment:1} implies that  $\E\left[ \chi_k^{p} \right]$ is uniformly bounded from above.
\end{proof}
Next, we prove an auxiliary lemma that we will use several times, and which is a direct application of the dominated convergence theorem and strong law of large numbers:
\begin{Lemma}
	\label{prop:M-inverse:expectation}
	Let $X_1, X_2,\ldots$ be a sequence of i.i.d.\ random variables with $X_1>c$ for some $c>0$ a.s.\ and finite mean. Then, with $X_{[n]}=X_1+\cdots+X_n$,
	\eqn{\label{eq:prop:M-inverse:expectation}\E\left[ \frac{1}{X_{[n]}} \right] = \left( 1+o(1) \right)\frac{1}{n\E[X_1]}~.}
\end{Lemma}
\begin{proof}
	Note that $X_i\geq c>0$, hence both $1/\E[X]$ and $n/X_{[n]}$ have upper bounds $1/c$. By the strong law of large numbers,
	\[
	\frac{X_{[n]}}{n}\overset{a.s.}{\to} \E[X_1].
	\]
	Since $\E[X_1]>0$ and $n/X_{[n]}\leq 1/c$, by the dominated convergence theorem,
	\begin{align*}
		\E\left[ \frac{n}{X_{[n]}} \right] =& (1+o(1))\frac{1}{\E[X_1]}.
	\end{align*}
	Hence, the lemma follows immediately.
\end{proof}
\subsection{Position concentration and Gamma-Beta couplings}
\label{subsec-pos-conc-Beta-gamma-coupling}
From \eqref{eq:edge-connecting_probability:CPU:SL} we obtain the conditional edge-probabilities for $\CPU^{\sss\rm{(SL)}}$ as
\eqn{\label{edge:connecting:probability:CPU:SL}
		\prob_{m,\psi}\left( u\overset{j}{\rightsquigarrow} v~\text{in }\CPU_n^{\sss\rm{(SL)}} \right) =~ \begin{cases}
			\frac{\mathcal{S}_v^{\sss(n)}}{\mathcal{S}_{u,j}^{\sss(n)}}\Big( 1-\prod\limits_{l\in [m_v]}\left(1-\psi_{m_{[v-1]+l}}\right) \Big)~&\text{for }u>v,\\ 
			1-\prod\limits_{l\in[j]}\left(1-\psi_{m_{[v-1]+l}}\right)~&\text{for }u=v.
		\end{cases}
}
Similarly, from \eqref{eq:edge-connecting_probability:CPU:NSL}, the conditional edge-probabilities {for} $\CPU^{\sss\rm{(NSL)}}$ are given by
\eqn{\label{edge:connecting:probability:CPU:NSL}
	\begin{split}
		\prob_{m,\psi}\left( u\overset{j}{\rightsquigarrow} v~\text{in }\CPU_n^{\sss\rm{(NSL)}} \right) = \begin{cases}
			\frac{\mathcal{S}_v^{\sss(n)}}{\mathcal{S}_{u,j-1}^{\sss(n)}}\Big( 1-\prod\limits_{l\in [m_v]}\big(1-\psi_{m_{[v-1]+l}}\big) \Big),&\text{for }u>v,\\ 
			1-\prod\limits_{l\in[j-1]}\big(1-\psi_{m_{[v-1]+l}}\big),&\text{for }u=v,
		\end{cases}
\end{split}}
and for $\PU^{\sss\rm{(NSL)}}$ it is
\eqn{\label{edge:connecting:probability:PU:NSL}
	\begin{split}
		\prob_{m,\psi}\left( u\overset{j}{\rightsquigarrow} v~\text{in }\PU_n^{\sss\rm{(NSL)}} \right) =~ 
		\psi_v\frac{\mathcal{S}_v^{\sss(n)}}{\mathcal{S}_{u-1}^{\sss(n)}}~.
\end{split}}

{We thus need to analyze} the asymptotics of the $\mathcal{S}_k^{\sss(n)}$ of $\CPU_n^{\sss\rm{(SL)}}(\boldsymbol{m},\boldsymbol{\psi})$ with $\boldsymbol{\psi}$ defined in \eqref{def:psi:1} and \eqref{def:psi:2}, which we do in the following proposition:
\begin{Proposition}[Position concentration for PU and CPU]\label{lem:position:concentration}
	Recall that $\chi={(\E[M]+\delta)}/{(2\E[M]+\delta)}$. Then, for every $\varepsilon,\omega>0,$ there exists $K<\infty$, such that for all $n>K$ and with probability at least $1-\varepsilon$, for both $\PU$ and $\CPU$,
	\eqn{
		\label{eq:lem:position:concentration:1}
		\max\limits_{k\in[n]} \left| \mathcal{S}_k^{\sss(n)} - \left(\frac{k}{n}\right)^\chi \right| \leq \omega,
	}
	and
	\eqn{
		\label{eq:lem:position:concentration:2}
		\max\limits_{k\in[n]\setminus [K]} \left(\frac{k}{n}\right)^{-\chi}\left| \mathcal{S}_k^{\sss(n)} - \left(\frac{k}{n}\right)^\chi \right| \leq \omega.
	}
\end{Proposition}
\medskip
Note that, although the $\Beta$ random variables are different, the position concentration lemma hold true for both $\PU$ and $\CPU$.
\begin{proof}[\bf Proof of Proposition \ref{lem:position:concentration}]
	We follow the line of proof provided in \cite[Proof of Lemma~3.1]{BergerBorgs}
	with the adaptations as required for our case of i.i.d.\ out-degrees.
	\noindent
	\paragraph{Proof for $\CPU$} Fix $\omega,\varepsilon>0$, and let $\bar{\omega}= \log(1+\omega)$. We use the definition of $\mathcal{S}_k^{\sss(n)}$ to bound the error in estimating $\mathcal{S}_k^{\sss(n)}$ by $\left( \frac{k}{n} \right)^{\chi}$. For all $k\in[n-1]$,
	\eqn{\label{for:lem:position:concentration:1}
		{\mathcal{S}}_k^{\sss(n)} = \prod\limits_{l=m_{[k]}+1}^{m_{[n]}} (1-\psi_l) = \exp{\left[ \sum\limits_{l=m_{[k]}+1}^{m_{[n]}} \log (1-\psi_l) \right]},
	}
	with $\mathcal{S}_n^{\sss(n)}\equiv 1$. We concentrate on the argument of the exponential in \eqref{for:lem:position:concentration:1}. Note that
	\eqn{
		\label{for:lem:position:concentration:2}
		\var \left( \log (1-\psi_l) \right) \leq \E \left[ \log^2 (1-\psi_l) \right] \leq \E\left[ \frac{\psi_l^2}{(1-\psi_l)^2} \right].}
	By \eqref{for:lem:position:concentration:2} and Kolmogorov's maximal inequality,
	\eqan{\label{for:lem:position:concentration:3}
		&\prob\Big( \max\limits_{l\in \left[ m_{[n]} -1 \right]} \Big| \sum\limits_{k=l+1}^{m_{[n]}} \log(1-\psi_k) -\E\Big[ \sum\limits_{k=l+1}^{m_{[n]}} \log(1-\psi_k) \Big] \Big| \geq \frac{\bar{\omega}}{2} \Big)\nn\\
		&\hspace{6cm}\leq \frac{4}{\bar{\omega}^2}\E\left[\sum\limits_{i=2}^{m_{[n]}}\E_m\left[ \frac{\psi_i^2}{(1-\psi_i)^2} \right]\right].
	}
	Equation~\eqref{for:lem:position:concentration:3} shows that the maximum of the fluctuations of the argument in \eqref{for:lem:position:concentration:1} can be bounded by the variances of the single terms. By properties of the $\Beta$ distribution, and recalling that, for $u=3,\ldots,n$ and $j\in[m_u]$,
	\[
	\psi_{m_{[u-1]}+j}\sim \Beta \left( 1+\frac{\delta}{m_u}, a_{[2]} + 2\left( m_{[u-1]}+j-3 \right) +(u-1)\delta +\frac{j-1}{m_u}\delta -1 \right),
	\]
	we can {bound}, for $u>2$ and $j\in[m_u]$,
	\eqn{\label{for:lem:position:concentration:4}
		\E_m\Bigg[ \frac{\psi_{m_{[u-1]}+j}^2}{\Big( 1-\psi_{m_{[u-1]}+j} \Big)^2} \Bigg] = \mathcal{O}\left( \big(m_{[u-1]}+j\big)^{-2} \right).}
	Notice that {$m_{[n]}\geq n$ for all $n\geq 1,$} and hence $m_{[n]}\to \infty$ as $n\to\infty$. Equation~\eqref{for:lem:position:concentration:4} assures us that the sum on the RHS of \eqref{for:lem:position:concentration:3} is finite as $n\rightarrow\infty$. Therefore, we can fix $N_1(\bar{\omega})\in\N$ such that $\E\Big[\sum_{i=N_1}^\infty {\psi_i^2}/{(1-\psi_i)^2} \Big]\leq { \varepsilon\bar{\omega}^2/4}.$ As a consequence, bounding the sum on the RHS of \eqref{for:lem:position:concentration:3} by the tail of the series, for $n>N_1,$
	\eqan{\label{for:lem:position:concentration:5}
			&\prob\left( \max\limits_{i\in\left[ m_{[n]}-1 \right]\setminus \left[ m_{[N_1]}-2 \right]}\left| \sum\limits_{l=i+1}^{m_{[n]}} \log(1-\psi_l) - \E\left[ \sum\limits_{l=i+1}^{m_{[n]}} \log(1-\psi_l)  \right]  \right|\geq \frac{\bar{\omega}}{2} \right)\nn\\
			&\hspace{6cm}\leq\frac{4}{\bar{\omega}^2}\E\left[ \sum\limits_{i=N_1}^\infty \frac{\psi_i^2}{(1-\psi_i)^2} \right]\leq \varepsilon~.
	}
	{Next, we wish to compare the expectations of $\sum\limits_{k=i}^{m_{[n]}}\log(1-\psi_k)$ and  $\sum\limits_{k=i}^{m_{[n]}}\psi_k$ for $n$ large enough.} Using the fact that, for $x\in(0,1)$, 	
	\eqn{\label{for:lem:position:concentration:log-result}
	\left|\log(1-x)+x\right|\leq x^2/(1-x),}
	and using \eqref{for:lem:position:concentration:4} we {bound}, for $m_{[N_1]}\leq i\leq m_{[n]}$, 
	\eqn{\label{for:lem:position:concentration:6}
		\Bigg| \E\Big[ \sum\limits_{l=i+1}^{m_{[n]}} \log(1-\psi_l) \Big] + \E\Big[ \sum\limits_{l=i+1}^{m_{[n]}} \psi_l \Big] \Bigg| \leq \E\Bigg[ \sum\limits_{l=i+1}^{m_{[n]}}\frac{\psi_l^2}{(1-\psi_l)} \Bigg] < \infty.
	}
	{Similarly,} there exists $N_2(\omega)\in\N$ such that $\E\big[ \sum\limits_{l=N_2}^{\infty}\psi_l^2/(1-\psi_l) \big]\leq {\bar{\omega}/3}$ and ${1/\sqrt{N_2}}\leq {\bar{\omega}/6}$. On the other hand, for $u>2$ and $j\in[m_u]$,
	\eqn{\label{for:lem:position:concentration:8}
		\begin{split}
			\E_m\left[ \psi_{m_{[u-1]}+j} \right]  &= \frac{1+\frac{\delta}{m_u}}{a_{[2]}+2\left[ m_{[u-1]}+j-3 \right]+(u-1)\delta + \frac{j}{m_u}\delta}\\
			&= \frac{1+\frac{\delta}{m_u}}{2 m_{[u-1]} +(u-1)\delta}\left( 1+ o\left( 1 \right)\right).
	\end{split}}
	Therefore, for all $u>2,$
	\eqan{
		\label{for:lem:position:concentration:9}
		\E\Big[ \sum\limits_{j=1}^{m_u}\psi_{m_{[u-1]}+j}  \Big]&=\E \Big[ \E_m\Big[ \sum\limits_{j=1}^{m_u}\psi_{m_{[u-1]}+j} \Big] \Big]\nn\\
		&= \E\left[ \frac{m_u+\delta}{2m_{[u-1]}+(u-1)\delta} \right]\left( 1+ o\left( 1 \right)\right)\nn\\
		&=(\E[M]+\delta)\E\left[\frac{1}{2m_{[u-1]}+(u-1)\delta} \right]\left( 1+ o\left( 1 \right)\right)\nn\\
		&=\frac{\E[M]+\delta}{(u-1)(2\E[M]+\delta)}\left( 1+ o\left( 1 \right)\right)\nn\\
		&{=\frac{\chi}{u-1}\left( 1+ o\left( 1 \right)\right)},
	}
	{by Lemma~\ref{prop:M-inverse:expectation}.}
	Now, using the bounds in \eqref{for:lem:position:concentration:6} and \eqref{for:lem:position:concentration:9}, for all $N_2\leq k\leq n,$
	\eqn{\label{for:lem:position:concentration:12}
		\begin{split}
			\Bigg| \E\Big[ \sum\limits_{i= m_{[k]}+1}^{m_{[n]}} \log(1-\psi_i) \Big] - \chi\log\left(\frac{k}{n}\right) \Bigg| \leq  o\left( k^{-1/2} \right) + \bar{\omega}/3\leq \frac{\bar{\omega}}{2}.
		\end{split}
	}
	As a consequence, for $n>N_2,$
	\eqn{
		\label{for:lem:position:concentration:13}
		\max\limits_{k\in (N_2,n]} \left| \E\left[ \sum\limits_{i= m_{[k]}+1}^{m_{[n]}} \log(1-\psi_i) \right] - \chi\log\left(\frac{k}{n}\right) \right| \leq \frac{\bar{\omega}}{2}.
	} 
	Let $N_0=\max\{ N_1,N_2 \}$. By \eqref{for:lem:position:concentration:13} and \eqref{for:lem:position:concentration:5}, for $n>N_0$,
	\eqn{\label{for:lem:position:concentration:14}
		\prob\left( \max\limits_{k\in (N_0,n]} \left| \sum\limits_{i= m_{[k]}+1}^{m_{[n]}} \log(1-\psi_i) - \chi\log\left(\frac{k}{n}\right) \right| \geq \bar{\omega} \right) \leq \varepsilon.
	}
	Recalling that $\log \mathcal{S}_k^{\sss(n)} = \sum\limits_{i= m_{[k]}+1}^{m_{[n]}} \log(1-\psi_i)$ and $\bar{\omega}=\log(1+\omega)$,  \eqref{for:lem:position:concentration:14} {implies} that with probability at least $1-\varepsilon,$ for every $i=(N_0,n]$,
	\eqn{\label{for:lem:position:concentration:15}
		\frac{1}{1+\omega}\left( \frac{k}{n} \right)^\chi \leq \mathcal{S}_k^{\sss(n)}\leq (1+\omega)\left( \frac{k}{n} \right)^\chi.}
	Since, $\frac{1}{1+\omega}\geq 1-\omega$, we obtain from \eqref{for:lem:position:concentration:15} that
	\eqn{\label{for:lem:position:concentration:16}
		\prob\Big( \bigcap\limits_{u\in(N_0,n]}\left\{ \left| \mathcal{S}_u^{\sss(n)} - \left( \frac{u}{n} \right)^\chi \right|\leq \omega \left( \frac{u}{n} \right)^\chi \right\} \Big)\geq 1-\varepsilon,
	}
	which proves \eqref{eq:lem:position:concentration:2}.  
	
	{Finally, to} prove \eqref{eq:lem:position:concentration:1}, we observe that, for fixed $\omega>0$ and $\varepsilon>0,\ \left(\frac{N_0}{n}\right)^\chi \leq {\omega/3}$ for large enough $n$. Now,
	\eqn{\label{for:lem:position:concentration:17}
		\max\limits_{u\in[N_0]}\left| \mathcal{S}_u^{\sss(n)} - \left( \frac{u}{n} \right)^\chi \right| \leq \mathcal{S}_{N_0}^{\sss(n)} + \left(\frac{N_0}{n}\right)^\chi \leq \omega,
	}
	as required.
	
	\medskip\noindent
	\paragraph{Proof for $\PU$}
	In P\'olya urn graphs, conditionally on ${\boldsymbol{m}},$ for $u\geq 3$, 
	\eqn{\label{for:prop:pos:conc:PU:0-0}\psi_u\sim\Beta\left( m_u+\delta,a_{[2]}+2(m_{[u-1]}-2)+m_u+(u-1)\delta \right).}
	We prove the position concentration lemma for $\PU$ following the proof for $\CPU$. Using Kolmogorov's inequality as in \eqref{for:lem:position:concentration:3}, 
	\eqan{\label{for:prop:pos:conc:PU:0-1}
		&\prob\Big( \max\limits_{l\in[n-1]\setminus [n_1-2]}\big| \sum\limits_{k=l+1}^n \log (1-\psi_k) - \E\big[ \sum\limits_{k=l+1}^n \log (1-\psi_k) \big] \big|\geq \frac{\bar{\omega}}{2} \Big)\nn\\
		&\hspace{7cm}\leq \frac{4}{\bar{\omega}^2}\E\Big[ \sum\limits_{i=n_1}^n\frac{\psi_i^2}{(1-\psi_i)^2)} \Big]~.}
	Now similarly as in \eqref{for:lem:position:concentration:4} we first bound $\E\Big[ \frac{\psi_i^2}{(1-\psi_i)^2} \Big]$ by $i^{-p}$ and thus we can choose $n_1$ large enough such that the RHS of \eqref{for:prop:pos:conc:PU:0-1} is smaller than $\varepsilon$. Then,
	\eqn{\label{for:prop:pos:conc:PU:0}\E\left[ \frac{\psi_u^2}{(1-\psi_u)^2} \right]\leq \E\Bigg[ \Big(\frac{m_u+\delta}{a_{[2]}+2(m_{[u-1]}-2)+m_u+(u-1)\delta}\Big)^2 \Bigg]~.}
	{ Since the random variable in the RHS of \eqref{for:prop:pos:conc:PU:0} is bounded above by $1,$ we can bound its second moment by its $p$-th moment and obtain the following bound on the LHS of \eqref{for:prop:pos:conc:PU:0}:
		\eqn{\begin{split}
				\E\left[ \frac{\psi_u^2}{(1-\psi_u)^2} \right]\leq&\E\Bigg[ \Big(\frac{m_u+\delta}{a_{[2]}+2(m_{[u-1]}-2)+m_u+(u-1)\delta}\Big)^{p} \Bigg]\\
				\leq& \E\Bigg[ \Big(\frac{m_u+\delta}{a_{[2]}+2(m_{[u-1]}-2)+(u-1)\delta}\Big)^{p} \Bigg]~.
	\end{split}}}
	Now the numerator and the denominator are independent of each other. Using the law of large numbers, similar to the one used to prove Lemma~\ref{prop:M-inverse:expectation}, and Lemma~\ref{prop:M-inverse:expectation},
	\eqan{\label{for:prop:pos:conc:PU:1-1}
		&\E\Bigg[ \Big(\frac{1}{a_{[2]}+2(m_{[u-1]}-3)+(u-1)\delta}\Big)^{1+p} \Bigg] \nn\\
		&\hspace{2cm}= (1+o(1))\frac{1}{u^{1+p}(2\E[M]+\delta)^{1+p}}~.}
	Since $m_u$ has finite $(1+p)$-th moment, there exists a constant $\xi_0>0$ such that
	\eqn{\label{for:prop:pos:conc:PU:1-2}
		\E\left[ \frac{\psi_u^2}{(1-\psi_u)^2} \right]\leq \xi_0 u^{-(1+p)}~,}
	which replaces \eqref{for:lem:position:concentration:4}. Now it remains to adapt a similar result to \eqref{for:lem:position:concentration:9} to complete the proof for $\PU$. By \eqref{for:prop:pos:conc:PU:0-0},
	\eqn{
		\label{for:prop:pos:conc:PU:1}
		\E[\psi_u]=\E\left[\frac{m_u+\delta}{a_{[2]}+2(m_{[u]}-2)+u\delta}\right]~.}
	Note that the numerator and the denominator are not independent here. We can bound the RHS from above as
	\eqan{\label{for:prop:pos:conc:PU:2}
			\E\left[\frac{m_u+\delta}{a_{[2]}+2(m_{[u]}-2)+u\delta}\right]&\leq \E\left[ \frac{m_u+\delta}{a_{[2]}+2(m_{[u-1]}-2)+(u-1)\delta} \right]\nn\\
			&=\frac{\chi}{u-1}(1+{o}(1))~.
	}
	On the other hand for the lower bound, we truncate $m_u$ at $\log (u)$ as
	\eqan{\label{for:prop:pos:conc:PU:3}
			&\E\left[\frac{m_u+\delta}{a_{[2]}+2(m_{[u]}-2)+u\delta}\right]\nn\\
			&\hspace{2cm}\geq\E\left[\frac{m_u\one_{\{m_u\leq \log u\}}+\delta}{a_{[2]}+2(m_{[u-1]}-2)+2\log (u)+u\delta}\right].
	}
	Again we can split the numerator and the denominator since they are now independent. Since $m_u$ has finite mean, $\E[m_u\one_{\{m_u\leq \log u\}}+\delta] = (\E[M]+\delta)(1+o(1)).$ {By Lemma~\ref{prop:M-inverse:expectation},}
	\eqan{\label{for:prop:pos:conc:PU:4}
			\E\left[\frac{m_u+\delta}{a_{[2]}+2(m_{[u]}-2)+u\delta}\right]&\geq \frac{\E[M]+\delta}{(u-1)(2\E[M]+\delta)}(1+o(1))\nn\\
			&=\frac{\chi}{u-1}(1+o(1)).
	}
	Hence from \eqref{for:prop:pos:conc:PU:2} and \eqref{for:prop:pos:conc:PU:4}, we {obtain a} similar result as {in} \eqref{for:lem:position:concentration:8} as
	\eqn{\label{for:lem:position:concentration:PU:01}
		\E[\psi_u]=\E\left[\frac{m_u+\delta}{a_{[2]}+2(m_{[u]}-2)+u\delta}\right] = \frac{\chi}{u-1}(1+o(1)).
	}
	Now, using \eqref{for:lem:position:concentration:log-result} and \eqref{for:lem:position:concentration:PU:01}, we obtain that there exists $N_2\in\N$ (depending on $\bar{\omega}$), such that,
	\eqn{\label{for:lem:position:concentration:PU:02}
	\max\limits_{k\in(N_2,n]}\left| \E\Big[ \sum\limits_{i=k+1}^n \log(1-\psi_k) \Big] - \chi\log\Big(\frac{k}{n}\Big) \right|\leq \frac{\bar{\omega}}{2}~.}
	The remainder of the proof follows exactly the same way as in \eqref{for:lem:position:concentration:14}-\eqref{for:lem:position:concentration:17}.
\end{proof}

In collapsed P\'olya urn graphs, the sequence $\{ \mathcal{S}_{k,j}^{\sss(n)}\colon j\in[m_k] \}$ defined in \eqref{eq:def:position:CPU} is an increasing sequence and from \FC{Proposition}~\ref{lem:position:concentration}, with probability at least $1-\varepsilon,$ for all $n>k>K,$
\eqn{\label{position_concentration:rest}
	\begin{split}
		\mathcal{S}_{k,j}^{\sss(n)} &\leq \mathcal{S}_{k}^{\sss(n)} \leq \left( \frac{k}{n} \right)^\chi (1+2\omega),\\
		\mbox{and }\qquad\mathcal{S}_{k,j}^{\sss(n)} &\geq \mathcal{S}_{k-1}^{\sss(n)} \geq \left( \frac{k-1}{n} \right)^\chi (1-\omega) \geq \left( \frac{k}{n} \right)^\chi (1-2\omega).
\end{split} }
Therefore it is evident that $k$ for sufficiently large, the variation in {$j\mapsto \mathcal{S}_{k,j}^{\sss(n)}$ is minor.} Hence the {differences between the} SL and NSL {versions} of the collapsed P\'olya Urn graphs {will also be minor.} 
The following proposition provides us with a nice coupling between the $\Beta$ random variables $\boldsymbol{\psi}$ and a sequence of Gamma variables. This coupling will be very much useful in analysing the remaining terms in \eqref{edge:connecting:probability:CPU:SL}, \eqref{edge:connecting:probability:CPU:NSL} and \eqref{edge:connecting:probability:PU:NSL}. Before diving into the proposition and its proof, let us denote $p=1+\varrho$ for some $\varrho>0$.
\begin{Proposition}[Beta-Gamma coupling for $\PU$ and $\CPU$]\label{prop:Betagamma:coupling:CPU}
	Consider the sequence $(\psi_k)_{k\in\N}$ for $\CPU$ and define the sequence $(\phi_k)_{k\in\N}$ as
	\eqn{
		\label{def:phi}
		\phi_v:=\sum\limits_{j\in[m_v]} \psi_{m_{[v-1]}+j}.
	}
	Recall that, {conditionally on $\boldsymbol{m}$}, $\chi_k$ has a Gamma distribution with parameters $m_k+\delta$ and $1$.
	Then, {there exist $K_\varepsilon$ and $K_\eta$ such that}
	\begin{itemize}
		\item[(i)] with probability at least $1-\varepsilon$, $\chi_k\leq k^{1-\varrho/2}$ for all $k\geq K_\varepsilon$;
		\smallskip
		\item[(ii)] consider the function $h_k^{\phi}(x)$ such that $\prob(\phi_k\leq h_k^{\phi}(x)) = \prob(\chi_k\leq x)$, where conditionally on $\boldsymbol{m},~\chi_k$  has a Gamma distribution with parameters $m_k+\delta$ and 1. 
		Then, for every $\eta\in\left(0,\tfrac{1}{2}\right),$ there exists a sufficiently large $K_\eta\geq 1$ depending on $\eta$, such that, for all $k\geq K_{\eta}$ and $x\leq k^{1-\varrho/2}$,
		\eqn{\label{for:prop:Betagamma:coupling}
			\frac{1-\eta}{k(2\E[M]+\delta)}x\leq h_k^{\phi}(x)\leq \frac{1+\eta}{k(2\E[M]+\delta)}x.
		}
	\end{itemize}
	Since $(\psi_k)_{k\geq 1}$ is the $\PU$ analogue of the sequence $(\phi_k)_{k\geq 1}$, replacing $(\phi_k)_{k\geq 1}$ by {the} corresponding $(\psi_k)_{k\geq 1}$ of $\PU$ defined in \eqref{eq:def:psi:1}, {a} similar Beta-Gamma coupling holds for $\PU$.
\end{Proposition}
Berger et al.\ \cite[Lemma~3.2]{BergerBorgs} {provide} a {related} result for degenerate $M$, but our {proof technique is a bit different}. For obtaining the upper and lower bounds in \eqref{for:prop:Betagamma:coupling}, we {use a} correlation inequality \cite[Lemma~1.24]{vdH2} and Chernoff's inequality.
\begin{proof}
	{First, we start by proving $(i)$. Using Markov's inequality, 
		\eqan{\label{for:lem:Beta-gamma:1:2}
			\prob\left( \chi_k\geq k^{1-\varrho/2} \right)&=\prob\left( \chi_k^{1+\varrho}\geq k^{(1-\varrho/2)(1+\varrho)} \right)\nn\\
			&\leq \E\left(\chi_k^{1+\varrho}\right) k^{-\left( 1+\varrho(1-\varrho)/2 \right)}.
		}
		By Lemma~\ref{lem:finite_moment}, $\E\left( \chi_k^{p} \right)$ is uniformly bounded. Therefore the RHS of \eqref{for:lem:Beta-gamma:1:2} is summable and we obtain $(i)$ using the Borel-Cantelli lemma.}\\
	{We} proceed to prove $(ii).$
	Let 
	\[
	b_{u,j} = a_{[2]}+2(m_{[u-1]}+j-3)+(u-1)\delta +\frac{(j-1)}{m_u}\delta -1,
	\]
	and, conditionally on $\boldsymbol{m}$, let $\left(\chi_{k}^\prime\right)_{k\in\N}$ be a sequence of independent Gamma variables such that
	\eqn{
		\label{for:lem:Beta-gamma:2}
		\chi_{m_{[u-1]}+j}^\prime \sim {\rm{Gamma}}\left( 1+{\delta/m_{u}}, 1 \right).}
	The function $x\mapsto h_{u,j}^{\phi}(x)$ is defined such that for all $x\leq b_{u,j}^{1-\varrho/2},$
	\[
	\prob\left( \psi_{m_{[u-1]}+j}\leq h_{u,j}^{\phi}(x) \right) = \prob\left( \chi_{m_{[u-1]+j}}^\prime \leq x \right).
	\]
	To prove {part $(ii)$}, we start with the sequence $(\psi_k)_{k\in\N}$ with the aim to couple it with appropriately scaled $\left( \chi_k^\prime \right)_{k\in\N}$. We will prove \eqref{for:prop:Betagamma:coupling} using the following claim:
	\begin{Claim}\label{claim:beta-gamma-1}
		There exists $K_\eta\geq 1$ such that for all $x\leq \left( b_{u,j} \right)^{1-\varrho/2}$ and $u>K_\eta$,
		\eqn{\label{for:lem:Beta-gamma:3}
			\left(1-\frac{\eta}{2}\right)\frac{x}{b_{u,j}}\leq h_{u,j}^{\phi}(x)\leq \frac{x}{b_{u,j}}.}
	\end{Claim}
	\paragraph{\textbf{Proof of part $(ii)$ subject to Claim~\ref{claim:beta-gamma-1}}:}
	From Claim~\ref{claim:beta-gamma-1} and part $(i)$, there exists a $K_\eta$ such that for any $u> K_\eta$, we can couple $\big(\psi_{m_{[u-1]}+j}\big)_{\substack{u>K_\eta\\j\in [m_u]}}$ and $\big(\chi_{m_{[u-1]}+j}^\prime\big)_{\substack{u>K_\eta\\j\in [m_u]}}$ with probability at least $1-\varepsilon$, such that
	\eqn{\label{for:lem:Beta-gamma:4}
		\chRounak{\left( 1-\frac{\eta}{2} \right) \frac{\chi_{m_{[u-1]}+j}^\prime}{b_{u,j}}\leq \psi_{m_{[u-1]}+j}\leq \frac{\chi_{m_{[u-1]}+j}^\prime}{b_{u,j}}~.}
	}
	\chRounak{Rearranging the inequalities in \eqref{for:lem:Beta-gamma:4}, we obtain}
	\eqn{\label{eq:major:3:4}
		\chRounak{\psi_{m_{[u-1]}+j}\leq \frac{\chi_{m_{[u-1]}+j}^\prime}{b_{u,j}}\leq \Big(1+\frac{3}{4}\eta\Big)\psi_{m_{[u-1]}+j}.}}
	Since $b_{u,j}$ is random, we replace it with its expectation and encounter some error. By the law of large numbers,
	\eqn{\label{for:lem:Beta-gamma:5}
		\frac{m_{[u-1]}}{u}\overset{a.s.}{\to}\E[M],}
	and hence, \chRounak{depending on $\eta$ and $\vep$, there exists $K_\eta(\vep)\geq 1$ large enough such that, with probability at least $1-\vep,$ the error {can} be bounded as}
	\eqn{
		\label{for:lem:Beta-gamma:6}
		\chRounak{\left| \frac{b_{u,j}-u(2\E[M]+\delta)}{u(2\E[M]+\delta)} \right|\leq \frac{\eta}{4},\quad \mbox{for all } u>K_\eta(\vep).}
	}
	Therefore, from \eqref{for:lem:Beta-gamma:4} and \eqref{for:lem:Beta-gamma:6}, with probability at least $1-2\varepsilon,$
	{the sequences} $\left(\psi_{m_{[u-1]}+j}\right)_{\substack{u>K_\eta\\j\in m_{[u]}}}$ and $\left(\chi_{m_{[u-1]}+j}^\prime\right)_{\substack{u>K_\eta\\j\in m_{[u]}}}$ can be coupled such that
	\eqn{\left( 1-\eta \right)\psi_{m_{[u-1]}+j}\leq \frac{\chi_{m_{[u-1]}+j}^\prime}{u(2\E[M]+\delta)}\leq (1+\eta) \psi_{m_{[u-1]}+j}.}
	{Thus}, summing over all $j\in[m_u],$ we obtain \eqref{for:prop:Betagamma:coupling} subject to \eqref{for:lem:Beta-gamma:3}. {We are left to proving Claim \ref{claim:beta-gamma-1}:}
	\paragraph{\textbf{Proof of Claim \ref{claim:beta-gamma-1}}}
	To prove the claim it is enough to show that for {all} $x\leq \left( b_{u,j} \right)^{-\varrho/2}$ and $u>K_\eta\geq {1/\sqrt{\eta}},$
	\eqan{\label{for:lem:Beta-gamma:7}
			\prob_m\left( \psi_{m_{[u-1]}+j}\leq (1-\eta)x \right) &\leq \prob_m\left( \chi_{m_{[u-1]}+j}^\prime\leq b_{u,j} x \right)\nn\\
			&\hspace{2.5cm}\leq \prob_m\left( \psi_{m_{[u-1]}+j}\leq x \right).
	}
	\paragraph{\textbf{The upper bound}}
	With $\alpha_u = 1+{\delta/m_u},$ we bound
	\eqn{
		\prob_m\left( \chi_{m_{[u-1]}+j}^\prime\leq b_{u,j} x \right) = \frac{\int\limits_0^{b_{u,j}x} y^{\alpha_u-1}e^{-y}\,dy}{\int\limits_0^{\infty} y^{\alpha_u-1}e^{-y}\,dy}\leq  \frac{\int\limits_0^{b_{u,j}x} y^{\alpha_u-1}e^{-y}\,dy}{\int\limits_0^{b_{u,j}} y^{\alpha_u-1}e^{-y}\,dy}.\nonumber
	}
	Then, using a change of variables and adjusting the missing {factors from the $\Beta$ density,} we get
	\eqn{\label{for:lem:Beta-gamma:8}
		\begin{split}
			\prob_m\left( \chi_{m_{[u-1]}+j}^\prime\leq b_{u,j} x \right)
			\leq \frac{\int\limits_{0}^1\one_{\{y\leq x\}}e^{-b_{u,j}y}\left( 1-y \right)^{-b_{u,j}} f_{m_{[u-1]}+j}(y)\,dy }{\int\limits_{0}^1 e^{-b_{u,j}y}\left( 1-y \right)^{-b_{u,j}} f_{m_{[u-1]}+j}(y)\,dy },
	\end{split}}
	where $f_k$ is the {density} of $\psi_k.$\\
	Note that the numerator is $\E_m\big[ \one_{[\psi_{m_{[u-1}+j}\leq x]}e^{-b_{u,j}\psi_{m_{[u-1}+j}}\big( 1-\psi_{m_{[u-1}+j} \big)^{-b_{u,j}} \big]$ and the denominator is $\E_m\big[ e^{-b_{u,j}\psi_{m_{[u-1}+j}}\big( 1-\psi_{m_{[u-1}+j} \big)^{-b_{u,j}} \big]$. Further, ${z\mapsto\one_{\{z\leq x\}}}$ is non-increasing, {while} $z\mapsto e^{-b_{u,j}z}\left( 1-z \right)^{-b_{u,j}}$ is increasing. {Recall that, for increasing $f$ and decreasing $g$ on the support of $X$,}
	\eqn{
		\label{for:lem:Beta-gamma:9}
		\E[f(X)g(X)]\leq \E[f(X)]\E[g(X)].
	}
	Therefore, using this correlation inequality in the RHS of \eqref{for:lem:Beta-gamma:8}, we have
	\eqn{\label{for:lem:Beta-gamma:10}
		\prob_m\left( \chi_{m_{[u-1]}+j}^\prime\leq b_{u,j} x \right) \leq \prob_m\left( \psi_{m_{[u-1]}+j}\leq x \right),
	}
	{which proves the upper bound in \eqref{for:lem:Beta-gamma:7}.}
	\paragraph{\textbf{The lower bound}}
	The {lower bound} in \eqref{for:lem:Beta-gamma:7} can be handled in a similar, albeit slightly more involved, way. Indeed,
	\eqan{\label{for:lem:Beta-gamma:11}
		&\prob_m \left( \psi_{m_{[u-1]}+j}  \leq x(1-\eta) \right)= \frac{\int\limits_{0}^{x(1-\eta)b_{u,j}}y^{a_u-1}\left( 1-\frac{y}{b_{u,j}} \right)^{b_{u,j}}\,dy}{\int\limits_{0}^{b_{u,j}}y^{a_u-1}\left( 1-\frac{y}{b_{u,j}} \right)^{b_{u,j}}\,dy}\\
		=&\frac{\E_m\left[ \left.\one_{\big\{ \chi_{m_{[u-1]}+j}^\prime\leq b_{u,j}x(1-\eta) \big\}}e^{\chi_{m_{[u-1]}+j}^\prime}\left( 1-\frac{\chi_{m_{[u-1]}+j}^\prime}{b_{u,j}} \right)^{b_{u,j}}\right| \chi_{m_{[u-1]}+j}^\prime\leq b_{u,j}\right]}{\E_m\left[ \left.e^{\chi_{m_{[u-1]}+j}^\prime}\left( 1-\frac{\chi_{m_{[u-1]}+j}^\prime}{b_{u,j}} \right)^{b_{u,j}}\right| \chi_{m_{[u-1]}+j}^\prime\leq b_{u,j}\right]}.\nn}
	Denote $f(z) = \one_{\{z\leq b_{u,j}x(1-\eta)\}}$ and $g(z)= e^z\left( 1-\frac{z}{b_{u,z}} \right)^{b_{u,j}}$. {Note} that $f$ and $g$ are non-increasing and increasing functions, respectively. Therefore, {by} the correlation inequality in \eqref{for:lem:Beta-gamma:9} {once more},
	\eqan{&\prob_m \left( \psi_{m_{[u-1]}+j}  \leq x(1-\eta) \right)\nn\\
		&\hspace{2cm}\leq \prob_m\left( \left. \chi_{m_{[u-1]}+j}^\prime\leq b_{u,j}x(1-\eta) \right|\chi_{m_{[u-1]}+j}^\prime\leq b_{u,j} \right)~.}
	Thus, to prove the required inequality, it suffices to show that for all $u \geq K_\eta$, $j \in m_{[u]}$, and $x \leq (b_{u,j})^{-\varrho/2}$,
	\eqan{\label{for:lem:Betagamma:12}
		&\prob_m\left(\chi_{m_{[u-1]}+j}^\prime \leq b_{u,j}x(1-\eta) \,\Big|\, \chi_{m_{[u-1]}+j}^\prime \leq b_{u,j} \right)\nn\\
		&\hspace{2cm}\leq \prob_m\left( \chi_{m_{[u-1]}+j}^\prime \leq b_{u,j}x \right).}
	
	To simplify notation, instead of proving \eqref{for:lem:Betagamma:12} directly, we demonstrate the same result for any Gamma random variable $Z$ with parameters $\alpha$ and $1$, where $\alpha \in (0,1+|\delta|)$. Observe that proving \eqref{for:lem:Betagamma:12} is equivalent to showing that
	\eqn{\label{for:lem:Betagamma:13}
		E(x) = \prob\left( Z \leq bx \right)\prob\left( Z \leq b \right) - \prob\left( Z \leq bx(1-\eta) \right) \geq 0,}
	where $x \leq b^{-\varrho/2}$ for sufficiently large $b > K_\eta$.
	
	By definition, $E(0)=0.$ {Therefore, it} remains to show that \eqref{for:lem:Betagamma:13} holds for $x\in\left( 0,b^{-\varrho/2}\right)$. We simplify
	\eqn{
		\label{for:lem:Betagamma:14}
		E(x)= \prob\left( (1-\eta)bx\leq Z \leq bx \right) - \prob(Z>b)\prob(Z\leq bx).
	}
	\paragraph{\textbf{Lower bounding the first expression of $E(x)$}}Using the properties of the density of the Gamma distribution, we can lower bound the first term as
	\eqan{\label{for:lem:Beta-gamma:15}
			\prob\left( (1-\eta)bx\leq Z \leq bx \right)\geq& \eta bx \min\left\{ \frac{(bx)^{\alpha-1}}{\Gamma(\alpha)}e^{-bx}, \frac{(bx)^{\alpha-1}}{\Gamma(\alpha)}e^{-bx}(1-\eta)^{\alpha-1}e^{bx\eta} \right\}\nn\\
			=& \eta \frac{(bx)^{\alpha}}{\Gamma(\alpha)}e^{-bx}\min\left\{ 1, (1-\eta)^{\alpha-1}e^{bx\eta}\right\}.
	}
	With the fact that $\eta<\tfrac{1}{2},$ and $e^{\eta bx}>1$, we can lower bound the first term in \eqref{for:lem:Betagamma:14} further by 
	\eqan{\label{for:lem:Beta-gamma:16}
		\prob\left( (1-\eta)bx\leq Z \leq bx \right)&\geq \eta \frac{(bx)^{\alpha}}{\Gamma(\alpha)} e^{-bx}\min \{ 2^{1-\alpha},1 \} \nn\\
		&\geq \eta \frac{(bx)^{\alpha}}{\Gamma(\alpha)} e^{-bx} 2^{1-\lceil \alpha \rceil}.}
	\paragraph{\textbf{Upper bounding the second term of $E(x)$}} By Chernoff's inequality,
	\eqn{\label{for:prop:Beta-gamma:Chernoff}
		\prob\left( Z>b \right)=\prob\left( e^{Z/2}>e^{\frac{b}{2}} \right)\leq \E\left[ e^{Z/2} \right]e^{-\frac{b}{2}}= 2^{\alpha}e^{-\frac{b}{2}}.}
	We use the fact that $e^{-y}\leq 1$ for $y\geq 0$, for upper bounding the distribution function of $Z$ as
	\eqn{\label{for:lem:Beta-gamma:17}
		\prob\left( Z\leq bx \right) = \frac{1}{\Gamma(\alpha)}\int\limits_{0}^{bx} y^{\alpha-1} e^{-y}\,dy\leq \frac{1}{\Gamma(\alpha)}\int\limits_{0}^{bx} y^{\alpha-1} \,dy= \frac{(bx)^\alpha}{\alpha\Gamma(\alpha)}.}
	Therefore, the second term in \eqref{for:lem:Betagamma:14} can be upper bounded using \eqref{for:lem:Beta-gamma:17} and \eqref{for:prop:Beta-gamma:Chernoff}, as
	\eqn{
		\label{for:lem:Betagamma:15}
		\prob(Z>b)\prob(Z\leq bx)\leq 2^{\alpha} e^{-\frac{b}{2}}\frac{(bx)^\alpha}{\alpha\Gamma(\alpha)}.
	}
	Hence from \eqref{for:lem:Beta-gamma:16} and \eqref{for:lem:Betagamma:15}, we obtain
	\eqan{\label{for:lem:Betagamma:16}
		E(x)&\geq~ \eta\frac{(bx)^{\alpha}}{\Gamma(\alpha)} e^{-bx}2^{1-\lceil\alpha\rceil} - 2^{\alpha} e^{-\frac{b}{2}}\frac{(bx)^\alpha}{\alpha\Gamma(\alpha)}\nn\\
		&\hspace{2cm}=~ \frac{(bx)^\alpha}{\Gamma(\alpha)}e^{-\frac{b}{2}}\left[ e^{\left(\frac{1}{2}-x\right)b+\log \eta} 2^{1-\lceil \alpha \rceil} - \frac{2^\alpha}{\alpha} \right].
	}
	Remember that $\eta>b^{-2}$ and, for sufficiently large $b,\ x\leq b^{-\varrho/2}\leq \tfrac{1}{4}.$ Therefore, {by} \eqref{for:lem:Betagamma:16}, for $b\geq K_\eta,$ 
	\eqn{\label{for:lem:Beta-gamma:18}
		E(x)\geq \frac{(bx)^\alpha}{\Gamma(\alpha)}e^{-\frac{b}{2}}\left[  2^{1-\lceil \alpha \rceil} e^{\frac{b}{4}-2\log b} - \frac{2^\alpha}{\alpha} \right].
	}
	Using $\alpha\in (0,1+|\delta|)$ and taking $b$ large enough, the RHS of \eqref{for:lem:Beta-gamma:18} is non-negative and this proves \eqref{for:lem:Betagamma:13}. This completes the proof of the coupling for $\CPU$.
	
	The proof for $\PU$ {is similar}. Indeed, $b_{u,j}$ is replaced by $b_u$ {given by}
	\eqn{\label{for:prop:bgc:PU:1}
		b_u=a_{[2]}+2\left( m_{[u-1]}-2 \right)+m_u+(u-1)\delta.}
	Now, we show that there exists a constant $K_\eta \geq 1$ such that for all $u > K_\eta$,
	\eqn{\left( 1-\frac{\eta}{2} \right)\frac{x}{b_u}\leq h_u^{\psi}(x)\leq \frac{x}{b_u}.}
	To prove this we \FC{need to} show that for $u>K_\eta\geq \frac{1}{\sqrt{\eta}},$ and $x\leq b_u^{-\varrho/2},$
	\[
	\prob_m\left( \psi_u\leq (1-\eta)x \right)\leq \prob_m\left( \chi_u\leq b_u x \right)\leq \prob_m\left( \psi_u\leq x \right),
	\]
	which we have already proved.
\end{proof}
\subsection[Asymptotics of attachment probabilities and\\regularity of RPPT]{Asymptotics of attachment probabilities and regularity of RPPT}
\label{sec-attachment-RPPT}
{The coupling arguments in Proposition~\ref{prop:Betagamma:coupling:CPU} give us a sequence of conditional} Gamma random variables $\left( \hat{\chi}_v \right)_{v\geq 2}$ corresponding to $\left( \phi_v \right)_{v\geq 3}$, such that $\hat{\chi}_v=(2\E[M]+\delta)v\phi_v$ for $v$ large enough ({where we assume that $v>K_\eta$}). Similarly, for P\'olya urn graphs, we can couple $\left( \hat{\chi}_v \right)_{v\geq 2}$ and $\left( \psi_v \right)_{v\geq 2}$.  These relations are crucial {to bound} the edge-connection probabilities in \eqref{edge:connecting:probability:CPU:SL} and \eqref{edge:connecting:probability:CPU:NSL}, {where we recall that $\chi = {(\E[M]+\delta)}/{(2\E[M]+\delta)}$:}

\begin{Lemma}[Bound on attachment probabilities]
	\label{lem:bound:attachment_probability}
	For $n\in\N$, consider $v\in[n]$. Then, for every $\varepsilon>0$ there exists $\omega>0$ such that, for both $\CPU$ and $\PU$, with probability larger than $1-\varepsilon$, for every $k\geq v\geq K_\omega$ and $j\in[m_k]$,
	\eqn{\nonumber
		(1-\omega)\frac{\hat{\chi}_{v}}{(2\E[M]+\delta)v }\Big(\frac{v}{k}\Big)^\chi
		\leq \prob_{m,\psi}\left(k\overset{j}{\rightsquigarrow} v \right)
		\leq (1+\omega) \frac{\hat{\chi}_{v}}{(2\E[M]+\delta)v }\Big(\frac{v}{k}\Big)^\chi,
	}
	where $\hat{\chi}_{v}$ is a Gamma random variable with parameters $m_v+\delta$ and 1.
\end{Lemma}

\begin{proof}
	Fix $n\in\N$ and $\varepsilon,\omega>0$. The edge-connection probabilities for $\CPU^{\sss\rm{(SL)}},\CPU^{\sss\rm{(NSL)}}$ and $\PU^{\sss\rm{(NSL)}}$ are calculated in \eqref{edge:connecting:probability:CPU:SL}, \eqref{edge:connecting:probability:CPU:NSL} and \eqref{edge:connecting:probability:PU:NSL}, respectively.
	By Proposition~\ref{lem:position:concentration}, with probability {at least} $1-\varepsilon/3$,
	\eqn{\label{for:lem:attprob:urn:modela:1:bis}
		\max\limits_{j\in[m_k]}\left|\frac{\mathcal{S}^{\sss(n)}_v}{\mathcal{S}^{\sss(n)}_{(k,j)}} - \left(\frac{v}{k}\right)^{\chi} \right| \leq    \left(\frac{\omega}{2}\right) \left(\frac{v}{k}\right)^\chi	,
	}
	where the value $\chi$ {comes} from the collapsed P\'olya Urn graph model.
	
	{We next} control the remaining terms in the edge-connection probabilities of \eqref{edge:connecting:probability:CPU:SL}, \eqref{edge:connecting:probability:CPU:NSL} and \eqref{edge:connecting:probability:PU:NSL}. For $\CPU,$ we rewrite the remaining expression as
	\eqn{
		\label{for:lem:attprob:urn:modela:2}
		1-\prod_{i=1}^{m_v}\left(1-\psi_{m_{[v-1]}+i}\right)  = \sum_{i=1}^{m_v} \psi_{m_{[v-1]}+i}+ E_m(v)=\phi_v+E_m(v),
	}
	{which implicitly defines the error term $E_m(v)$.} On the other hand, for $\PU,$ the remaining expression turns out to be $\psi_v.$ By the assumptions, we can apply Proposition~\ref{prop:Betagamma:coupling:CPU} to couple the Gamma random variables to $\phi_v$ and $\psi_v$ of $\CPU$ and $\PU$, respectively. As a consequence, with probability {at least} $1-\varepsilon/3$, simultaneously for all $v$ sufficiently large,
	\eqn{\nonumber
		\begin{split}
			&\left( 1-\frac{\omega}{2} \right) \frac{\hat{\chi}_{v}}{2\E[M]+\delta}\leq v\phi_{v} \leq \left( 1+\frac{\omega}{2} \right) \frac{\hat{\chi}_{v}}{2\E[M]+\delta},\\
			\text{and}\quad&\left( 1-\frac{\omega}{2} \right) \frac{\hat{\chi}_{v}}{2\E[M]+\delta}\leq v\psi_{v} \leq \left( 1+\frac{\omega}{2} \right) \frac{\hat{\chi}_{v}}{2\E[M]+\delta},
		\end{split}
	}
	for $\CPU$ and $\PU$, respectively, {where} $\hat{\chi}_{v}$ is a sequence of independent Gamma distribution with parameters $m_v+\delta$ and $1$.
	
	The term $ |E_m(v)|$ in \eqref{for:lem:attprob:urn:modela:2} is bounded by 
	\eqn{
		\label{for:lem:bound:attachment_probability:1}
		\begin{split}
			\sum\limits_{\substack{i\neq j\\i,j\in[m_v]}}\psi_{m_{[v-1]}+i}\psi_{m_{[v-1]}+j} \leq \Big( \sum\limits_{i\in[m_v]}\psi_{m_{[v-1]}+i} \Big)^2\leq (1+2\omega)\left( \frac{\hat{\chi}_v}{v} \right)^2.
		\end{split}
	}
	{By} Proposition~\ref{lem:position:concentration}, {$\hat{\chi}_v\leq v^{1-\varrho/2}$} for all $v>K_\varepsilon,$ with probability at least $1-\varepsilon$. Therefore, {by} \eqref{for:lem:bound:attachment_probability:1}, 
	\eqn{|E_m(v)|\leq \frac{\hat{\chi}_v}{v}O\left(v^{-\varrho/2}\right),}
	which completes the proof.
\end{proof}

\begin{remark}\label{remark:equal-edge-probability}
	{\rm
		Observe that the edge-connection probabilities are essentially the same for the three models: $\CPU_n^{\sss\rm{(SL)}},\ \CPU_n^{\sss\rm{(NSL)}}$ and $\PU_n^{\sss\rm{(NSL)}}$, except for the self-loop creation probability. {The latter} is insignificant for large graphs. Using Lemma~\ref{lem:bound:attachment_probability}, we approximate the attachment probabilities of all these models by the same expression, with an error {that can be effectively taken care of.}}\hfill $\blacksquare$
\end{remark}

We {close this} section by proving a regularity property of the $\RPPT$ {that is similar to \cite[Lemma~3.3]{BergerBorgs} and} which {is} useful in Section~\ref{chap:convergence:sec:local_convergence}:
\begin{Lemma}[Regularity of RPPT]\label{lem:RPPT:property}
	Fix $r\geq 0$ and $\varepsilon>0$. Then there exist $K,C<\infty$ and $\FC{\eta(\varepsilon,r)} >0$ such that, with probability at least $1-\varepsilon$,
	\begin{enumerate}
		\item\label{RPPT_prop:1} $A_\omega\geq \FC{\eta(\varepsilon,r)}$, for all $\omega\in B_r^{\sss (G)}(\emp)$;
		\item\label{RPPT_prop:2} $\left| B_r^{\sss (G)}(\emp) \right|\leq C$;
		\item\label{RPPT_prop:3} $\Gamma_\omega\leq K$, for all $\omega\in B_r^{\sss(G)}(\emp).$
	\end{enumerate}
\end{Lemma}
\begin{proof}
	The lemma holds {trivially} for $r=0$ and ${\varepsilon/4}$, i.e., with probability at least $1-{\varepsilon/4},$
	\begin{itemize}
		\item[(1)] $A_\emp \geq \eta\left( \frac{\varepsilon}{4},0\right)$, where $\eta\left( \frac{\varepsilon}{4},0\right)$ is a positive value depending on $\varepsilon$ and $r=0$;
		\item[(2)] $\big| B_0^{\sss(G)}(\emp) \big|=1$;
		\item[(3)] $\Gamma_\emp\leq K\left( \frac{\varepsilon}{4},0\right)$, where $K\left( \frac{\varepsilon}{4},0\right)$ is a natural number depending on $\varepsilon$ and $r=0$.
	\end{itemize}
	We {thus} prove the lemma for $r=1$. The proof for $r\geq 2$, {then easily} follows by induction on $r.$
	
	Let $U_{\sss(M)}$ be the smallest order statistic of $U_1,\ldots,U_M$, i.e., $U_{\sss(M)}=\min\{ U_1,\ldots,U_M \}$ and $\eta\leq \eta\left( \frac{\varepsilon}{4},0\right)$. To prove (\ref{RPPT_prop:1}), it is enough to show that
	\[
	\prob\Big(\left. A_\emp U_{\sss(M)}^{1/\chi}<\eta~\right|~ A_\emp>\eta( {\varepsilon/4},0)\Big)\leq \frac{\varepsilon}{4}.
	\]
	First, we condition on $M$ and find a lower bound on the LHS. Next, we take an expectation over $M$. Using the density of $U_{\sss(M)}$ and Taylor's inequality,
	\eqan{\label{for:lem:RPPT_prop:1}
			&\prob\big(\left. A_\emp U_{\sss(M)}^{1/\chi}<\eta~\right|~ M=m, A_\emp\geq \eta({\varepsilon/4},0) \big)\nn\\
			&=\frac{1}{(1-\eta({\varepsilon/4},0))}\int\limits_{\eta( {\varepsilon/4},0)}^1 \left( 1-\left( 1-\left( \frac{\eta}{x} \right)^\chi \right)^m \right)\,dx\nn\\
			&\leq \frac{1}{(1-\eta({\varepsilon/4},0))}\int\limits_{\eta({\varepsilon/4},0)}^1 m\left(\frac{\eta}{x}\right)^{\chi}\,dx\leq \frac{m\eta^\chi}{1-\chi}.
	}
	where the last inequality follows from the fact that $\frac{1-\eta({\varepsilon/4},0)^{1-\chi}}{1-\eta({\varepsilon/4},0)}\leq 1$. Integrating over $m$ and choosing $\eta$ suitably,
	\eqn{\label{for:lem:RPPT_prop:2}
		\prob\big(\left. A_{\emp} U_{\sss(M)}^{1/\chi}<\eta~\right|~ A_{\emp}\geq \eta( {\varepsilon/4},0) \big)\leq \frac{\eta^{\chi}}{2\E[M]+\eta}= \frac{\varepsilon}{4}.}
	Denote the $\eta$ in \eqref{for:lem:RPPT_prop:2} by $\eta\left( {\varepsilon},1 \right)$. 
	
	Next, we prove (\ref{RPPT_prop:2}). The number of children of the root is distributed as $(M+\Lambda)$, where $\Lambda$ is the total number of points in an inhomogeneous Poisson process with intensity $\rho_\emp(x).$ Since $M$ is uniformly integrable, there exists $m_0$ such that $\prob(M\leq m_0)\geq 1-\frac{\varepsilon}{8}$. Moreover, it can also be shown that 
	\[
	\prob( M^{\sss(0)}>m_0) = \frac{1}{\E[M]}\sum\limits_{k\geq m_0+1} k\prob(M=k)=\frac{\E\left[ M\one_{\{M>m_0\}} \right]}{\E[M]}\leq\frac{\varepsilon}{8}.
	\]
	Similar results {also hold} for $M^{\sss(\delta)}$, {and are} useful for the induction step when $r>1$. Now, we have the parameter of $\Lambda$ bounded above and hence there exists $C(\varepsilon,1)$ such that 
	\[
	M+\Lambda\leq C(\varepsilon,1).
	\]
	We can choose $K(\varepsilon,1)<\infty$, such that $\prob\left(\Gamma_\omega\leq K(\varepsilon,1)\mbox{ for all }\omega\in B_1^{\sss(G)}(\emp)\right)$ is at least $ 1-\frac{\varepsilon}{4},$ completing the proof.
\end{proof}
\section{Local convergence proof for models (A), (B) and (D)}
\label{chap:convergence:sec:local_convergence}
In this section, we first prove local convergence for the vertex-marked preferential attachment model (A) and then extend this result to other models (models (B) and (D)). For any finite vertex-marked tree $\tree$ with vertex marks in $[0,1]$, let $V(\tree)$ denote the set of vertices of $\tree$, and let $\{ a_\omega \in [0,1] : \omega \in V(\tree) \}$ represent the age-set of the vertices. Fix $r \in \mathbb{N}$ and $\varepsilon > 0$. Let $G_n = \PArs_n(\boldsymbol{m}, \delta)$. If $B_r^{\sss(G_n)}(v) \simeq \tree$, and $v_\omega$ is the vertex in $G_n$ corresponding to vertex $\omega$ of $\tree$, then we define
\[
N_{r,n}\left(\tree, (a_\omega)_{\omega \in V(\tree)}\right) = \sum\limits_{v \in [n]} \one_{\left\{ B_r^{\sss(G_n)}(v) \simeq \tree, \, |v_\omega/n - a_\omega| \leq 1/r \, \forall \omega \in V(\tree) \right\}}~,
\]
where $B_r^{\sss(G_n)}(v)$ is the $r$-neighbourhood of $v$ in $G_n$. With $B_r^{\sss(G)}(\emp)$ denoting the $r$-neighbourhood of the $\RPPT(M, \delta)$ and $A_\omega$ the age in $\RPPT(M, \delta)$ of the node $\omega$ of $\tree$, we aim to show that
\eqn{\label{target:1}
	\frac{N_{r,n}\left(\tree, (a_\omega)_{\omega \in V(\tree)}\right)}{n} \overset{\prob}{\to} \mu\left( B_r^{\sss(G)}(\emp) \simeq \tree, \, |A_\omega - a_\omega| \leq 1/r \, \forall \omega \in V(\tree) \right).
}
To prove \eqref{target:1}, we use the second moment method, i.e., we prove that $\E\left[ N_{r,n}\left(\tree, (a_\omega)_{\omega \in V(\tree)}\right)\right]/n$ converges to the limit and that the variance of $N_{r,n}/n$ vanishes as $n \rightarrow \infty$. Throughout this section, we consider $\eta = \eta(\varepsilon, r)$ as introduced in Lemma~\ref{lem:RPPT:property}.
\subsection{First moment convergence}\label{subsec:first-moment}
Here we prove the convergence of the first moment. Let $\left( \tree, (a_\omega)_{\omega\in V(\tree)} \right)$ be a vertex-marked tree with marks $(a_\omega)_{\omega\in V(\tree)}$ taking values in $[0,1]^{|V(\tree)|}$. {We compute}
\begin{align}\label{firstmoment:aim}
	&{\frac{1}{n}}\E\left[ N_{r,n}\left(\tree,(a_\omega)_{\omega\in V(\tree)}\right)\right]\nn\\
	 &\hspace{2cm}=\prob\left( B_r^{\sss(G_n)}({o})~\simeq~ \tree, ~|v_\omega/n-a_\omega|\leq 1/r~\forall\omega~\in~V(t) \right), 
\end{align}
{where $o\in [n]$ is chosen uniformly at random. We aim to show that this converges to the RHS of \eqref{target:1}.}

Instead of proving the first moment convergence, we prove {the stronger statement that the {\em age densities}} of the vertices in the $r$-neighbourhood of a uniformly chosen vertex in $\PArs_n(\boldsymbol{m},\delta)$ converges pointwise to the age density of the nodes in the $r$\FC{-neighbourhood} of $\RPPT(M,\delta)$.

Define $f_{r,\tree}\left((A_\omega)_{\omega\in V(t)}\right)$ as the \emph{density of the ages in the $\RPPT(M,\delta)$}, when the ordered $r$-neighbourhood $B_r^{\sss(G)}(\emp)$ is in the same equivalence class as $\tree$. Then,
\eqn{\label{eq:RPPT:density:1}
	\mu\left( B_r^{\sss(G)}(\emp)\simeq \tree, A_\omega\in \,da_\omega,~\forall \omega\in V(\tree) \right) = f_{r,\tree}\left((a_\omega)_{\omega\in V(t)}\right)\prod\limits_{\omega\in V(\tree)}\,da_\omega.}
\begin{Theorem}[First moment density convergence theorem]
	\label{thm:prelimit:density}
	Fix $\delta>-\infsupp(M),$
	and consider $G_n=\PArs_n(\boldsymbol{m},\delta)$. Uniformly for all $a_\omega>\eta,$ distinct $v_\omega$, and $\hat{\chi}_{v_\omega}\leq (v_\omega)^{1-\frac{\varrho}{2}}$ for all $\omega\in V(\tree)$,
	\eqn{\label{eq:prelimit:density:1}
		\begin{split}
			&\prob_{m,\psi}\left( B_r^{\sss(G_n)}(v)~\simeq~ \tree,~v_\omega=\lceil na_\omega \rceil~ \forall \omega\in V(\tree) \right)\\
			&\qquad= (1+o_\prob(1))\frac{1}{n^{|V(\tree)|}}g_{r,\tree}\left( \left( {a_\omega} \right)_{\omega\in V(\tree)}; \left( m_{v_\omega},\hat{\chi}_{v_\omega} \right)_{\omega\in V(\tree)} \right),
	\end{split}}
	for some measurable function $g_{r,\tree}\left( \left( a_\omega \right)_{\omega\in V(\tree)}; \left( m_{v_\omega},\hat{\chi}_{v_\omega} \right)_{\omega\in V(\tree)} \right)$, where $(\hat{\chi}_v)_{v\in\N}$ are the Gamma random variables coupled with the corresponding $\Beta$ random variables in Proposition~\ref{prop:Betagamma:coupling:CPU}.
	Consequently, with $\left( \hat{\chi}_{v_\omega} \right)_{\omega\in V(\tree)}$ a conditionally independent sequence of ${\rm{Gamma}}(m_{v_{\omega}}+\delta,1)$ random variables,
	\eqan{\label{eq:prelimit:density:2}
		&\E\left[ g_{r,\tree}\left( \left( a_\omega \right)_{\omega\in V(\tree)}; \left( m_{v_\omega},\hat{\chi}_{v_\omega} \right)_{\omega\in V(\tree)} \right) \right]= f_{r,t}\left((a_\omega)_{\omega\in V(t)}\right).}
\end{Theorem}
By \eqref{eq:prelimit:density:2}, Theorem~\ref{thm:prelimit:density} can be seen as a \textit{local density limit theorem} for the ages of the vertices {in the} $r-$neighbourhoods. This is {significantly} stronger than local convergence of preferential attachment models. Further, when $(\lceil na_\omega \rceil)_{\omega\in V(\tree)}$ are distinct (which occurs whp), $\left( \hat{\chi}_{\lceil na_\omega\rceil} \right)_{\omega\in V(\tree)}$ are {\em conditionally independent} Gamma variables with parameters $m_{v_\omega}+\delta$ and $1$.

We prove Theorem~\ref{thm:prelimit:density} below in several steps. First we calculate the conditional density $f_{r,\tree}$. Using the equivalence of $\CPU$ and model (A) in Proposition~\ref{prop:equiv:CPU:PArs}, we compute the explicit expression in \eqref{eq:prelimit:density:1}. Let $\partial V(\tree)$ denotes the leaf nodes of the tree $\tree$ and $V^\circ(\tree)$ the set of vertices in the interior of the tree $\tree$, i.e., $V^\circ(\tree)= V(\tree)\setminus \partial V(\tree)$. {Then the age densities in $\RPPT(M,\delta)$ are identified as follows:}

\begin{Proposition}[Law of vertex-marked neighbourhood of RPPT]
	\label{prop:RPPT:density} 
	Let $M$ be the law of the out-degrees, and fix $\delta>-\infsupp(M)$. Let $n_{\old}$ and $n_{\young}$ denote the number of $\Old$ and $\Young$ labelled nodes in $\tree$ and $E(\tree)$ the edge-set of the tree $\tree$. Then,
	\eqan{\label{eq:prop:RPPT:density}
		&~f_{r,t}\left((a_\omega)_{\omega\in V(t)}\right)\nn\\
		=&~\chi^{n_{\old}}(1-\chi)^{n_{\young}}\E\left[\prod\limits_{\omega\in V^\circ(t)} m_{-}(\omega)!\, \Gamma_\omega^{d_{\omega}^{({\rm{in}})}(t)}\exp{\left(-\Gamma_\omega \lambda(a_\omega)\right)}\right.\nn\\
		&\hspace{3.5cm}\times\left.\prod\limits_{(\omega,\omega l)\in E(\tree)}(a_\omega \vee a_{\omega l})^{-\chi}(a_\omega \wedge a_{\omega l})^{-(1-\chi)}\right],}
	where $d_{\omega}^{({\rm{in}})}(t) = \#\{ \omega l:~ a_{\omega l}>a_\omega \}$ denotes the number of $\Young$ labelled children of $\omega,$ while $(\Gamma_\omega,~m_{-}(\omega))$ are distributed as in Section~\ref{chap:LWC:sec:RPPT}, and $\lambda$ is a real-valued function on $(0,1)$ defined as 
	\eqn{\label{eq:def:lambda-func}\lambda(x)=\frac{1-x^{1-\chi}}{x^{1-\chi}}.}
\end{Proposition}

To prove {Proposition \ref{prop:RPPT:density},} we need to identify the densities of $(A_\omega)_{\omega\in V(\tree)}$. We start by analysing the density of the age of $\Old$ labelled nodes:

\begin{Lemma}[Conditional age density of $\Old$ labelled children]
	\label{lem:old:density} Conditionally on $a_\omega,$ the age of its parent $\omega\in V^\circ(\tree)$, the density of \FC{an} $\Old$ labelled child on $[0,a_\omega]$ is given by
	\eqn{\label{eq:lem:old:density}
		f_{\old}(x)=\chi x^{-(1-\chi)}a_\omega^{-\chi}.}
\end{Lemma}

\begin{proof}
	From the construction of $\RPPT$, for any $\Old$ labelled node $\omega l$, its age is given by $U^{1/\chi}A_\omega$, where $U$ is {uniform} in $[0,1]$ independently of $A_\omega.$ Therefore, conditionally on $A_\omega=a_\omega,$
	\begin{align}\label{for:lem:old:density:1}
		\prob\left( A_{\omega l}\leq x\mid A_\omega=a_\omega \right) =&~ \prob\left( A_\omega U^{1/\chi}\leq x\mid A_\omega=a_\omega \right) \nn\\
		=&~ \prob\left(U^{1/\chi}\leq x a_\omega^{-1}\right)= a_\omega^{-\chi} x^{\chi}.
	\end{align}
	Now, differentiating the RHS of \eqref{for:lem:old:density:1} with respect to $x,$ we obtain the conditional density of $A_{\omega l}$ in \eqref{eq:lem:old:density}.
\end{proof}
The ages of the $\Young$ labelled {children of $\omega$} follow an inhomogeneous Poisson process with intensity
\begin{equation}
	\label{intensity-younger-children}
	\rho_{\omega}(x) = (1-\chi)\Gamma_\omega\frac{x^{-\chi}}{A_\omega^{1-\chi}}.
\end{equation}
{This leads to the following density result on the ages of all children of $\omega$:}
\begin{Lemma}[Conditional age density of children]
	\label{lem:children:age:density}
	For any $\omega\in V^\circ(\tree),$ conditionally on $(m_{-}(\omega),A_\omega,\Gamma_\omega)$, the density of the ages of the children of $\omega$ is given by
	\eqan{\label{eq:lem:children:age:density}
		&f_\omega\left(\left.(a_{\omega l})_{l\in d_{\omega}(\tree)}\right|m_{-}(\omega),a_\omega,\Gamma_\omega \right)\nn\\
		=&m_{-}(\omega)!\prod\limits_{l=1}^{m_{-}(\omega)}\left[ \chi (a_{\omega l})^{\chi-1}(a_\omega)^{-\chi} \right]\exp{\left(-\Gamma_\omega\lambda(a_\omega)\right)}\nn\\
		&\hspace{2.5cm}\times\prod\limits_{k=1}^{d_{\omega}^{{\rm{(in)}}}(\tree)}\left[ (1-\chi)\left( a_{\omega (m_{-}(\omega)+k)} \right)^{-\chi}(a_\omega)^{-(1-\chi)}\Gamma_\omega \right] ,}
	where $m_{-}(\omega)$ is as defined in Section~\ref{chap:LWC:sec:RPPT}, and $d_{\omega}(\tree)= m_{-}(\omega)+d_{\omega}^{\sss\rm{(in)}}(\tree)$ denotes the number of children of $\omega$.
\end{Lemma}
\begin{proof}
	From the construction of the $\RPPT$, we see that any node $\omega$ have $m_{-}(\omega)$ many $\Old$ labelled children. Recall that, $m_{-}(\omega)\sim M^{(\delta)}$ if $\omega$ has label $\Old$, whereas $m_{-}(\omega) \sim M^{(0)}$ if $\omega$ has label $\Young$, and $m_{-}(\emp)\sim M$. Since the uniform random variables are chosen independently for obtaining the age of the $\Old$ labelled children, using Lemma~\ref{lem:old:density}, 
	\eqan{\label{for:lem:cad:1}
		&f\left( \left.(a_{\omega l})_{l\in [m_{-}(\omega)]}\right|m_{-}(\omega),a_\omega,\Gamma_\omega \right) \nn\\
		&\hspace{1cm}= m_{-}(\omega)!\prod\limits_{l=1}^{m_{-}(\omega)}\left[ \chi (a_{\omega l})^{\chi-1}(a_\omega)^{-\chi} \right].}
	{Indeed, since} there is no particular order for connecting to the older nodes, $\omega$ can connect to its older children with its edges in $m_{-}(\omega)!$ different ways.
	
	The label $\Young$ children have ages coming from a Poisson process with (random) intensity $x\mapsto\rho_\omega(x)$ on $\left[ a_\omega,1 \right]$ {defined in \eqref{intensity-younger-children}}. 
	Therefore for $k\geq2$, {conditionally on $a_{\omega (m_{-}(\omega)+k-1)},~\omega (m_{-}(\omega)+k)$} has an age following a non-homogeneous exponential distribution with intensity
	\eqn{x\mapsto \rho_\omega(x)=(1-\chi)\Gamma_{\omega}\frac{x^{-\chi}}{a_{\omega}^{1-\chi}}\one_{\{x> a_{\omega (m_{-}(\omega)+k-1)}\}}  ~.} 
	
	Additionally, there is one more factor arising in this part of the density, which is the no-further $\Young$ labelled child after $\omega d_\omega(t)$. Conditionally on $a_{\omega d_{\omega}(\tree)}$, this no-further child factor is given by
	\[
	\exp{\left(-\Gamma_\omega\frac{1-a_{\omega d_{\omega}(\tree)}^{1-\chi}}{a_\omega^{1-\chi}}\right)}.
	\]
	This no-further child part is also independent of the ages of the $\Young$ labelled nodes except for $a_{\omega d_\omega(\tree)}$, from the property of the Poisson process. Therefore,
	\eqn{\label{for:lem:cad:2}
		\begin{split}
			&f\left( \left. \left(a_{\omega (m_{-}(\omega)+l)}\right)_{l\in [d_\omega^{\sss\rm{(in)}}(\tree)]}\right|m_{-}(\omega),a_\omega,\Gamma_\omega \right)\\
			&= \exp{\left(-\Gamma_\omega\lambda(a_\omega)\right)}\prod\limits_{k=m_{-}(\omega)+1}^{d_{\omega}(\tree)}\left[ (1-\chi)\left( a_{\omega k} \right)^{-\chi}(a_\omega)^{\chi-1} \Gamma_\omega \right] .
	\end{split}}
	Since the $\Young$ labelled nodes connect one by one sequentially in a particular order, $\omega$ connects to its $\Young$ labelled children in only $1$ way. Now, from the construction, the edges to the $\Old$ labelled children and the edges to the $\Young$ labelled children are created independently (conditionally on $(m_{-}(\omega),A_\omega,\Gamma_\omega)$). Therefore from \eqref{for:lem:cad:1} and \eqref{for:lem:cad:2} and this independence,
	\eqan{\label{for:lem:cad:3}
		&f_\omega\left(\left.(a_{\omega l})_{l\in [d_{\omega}(\tree)]}\right|m_{-}(\omega),a_\omega,\Gamma_\omega \right) \\
		= &f\left( \left.(a_{\omega l})_{l\in [m_{-}(\omega)]}\right|m_{-}(\omega),a_\omega,\Gamma_\omega \right) f\left( \left. \left(a_{\omega (m_{-}(\omega)+l)}\right)_{l\in [d_\omega^{\sss\rm{(in)}}(\tree)]}\right|m_{-}(\omega),a_\omega,\Gamma_\omega \right),\nn}
	which leads to the claimed expression in \eqref{eq:lem:children:age:density}.
\end{proof}

Now we have the required tools to prove Proposition~\ref{prop:RPPT:density}, for which we use induction on  $r$, the depth of the tree:
\begin{proof}[\bf Proof of Proposition~\ref{prop:RPPT:density}]
	{For proving this proposition,} we define some notation \textbf{\textit{that is used in this proof only}}. Define $\tree_r$ to be the $r$-neighbourhood of the root in $\tree$. Therefore,
	\eqn{V^\circ(\tree_{r+1})=V^\circ(\tree_r)\cup \partial V(\tree_r). \nonumber}
	First, we prove the proposition for $r=1$. Here $V^\circ(\tree_1)=\{\emp\}$. By Lemma~\ref{lem:children:age:density},
	\eqan{\label{for:prop:RPPTd:1}
		&f_{\emp}\left(\left.(a_{\emp l})_{l\in [d_{\emp}(\tree)]}\right|m_{-}(\emp),a_\emp,\Gamma_\emp \right)\nn\\
		=& m_{-}(\emp)!\prod\limits_{l=1}^{m_{-}(\emp)}\left[ \chi (a_{\emp l})^{\chi-1}(a_\emp)^{-\chi} \right]\\
		&\hspace{0.5cm}\times\prod\limits_{k=1}^{d_{\emp}^{{\rm{(in)}}}(\tree)}\left[ (1-\chi)\left( a_{\emp (m_{-}(\emp)+k)} \right)^{-\chi}(a_\emp)^{\chi-1}\Gamma_\emp \right] \exp{\left(-\Gamma_\emp\lambda(a_\emp)\right)}.\nn}
	Now, observe that for every edge to an $\Old$ labelled child, we obtain a factor $\chi$ and for every edge to $\Young$ labelled children, we obtain a factor $(1-\chi)$ and a factor $\Gamma_\emp$ in \eqref{for:prop:RPPTd:1}. Further note that $\Old$ labelled children of $\emp$ have smaller age than $a_\emp$ and $\Young$ labelled children have higher age than $a_\emp$. $\tree_1$ has $m_{-}(\emp)$ many $\Old$ labelled nodes and $d_{\emp}^{\sss\rm{(in)}}(\tree)$ many $\Young$ labelled nodes. Therefore \eqref{for:prop:RPPTd:1} can be rewritten as
	\eqan{\label{for:prop:RPPTd:2}
		&f_{\emp}\left(\left.(a_{\emp l})_{l\in {[d_{\emp}(\tree)]}}\right|m_{-}(\emp),a_\emp,\Gamma_\emp \right) \nn\\
		=& \chi^{n_{1,\old}}(1-\chi)^{n_{1,\young}}m_{-}(\emp)!~ \Gamma_\emp^{d_{\emp}^{\sss\rm{(in)}}(\tree)}\exp{\left(-\Gamma_\emp\lambda(a_\emp)\right)}\nn\\
		&\hspace{1cm}\times\prod\limits_{(\emp,\emp l)\in E(\tree_1)}\left[ \left( a_\emp\vee a_{\emp l} \right)^{-\chi}\left( a_\emp\wedge a_{\emp l} \right)^{-(1-\chi)} \right],}
	where $n_{1,\old}$ and $n_{1,\young}$ denote the number of $\Old$ and $\Young$ labelled nodes in $\tree_1$. Since $A_\emp$ has a uniform distribution on $[0,1]$, and is independent of $m_\emp$ and $\Gamma_\emp$,
	\eqan{\label{for:prop:RPPTd:3}
		f_{1,\tree}\left( (a_\omega)_{\omega\in V(t)} \right) =&~ \E\left[ f_{1,\tree}\left(\left.a_\emp,(a_{\emp l})_{l\in {[d_{\emp}(\tree)]}}\right|m_{-}(\emp),\Gamma_\emp \right) \right]\nn\\
		=&~\E\left[f_{\emp}\left(\left.(a_{\emp l})_{l\in {[d_{\emp}(\tree)]}}\right|m_{-}(\emp),a_\emp,\Gamma_\emp \right)\right]\\
		=& \chi^{n_{1,\old}}(1-\chi)^{n_{1,\young}} \E\left[ m_{-}(\emp)!\Gamma_\emp^{d_{\emp}^{\sss\rm{(in)}}(\tree)}\exp{\left(-\Gamma_\emp\lambda(a_\emp)\right)} \right.\nn\\
		&\hspace{2cm} \times \prod\limits_{(\emp,\emp l)\in E(\tree_1)}\left[ \left( a_\emp\vee a_{\emp l} \right)^{-\chi}\left( a_\emp\wedge a_{\emp l} \right)^{-(1-\chi)} \right] \Bigg].\nn
	}
	The first equality comes from the fact that $V^\circ(\tree_1)=\emp$ and hence the proposition is proved for $r=1.$ {We} proceed toward the induction step. Suppose that \eqref{prop:RPPT:density} be true for $r=k\in\N$. We wish to show that the result holds true for $r=k+1$. 
	
	We have the distribution for the ages of the nodes in $V(\tree_k).$ What remains is to compute the density of the boundary conditionally on the ages of $V(\tree_k)$. Now, the nodes in $\partial V(\tree_{k+1})$ are the children of the nodes in $\partial V(\tree_k)$ and their age-distribution is independent of the age of nodes in $V^\circ (\tree_k)$. {By} Lemma~\ref{lem:children:age:density},
	\eqan{\label{for:prop:RPPTd:4}
		&f\left( \left.(a_\omega)_{\omega\in\partial V(\tree_{k+1})}\right|\left( m_{-}(u),a_u,\Gamma_u \right)_{u\in V(\tree_k)} \right)\nn\\
		=&f\left( \left.(a_{\omega l})_{\omega l\in\partial V(\tree_{k+1})}\right|\left( m_{-}(\omega),a_\omega,\Gamma_\omega \right)_{\omega\in \partial V(\tree_k)} \right)\nn\\
		=& \chi^{n_{k+1,\old}} (1-\chi)^{n_{k+1,\young}}\prod\limits_{\omega\in \partial V(\tree_k)} \left[ m_{-}(\omega)! \Gamma_{\omega}^{d_{\omega}^{\sss\rm{(in)}}(\tree_{k+1})}\exp{\left( -\Gamma_\omega\lambda(a_\omega) \right)}\right.\nn\\
		&\hspace{2.5 cm}\times\prod\limits_{(\omega, \omega l)\in E(\tree_{k+1})} \left( a_\omega \vee a_{\omega l} \right)^{-\chi} \left( a_\omega \wedge a_{\omega l} \right)^{-(1-\chi)} \Big],}
	where $n_{k+1,\old}$ and $n_{k+1,\young}$ denote the number of $\Old$ labelled and $\Young$ labelled nodes in $\partial V(\tree_{k+1})$. On the other hand, $\tree_k$ is a rooted tree of depth $k$. Therefore, using induction on $r$, the depth of the tree from the root,
	\eqan{\label{for:prop:RPPTd:5}
		&~f\left( (a_\omega)_{\omega\in V(\tree_k)} \right) \nn\\
		= &~\chi^{n_{[k],\old}}(1-\chi)^{n_{[k],\young}}\E\left[\prod\limits_{\omega\in V^\circ(\tree_k)}  m_{-}(\omega) !\, \Gamma_\omega^{d_{\omega}^{({\rm{in}})}(t)}\exp{\left(-\Gamma_\omega\lambda(a_\omega)\right)}\right.\nn\\
		&\hspace{3.5cm}\times\left.\prod\limits_{(\omega,\omega l)\in E(\tree_k)}(a_\omega \vee a_{\omega l})^{-\chi}(a_\omega \wedge a_{\omega l})^{-(1-\chi)}\right], }
	where $n_{[k],\old}$ and $n_{[k],\young}$ denote the number of $\Old$ labelled and $\Young$ labelled nodes in $V(\tree_k)$. Since $\tree$ is tree, $d_{\omega}^{\sss\rm{(in)}}(\tree_{k+1})=d_{\omega}^{\sss\rm{(in)}}(\tree)$. Moreover $n_{[k],\old}+n_{k+1,\old} =n_{[k+1],\old}$ and  $n_{[k],\young}+n_{k+1,\young} =n_{[k+1],\young}$ and $\left( m_{-}(\omega),\Gamma_\omega \right)_{\omega\in V(\tree)}$ are independent random variables. Therefore from \eqref{for:prop:RPPTd:4} and \eqref{for:prop:RPPTd:5}, we obtain the required result for $r=k+1$ as
	\eqan{
		&~f_{k+1,\tree}\left( (a_\omega)_{\omega\in V(\tree)} \right) \nn\\
		=&~\chi^{n_{[k+1],\old}}(1-\chi)^{n_{[k+1],\young}}\E\left[\prod\limits_{\omega\in V^\circ(\tree)} m_{-}(\omega) !\, \Gamma_\omega^{d_{\omega}^{({\rm{in}})}(t)}\exp{\left(-\Gamma_\omega\lambda(a_\omega)\right)}\right.\nn\\
		&\hspace{3cm}\times\left.\prod\limits_{(\omega,\omega l)\in E(\tree_{k+1})}(a_\omega \vee a_{\omega l})^{-\chi}(a_\omega \wedge a_{\omega l})^{-(1-\chi)}\right].}
	The general claim follows by induction in $r.$
\end{proof}

By Proposition~\ref{prop:RPPT:density}, we have the exact expression for the density of the ages of the nodes {in} the $\RPPT$. To prove Theorem~\ref{thm:prelimit:density}, we compute the expression for $g_{r,\tree}$. Instead of finding $g_{r,\tree}$ directly, we use \FC{Theorem~\ref{thm:equiv:CPU:PArs}}. {To make} our computation simpler, we first introduce edge-marks in the tree and then lift the edge-marks carefully.

Let $\overline{\rm{t}}=\left( \tree, (a_\omega)_{\omega\in V(\tree)}, (e_{\omega,\omega j})_{(\omega,\omega j)\in E(\tree)} \right)$ be the edge-marked version of $\tree$, where $E(\tree)$ is the edge-set of $\tree$. 
We write $\overline{B}_r^{\sss(G_n)}(v)\doteq\overline{\tree}$ to denote that the vertex and edge-marks of the $r$-neighbourhood of the vertex $v$ in $\CPU_n(\boldsymbol{m},\boldsymbol{\psi})$ are given by those in $\tree$.

\begin{Proposition}[Density of vertex and edge-marked $\CPU$]\label{prop:CPU:density}
	Let $o_n$ be a uniformly chosen vertex from $\CPU_n^{\sss\rm{(SL)}}(\boldsymbol{m},\boldsymbol{\psi})$. Then,
	\eqan{\label{eq:prop:CPU:density:1}
		&\prob_{m,\psi}\left( \overline{B}_r^{\sss(G_n)}(\FC{o_n})\doteq\overline{\tree},~v_\omega=\lceil na_\omega \rceil,~\forall \omega\in V(\tree) \right)\nn\\
		=~&(1+o_\prob(1))n^{-|V(\tree)|}\prod\limits_{\omega\in V^\circ(\tree)}\left[\left( \frac{\hat{\chi}_{v_\omega}}{2\E[M]+\delta} \right)^{d_\omega^{\sss\rm{(in)}}(\tree)} \exp{\left( -\hat{\chi}_{v_\omega} \lambda(a_\omega) \right)}\right]\nn\\
		&\hspace{0.5cm}\times\prod\limits_{(\omega,\omega l)\in E(\tree)}\left( a_{\omega}\vee a_{\omega l} \right)^{-\chi}\left( a_{\omega}\wedge a_{\omega l} \right)^{-(1-\chi)}\prod\limits_{\substack{\omega\in V(\tree)\\\omega \text{ is $\Old$ }}}\frac{\hat{\chi}_{v_\omega}}{2\E[M]+\delta}~.}
\end{Proposition}

Before we prove Proposition \ref{prop:CPU:density} we begin with a lemma that gives an estimate of the edge-connection probabilities and this estimate will be useful in the proof of the above proposition. The estimate is given in terms of the coupled Gamma random variables $(\hat{\chi}_v)_{v\geq 2}$ from Proposition \ref{prop:Betagamma:coupling:CPU}:
\begin{Lemma}
	\label{lem:NFE:main}
	Conditionally on $(\boldsymbol{m,\psi}),$ for all $\omega\in V^\circ(\tree)$,
	\eqn{\label{eq:lem:NFE:main}
		\sum\limits_{u,j\colon u\geq v_\omega} p^{\sss(j)}(u,v_\omega) = (1+o_\prob(1))\hat{\chi}_{v_\omega}\lambda(a_\omega).}
\end{Lemma}
\begin{proof}
	For every $u$, there are $m_u$ many out-edges from $u$ and using the expression for the edge-connection probabilities in Lemma~\ref{lem:bound:attachment_probability},
	\eqan{\label{for:lem:NFE:main:1}
		\sum\limits_{u,j:u\geq v_\omega} p^{\sss(j)}(u,v_\omega)=&~ (1+o_\prob(1))\sum\limits_{u\geq v_\omega} m_u \frac{\hat{\chi}_{v_\omega}}{(2\E[M]+\delta)v_\omega}\left( \frac{v_\omega}{u} \right)^{\chi}\nn\\
		=&~(1+o_\prob(1))\frac{\hat{\chi}_{v_\omega}}{(2\E[M]+\delta)v_\omega^{1-\chi}}\sum\limits_{u\geq v_\omega} m_u u^{-\chi}\\
		=&~ (1+o_\prob(1))\frac{\hat{\chi}_{v_\omega}}{(2\E[M]+\delta)\left(\frac{v_\omega}{n}\right)^{1-\chi}}\Big[\frac{1}{n}\sum\limits_{u\geq v_\omega} m_u \left( \frac{u}{n} \right)^{-\chi}  \Big].\nn}
	Define $T_n:=~ \frac{1}{n}\sum\limits_{u\geq v_\omega} m_u \left( \frac{u}{n} \right)^{-\chi}.$ With $v_\omega=\lceil na_\omega \rceil$, we aim to show that 
	\eqn{
		\label{WSLLN}
		T_n\overset{\prob}{\to} \E[M]\int\limits_{a_{\omega}}^1 t^{-\chi}\,dt
	}
	{using a {\em weighted}} strong law of large numbers. Let $(X_i)_{i\geq 1}$ be a sequence of i.i.d.\ random variables with finite mean. Then from \cite[Theorem 5]{choisung87}, a sufficient condition for $\sum\limits_{i=1}^n a_{i,n} X_i~\left(\text{with }\sum\limits_{i=1}^n a_{i,n}=1\right)$ to converge to $\E[X]$ is {that $\max\limits_{i\in [n]} ~a_{i,n} = O(1/n).$}
	
	{In} our case, consider $a_{u,n} = \frac{\frac{1}{n}\left( \frac{u}{n} \right)^{-\chi}}{\frac{1}{n}\sum\limits_{z\geq v_\omega}\left( \frac{z}{n} \right)^{-\chi}},$ {so that}
	\eqn{\label{for:lem:NFE:main:2}
		\max\limits_{u\in [v_\omega,n]} \frac{\frac{1}{n}\left( \frac{u}{n} \right)^{-\chi}}{\frac{1}{n}\sum\limits_{z\geq v_\omega}\left( \frac{z}{n} \right)^{-\chi}}\leq \frac{a_{\omega}^{-\chi}}{n(1-a_\omega^{1-\chi})}=~O\left( \frac{1}{n} \right)  ~.}
	Therefore, the weighted strong law {implies that \eqref{WSLLN} holds.} Note that the denominator here is the Riemann sum approximation of $\int\limits_{a_\omega}^1 t^{-\chi}\,dt$, {so that}
	\eqn{\label{for:lem:NFE:main:3}
		T_n=(1+o_\prob(1))\E[M]\int\limits_{a_\omega}^1 z^{-\chi}\,dz=~(1+o_\prob(1))\frac{\E[M]}{1-\chi}\left[ 1-a_\omega^{1-\chi} \right].}
	Hence, from \eqref{for:lem:NFE:main:1} and \eqref{for:lem:NFE:main:3},
	\eqn{\begin{split}
			\sum\limits_{u,j:u\geq v_\omega} p^{\sss(j)}(u,v_\omega)=&~ (1+o_\prob(1))\frac{\hat{\chi}_{v_\omega}}{(2\E[M]+\delta)a_{\omega}^{1-\chi}} \frac{\E[M]}{1-\chi}\left[ 1-a_\omega^{1-\chi} \right]\\
			=&~ (1+o_\prob(1))\hat{\chi}_{v_\omega}\lambda(a_\omega),
	\end{split}}
	as required.
\end{proof}

\begin{proof}[\bf Proof of Proposition~\ref{prop:CPU:density}]
	The proof of Proposition~\ref{prop:CPU:density} is divided into several steps. Recall that { Lemma~\ref{lem:bound:attachment_probability} yields} the edge-connection probabilities for $\CPU$. 
	\paragraph{\bf Computing the conditional law of $\overline{B}_r^{\sss(G_n)}(o_n)$}
	{The construction of the $\CPU$ implies the conditional independence of the} edge-connection events. Conditionally on 
	$(\boldsymbol{m,\psi})$, let $p^{\sss(j)}(v,u)$ denote the probability of connecting the $j$-th edge from $v$ to $u$. {Then,}
	\eqan{\label{eq:conditional_law:CPU:1}
			&\prob_{m,\psi}\left( \overline{B}_r^{\sss(G_n)}(o_n)\doteq\overline{\tree},~v_\omega=\lceil na_\omega \rceil,~\forall \omega\in V(\tree) \right)\nn\\
			=~&\frac{1}{n}\prod\limits_{u\in V(\tree)}\prod\limits_{\substack{\omega\in V(\tree)\\\omega>u\\\omega\overset{j}{\sim} u}} p^{\sss(j)}(v_\omega,v_u) \chRounak{\prod\limits_{u\in [n]}\prod\limits_{j\in[m_u]}\Bigg[ 1-{\sum\limits_{\substack{\omega\in V^\circ(\bar{\tree})\\(u,j,v_\omega)\notin E(\bar{\tree})}}p^{\sss(j)}(u,v_\omega) \Bigg]}.
	}}
	The factor of ${1/n}$ arises due to the uniform choice of the root and the first product comprises all edge-connection probabilities to make sure that the edges in $\overline{\tree}$ are there in $\overline{B}_r^{\sss(G_n)}(o_n)$ keeping the edge-marks the same. The last product ensures that there is no further edge in the $(r-1)$-neighbourhood of the randomly chosen vertex $o_n$, so that \FC{the vertex and edge-marks of $\overline{B}_r^{\sss(G_n)}(o_n)$ are the same as those in $\overline{\tree}$.}
	\paragraph{\bf The no-further edge probability}
	{We} {continue} by analyzing the second product on the RHS of \eqref{eq:conditional_law:CPU:1} which, for simplicity, we call the \textit{no-further edge probability}. Observe that in this part of the expression, we have not included the edges that are connected in $\tree,$ i.e., we exclude those factors involving $p^{\sss(j)}(v_u,v_\omega),$ for which $\omega\in V^\circ(\tree)$ and $u\overset{j}{\rightsquigarrow}\omega$ in $\tree$. Recall from Lemma~\ref{lem:RPPT:property} that the minimum age of the vertices in $\tree$ is $\eta$ whp. \chRounak{Further, any vertex $u\in[n]$ is allowed to connect to $v_\omega$ if and only if $v_\omega\leq u$.} Therefore by Lemma~\ref{lem:bound:attachment_probability}, for all $u>\eta n,~p^{\sss(j)}(u,v)$ is $o_\prob(1).$ Since $\tree$ is finite, we include a finite number of $p^{\sss(j)}(u,v_\omega)$ factors and hence we can approximate the \textit{no-further edge probability} as
	\eqan{
		&\chRounak{\prod\limits_{u\in [n]}\prod\limits_{j\in[m_u]}\Bigg[ 1-{\sum\limits_{\substack{\omega\in V^\circ(\bar{\tree})\\(u,j,v_\omega)\notin E(\bar{\tree})}}p^{\sss(j)}(u,v_\omega) \Bigg]}}\nn\\
		=&(1+o_\prob(1))\chRounak{\prod\limits_{u\in [n]}\prod\limits_{j\in[m_u]}{\prod\limits_{\substack{\omega\in V^\circ(\bar{\tree})\\(u,j,v_\omega)\notin E(\bar{\tree})}}\Big[ 1-p^{\sss(j)}(u,v_\omega) \Big]}}\label{eq:NFE:01}\\
		=&(1+o_\prob(1))\chRounak{\prod\limits_{\omega\in V^\circ(\bar{\tree})}{\prod\limits_{\substack{u\geq v_\omega}}\prod\limits_{j\in[m_u]}\Big[ 1-p^{\sss(j)}(u,v_\omega) \Big]}. \label{eq:NFE:1}}
		}
	We can approximate
	\eqan{\label{eq:NFE:2}
		&\chRounak{\prod\limits_{u\geq v_\omega}\prod\limits_{j=1}^{m_u} \Big[ 1-p^{\sss(j)}(u,v_\omega) \Big]}\nn\\
		=& \exp{\Big(\Theta(1)\sum\limits_{u,j\colon u\geq v_\omega}p^{\sss(j)}(u,v_\omega)^2\Big)}\exp{\Big( -\sum\limits_{u,j:u\geq v_\omega} p^{\sss(j)}(u,v_\omega) \Big)}.}
	We next investigate the first term in the RHS of \eqref{eq:NFE:2}, \FC{while} the second, and the main, term is estimated using Lemma~\ref{lem:NFE:main}. 
	We prove that the first exponential term in \eqref{eq:NFE:2} is $1+o_\prob(1)$. Using Lemma~\ref{lem:bound:attachment_probability}, similarly as in \eqref{for:lem:NFE:main:1},
	\eqan{\label{eq:NFE:3}
			&\sum\limits_{u,j:u\geq v_\omega} p^{\sss(j)}(u,v_\omega)^2 \nn\\
			=& (1+o_\prob(1))\sum\limits_{u\geq v_\omega} m_u \frac{\hat{\chi}_{v_\omega}^2}{(2\E[M]+\delta)^2v_\omega^2}\left( \frac{v_\omega}{u} \right)^{2\chi} \nn\\
			\leq& (1+o_\prob(1))\frac{\hat{\chi}_{v_\omega}^2}{(2\E[M]+\delta)^2\left({v_\omega}\right)^{2-\chi}}\Big[ \sum\limits_{u\geq v_\omega} m_u\left( \frac{1}{u} \right)^{\chi} \Big] \nn\\
			=& (1+o_\prob(1))\frac{(\hat{\chi}_{v_\omega}^2/n)}{(2\E[M]+\delta)^2\left(\frac{v_\omega}{n}\right)^{2-\chi}}\Big[ \frac{1}{n}\sum\limits_{u\geq v_\omega} m_u\left( \frac{u}{n} \right)^{-\chi} \Big].
	}
	By \eqref{for:lem:NFE:main:3} and recalling that $v_\omega=\lceil na_\omega \rceil$,
	\[
	\frac{1}{n} \sum\limits_{u\geq v_\omega} m_u\left( \frac{u}{n} \right)^{-\chi} \overset{a.s.}{\to} \frac{\E[M]}{1-\chi}\left[ 1-a_\omega^{1-\chi} \right]<C,
	\]
	for some constant $C>0$. It is enough to show that $\frac{\hat{\chi}_{v_\omega}^2}{n}=o_\prob(1)$. {For this, we fix $\varepsilon,\zeta>0$, and note that, for sufficiently large $n$,}
	\eqn{\label{eq:NFE:4}
		\begin{split}
			\prob\left( \frac{\hat{\chi}_{v_\omega}^2}{n}\geq \zeta \right)=\prob\left( \hat{\chi}_{v_\omega}^{p}\geq (n\zeta)^{p/2} \right)\leq {\E\left[ \hat{\chi}_{v_\omega}^{p} \right]}(n\zeta)^{-p/2}\leq \varepsilon.
	\end{split}}
	Therefore,  $\sum\limits_{u,j:u\geq v_\omega} p^{\sss(j)}(u,v_\omega)^2 = o_\prob(1)$ and, by \eqref{eq:NFE:2} and Lemma~\ref{lem:NFE:main},
	\eqn{\label{eq:NFE:5}
		\prod\limits_{\substack{u,j:\\u\geq v_\omega}}\left[ 1-p^{\sss(j)}(u,v_\omega) \right] = (1+o_\prob(1))\exp{\left( -\hat{\chi}_{v_\omega}\lambda(a_\omega) \right)}.}
	\paragraph{\bf Conclusion of the proof}
	{Substituting the} \textit{no-further edge probability} obtained in \eqref{eq:NFE:5} and the conditional probability estimates obtained in Lemma{s}~\ref{lem:bound:attachment_probability} and \ref{lem:NFE:main}, we obtain
	\eqan{\label{eq:conditional_law:CPU:2}
		&\prob_{m,\psi}\left( \overline{B}_r^{\sss(G_n)}\doteq\overline{\tree},~v_\omega=\lceil na_\omega \rceil,~\forall \omega\in V(\tree) \right)\nn\\
		=~& (1+o_\prob(1))\frac{1}{n}\prod\limits_{\omega\in V(\tree)}\prod\limits_{\substack{u\in V(\tree)\\u>\omega\\u\overset{j}{\sim}\omega}}\frac{\hat{\chi}_{v_\omega}}{(2\E[M]+\delta)v_\omega}\left( \frac{v_\omega}{v_u} \right)^{\chi}\prod\limits_{\omega\in V^\circ(\tree)}\exp{\left( -\hat{\chi}_{v_\omega}\lambda(a_\omega) \right)}\nn\\
		=~& (1+o_\prob(1))\frac{1}{n}\prod\limits_{\omega\in V(\tree)}\left(\frac{\hat{\chi}_{v_\omega}}{(2\E[M]+\delta)}\right)^{d_{v_\omega}^{\sss\rm{(in)}}(G_n)}\prod\limits_{\omega\in V^\circ(\tree)}\exp{\left( -\hat{\chi}_{v_\omega}\lambda(a_\omega) \right)}\nn\\
		&\hspace{3.5cm}\times\prod\limits_{(\omega,\omega l)\in E(\tree)} \left( v_\omega\wedge v_{\omega l} \right)^{-(1-\chi)}\left( v_\omega\vee v_{\omega l} \right)^{-\chi},}
	where $d_{v_{\omega}}^{\sss\rm{(in)}}(G_n) = d(v_\omega)-m_{v_\omega}$ is the number of vertices in $\CPU$ connected to $v_\omega$. Observe that $d_{v_\omega}^{\sss\rm{(in)}}(G_n)=d_{\omega}^{\sss\rm{(in)}}(\tree)$ when $\omega$ has label $\Young$, and $d_{v_\omega}^{\sss\rm{(in)}}(G_n)=d_{\omega}^{\sss\rm{(in)}}(\tree)+1$ when $\omega$ has label $\Old$. Further, $d_{v_{\omega}}^{\sss\rm{(in)}}(G_n) = 1$ for $\omega\in \partial V(\tree)$ \FC{with label $\Old$.} Therefore, \eqref{eq:conditional_law:CPU:2} can be re-written as
	\eqan{\label{eq:conditional_law:CPU:3}
		&\prob_{m,\psi}\left( \overline{B}_r^{\sss(G_n)}\doteq\overline{\tree},~v_\omega=\lceil na_\omega \rceil,~\forall \omega\in V(\tree) \right)\\
		=~& (1+o_\prob(1))n^{-(1+|E(\tree)|)}\prod\limits_{\omega\in V^\circ(\tree)}\left(\frac{\hat{\chi}_{v_\omega}}{(2\E[M]+\delta)}\right)^{d_{\omega}^{\sss\rm{(in)}}(\tree)}\prod\limits_{\omega\in V^\circ(\tree)}\exp{\left( -\hat{\chi}_{v_\omega}\lambda(a_\omega) \right)}\nn\\
		&\hspace{2.2cm}\times\prod\limits_{\substack{\omega\in V(\tree)\\\omega\text{ is $\Old$ }}}\frac{\hat{\chi}_{v_\omega}}{(2\E[M]+\delta)}\prod\limits_{(\omega,\omega l)\in E(\tree)} \left( a_\omega\wedge a_{\omega l} \right)^{-(1-\chi)}\left( a_\omega\vee a_{\omega l} \right)^{-\chi}.\nn}
	Since $\tree$ is a tree, $|V(\tree)|=1+|E(\tree)|$ and hence \eqref{eq:conditional_law:CPU:3} leads to \eqref{eq:prop:CPU:density:1}.
\end{proof}

\begin{remark}[Density of vertex-marked $\CPU$]\label{rem:density:CPU}
	\rm{
		 Every vertex $\omega\in V^\circ(\tree)$ has $m_{v_\omega}$ many out-edges that can be marked in $m_{v_\omega}!$ different ways.} If we {sum \eqref{eq:prop:CPU:density:1} out over} the edge-marks, {then} all such {edge-marked} graphs produce the same vertex-marked graph. 
	Observe that we have not considered any of the out-edges from the $\Old$ labelled vertices in $\partial V(\tree)$. Exactly one out-edge from every $\Young$ labelled {vertex} in $\partial V(\tree)$ is considered and its edge-mark can be labelled in $m_{v_\omega}$ many possible ways. Therefore by summing out the edge-marks,
	\eqan{\label{eq:prop:CPU:density:2}
		&\prob_{m,\psi}\left( {B}_r^{\sss(G_n)}(o)\simeq{\tree},~v_\omega=\lceil na_\omega \rceil,~\forall \omega\in V(\tree) \right)\\
		=&~(1+o_\prob(1))n^{-|V(\tree)|}\prod\limits_{\omega\in V^\circ(\tree)}\left[m_{v_\omega}! \left( \frac{\hat{\chi}_{v_\omega}}{2\E[M]+\delta} \right)^{d_\omega^{\sss\rm{(in)}}(\tree)} \exp{\left( -\hat{\chi}_{v_\omega} \lambda(a_\omega) \right)}\right]\nn\\
		&\hspace{0.5cm}\times\prod\limits_{(\omega,\omega l)\in E(\tree)}\left( a_{\omega}\vee a_{\omega l} \right)^{-\chi}\left( a_{\omega}\wedge a_{\omega l} \right)^{-(1-\chi)}\prod\limits_{\substack{\omega\in V(\tree)\\\omega \text{ is $\Old$ }}}\frac{\hat{\chi}_{v_\omega}}{2\E[M]+\delta}\prod\limits_{\substack{\omega\in\partial V(\tree)\\\omega \text{ is $\Old$ }}}m_{v_{\omega}}~.\nn}
	Conditionally on $( \boldsymbol{m},\boldsymbol{\psi} ),$ we have now obtained the density of the $r$-neighbourhood of a randomly chosen vertex of $\PArs_n(\boldsymbol{m},\delta)$. This will be useful in the proof of Theorem~\ref{thm:prelimit:density} below.
	\hfill$\blacksquare$
\end{remark}

From Remark~\ref{rem:density:CPU}, we have explicitly obtained the age density of the $\CPU$ graphs. Now we aim to show that this age density of $\CPU$ converges to that of the $\RPPT$.
\begin{proof}[\bf Proof of Theorem~\ref{thm:prelimit:density}]
	We have obtained the explicit form of $f_{r,\tree}\left( (a_\omega)_{\omega\in V(\tree)}; (\hat{\chi}_{v_\omega})_{\omega\in V(\tree)} \right)$ in Proposition~\ref{prop:RPPT:density}. Remark~\ref{rem:density:CPU} gives us an expression for the LHS of \eqref{eq:prelimit:density:1}. {Thus,} we are left to show that \eqan{\label{for:thm:prelim:density:1}
		&\E\Bigg[ \prod\limits_{\omega\in V^\circ(\tree)}\Bigg[m_{v_\omega}!\left( \frac{\hat{\chi}_{v_\omega}}{2\E[M]+\delta} \right)^{d_\omega^{\sss\rm{(in)}}(\tree)} \exp{\left( -\hat{\chi}_{v_\omega} \lambda(a_\omega) \right)}\Bigg]\nn\\
		&\hspace{5cm}\times\prod\limits_{\substack{\omega\in V(\tree)\\\omega \text{ is $\Old$ }}}\frac{\hat{\chi}_{v_\omega}}{2\E[M]+\delta} \prod\limits_{\substack{\omega\in\partial V(\tree)\\\omega \text{ is $\Young$}}}m_{v_{\omega}}\Bigg]\\
		=& \chi^{n_{\old}}(1-\chi)^{n_{\young}}\E\Bigg[\prod\limits_{\omega\in V^\circ(t)} m_{-}(\omega)^\prime ! ~\left(\hat{\chi}_\omega^\prime\right)^{d_{\omega}^{({\rm{in}})}(t)}\exp{\left(-\hat{\chi}_\omega^\prime\lambda(a_{\omega})\right)}\Bigg],\nn
	}
	where $\hat{\chi}_\omega^\prime\sim{\rm{Gamma}}\left(m_{-}(\omega)^\prime+\delta+\one_{\{\omega\text{ is $\Old$}\}},1\right)$ and $\hat{\chi}_{v_{\omega}}\sim {\rm{Gamma}}\left( m_{v_\omega}+\delta,1 \right)$. The distribution of $m_{-}(\omega)^\prime$ is the same as $M^{(\delta)}$ when $\omega$ has label $\Young$, $M^{(0)}$ when $\omega$ has label $\Old$ and, $M$ when $\omega$ is the root. $m_{v_\omega}$ are i.i.d.\ {copies of $M$.} To prove \eqref{for:thm:prelim:density:1}, we start by simplifying the LHS.
	
	First we rewrite
	\eqn{\label{for:thm:prelim:density:2}
		\prod\limits_{\substack{\omega\in V(\tree)\\\omega\text{ is $\Old$ }}}\frac{\hat{\chi}_{v_\omega}}{2\E[M]+\delta}=\prod\limits_{\substack{\omega\in V(\tree)\\\omega\text{ is $\Old$ }}}\left( \frac{\hat{\chi}_{v_\omega}}{m_{v_{\omega}}+\delta} \right)\left( \frac{m_{v_\omega}+\delta}{\E[M]+\delta} \right)\left( \frac{\E[M]+\delta}{2\E[M]+\delta} \right).}
	Remember that $\chi={(\E[M]+\delta)}/{(2\E[M]+\delta)}$. Therefore from \eqref{for:thm:prelim:density:2}, we see that we obtain a factor $\chi$ for each of the $\Old$ labelled vertices in $\tree$. The first two terms in the RHS of \eqref{for:thm:prelim:density:2} give rise to the size-biasing of the Gamma random variables and the out-edge distribution of the $\Old$ labelled vertices, as we observed in the definition of the $\RPPT$. Now, for $\omega\in\partial V(\tree)$, there is no other term in the LHS of \eqref{for:thm:prelim:density:1} containing $\hat{\chi}_{v_\omega}$ and $m_{v_{\omega}}$ and these are independent of the rest. Therefore taking expectation with respect to these $\hat{\chi}_{v_\omega}$ and $m_{v_{\omega}}$, the first two terms \FC{in \eqref{for:thm:prelim:density:2}} turn out to be $1$. 
	Next, we rewrite
	\eqan{\label{for:thm:prelim:density:3}
		&\prod\limits_{\substack{\omega\in V^\circ(\tree)\\\omega\text{ is $\Young$ }}}m_{v_\omega}!\prod\limits_{\substack{\omega\in\partial V(\tree)\\\omega \text{ is $\Young$ }}} m_{v_{\omega}}\nn\\
		=& \prod\limits_{\substack{\omega\in V^\circ(\tree)\\\omega\text{ is $\Young$ }}}\left(m_{v_\omega}-1\right)!\prod\limits_{\omega \text{ is $\Young$ }}m_{v_{\omega}}\nn\\
		=& \prod\limits_{\substack{\omega\in V^\circ(\tree)\\\omega\text{ is $\Young$ }}}\left(m_{v_\omega}-1\right)!\prod\limits_{\omega \text{ is $\Young$ }}\left( \frac{m_{v_{\omega}}}{\E[M]} \right)\left( \frac{\E[M]}{2\E[M]+\delta} \right)(2\E[M]+\delta)^{n_{\young}}.}
	Again {by} definition, $1-\chi=\frac{\E[M]}{2\E[M]+\delta}$. Therefore, similarly as in \eqref{for:thm:prelim:density:2}, we see that the size-biased out-degree distributions of the $\Young$ labelled vertices arise from the second term in \eqref{for:thm:prelim:density:3} and for every $\Young$ labelled vertex we obtain a factor $1-\chi$ as we observe in the $\RPPT$. There is no size-biasing in $\chi_{v_\emp}$ and $m_{v_\emp}$. The vertices in $V^\circ(\tree)$ can be partitioned in $3$ sets: the root, $\Old$ labelled vertices and $\Young$ labelled vertices. Therefore, using the simplification of the expressions in \eqref{for:thm:prelim:density:2} and \eqref{for:thm:prelim:density:3},
	\eqan{\label{for:thm:prelim:density:4}
		&\mbox{LHS of \eqref{for:thm:prelim:density:1}}\nn\\
		=& ~\chi^{n_{\old}}(1-\chi)^{n_{\young}}(2\E[M]+\delta)^{n_{\young}-\sum\limits_{\omega\in V^\circ(\tree)} d_\omega^{\sss\rm{(in)}}(\tree)}\nn\\
		&\hspace{1cm}\times\E\left[ \prod\limits_{\omega\in V^{\circ}(\tree)}m_{-}(\omega)^\prime!\left( {\hat{\chi}_{\omega}^\prime} \right)^{d_\omega^{\sss\rm{(in)}}(\tree)} \exp{\left( -\hat{\chi}_{\omega}^\prime \lambda(a_\omega) \right)} \right],}
	where $\hat{\chi}_{\omega}^\prime$ is defined as before.
	Note that $d_\omega^{\sss\rm{(in)}}(\tree)$ denotes the number of $\Young$ labelled children of $\omega$ and therefore summing {$d_\omega^{\sss\rm{(in)}}(\tree)$ over} all $\omega\in V^\circ (\tree)$ gives the total number of $\Young$ labelled vertices in $\tree$ and hence the terms in the exponent of $2\E[M]+\delta$ cancel. Therefore, \eqref{for:thm:prelim:density:1} follows immediately from \eqref{for:thm:prelim:density:4}, proving the required result.
\end{proof}
For obtaining the first moment convergence {in} \eqref{firstmoment:aim} using Theorem~\ref{thm:prelimit:density}, we have to sum over the range $\left[ \lceil n(a_\omega-1/r)\rceil,\lceil n(a_\omega+1/r)\rceil \right]^{|V(\tree)|}$. Summing the equality over this range, we obtain
\[
{\frac{1}{n}}\E\left[ N_{r,n}\left(\tree,(a_\omega)_{\omega\in V(\tree)}\right)\right] \to \mu\left( B_r^{\sss(G)}(\emp)\simeq \tree,~\left| A_\omega-a_\omega \right|\leq\frac{1}{r},\forall \omega\in V(\tree) \right).
\]
\subsection{Second moment convergence}\label{subsec:second-moment}
Here we show that
\[
{\frac{1}{n^2}}\E\Big[N_{r,n}\left(\tree,(a_\omega)_{\omega\in V(\tree)}\right)^2\Big]\to \mu\Big( B_r^{\sss(G)}(\emp)\simeq \tree,~\left| A_\omega-a_\omega \right|\leq\frac{1}{r},\forall \omega\in V(\tree) \Big)^2,
\]
or, alternatively, the variance {of $N_{r,n}/n$ vanishes.} {Expanding the double sum yields}
\eqan{\label{eq:second-moment:expansion}
	&N_{r,n}\left(\tree,(a_\omega)_{\omega\in V(\tree)}\right)^2 \nn\\
	=& \sum\limits_{v\in [n]} \one_{\left\{ B_r^{\sss(G_n)}(v)~\simeq~ \tree, ~|v_\omega/n-a_\omega|\leq 1/r~\forall\omega~\in~V(t) \right\}}\nn\\
	&+\sum\limits_{\substack{u\neq v\\ u,v\in[n]}} \one_{\left\{ B_r^{\sss(G_n)}(u)~\simeq~ \tree,~B_r^{\sss(G_n)}(v)\simeq~ \tree, ~\left|\frac{u_\omega}{n}-a_\omega\right|\leq \frac{1}{r},~\left|\frac{v_\omega}{n}-a_\omega\right|\leq \frac{1}{r}~\forall\omega~\in~V(t) \right\}},}
where $v_\omega$ and $u_\omega$ are the vertices in $B_r^{\sss(G_n)}(v)$ and $B_r^{\sss(G_n)}(u)$ corresponding to the vertex $\omega\in V(\tree)$. Note that the first term in the RHS of \eqref{eq:second-moment:expansion} equals $N_{r,n}\left(\tree,(a_\omega)_{\omega\in V(\tree)}\right)$. {By} the results proved earlier in this section, it follows that this term, upon dividing by $n^2$, vanishes.
Therefore, we are {left} to analyse the second term {on} the RHS of \eqref{eq:second-moment:expansion}.
First, we show that $r$-neighbourhoods of two uniformly chosen vertices are disjoint whp. Note that
\eqan{\label{eq:disjoint-neighbourhood}
	&\prob\left( B_r^{\sss(G_n)}\left( o_n^{\sss(1)} \right)\cap B_r^{\sss(G_n)}\left( o_n^{\sss(2)} \right) = \emp \right)\nn\\
	&\hspace{1cm}=~\prob\left( o_n^{\sss(2)}\notin B_{2r}^{\sss(G_n)}\left( o_n^{\sss(1)} \right) \right)
	= 1-\frac{1}{n}\E\left[ \left| B_{2r}^{\sss(G_n)}\left( o_n^{\sss(1)} \right) \right| \right].}
Previously we have proved that $B_{2r}^{\sss(G_n)}\left( o_n^{\sss(1)} \right)$ converges in distribution to $B_r^{\sss(G)}\left( \emp \right)$, where $(G,\varnothing)$ is the random P\'olya point tree with parameters $M$ and $\delta$, so that $\left\{\left| B_{2r}^{\sss(G_n)}\left( o_n^{\sss(1)} \right) \right|\right\}_{n\in\N}$ is a tight sequence of random variables. Therefore the $r$-neighbourhood of two uniformly chosen vertices are whp disjoint. 
Now as we did {for} the first moment, we use a density argument {for the second term:} 
\begin{Theorem}{\label{thm:second-moment:prelim}}
	Fix $\delta>-\infsupp(M)$, and let $G_n=\PArs_n(\boldsymbol{m},\delta)$. Let $\left(\tree_1,(a_\omega)_{\omega\in V(\tree_1)}\right)$ and $\left(\tree_2,(a_\omega)_{\omega\in V(\tree_2)}\right)$ be two vertex-marked trees of depth $r$ with disjoint vertex marks. If $v_\omega$ denotes the vertex in $G_n$ corresponding to $\omega\in V(\tree_1)\cup V(\tree_2)$, then, uniformly for all distinct $v_\omega\geq \eta n$ and $\hat{\chi}_{v_\omega}\leq \left( v_\omega \right)^{1-{\varrho/2}}$,
	\eqan{\label{eq:thm:second-moment:prelim}
		&\prob_{m,\psi}\left( B_r^{\sss(G_n)}\left(o_n^{\sss(1)}\right)\simeq \tree_1,~B_r^{\sss(G_n)}\left(o_n^{\sss(2)}\right)\simeq \tree_2,~v_{\omega}=\lceil na_\omega \rceil,\forall \omega\in V(\tree_1)\cup V(\tree_2) \right)\nn\\
		&\hspace{0.25cm}=(1+o_\prob(1))\frac{1}{n^{|V(\tree_1)\cup V(\tree_2)|}}~g_{r,\tree_1}\left( (a_\omega)_{\omega\in V(\tree_1)}; \left( m_{v_\omega},\hat{\chi}_{v_\omega} \right)_{\omega\in V(\tree_1)} \right)\nn\\
		&\hspace{4.5cm} \times g_{r,\tree_2}\left( (a_\omega)_{\omega\in V(\tree_2)}; \left( m_{v_\omega},\hat{\chi}_{v_\omega} \right)_{\omega\in V(\tree_2)} \right),
	}
	where $g_{r,\tree}(\cdot)$ is as in Theorem~\ref{thm:prelimit:density} and {$o_n^{\sss(1)},o_n^{\sss(2)}\in[n]$ are chosen independently and uniformly at random.}
\end{Theorem}
\begin{proof}
	Since most of the steps in this proof are similar to the proof of Theorem~\ref{thm:prelimit:density}, we will be more concise.
	We restrict our analysis to the case where $B_r^{\sss(G_n)}\left( o_n^{\sss(1)} \right)$ and $B_r^{\sss(G_n)}\left( o_n^{\sss(2)} \right)$ are disjoint graphs. Using the conditional independence of the edge-connection events for $\CPU$, we can {write} the LHS of \eqref{eq:thm:second-moment:prelim} {as}
	\eqan{\label{for:thm:SMP:1}
			&\prob_{m,\psi}\left( B_r^{\sss(G_n)}\left(o_n^{\sss(1)}\right)\simeq \tree_1,~B_r^{\sss(G_n)}\left(o_n^{\sss(2)}\right)\simeq \tree_2,~v_{\omega}=\lceil na_\omega \rceil,\forall \omega\in V(\tree_1)\cup V(\tree_2) \right)\nn\\
			=&~ \prob_{m,\psi}\left( B_r^{\sss(G_n)}\left(o_n^{\sss(1)}\right)\simeq \tree_1,~v_{\omega}=\lceil na_\omega \rceil,\forall \omega\in V(\tree_1) \right)\nn\\
			&\hspace{1.5cm}\times\prob_{m,\psi}\left( B_r^{\sss(G_n)}\left(o_n^{\sss(2)}\right)\simeq \tree_2,~v_{\omega}=\lceil na_\omega \rceil,\forall \omega\in V(\tree_2) \right).
	}
	Theorem~\ref{thm:prelimit:density} {then immediately implies} Theorem~\ref{thm:second-moment:prelim}.
\end{proof}

Considering $\tree_1\simeq\tree_2$ but with different marks, {by} Theorem~\ref{thm:second-moment:prelim}, it {follows} that 
\eqan{\label{eq:second-moment:expectation}
		&{\frac{1}{n^2}}\E\left[N_{r,n}\left( \tree,(a_\omega)_{\omega\in V(\tree)} \right)^2\right]\nn\\
		&\hspace{1cm}=~ (1+o(1))
		\mu\left( B_r^{\sss(G)}(\emp)\simeq \tree,~\left| A_\omega-a_\omega \right|\leq\frac{1}{r},\forall \omega\in V(\tree) \right)^2.
}
{Therefore the {variance of $N_{r,n}\left( \tree,(a_\omega)_{\omega\in V(\tree)} \right)/n$ vanishes} and the local convergence of model (A) to the random P\'olya point tree follows immediately.
	
\begin{remark}[Convergence of models (B) and (D)]\label{remark:conv:models:B,D}
\rm	We have now proved the local convergence of model (A), which is equal in distribution to $\CPU^{\sss\rm{(SL)}}$. The proof of the local convergence of models (B) and (D) follows along the same lines. Theorems~\ref{thm:equiv:CPU:PArt} and \ref{thm:equiv:PU:PAri} show that models (B) and (D) have the same law as $\CPU^{\sss\rm{(NSL)}}$ and $\PU^{\sss\rm{(NSL)}}$, respectively. Recall that, by Remark~\ref{remark:equal-edge-probability}, the edge-connection probabilities for all the models $\CPU^{\sss\rm{(SL)}},\CPU^{\sss\rm{(NSL)}}$, and $\PU^{\sss\rm{(NSL)}}$ behave similarly. Therefore, upon substitution of these in \eqref{eq:conditional_law:CPU:1}, the proofs for Theorems~\ref{thm:prelimit:density} and \ref{thm:second-moment:prelim} for models (B) and (D) follow from the same calculations. 

\hfill$\blacksquare$
\end{remark}

\subsection{Concluding the proof}\label{subsec:proof-conclusion}
In this section, we assemble the results obtained in Sections~\ref{subsec:first-moment} and \ref{subsec:second-moment} to prove Theorem~\ref{theorem:PArs} for models (A), (B), and (D).

\begin{proof}[Proof of Theorem~\ref{theorem:PArs} for models (A), (B), and (D)]
	From Theorems~\ref{thm:prelimit:density} and \ref{thm:second-moment:prelim}, we obtain that the first and second moments of age densities of the vertices in the $r$-neighbourhood of a uniformly chosen vertex in $\PArs_n(\bfm,\delta)$ converge to those of $\RPPT(M,\delta)$. Remark~\ref{remark:conv:models:B,D} shows that the same holds for models (B) and (D) as well. Therefore, by the second moment method, for models (A), (B), and (D), the age densities of the vertices in the $r$-neighbourhood of a uniformly chosen vertex converge in probability to those of $\RPPT(M,\delta)$. This, in turn, proves the local convergence of models (A), (B), and (D) to the random P\'{o}lya point tree.
\end{proof}


\section{Asymptotic degree distribution}\label{chap:convergence:sec:degree-distribution}
We next discuss some consequences of our main result in Theorem \ref{theorem:PArs}, focusing on degree distributions. As an immediate consequence of local convergence theorem, if $G_n$ converges locally in probability to $(G,o)$ having law $\mu$, then for any bounded continuous function $h:\mathcal{G}_\star\mapsto\R,$
\eqn{\label{for:cor:neighbour:1}
	\frac{1}{n}\sum\limits_{u\in[n]}h(G_n,u)\overset{\prob}{\to}\E_{\mu}[h(G,o)]~.}
Let the out-degree distribution $M$ have a power-law exponent $\tau_{\sss M} \geq p$, i.e., for any $k \in \mathbb{N}$,
\begin{equation}\label{eq:def:power-law:M}
	\mathbb{P}(M=k) = L^{\sss(\emp)}(k) k^{-\tau_{\sss M}},
\end{equation}
for some slowly varying function $L^{\sss (\emp)}$. Further, define the preferential attachment power-law exponent $\tau_e$ as
\begin{equation}\label{eq:def:tau_e}
	\tau_e = 3 + \frac{\delta}{\mathbb{E}[M]}.
\end{equation}
In this section, we discuss the power-law exponents of the average degree distribution and the degrees of neighbours of a uniformly chosen vertex in the graph. The next lemma identifies the power-law exponent for the asymptotic degree distribution.

\begin{Lemma}[Power-law exponent for average degree distribution]\label{lem:power-law-root}
	Let $\{G_n\}_{n \geq 1}$ be a sequence of generalised preferential attachment models with out-degree distribution $M$ and $\delta > -\inf\mathrm{supp}(M)$, and let $d_u(n)$ denote the degree of the vertex $u$ in $G_n$. Then, for all $k \geq 1$,
	\begin{equation}\label{eq:lem:power-law:root}
		\frac{1}{n} \sum\limits_{v \in [n]} \mathbb{I}\{d_u(n) = k\} \overset{\mathbb{P}}{\to} p_k = L^{\sss (\emp)}(k) k^{-\tau},
	\end{equation}
	where $\tau = \min\{\tau_{\sss M}, \tau_e\}$.
\end{Lemma}
\begin{proof}
	For proving this lemma, for any $k \in \N$, we consider
	\begin{equation}\label{for:lem:power-law-root:1}
		h(G_n,u) = \one_{\{d_u(n)=k\}}~.
	\end{equation}
	Clearly, $h$ is a bounded continuous function in $\Gcal_\star$, so that
	\begin{equation}\label{for:lem:power-law-root:2}
		\frac{1}{n}\sum\limits_{u}{\one_{\{d_u(n)=k\}}} \overset{\prob}{\to} \prob(d_{\emp}=k),
	\end{equation}
	where $d_{\emp}$ is the degree of the root in the random P\'{o}lya point tree. By definition of $\RPPT$, the degree of the root is distributed as $M+Y\big(M,U_\emp\big)$, where $Y(m,a)$ is a mixed Poisson distribution with parameter $\Gamma_{in}(m)\lambda(a)$, and $\lambda$ is as defined in \eqref{eq:def:lambda-func}. Since $U_\emp \sim \Unif[0,1]$,
	\begin{align}\label{for:lem:power-law-root:3}
		&\prob\Big( M+Y\big(M,U_\emp\big) = k \Big)\nn\\
		=& \sum\limits_{m=1}^k\prob(M=m)\int\limits_0^1\prob\big(Y( m,a )=k-m\big)\,da~.
	\end{align}
	Therefore, explicitly computing the integrand in \eqref{for:lem:power-law-root:3},
	\begin{align}\label{for:lem:power-law-root:4}
		&\prob(Y(m,a)=k-m) \nn\\
		=& \int\limits_0^{\infty} \frac{(\gamma\lambda(a))^{k-m}}{(k-m)!}\exp{(-\gamma\lambda(a))}\frac{1}{\Gamma(m+\delta)}\gamma^{(m+\delta)-1} \exp{(-\gamma)}\,d\gamma \nn\\
		=& \frac{\lambda(a)^{k-m}}{(k-m)!\Gamma(m+\delta)}\int\limits_0^{\infty} \gamma^{(k+\delta)-1}\exp{(-\gamma(\lambda(a)+1))}\,d\gamma \nn\\
		=& \frac{\Gamma(k+\delta)}{(k-m)!\Gamma(m+\delta)} \lambda(a)^{k-m}(1+\lambda(a))^{-(k+\delta)}~.
	\end{align}
	Substituting the value of $\lambda(a) = a^{-(1-\chi)}(1-a^{1-\chi})$, we obtain a compact form of the probability on the LHS of \eqref{for:lem:power-law-root:4} as
	\begin{equation}\label{for:lem:power-law-root:5}
		\prob(Y(m,a)=k-m) = \frac{\Gamma(k+\delta)}{\Gamma(m+\delta)(k-m)!}(1-a^{1-\chi})^{k-m}(a^{1-\chi})^{m+\delta}~.
	\end{equation}
	This result is similar to \cite[Lemma~5.2]{BergerBorgs}. As we can see from \eqref{for:lem:power-law-root:3}, we need to integrate the RHS of \eqref{for:lem:power-law-root:5}.
	\begin{align}\label{for:lem:power-law-root:6}
		&\int_0^1\prob(Y(m,a)=k-m)\,da \nn\\
		=& \frac{\Gamma(k+\delta)}{\Gamma(m+\delta)(k-m)!}\int_0^1 (1-a^{1-\chi})^{k-m}(a^{1-\chi})^{m+\delta}\,da~.
	\end{align}
	Again, performing a change of variable, we obtain
	\begin{align}\label{for:thm:root:2}
		&\int_0^1\prob(Y(m,a)=k-m)\,da \nn\\
		=& \frac{\Gamma(k+\delta)}{(1-\chi)\Gamma(m+\delta)(k-m)!}\int_0^1 u^{(m+\delta+\tau_e-1)-1}(1-u)^{k-m} \,du\nn\\
		=& \frac{(\tau_e-1)\Gamma(k+\delta)\Gamma(k-m+1)\Gamma(m+\delta+\tau_e-1)}{\Gamma(m+\delta)(k-m)!\Gamma(k+\delta+\tau_e)}\nn\\
		=& \frac{(\tau_e-1)\Gamma(k+\delta)\Gamma(m+\delta+\tau_e-1)}{\Gamma(m+\delta)\Gamma(k+\delta+\tau_e)}~.
	\end{align}
	This matches with the expression presented in \cite{DEH09}. Now, plugging this integral value into \eqref{for:lem:power-law-root:3} and using Stirling's approximation, we obtain
	\begin{align}\label{taiL:root:1}
		&\prob(M+Y(M,U_\emp)=k) \nn\\
		= &\frac{(\tau_e-1)\Gamma(k+\delta)}{\Gamma(k+\delta+\tau_e)} \sum\limits_{m=1}^{k}\prob(M=m)\frac{\Gamma(m+\delta+\tau_e-1)}{\Gamma(m+\delta)}\nn\\
		= &(\tau_e-1)k^{-\tau_e}(1+\mathcal{O}(1/k))\sum\limits_{m=1}^{k} L(m)m^{-(\tau_{\sss M}-\tau_e+1)}(1+\mathcal{O}(1/m))~.
	\end{align}
	For $\tau_{\sss M}>\tau_e$, the sum on the RHS of \eqref{taiL:root:1} is finite. On the other hand, for $\tau_{\sss M}\leq\tau_e$, the sum varies regularly as $k^{-(\tau_{\sss M}-\tau_e)}$ and hence
	\begin{equation}\label{tail:root:2}
		\prob(M+Y(M,U_\emp)=k) = L^{(\emp)}(k)k^{-\tau}~,
	\end{equation}
	where ${\tau=\min\{ \tau_{\sss M},\tau_e \}}$ and $L^{(\emp)}(\cdot)$ is a slowly varying function.
\end{proof}
Lemma~\ref{lem:power-law-root} re-establishes the power-law exponent obtained in \cite{DEH09}, using the local limit result of generalised preferential attachment models. We now extend this result to the convergence of degree distribution of older and younger neighbours of uniform vertex as:
\begin{Theorem}[Asymptotic degree distribution for older and younger neighbours]
	\noindent
	\begin{itemize}
		\item[(a)]\label{cor-older-younger-neighbours}
		As $n\rightarrow \infty$, for all $k\geq 1$, the degree of a uniform older neighbour of a uniform vertex satisfies
		\eqn{\label{eq:deg:old}
			\frac{1}{n}\sum\limits_{\substack{u,v,j:\\u\overset{j}{\rightsquigarrow} v}}\frac{\one_{\{d_v(n)=k\}}}{m_u}\overset{\prob}{\to} \Tilde{p}_k^{\sss({\old})} = \prob\Big( 1+ M^{(\delta)}+Y\big( M^{(\delta)}+1,A_{\old} \big) = k\Big),}
		where $Y\big( M^{(\delta)}+1,A_{\old} \big)$ is a mixed Poisson random variable with mixing distribution $\Gamma_{in}\big( M^{(\delta)}+1 \big)\lambda\big( A_{\old} \big)$ and $A_{\old}$ is distributed as $U_\emp U_1^{1/\chi}$, where $U_\emp, U_1$ are independent $Unif(0,1)$ random variables.\\
		Similarly, as $n\rightarrow \infty$, for all $k\geq 1$, the degree of a uniform younger neighbour of a uniform vertex satisfies
		\eqan{\label{eq:deg:young}
				\frac{1}{\sum\limits_v \one_{\{d_v(n)>m_v\}}}\sum\limits_{\substack{u,v,j:\\u\overset{j}{\rightsquigarrow} v}}&\frac{\one_{\{d_v(n)>m_v\}}\one_{\{d_u(n)=k\}}}{d_v(n)-m_v}\nn\\
				&\overset{\prob}{\to} \Tilde{p}_k^{\sss({\young})} =\prob\Big( M^{(0)}+Y\big( M^{(0)},A_{\young} \big) =k \Big),
		}
		where $Y\big( M^{(0)},A_{\young} \big)$ is a {mixed} Poisson random variable with mixing distribution $\Gamma_{in}\big( M^{(0)} \big)\lambda\big( A_{\young} \big)$ and $A_{\young}$ has conditional density
		\eqn{\label{eq:density:A}f_{U_\emp}(x)=\frac{\rho_\emp(x)}{\int_{U_\emp}^1 \rho_\emp(s)\,ds}=\frac{x^{-\chi}}{\int_{U_\emp}^1 s^{-\chi}\,ds}~,}
		where $U_\emp\sim Unif(0,1)$ and $\rho_\emp(s)$ is as defined in \eqref{def:rho_emp}.
		\smallskip\noindent
		\item[(b)]\label{cor-older-younger-neighbours-tail} There exist slowly varying functions $L^{\sss({\old})}(k)$ and $L^{\sss({\young})}(k)$, such that
		\eqn{\label{tail:exponent}
			\Tilde{p}_k^{\sss({\old})}\sim L^{\sss({\old})}(k)k^{-\tau_{\sss({\old})}},
			\qquad
			\Tilde{p}_k^{\sss({\young})}\sim \Theta(L^{\sss({\young})}(k)) k^{-\tau_{\sss({\young})}} \text{ as $k\to\infty$},
		}
		where $\tau_{\sss({\old})}=\min\{\tau_e-1, \tau_{\sss M}-1\}$ and $\tau_{\sss({\young})}=\min\{ \tau_e+1, \tau_{\sss M}-1 \}$.
	\end{itemize}
\end{Theorem}
The convergence is a direct consequence of Theorem \ref{theorem:PArs}. Thus, the degree distribution of random neighbours is asymptotically {\em size-biased} compared to the original degree distribution in the graph, as in Lemma~\ref{lem:power-law-root}. This result is somewhat surprising in the sense that in \eqref{tail:exponent}, both the tail exponents $\tau_{({\old})}$ and $\tau_{({\young})}$ contain the $\tau_{\sss M}-1$ but the dependence on the $\PAM$ power-law exponent is either one larger or one smaller than for the degree distribution of the root.
\begin{proof}[Proof of Theorem~\ref{cor-older-younger-neighbours}]
	Similar to the proof of Lemma~\ref{lem:power-law-root}, we use some bounded continuous function to prove convergence in this corollary. Define for fixed $k\geq 1$, 
	\eqn{\label{for:cor:neighbour:2}
		h(H,u) = \sum\limits_{\substack{v,j:\\u\overset{j}{\rightsquigarrow} v}}\frac{\one_{\{ d_v(H)=k \}}}{m_u}~,} where $d_v(H)$ is the degree of the vertex $v$ in the graph $H$. 
	Clearly $h$ is a bounded continuous function on $\mathcal{G}_\star$. Therefore by \eqref{for:cor:neighbour:1},
	\eqn{\label{for:cor:neighbour:3}
		\frac{1}{n}\sum\limits_{\substack{u,v,j:\\u\overset{j}{\rightsquigarrow} v}}\frac{\one_{\{d_v(n)=k\}}}{m_u}\overset{\prob}{\to} \E_{\mu_\star} \big[ h(G,\emp) \big]~,}
	where $(G,\emp)$ is the rooted $\RPPT(M,\delta)$ with law $\mu_\star$. Now $h(H,o)$ is the fraction of older neighbours of $o$ in $H$ having degree $k$. Therefore, $\E_{\mu_\star} \big[ h(G,\emp) \big]$ is the probability that a random $\Old$ labelled neighbour of the root of $\RPPT(M,\delta)$ has degree $k.$ From the definition of $\RPPT(M,\delta)$, the degree distribution of an $\Old$ labelled node with age $a_\omega$ is $1+M^{(\delta)}+Y\big( M^{(\delta)}+1,a_\omega \big)$, where $Y\big( M^{(\delta)}+1,a_\omega \big)$ is a {mixed} Poisson random variable with mixing distribution $\Gamma_{in}\big( M^{(\delta)}+1 \big)\lambda\big( a_\omega \big)$ and $\Gamma_{in}\big( M^{(\delta)}+1 \big)$ is as defined in Section~\ref{chap:LWC:sec:RPPT}. On the other hand, a random $\Old$ labelled neighbour of the root $\emp$ has age distributed as $U_\emp U_1^{1/\chi}$. Therefore a random $\Old$ labelled neighbour of the root has the degree distribution $1+M^{(\delta)}+Y\big( M^{(\delta)}+1,A_{\old}\big),$ where $Y\big( M^{(\delta)}+1,A_{\old}\big)$ is as defined previously in Corollary~\ref{cor-older-younger-neighbours} and
	\eqn{\label{for:cor:neighbour:4}
		\E_{\mu_\star} \big[ h(G,o) \big] = \prob\big( 1+M^{(\delta)}+Y\big( M^{(\delta)}+1,A_{\old}\big)=k \big)= \Tilde{p}_k^{({\old})}~.}
	We prove \eqref{eq:deg:young} in a similar way with a few adaptations. Instead of considering all vertices, we consider the vertices that have at least one younger neighbour. Next we choose the bounded continuous functions $h_1$ and $h_2$ as
	\eqn{\label{for:cor:neighbour:5}
		\begin{split}
			h_1(H,v)&=\sum\limits_{\substack{u,j:\\u\overset{j}{\rightsquigarrow} v}}\frac{\one_{\{d_v(H)>m_v\}}\one_{\{d_u(H)=k\}}}{d_v(H)-m_v}~,\\
			\mbox{and}\qquad\qquad h_2(H,v)&=\one_{\{v~{\sss\mbox{\rm has at least one younger neighbour}}\}}~.
	\end{split}}
	Similarly by \eqref{for:cor:neighbour:1}, 
	\eqan{
			&\frac{1}{n}\sum\limits_{\substack{u,v,j:\\u\overset{j}{\rightsquigarrow} v}}\frac{\one_{\{d_v(n)>m_v\}}\one_{\{d_u(n)=k\}}}{d_v(n)-m_v} \overset{\prob}{\to} \E_{\mu_\star} \big[ h_1(G,\emp) \big]~,\label{for:cor:neighbour:6-1}\\
			\mbox{and}\hspace{0.2cm} &\frac{1}{n}\sum\limits_{v\in [n]} \one_{\{v~{\sss\mbox{\rm has at least one younger neighbour}}\}}\overset{\prob}{\to} \E_{\mu_\star} \big[ h_2(G,\emp) \big]~.\label{for:cor:neighbour:6-2}
	} 
	where $(G,\emp)$ is the rooted $\RPPT(M,\delta)$ with law $\mu_\star$. Now $h_1(H,o)$ is the fraction of younger neighbours of $o$ in $H$ having degree $k$. If $o$ has no younger neighbour then define $h_1(H,o)=0$. Therefore,
	\eqn{\label{for:cor:neighbour:7}
		\E_{\mu_\star} \big[ h_1(G,\emp) \big] = \E_{\mu_\star} \big[ h_1(G,\emp)\big|\din_\emp>0 \big]\prob(\din_\emp>0)~,}
	and $\E_{\mu_\star} \big[ h_1(G,\emp)\big| \din_\emp>0 \big]$ is the probability that a random $\Young$ labelled neighbour of the root of $\RPPT(M,\delta)$ has degree $k$, conditionally on the event that the root has at least one younger neighbour.
	From the definition of $\RPPT(M,\delta)$, the degree distribution of a $\Young$ labelled node of age $a_\omega$ is $M^{(0)}+Y\big( M^{(0)},a_\omega \big)$, where $Y\big( M^{(0)},a_\omega \big)$ is a {mixed} Poisson random variable with mixing distribution $\Gamma_{in}\big( M^{(0)} \big)\lambda\big( a_\omega \big)$.

	Using the fact that $\din_\emp$ is the total number of points in a Poisson point process, the ages of the $\Young$-labelled neighbours of $\emp$, conditionally on $\din_\emp=n$, are i.i.d.\ random variables with density \eqref{eq:density:A} \cite[Exercise 4.34]{resnick1992adventures}. Hence, conditionally on $\din_\emp=n$, a uniformly chosen younger neighbour of the root $\emp$ has an age distribution given by $A_1$ with density \eqref{eq:density:A}.
	
	Therefore, conditionally on the root having at least one younger neighbour, a random $\Young$-labeled neighbour of the root has a degree distribution given by $M^{(0)} + Y\big(M^{(0)}, A_{\young}\big)$, where $Y\big(M^{(0)}, A_{\young}\big)$ is as defined earlier in Corollary~\ref{cor-older-younger-neighbours}, and
	\begin{align}\label{for:cor:neighbour:8}
		\E_{\mu_\star} \big[ h_1(G,\emp) \big] =& \prob\big(\din_\emp > 0\big)\prob\Big( M^{(0)} + Y\big( M^{(0)}, A_{\young} \big) = k \Big)\nn\\
		=& \prob\big(\din_\emp > 0\big) \Tilde{p}_k^{({\young})}.
	\end{align}
	On the other hand, $\E_{\mu_\star} \big[ h_2(G,\emp) \big]$ is the probability of the root $\emp$ having at least one younger neighbour, which is given by $\prob\big(\din_\emp > 0\big)$.
	 Therefore,
	\eqn{\label{for:cor:neighbour:9}
		\E_{\mu_\star} \big[ h_2(G,\emp) \big] = \prob\big( \din_\emp>0 \big)~.}
	Since $\prob\big(\din_\emp>0\big)$ is non-zero, \eqref{eq:deg:young} follows immediately from \eqref{for:cor:neighbour:8} and \eqref{for:cor:neighbour:9}. This completes the proof of (a).
	
	Now, \emph{we prove part (b) of this theorem}. For all $k\in\N$,
	\eqan{
		&\prob(1+M^{(\delta)}+Y(M^{(\delta)}+1,A_{\old})=k)\nn\\
		&\hspace{1cm}=\sum\limits_{m=1}^{k-1}\prob(M^{(\delta)}=m)\int_0^1\prob(Y(m+1,a)=k-m-1)f_{\old}(a)\,da~,\label{for:appd:deg:001}\\
		&\prob(M^{(0)}+Y(M^{(0)},A_{\young})=k)\nn\\
		&\hspace{1cm}=\sum\limits_{m=1}^{k}\prob(M^{(0)}=m)\int_0^1\prob(Y(m,a)=k-m)f_{\young}(a)\,da~,\label{for:appd:deg:011}}
	where $f_{\old}$ and $f_{\young}$ are the age density functions of a random $\Old$ and $\Young$ child of the root. If we manage to explicitly calculate the densities $f_{\old}$ and $f_{\young}$, then we can use $\prob(Y(m,a)=k-m)$ from \eqref{for:lem:power-law-root:5} to obtain expressions for degree distributions for older and younger neighbours of uniformly chosen vertex.
	\paragraph{Age density computations}
	We proceed to explicitly calculate the densities $f_{\old}$ and $f_{\young}$. From the definition of the $\RPPT(M,\delta)$, we have that $A_{\old}$ is distributed as $U_1U_2^{1/\chi}$, where $U_1$ and $U_2$ are independent uniform random variables on $[0,1]$. Therefore,

	\eqn{\label{for:lem:age:rand:old:1}
	\prob(A_{\old} \leq a) = \prob (U_1U_2^{1/\chi}\leq a) = \int_0^1 \prob(U_2\leq a^{\chi} x^{-\chi})\,dx~.
	}

	Note that for $x\in[0,a]$, the probability in the RHS of \eqref{for:lem:age:rand:old:1} is always $1$. For $x\in(a,1]$, the probability is $a^{\chi} x^{-\chi}$. Therefore, the RHS of \eqref{for:lem:age:rand:old:1} simplifies as

	\eqn{\label{for:lem:age:rand:old:2}
	\prob(A_{\old}\leq a) = a + a^{\chi}\int_a^1 x^{-\chi}\,dx = a+\frac{a^{\chi}}{1-\chi}(1-a^{1-\chi})~.
	}
	Now, differentiating the RHS of \eqref{for:lem:age:rand:old:2} with respect to $a$, we obtain
	\eqan{\label{for:lem:age:rand:old:3}
	f_{\old}(a) &= 1 + \frac{\chi}{1-\chi}a^{-(1-\chi)}(1-a^{1-\chi})-\frac{1-\chi}{1-\chi}a^\chi a^{-\chi}\nn\\
	&=\frac{\chi}{1-\chi}a^{-(1-\chi)}(1-a^{1-\chi})~.
	}
	Similarly, we have seen that conditionally on the existence of at least one $\Young$ child, its age distribution is given in equation \eqref{eq:density:A}. If $U_1$ is a uniform random variable on $[0,1]$, then
	\eqn{\label{for:lem:age:rand:young:1}
	\begin{split}
		f_{\young}(a)= \E\left[(1-\chi)\frac{a^{-\chi}\one_{\{ a\geq U_1 \}}}{1-U_1^{1-\chi}}\right] = (1-\chi)a^{-\chi}\int_0^a (1-y^{1-\chi})^{-1}\,dy~.
	\end{split}}
	Next, we perform a change of variable by substituting $y^{1-\chi}=x$ in the RHS of \eqref{for:lem:age:rand:young:1}:
	\eqn{\label{for:lem:age:rand:young:2}
	f_{\young}(a) = a^{-\chi}\int_0^{a^{1-\chi}}x^{1/(1-\chi)-1}(1-x)^{-1}\,dx~.
	}
	Substituting $\tau_e=1+{1}/{(1-\chi)}$ in the RHS of \eqref{for:lem:age:rand:young:2}, we obtain the density of a randomly chosen $\Young$ neighbour of the root as
	\eqn{\label{eq:den:rand:young}
	f_{\young}(a) = a^{-\chi}\int_0^{a^{1-\chi}}x^{\tau_e-2}(1-x)^{-1}\,dx~.
	}
	We have all the ingredients to prove the tail distributions.
	\paragraph{Tail distribution computation}
	Note that, from the definition of $M^{(0)}$ and $M^{(\delta)}$, and \eqref{eq:def:power-law:M}, we have
	\eqan{
			\prob(M^{(0)}=m)=&~L_1(m) m^{-(\tau_{\sss M}-1)}~,\label{eq:tail:M:1}\\
			\mbox{and}\qquad\prob(M^{(\delta)}=m)=&~L_2(m) m^{-(\tau_{\sss M}-1)}~. \label{eq:tail:M:2}
	}
	From \eqref{for:lem:age:rand:old:3}, we have the expression for $f_{\old}(a)$, and therefore the integral in \eqref{for:appd:deg:001} can be simplified as
	\eqan{\label{for:thm:old:1}
			&\int_0^1\prob(Y(m,a)=k-m)f_{\old}(a)\,da\nn\\
			=& \frac{\Gamma(k+\delta)}{\Gamma(m+\delta)(k-m)!}\int_0^1 (1-a^{1-\chi})^{k-m}(a^{1-\chi})^{m+\delta} \frac{\chi}{1-\chi} a^{-(1-\chi)}(1-a^{1-\chi})\,da\nn\\
			=& (\tau_e-2)\frac{\Gamma(k+\delta)}{\Gamma(m+\delta)(k-m)!}\int_0^1 (1-a^{1-\chi})^{k-m+1}(a^{1-\chi})^{m+\delta-1}\,da~.
	}
	Again we do the same change of variable and simplify the RHS of \eqref{for:thm:old:1} as
	\eqan{\label{for:thm:old:3}
			&\int_0^1\prob(Y(m,a)=k-m)f_{\old}(a)\,da\nn\\
			=& (\tau_e-2)(\tau_e-1)\frac{\Gamma(k+\delta)}{\Gamma(m+\delta)(k-m)!}\int_0^1 (1-u)^{k-m+1}u^{m+\delta+\tau_e-3}\,da\nn\\
			=& (\tau_e-2)(\tau_e-1)\frac{\Gamma(k+\delta)}{\Gamma(m+\delta)(k-m)!} \frac{\Gamma(m+\delta+\tau_e-2)\Gamma(k-m+2)}{\Gamma(k+\delta+\tau_e)}\nn\\
			=& (\tau_e-2)(\tau_e-1){(k-m)}\frac{\Gamma(k+\delta){\Gamma(m+\delta+\tau_e-1)}}{{\Gamma(m+1+\delta)}\Gamma(k+\delta+\tau_e)}.
	}
	Now substituting this integral value in \eqref{for:appd:deg:001}, the sum in the RHS could be simplified as 
	\eqan{\label{tail:old:1}
		&\prob(1+M^{(\delta)}+Y(M^{(\delta)}+1,A_{\old})=k)\nn\\
		=&(\tau_e-2)(\tau_e-1)\frac{\Gamma(k+\delta)}{\Gamma(k+\delta+\tau_e)}\sum\limits_{m=1}^{k-1}\prob(M^{(\delta)}=m){(k-m)}\frac{{\Gamma(m+\delta+\tau_e-1)}}{{\Gamma(m+1+\delta)}}\nn\\
		=& (\tau_e-2)(\tau_e-1) k^{-\tau_e}(1+\mathcal{O}(1/k))\nn\\
		&\hspace{2.8cm}\times\sum\limits_{m=1}^{k-1}L_1(m)m^{-(\tau_{\sss M}-\tau_e+1)}(k-m)(1+\mathcal{O}(1/m))~.}
	If $\tau_{\sss M}\leq \tau_e$, the sum in \eqref{tail:old:1} varies regularly as $k^{(\tau_e-\tau_{\sss M})+1}$. On the other hand for $\tau_{\sss M}>\tau_e$, the sum varies regularly as $k$. Therefore there exists a slowly varying function $L^{({\old})}(k)$ such that
	\eqn{\label{tail:old:2}
		\prob(1+M^{(\delta)}+Y(M^{(\delta)}+1,A_{\old})=k) = L^{({\old})}(k) k^{-\tau_{\sss(\old)}}~,}
	where ${\tau_{\sss(\old)}=\min\{ \tau_{\sss M},\tau_e \}-1}$. 
	\FC{For $\tau_{\sss M}>\tau_e,~L^{\sss(\old)}$ turns out to be a constant. On the other hand, for $\tau_{\sss M}\leq \tau_e,$ using Karamata's Theorem \cite[Proposition~1.5.8]{bingham_goldie_teugels_1987}, $L^{\sss(\old)}$ can be shown to be asymptotically equal to $L_1/\tau_{\sss(\old)}$.}
	
	The calculation for $\Young$ neighbours of the root is more involved. We first substitute the $f_{\young}(a)$ in \eqref{for:appd:deg:011} to obtain
	\eqan{\label{for:thm:young:1}
		&\int_0^1\prob(Y(m,a)=k-m)f_{\young}(a)\,da\nn\\
		=& \frac{\Gamma(k+\delta)}{\Gamma(m+\delta)(k-m)!}\int_0^1(1-a^{1-\chi})^{k-m}(a^{1-\chi})^{(m+\delta)}a^{-\chi}\\
		&\hspace{6cm}\times\int_0^{a^{1-\chi}}x^{\tau_e-2}(1-x)^{-1}\,dx\,da~. \nn}
	We perform a change of variable $u=a^{1-\chi}$, and simplify the above equation as
	\eqan{\label{for:thm:young:2}
			&\int_0^1\prob(Y(m,a)=k-m)f_{\young}(a)\,da\\
			=& (\tau_e-1)\frac{\Gamma(k+\delta)}{\Gamma(m+\delta)(k-m)!}\int_0^1 (1-u)^{k-m}u^{m+\delta}\int_0^u x^{\tau_e-2}(1-x)^{-1}\,dx\,du~.\nn
	}
	Using the fact that $(1-x)^{-1}=\sum\limits_{i\geq 0}x^i$ we simplify the inner integral as
	\eqn{\label{for:thm:young:3}
		\int_0^u x^{\tau_e-2}(1-x)^{-1}\,dx = \sum\limits_{i\geq 0}\int_0^u x^{\tau_e+i-2}\,dx=\sum\limits_{i\geq 0}\frac{u^{\tau_e+i-1}}{\tau_e+i-1}~.}
	Substituting the RHS of \eqref{for:thm:young:3} in \eqref{for:thm:young:2}, we obtain
	\eqan{\label{for:thm:young:4}
		&\int_0^1\prob(Y(m,a)=k-m)f_{\young}(a)\,da\nn\\
		=& \frac{(\tau_e-1)\Gamma(k+\delta)}{\Gamma(m+\delta)(k-m)!}\sum\limits_{i\geq 0}\frac{1}{\tau_e+i-1}\int_0^1 (1-u)^{(k-m+1)-1}u^{(m+\delta+\tau_e+i)-1}\,du\nn\\
		=& (\tau_e-1)\frac{\Gamma(k+\delta)\Gamma(k-m+1)}{\Gamma(m+\delta)(k-m)!}\sum\limits_{i\geq 0}\frac{1}{(\tau_e+i-1)} \frac{\Gamma(m+\delta+\tau_e+i)}{\Gamma(k+\delta+\tau_e+i+1)}\nn\\
		=& (\tau_e-1)\frac{\Gamma(k+\delta)}{\Gamma(m+\delta)}\sum\limits_{i\geq 0}\frac{1}{(\tau_e+i-1)} \frac{\Gamma(m+\delta+\tau_e+i)}{\Gamma(k+\delta+\tau_e+i+1)}\nn\\
		=&\frac{\Gamma(k+\delta)\Gamma(m+\delta+\tau_e)}{\Gamma(m+\delta)\Gamma(k+\delta+\tau_e+1)}\nn\\
		&\hspace{1cm}+(\tau_e-1)\frac{\Gamma(k+\delta)}{\Gamma(m+\delta)}\sum\limits_{i\geq 1}\frac{1}{(\tau_e+i-1)} \frac{\Gamma(m+\delta+\tau_e+i)}{\Gamma(k+\delta+\tau_e+i+1)}~.}
	The RHS of \eqref{for:thm:young:4} is lower bounded by the first term and hence the RHS of \eqref{for:appd:deg:011} can be lower bounded as
	\eqan{\label{for:thm:young:5}
		&\prob(M^{(0)}+Y(M^{(0)},A_{\young})=k)\nn\\
		&\geq\sum\limits_{m=1}^{k}\prob(M^{(0)}=m) \frac{\Gamma(k+\delta)\Gamma(m+\delta+\tau_e)}{\Gamma(m+\delta)\Gamma(k+\delta+\tau_e+1)}\nn\\
		&=k^{-(\tau_e+1)}(1+\mathcal{O}(1/k))\sum\limits_{m=1}^{k}L_2(m)m^{-(\tau_{\sss M}-\tau_e-1)}(1+\mathcal{O}(1/m))~.}
	It can easily be shown that the RHS of \eqref{for:thm:young:5} varies regularly with $k^{-(\tau_e+1)}$ if $\tau_{\sss M}>\tau_e+2$. When $\tau_{\sss M}\leq \tau_e+2$, it varies regularly with $k^{-(\tau_{\sss M}-1)}$. Therefore the lower bound varies regularly with $k^{-\tau_{(\young)}}$, where $\tau_{(\young)}=\min\{\tau_{\sss M}-1,\tau_e+1\}$ and some slowly varying function $L_2^\prime(k)$. For $\tau_{\sss M}>\tau_e+2,~L_2^\prime(k)$ turns out to be a constant, and for $\tau_{\sss M}=\tau_e+2,$
	\eqn{
		L_2'(k)=\sum_{t=1}^k L_2(t)/t,
	}
	and lastly for $\tau_{\sss M}<\tau_e+2$, we use the same Karamata Theorem as before to obtain that $L_2^\prime(k)=\Theta(L_2(k)).$
	We move on to analyse the second term of RHS of \eqref{for:thm:young:4}.
	\eqan{\label{for:thm:young:6}
			&(\tau_e-1)\frac{\Gamma(k+\delta)}{\Gamma(m+\delta)}\sum\limits_{i\geq 1}\frac{1}{(\tau_e+i-1)} \frac{\Gamma(m+\delta+\tau_e+i)}{\Gamma(k+\delta+\tau_e+i+1)}\nn\\
			=& (\tau_e-1)\frac{\Gamma(k+\delta)}{\Gamma(m+\delta)}\sum\limits_{i\geq 1}\frac{1}{(\tau_e+i-1)(m+\delta+\tau_e+i)} \frac{\Gamma(m+\delta+\tau_e+i+1)}{\Gamma(k+\delta+\tau_e+i+1)}\nn\\
			=&(\tau_e-1)\frac{\Gamma(k+\delta)\Gamma(m+\delta+\tau_e+2)}{\Gamma(m+\delta)\Gamma(k+\delta+\tau_e+2)}\nn\\
			&\hspace{1cm}\times\sum\limits_{i\geq 0}\frac{1}{(\tau_e+i)(m+\delta+\tau_e+i+1)}\prod\limits_{j=0}^{i-1}\frac{m+\delta+\tau_e+j+2}{k+\delta+\tau_e+j+2}~.
	}
	\paragraph{\underline{Claim}}For any $m\geq 1,$
	\eqan{\label{eq:lemma:upper:bound:young:tail}
		&\sum\limits_{i\geq 0}\frac{1}{(\tau_e+i)(m+\delta+\tau_e+i+1)}\prod\limits_{j=0}^{i-1}\frac{m+\delta+\tau_e+j+2}{k+\delta+\tau_e+j+2}\nn\\
		&\hspace{3cm}=~ \mathcal{O}(m^{-1}(2-\log (1-2m/(k+m+1))).
	}
	Subject to the claim above, there exists $J_0>0$ such that the RHS of \eqref{for:thm:young:6} can be upper bounded by $J_0 (1+\log (1-2m/(k+m+1))) k^{-(\tau_e+2)}m^{(\tau_e+1)}(1+\mathcal{O}(1/k))(1+\mathcal{O}(1/m))$. Therefore, using \eqref{for:thm:young:4}, \eqref{for:thm:young:5}, \eqref{for:thm:young:6} and \eqref{eq:lemma:upper:bound:young:tail}, the RHS of \eqref{for:appd:deg:011} can be upper bounded as
	\eqan{
		\label{for:thm:young:7}
		&\prob(M^{(0)}+Y(M^{(0)},A_{\young})=k) \nn \\
		\leq& L_2^\prime(k)k^{-\tau_{(\young)}}+J_0k^{-(\tau_e+2)}(1+\mathcal{O}(1/k))\\
		&\hspace{0.52cm}\times\sum\limits_{m=1}^k L_2(m)(2-\log (1-2m/(k+m+1)))m^{-(\tau_{\sss M}-\tau_e-2)}(1+\mathcal{O}(1/m))~.\nn
	}
	When $\tau_{\sss M}>\tau_e+2,$ the second sum is $o(k)$ and hence the second term is $o(k^{-(\tau_e+1)})$. Therefore the tail degree distribution of the $\young$ child varies regularly with $k^{-\tau_{(\young)}}$ when $\tau_{\sss M}>\tau_e+2$.
	
	On the other hand, when $\tau_{\sss M}\leq \tau_e+2$, the upper bound can be shown to vary regularly with $k^{-\tau_{(\young)}}$ and the slowly varying function is again $\Theta(L_2^\prime(k))$. Therefore considering $L_2^\prime(k)$ as $L^{({\young})}(k)$, we can say that 
	\eqn{\label{tail:young:final}
		\prob(M^{(0)}+Y(M^{(0)},A_{\young})=k)=\Theta(L^{({\young})}(k))k^{-\tau_{(\young)}}~.} 
	From our calculation here we could not identify the exact slowly varying function in $k$ when $\tau_{\sss M}\leq \tau_e+2$. We think this could also be shown by tweaking the sum in \eqref{for:thm:young:4} properly. Now, it only remains to prove \eqref{eq:lemma:upper:bound:young:tail} from the claim.
	
	\noindent
	\paragraph{Proof of claim} For $i\geq m-\delta-\tau_e-1$, we bound
	\eqn{
		\prod\limits_{j=0}^{i-1}\frac{m+\delta+\tau_e+j+2}{k+\delta+\tau_e+j+2}\leq 1,
	}
	so that
	\eqan{
		&\sum_{i\geq m-\delta-\tau_e-1}
		\frac{1}{(\tau_e+i)(m+\tau_e+i+\lfloor\delta\rfloor)}\nn\\
		&\hspace{2cm}\leq 
		\sum_{i\geq m-\delta-\tau_e-1}
		\frac{1}{(\tau_e+i)(\tau_e+i+\lfloor\delta\rfloor)}=\Theta(1/m),
	}
	as required.
	For $i<m-\delta-\tau_e-1$, instead, we rewrite the summands in \eqref{eq:lemma:upper:bound:young:tail} as 
	\eqan{
			&\frac{1}{(\tau_e+i)(m+\delta+\tau_e+i+1)}\prod\limits_{j=0}^{i-1}\frac{m+\delta+\tau_e+j+2}{k+\delta+\tau_e+j+2}\nn\\
			=&\frac{1}{(\tau_e+i)(m+\delta+\tau_e+1)}\prod\limits_{j=0}^{i-1}\frac{m+\delta+\tau_e+j+1}{k+\delta+\tau_e+j+2}~.
	}
	Next, we bound the product terms as
	\eqn{
		\frac{m+\delta+\tau_e+j+1}{k+\delta+\tau_e+j+2}
		\leq \frac{m+\delta+\tau_e+i-1+1}{k+\delta+\tau_e+i-1+2}
		\leq 
		\frac{2m}{k+m+1}.
	}
	Therefore,
	\eqn{
		\prod\limits_{j=0}^{i-1}\frac{m+\delta+\tau_e+j+1}{k+\delta+\tau_e+j+2}\leq 
		\Big(\frac{2m}{k+m+1}\Big)^i.
	}
	We conclude that, also using that $\tau_e\geq 1$,
	\eqan{
		&\sum_{i=0}^{m-\delta-\tau_e-1}
		\frac{1}{(\tau_e+i)(m+\delta+\tau_e+i+1)}\prod\limits_{j=0}^{i-1}\frac{m+\delta+\tau_e+j+2}{k+\delta+\tau_e+j+2}\\
		&\qquad \leq \frac{1}{m+\delta+\tau_e+1}+\frac{1}{m}
		\sum_{i=1}^{m-\delta-\tau_e-1}
		\frac{1}{i}\Big(\frac{2m}{k+m+1}\Big)^i\nn\\
		&\qquad\leq \Theta(1/m)
		+\frac{1}{m}
		\sum_{i\geq 1}
		\frac{1}{i}\Big(\frac{2m}{k+m+1}\Big)^i\nn\\
		&\qquad=\Theta(m^{-1}(1-\log(1-2m/(k+m+1)))),\nn
	}
	as required, proving \eqref{eq:lemma:upper:bound:young:tail} in the claim.
\end{proof}

\EndThumbs
\part{Stochastic Processes on Preferential Attachment Model}
\StartThumbs

\chapter{Percolation Threshold for P\'{o}lya point tree}
\label{chap:percolation_threshold_PPT}
\begin{flushright}
\footnotesize{}Based on:\\
\textbf{Percolation on preferential attachment models}
\cite{RRR23}
\end{flushright}
\vspace{0.1cm}
\begin{center}
	\begin{minipage}{0.7 \textwidth}
		\footnotesize{\textbf{Abstract.}
		In this chapter, we prove that the critical percolation threshold for the P\'olya point tree is the inverse of the spectral radius of the mean offspring operator. For positive $\delta$, we use sub-martingales to establish sub-criticality and apply spine decomposition theory to demonstrate super-criticality, thereby completing the proof. For $\delta \leq 0$ and any $\pi > 0$, we prove that the percolated P\'olya point tree dominates a supercritical branching process, showing that the critical percolation threshold equals $0$.
		
		We identify the explicit critical percolation threshold for P\'olya point trees as
		\[
		\pi_c = \frac{\delta}{2\left(m(m+\delta) + \sqrt{m(m-1)(m+\delta)(m+1+\delta)}\right)}
		\]
		for $\delta > 0$, and $\pi_c = 0$ for non-positive values of $\delta$.
		 }
	\end{minipage}
\end{center}
\vspace{0.1cm}

\section{Introduction}\label{chap:percolation_threshold_PPT:intro}
Bond percolation is a fundamental and elementary process in statistical physics and network science that investigates the connectivity of a network under random attack. In bond percolation, each edge of a given graph is independently retained with probability $\pi$ and removed with probability $1-\pi$. 

\chRounak{In Chapter~\ref{chap:LWC}, we observed that generalised preferential attachment models have a universal local limit, the random P\'{o}lya point tree. In this chapter, we study percolation on this multi-type branching process. For any $\pi\in[0,1]$, let $\PPT_{\pi}(m,\delta)$ denote the connected component of the $\pi$-percolated $\PPT(m,\delta)$ that contains the root. The survival probability of the P\'{o}lya point tree is defined as
\eqn{\label{eq:def:survival-probability}
		\zeta(\pi):=\prob\Big( \big|\PPT_{\pi}(m,\delta)\big|=\infty \Big)~.}
Branching processes typically undergo a phase transition in the survival probability, depending on the percolation probability $\pi$. Following the notation commonly used in the branching process literature, we define $\pi_c$ as the critical percolation threshold for the P\'{o}lya point tree, where $\zeta(\pi)>0$ for $\pi>\pi_c$, and $\zeta(\pi)=0$ for $\pi<\pi_c$. Percolation on branching processes has been extensively studied, and we refer the reader to \cite{JN84,AN72,H63} and the references therein.
}

\chRounak{It has been shown in \cite{AN72} that the critical percolation threshold for a Galton-Watson branching process with mean offspring $\lambda (>1)$ is $1/\lambda$. For multi-type branching processes with a finite type space, the critical percolation threshold is given by the inverse of the mean offspring matrix. However, identifying the same for multi-type branching processes with a more general type space is not straightforward. Under certain conditions, it has been shown in \cite{AN72} that the spectral radius of the mean offspring operator of the branching process is its critical percolation threshold. Most of these proof techniques rely on explicit characterization and analysis of the probability generating functions of the number of offspring. Lyons, Peres, and Pemantle in \cite{LPP95} provided an alternative proof of the identification of critical percolation thresholds for Galton-Watson branching processes, using simple probabilistic tools such as martingales and spine decomposition theory. Later, Athreya generalised the proof technique for multi-type branching processes with general type space in \cite{A2000}.
}

\chRounak{In \cite{ABS22}, the authors proved that the P\'{o}lya point tree with $\delta=0$ has a critical percolation threshold of $0$. However, this proof heavily relies on the assumption $\delta=0$, and we found it challenging to extend the result to other admissible values of $\delta$. In this chapter, we identify the critical percolation threshold for P\'{o}lya point trees in general.
}

\begin{center}
	{\textbf{Organisation of the chapter}}
\end{center}
This chapter is organized as follows. 
In Section~\ref{chap:percolation_threshold_PPT:sec:main-theorem}, we have stated the main theorem regarding the critical percolation threshold of the P\'{o}lya point tree, and briefly outline the proof to the theorem. Then, in Section~\ref{chap:percolation_threshold_PPT:sec:non-positive}, we prove the theorem for non-positive $\delta$. In Section~\ref{subsec:positive-delta}, we prove the theorem for positive $\delta$ regime.

\section{Critical percolation threshold of PPT}\label{chap:percolation_threshold_PPT:sec:main-theorem}
In \cite[Appendix~E]{ABS22}, the authors showed that the survival probability is continuous at $\pi_c$ for the $\delta=0$ case of the P\'olya point tree. Additionally, the authors proved in \cite[Corollary~2.2]{ABS22} that $\zeta(\pi)$ is continuous for all $\pi \neq \pi_c$. 

We now proceed to identify the critical percolation threshold $\pi_c$ for the P\'olya point tree, a multi-type branching process with a mixed continuous and discrete type space. Additionally, we demonstrate that $\zeta(\pi)$ is continuous at $\pi_c$ for any $\delta > -m$. Intuitively, $\pi_c$ for such branching processes is the inverse of the spectral norm of the offspring operator, but this requires a proof.

For any $m \geq 1$ and $\delta > -m$, from \cite[(5.4.86)]{vdH2} the kernel function of the mean offspring operator of the P\'olya point tree is given by 
\eqn{\label{eq:kernel:offspring-operator:PPT}
	\kappa\big((x,s), (y,t)\big)=\frac{c_{st}(\one_{\{x>y, t=\old\}}+\one_{\{x<y, t=\young\}})}{(x\vee y)^{\chi}(x\wedge y)^{1-\chi}},
}
with $\chi = (m + \delta)/(2m + \delta) \in (0,1)$ and 
\eqn{
	\label{eq:def-cst}
	c_{st}=
	\begin{cases}
		\frac{m(m+\delta)}{2m+\delta} &\text{ for } st = \Old\Old,\\
		\frac{m(m+1+\delta)}{2m+\delta} &\text{ for } st = \Old\Young,\\
		\frac{(m-1)(m+\delta)}{2m+\delta} &\text{ for } st = \Young\Old,\\
		\frac{m(m+\delta)}{2m+\delta} &\text{ for } st = \Young\Young.
	\end{cases}
}
Let ${\bfT}_\kappa$ denote the mean offspring operator of the P\'olya point tree, defined on $L^2(\Scal,\lambda)$, where $\lambda$ is the Lebesgue measure on the continuous part of $\Scal$.

\begin{Theorem}[Critical percolation threshold for the P\'olya point tree]\label{thm:critical-percolation:PPT}
	Fix $m \geq 1$ and $\delta > -m$. Let ${\bfT}_\kappa$ be the offspring operator of the P\'olya point tree on $\Scal$. Then, for $\delta > 0$, the critical percolation threshold of the P\'olya point tree is $1/r({\bfT}_\kappa)$. For $m \geq 2$ and $\delta \in (-m,0]$, the critical percolation threshold is $0$. Additionally, $\pi \mapsto \zeta(\pi)$, the survival probability of the P\'olya point tree after percolation, is continuous at the critical percolation threshold.
\end{Theorem}

Theorem~\ref{thm:critical-percolation:PPT} identifies the critical percolation threshold for the P\'olya point tree and establishes the continuity of the survival probability at $\pi_c$ for $\delta > -m$.

This theorem holds true for $m = 1$ for positive $\delta$, whereas our proof technique fails for non-positive $\delta$ and $m = 1$. \chRounak{We believe that for non-positive $\delta$, the critical percolation threshold is also $0$.}

Now, we provide a brief outline of the proof of this theorem. We investigate the survival probability of the percolated P\'olya point tree in two regimes: $\delta \leq 0$ and $\delta > 0$.

For $\delta \leq 0$, we prove the following proposition: 
\begin{Proposition}[Non-positive $\delta$]\label{prop:criticality:negative-delta}
	Fix $m \geq 2$ and $\delta \in (-m,0]$. Then, the critical percolation threshold of the P\'olya point tree with parameters $m$ and $\delta$ is $0$.
\end{Proposition}

The proof of this proposition uses an interesting property of the P\'olya point tree, namely the \emph{elbow children}. We do not use any functional analytic tools to prove this proposition, in contrast to \cite{ABS22}. We prove that for non-positive $\delta$ and any $\pi > 0$, the P\'olya point tree, percolated with probability $\pi$, stochastically dominates a supercritical single-type branching process.

On the other hand, for $\delta>0$, the critical percolation threshold turns out to be positive, necessitating a detailed investigation of the mean offspring generator of the P\'olya point tree. Rather than confining our analysis to a restricted space, we choose to extend the type space to $\Scal_e = [0,\infty) \times \{\Old, \Young\}$ while keeping the kernel unchanged. This extension allows us to incorporate vertices that will join the preferential attachment graph after time $n$. By growing the $\PPT$ on this extended type space and subsequently truncating all nodes with an age greater than $1$, we retrieve the $\PPT$.

Let $\bar{\bfT}_{\kappa}$ denote the mean offspring operator defined on $L^2(\Scal_e,\lambda_e)$, where $\lambda_e$ is the Lebesgue measure on the continuous part of $\Scal_e$. The following theorem identifies an eigenvalue of $\bar{\bfT}_{\kappa}$ along with the corresponding eigenfunction, and this eigenvalue is the spectral norm of the mean offspring operator, irrespective of whether the type space is $\Scal$ or $\Scal_e$:

\begin{Theorem}[Spectral norm of the offspring generator]
	\label{thm:operator-norm:PPT}
	Fix $m \geq 1$ and $\delta>0$. Let $r(\bfT_{\kappa})$ and $r(\bar{\bfT}_{\kappa})$ denote the spectral norms of $\bfT_{\kappa}$ and $\bar{\bfT}_{\kappa}$, respectively. Then
	\eqn{
		\label{nu-equality-PAM}
		r({\bfT}_{\kappa})=r(\bar{\bfT}_{\kappa})=2\frac{m(m+\delta)+\sqrt{m(m-1)(m+\delta)(m+1+\delta)}}{\delta}.
	}
	Additionally, the explicit eigenfunction of $\bar{\bfT}_\kappa$ corresponding to $r(\bar{\bfT}_{\kappa})$ is $f(x,s)=\bfp_s/\sqrt{x}$, where $\bfp=(\bfp_{\old},\bfp_{\young})$ is the right eigenvector of $\bfM$, defined by
	\[
	\bfM=\begin{bmatrix}
		c_{\old\old} & c_{\old\young}\\
		c_{\young\old} & c_{\young\young}
	\end{bmatrix}~.
	\]
\end{Theorem}

It can further be shown that the operator norms of these operators are also equal to their spectral norm. Although we do not use this fact to prove the critical percolation threshold of the P\'olya point tree, this result is interesting in its own right. We defer the proof of Theorem~\ref{thm:operator-norm:PPT} to Section~\ref{sec:spectral_radius}.

Next, we prove that for $\pi \leq 1/r(\bar{\bfT}_{\kappa})$, the percolated P\'olya point tree is subcritical, demonstrating the required left continuity of the survival probability function $\zeta$, while the right continuity generally follows for percolation models from \cite[Lemma~8.9]{Grimmet99}:

\begin{Proposition}\label{lem:subcriticality}
	For $\delta>0$, the $\PPT$ dies out almost surely if it is percolated with probability $\pi \leq 1/r(\bar{\bfT}_\kappa)$.
\end{Proposition}

Conversely, for $\pi > 1/r(\bar{\bfT}_{\kappa})$, we prove that the percolated P\'olya point tree is supercritical.

\begin{Proposition}[Supercriticality of $\PPT$]\label{prop:supercriticality}
	For $\delta>0$, the $\PPT$ survives with positive probability when percolated with probability $\pi > 1/r(\bar{\bfT}_{\kappa})$.
\end{Proposition}

We prove this proposition by adapting the proof of \cite[Lemma~3.3]{DM13}, utilising a spine decomposition argument akin to that used by Lyons, Pemantle, and Peres in \cite{LPP95}. Therefore, from Propositions~\ref{prop:criticality:negative-delta}, \ref{lem:subcriticality}, \ref{prop:supercriticality}, and Theorem~\ref{thm:operator-norm:PPT}, the proof of Theorem~\ref{thm:critical-percolation:PPT} follows immediately. In the remainder of the chapter, we shall focus on proving these lemmas and the theorem.

\section{Non-positive regime}\label{chap:percolation_threshold_PPT:sec:non-positive}
In \cite{ABS22}, the authors addressed the case $\delta = 0$ using fixed-point approximation techniques. We propose an alternative approach, establishing that for any $\delta \leq 0$ and $\pi > 0$, the P\'olya point tree stochastically dominates a single-type discrete branching process with a mean offspring greater than $1$. Before defining this branching process, we introduce the concept of \emph{elbow children} as follows:
\begin{Definition}[Elbow children]
	Let $(x,s)$ be a node in $\PPT$. A node $(z,{\Old})$ is called an \emph{elbow child} of $(x,s)$ if there exists a node $(y,{\Young})$ in $\PPT$ such that $(x,s)$ connects to $(y,{\Young})$, and $(y,{\Young})$ connects to $(z,{\Old})$ in $\PPT$. The edge connections involved in forming an elbow child are referred to as \emph{elbow edges}.
\end{Definition}
\begin{wrapfigure}{r}{3.0cm}
\begin{tikzpicture}[line cap=round,line join=round,>=triangle 45,x=1.0cm,y=1.0cm]
	\clip(-7.5,2.2) rectangle (-5,4.7);
	\draw [line width=2.pt,color=wrwrwr] (-7.,3.)-- (-6.5,4.);
	\draw [line width=2.pt,color=wrwrwr] (-6.5,4.)-- (-6.,2.5);
	\draw (-6,2.8) node[anchor=north west] {$(z,{\Old})$};
	\draw (-7,4.7) node[anchor=north west] {$(y,{\Young})$};
	\draw (-7.5,2.9) node[anchor=north west] {$(x,s)$};
	\begin{scriptsize}
		\draw [fill=rvwvcq] (-7.,3.) circle (2.5pt);
		\draw [fill=rvwvcq] (-6.5,4.) circle (2.5pt);
		\draw [fill=rvwvcq] (-6.,2.5) circle (2.5pt);
	\end{scriptsize}
\end{tikzpicture}

	\label{fig:elbow-child}
\end{wrapfigure}
In the figure, the $(z,\Old)$ node is connected to $(x,s)$ through $(y,\Young)$. Thus, $(z,\Old)$ is an elbow child of $(x,s)$. The edge connections $((x,s),(y,\Young))$ and $((y,\Young),(z,\Old))$ are elbow edges. Note that all the elbow children of any node have the label $\Old$ in the tree. Furthermore, the set of elbow children of nodes in the $n$-th generation provides a lower bound on the total number of $\Old$-labeled nodes in the $(n+2)$-th generation in a $\PPT$.

Fix any $\pi > 0$. We now work with the $\PPT$ percolated with probability $\pi$. To construct the branching process, we proceed in two steps. First, we construct a branching process with elbow children, where the offspring generation still depends on the age of the parent node. Next, we further stochastically lower bound this process with another branching process, where the offspring generation no longer depends on the age of the parent node.

Let $h > 0$ be a fixed threshold. Starting from the root in the percolated $\PPT$, there is a positive probability of obtaining a child $(x,\Old)$ of the root such that $x < h$. We consider the branching process $\Tcal(x,\Old)$, starting at $(x,\Old)$. The children of $(x,\Old)$ in $\Tcal(x,\Old)$ are given by the elbow children of $(x,\Old)$ in the percolated $\PPT$ we started with. We continue growing the tree following the same procedure. It is evident that the size of $\Tcal(x,\Old)$ serves as a lower bound for the size of the percolated $\PPT$. We then prune the tree so that all nodes have an age of at most $h$. In this pruned tree, all nodes have the label $\Old$, making the type labels redundant. We denote this pruned tree as $\Tcal^{\sss (h)}(x)$, where the nodes no longer have labels. From its construction, it is evident that the size of $\Tcal^{\sss (h)}(x)$ further lower bounds that of $\Tcal(x,\Old)$, and in turn, lower bounds the size of the percolated $\PPT$.

Before proceeding to the next step of our stochastic lower bounds, we compute the explicit kernel function $\kappa_1$ of the offspring operator of $\Tcal^{\sss (h)}(x)$. Since $(z,\Old)$ is an elbow child of $(y,\Old)$, the latter connects to the former through a $\Young$-labeled node. Therefore, $\kappa_1$ is given by
\begin{equation}\label{def:kernel:lower-bound:1}
	\kappa_1(y,z)=\pi^2\int\limits_{y}^1\kappa((y,\Old),(u,\Young))\kappa((u,\Young),(z,\Old))\,du.
\end{equation}
Here, we crucially use the fact that the node $(u,\Young)$ connecting $(y,\Old)$ and $(z,\Old)$ in the $\PPT$ is unique, if it exists. The $\pi^2$ factor arises from bond percolation on the two elbow edges used to connect $(y,\Old)$ and $(z,\Old)$ in the $\PPT$. The right-hand side of \eqref{def:kernel:lower-bound:1} can be simplified as
\begin{equation}\label{eq:kernel:lower-bound:1}
	\kappa_1(y,z)=\begin{cases}
		c_{\old\young}c_{\young\old}\pi^2 (yz)^{-1+\chi}\frac{1-y^{1-2\chi}}{1-2\chi}, & \text{for } \delta < 0, \\
		c_{\old\young}c_{\young\old}\pi^2 (yz)^{-1+\chi}\log(1/x), & \text{for } \delta = 0,
	\end{cases}
\end{equation}
where $\chi = (m + \delta)/(2m + \delta)$, as defined earlier. 

Now, we move on to the second step of the proof. In this step, we aim to establish a lower bound for $\Tcal^{\sss (h)}(x)$ by comparing it to a single-type branching process where the offspring generation does not depend on the age of the parent node. To achieve this, we carefully choose the offspring distribution to obtain the desired lower bound. To determine the offspring distribution, we use a stochastic domination argument. Let $\Bcal_1^{\sss(h)}(x)$ denote the $1$-neighbourhood of $x$ in $\Tcal^{\sss (h)}(x)$, and for any two random variables $X$ and $Y$, we use $X\preceq Y$ to denote that $Y$ stochastically dominates $X$.

\begin{Lemma}\label{lem:stoch-dom}
	Let $0 < z_1 \leq z_2 < 1$. Then $|\Bcal_1^{\sss(h)}(z_2)| \preceq |\Bcal_1^{\sss(h)}(z_1)|$.
\end{Lemma}

\begin{proof}
	To prove the stochastic domination, we revisit the definition of the $\PPT$. From \eqref{for:pointgraph:poisson:PPT}, the $\Young$-labelled children of a node $(z,\Old)$ are generated by an inhomogeneous Poisson process with intensity
	\begin{equation}\label{for:lem:stoch-dom:1}
		\rho_{(z,\old)}(x) = (1-\chi)\Gamma_{(z,\old)} \frac{x^{-\chi}}{z^{1-\chi}} \one_{\{x \geq z\}},
	\end{equation}
	where $\Gamma_{(z,\old)}$ is a Gamma random variable that is independent of the choice of $z$. 
	Let $z_{ij}$ denote the age of the $j$-th $\Young$-labelled child of $(z_i,\Old)$.
	
	Since $\rho_{(z_2,\Old)}(x) \leq \rho_{(z_1,\Old)}(x)$ for all $x \geq z_1$, $z_{11}$ is stochastically dominated by $z_{21}$. As a consequence of this stochastic domination between the ages, it can similarly be shown that $z_{1j} \preceq z_{2j}$ for all $j$. It is important to remember that the $\Old$-labelled children of $(z_{ij},\Young)$ are the elbow children of $(z_i,\Old)$. From the construction of the $\PPT$, for all $k \in [m-1]$, the age of the $k$-th $\Old$-labelled child of $(z_{ij},\Young)$ is distributed as $U^{1/\chi} z_{ij}$, where $U$ is a uniform random variable on $[0,1]$. Hence, for all $k \in [m-1]$, the age of the $k$-th $\Old$-labelled child of $(z_{1j},\Young)$ is stochastically dominated by that of $(z_{2j},\Young)$. Clearly, all these stochastic domination results also hold true for the percolated $\PPT$.
	
	The lemma follows from the fact that in $\Tcal^{\sss(h)}(z_1)$ and $\Tcal^{\sss(h)}(z_2)$, all nodes with age higher than $h$ are discarded.
\end{proof}

Lemma~\ref{lem:stoch-dom} implies that the offspring distribution of $z_1$ dominates that of $z_2$ in $\Tcal^{\sss(h)}(x)$ for $z_1 \leq z_2$. Since the age of all nodes in $\Tcal^{\sss(h)}(x)$ is bounded from above by $h$, the offspring distribution of the node with age $h$ (denoted as $F_{\pi,h}$) is stochastically dominated by the offspring distribution of each of the nodes in $\Tcal^{\sss(h)}(x)$.

\begin{proof}[Proof of Theorem~\ref{thm:critical-percolation:PPT} for non-positive $\delta$]
	Consider a single-type discrete branching process ${\sf{BP}}(\pi, h)$ with offspring distribution $F_{\pi,h}$. From its construction, the size of ${\sf{BP}}(\pi, h)$, denoted as $|{\sf{BP}}(\pi, h)|$, serves as a stochastic lower bound on the size of the percolated $\PPT$.
	
	The expected number of offspring of every node in ${\sf{BP}}(\pi, h)$ can be expressed as
	\begin{equation}\label{eq:offspring:GWBP:1}
		\int\limits_0^h \kappa_1(h,u) \, du =
		\begin{cases}
			c_{\old\young}c_{\young\old}\pi^2 h^{-1+2\chi} \frac{1-h^{1-2\chi}}{\chi(1-2\chi)} & \text{for } \delta < 0,\\
			c_{\old\young}c_{\young\old}\pi^2 \log(1/h) & \text{for } \delta = 0.
		\end{cases}
	\end{equation}
	
	Since $2\chi < 1$ for $\delta < 0$, and $\log(1/h) \to \infty$ as $h \searrow 0$, for every fixed $\pi > 0$, we can choose the threshold $h$ small enough that the RHS of \eqref{eq:offspring:GWBP:1} is greater than $1$, making ${{\sf{BP}}}(\pi, h)$ a supercritical branching process. Consequently, the percolated P\'olya point tree ($\PPT$) also survives with positive probability after percolation with probability $\pi$.
\end{proof}

\section{Positive $\delta$ regime}\label{subsec:positive-delta}
We present the proof of Theorem~\ref{thm:critical-percolation:PPT} for positive $\delta$ in two crucial steps. First, we establish that the spectral norm of $\bar{\bfT}_{\kappa}$ corresponds to the inverse critical percolation threshold for the P\'olya point tree. Next, we prove the continuity of $\pi \mapsto \zeta(\pi)$ at the previously obtained critical percolation threshold. Our proof technique for the first step enables us to demonstrate the left continuity of $\zeta(\pi)$ at $\pi_c$. Furthermore, we use \cite[Lemma~8.9]{Grimmet99} to establish the right continuity of $\zeta(\pi)$ at $\pi_c$, thereby proving its continuity at $\pi_c$.

We proceed to prove that the inverse of the spectral norm of the offspring generator is identical to the $\PPT$'s critical percolation threshold. This proof consists of two parts. First, we show that when $\pi \leq 1/r(\bar{\bfT}_\kappa)$, the process dies out almost surely. To demonstrate this, we adapt an argument used in \cite[Lemma~3.3]{DM13}.

Throughout this section, we work with the $\PPT$ with type space $\Scal_e$. In Proposition~\ref{lem:subcriticality}, we prove that when percolated with probability $\pi \leq 1/r(\bar{\bfT}_{\kappa})$, the probability that the percolated $\PPT$ has the age of the leftmost node below $1$ at some generation converges to $0$. Therefore, after truncating the tree at nodes with ages greater than $1$, the tree will eventually die out. On the other hand, when the $\PPT$ is percolated with probability $\pi > 1/r(\bar{\bfT}_{\kappa})$, we prove in Proposition~\ref{prop:supercriticality} that with some positive probability, there exists an ancestral line of nodes where the ages of the nodes converge to $0$, and hence the $\PPT$ survives even in the restricted type space $\Scal$.

\begin{proof}[Proof of Proposition~\ref{lem:subcriticality}]
	In Theorem~\ref{thm:operator-norm:PPT}, we derived the eigenfunction of the offspring operator corresponding to its spectral norm, denoted as $f(x, s)$. Let $Y_{(x, s)}^{(n)}(y, t)\,dy$ represent the empirical measures of the type and age of all offspring in the $n$-th generation of a percolated P\'olya point tree initiated at $(x, s)$. With each generation $n \geq 0$, we associate the \emph{score}
	\eqn{\label{eq:def:supermartingale}
		X_\pi^{(n)}(x, s) := \sum\limits_{t \in \{ {\old, \young} \}} \int\limits_0^\infty Y_{(x, s)}^{(n)}(y, t) \frac{f(y, t)}{f(x, s)}\,dy~.}
	Given that $\pi \leq 1/r(\bar{\bfT}_\kappa)$ and $f$ is the eigenfunction of $\bar{\bfT}_{\kappa}$ corresponding to $r(\bar{\bfT}_{\kappa})$, it follows that $(X_\pi^{(n)})_{n \geq 1}$ constitutes a non-negative supermartingale and therefore converges almost surely. Now, let us consider a fixed value of $N \in \R^{+}$. Define the event $\mathcal{A}_n(N)$ as
	\eqn{\label{for:lem:subcriticality:2}
		\Acal_n(N) = \{\exists\text{ particle in the $n$-th generation with age less than }N\}~.}
	To complete the proof of Lemma~\ref{lem:subcriticality}, we prove that $\Acal_n(N)$ happens finitely often almost surely, i.e.,
	\eqn{\label{for:lem:subcriticality:3}
		\prob(\Acal_n(N)~\text{i.o.}) = 0~,}
	where i.o.\ means infinitely often. For simplicity of notation, we shall use $\Acal_n$ to denote $\Acal_n(N)$ throughout this proof. To prove \eqref{for:lem:subcriticality:3}, we define two sequences of stopping times $T_k$ and $S_k$ by
	\eqan{\label{for:lem:subcriticality:02}
		T_k = &\inf\{n > k: |X_\pi^{(n)} - X_\pi^{(n-1)}| > 1\}~, \\
		S_k = &\inf\{n > k: \Acal_n \text{ occurs}\}~,\nn
	}
	for $k \geq 1$. We now prove that $\prob\big(|X_\pi^{(n)} - X_\pi^{(n-1)}| > 1 \mid \mathcal{A}_{n-1}\big)$ is positive and independent of $n$, its value depending only on $N$.
	
	We couple two samples of the offspring of the $(n-1)$-st generation, where we keep all offspring identical except for the leftmost child of the leftmost node in the $(n-1)$-st generation. The two leftmost children of the leftmost node in the $(n-1)$-st generation are sampled independently. Let $(A_1, s_1)$ and $(\bar{A}_1, \bar{s}_1)$ be the two copies of the leftmost child of the leftmost node in the $(n-1)$-st generation.
	
	Note that the distribution of $(A_1, s_1)$ does not depend on $n$, but only on the fact that its parent node has an age less than $N$. Since the parent node of $(A_1, s_1)$ has age at most $N$ conditionally on $\Acal_{n-1}$,
	\eqn{
		\prob\Big(\inf\limits_{s, s_1, \bar{s}_1 \in \{\old, \young\}}\big|\bfp_{s_1}/\sqrt{A_1} - \bfp_{\bar{s}_1}/\sqrt{\bar{A}_1}\big|/\bfp_s > 2/\sqrt{N} \mid \Acal_{n-1}\Big) > \iota(N) > 0~,\nn}
	for some function $\iota(N)$ depending only on $N$, i.e., $\iota(N)$ is independent of $n$. Let $X_\pi^{(n)}$ and $\bar{X}_\pi^{(n)}$ be the \emph{score} of the $n$-th generation when the leftmost child of the leftmost node in the $(n-1)$-th generation is $(A_1, s_1)$ and $(\bar{A}_1, \bar{s}_1)$, respectively. Therefore, adding and subtracting the contribution of other nodes of generation $n$ to the scores, we have
	\eqan{\label{for:lem:subcriticality:04}
		&\prob\Big(\big|\bfp_{s_1}/\sqrt{A_1} - \bfp_{\bar{s}_1}/\sqrt{\bar{A}_1}\big|/\bfp_s > 2/\sqrt{N} \mid \Acal_{n-1}\Big)\nn \\
		=& \prob\big( |{X}_\pi^{(n)} - \bar{X}_\pi^{(n)}| > 2 \mid \Acal_{n-1} \big)~.}
	By the triangle inequality and the union bound, we upper bound the RHS of \eqref{for:lem:subcriticality:04} as
	\eqan{\label{for:lem:subcriticality:05}
		&\prob\big( |{X}_\pi^{(n)} - \bar{X}_\pi^{(n)}| > 2 \mid \Acal_{n-1} \big)\nn \\
		\leq & \prob \Big( |\bar{X}_\pi^{(n)} - {X}_\pi^{(n-1)}| > 1~\text{or } |{X}_\pi^{(n)} - {X}_\pi^{(n-1)}| > 1 \mid \Acal_{n-1}\Big)\nn \\
		\leq & 2\prob \Big( |{X}_\pi^{(n)} - {X}_\pi^{(n-1)}| > 1 \mid \Acal_{n-1}\Big)~.}
	Assume that $\prob(S_k < \infty) > \gamma > 0$ for some fixed $k \in \N$ and $\gamma > 0$. Then
	\eqan{\label{for:lem:subcriticality:06}
		\prob(T_k < \infty) \geq \sum\limits_{n > k}\prob(T_k = n + 1 \mid S_k = n)\prob(S_k = n)~.}
	Since $S_k$ is a stopping time, $\{S_k = n\}$ is $\Fcal_n$ measurable. Therefore, using \eqref{for:lem:subcriticality:04} and \eqref{for:lem:subcriticality:05},
	\eqn{\label{for:lem:subcriticality:07}
		\prob(T_k = n + 1 \mid S_k = n) = \prob(|X_\pi^{(n+1)} - X_\pi^{(n)}| > 1 \mid \Acal_n) > \iota(N)/2~.}
	Hence by \eqref{for:lem:subcriticality:07}, we obtain
	\eqan{\label{for:lem:subcriticality:08}
		\prob(T_k < \infty) > \iota(N)/2 \sum\limits_{n > k}\prob(S_k = n) > \gamma \iota(N)/2~.}
	Since $\big(X_\pi^{(n)}\big)_{n \geq 1}$ converges almost surely, $\prob(T_k < \infty)$ tends to $0$ as $k \to \infty$. Therefore, $\prob(S_k < \infty)$ tends to $0$ as $k \to \infty$. Note that $\{S_k < \infty\} = \bigcup\limits_{n > k}\Acal_n$, and it is a decreasing sequence. From the definition,
	\eqn{\label{for:lem:subcriticality:09}
		\prob(\Acal_n~\text{i.o.}) = \prob\Big(\bigcap\limits_{k \geq 1}\{S_k < \infty\}\Big) = \lim\limits_{k \to \infty}\prob(S_k < \infty) = 0~.}
	Hence, $\Acal_n$ occurs only finitely often, proving that the age of the leftmost node of the $n$-th generation diverges to $\infty$ almost surely. Therefore, upon truncating the nodes with age greater than $1$, the $\PPT$ dies out almost surely.
\end{proof}

Next, in Proposition~\ref{prop:supercriticality}, we prove that the $\PPT$ survives when it is percolated with a probability greater than $1/r(\bar{\bfT}_k)$. This, along with Proposition~\ref{lem:subcriticality}, proves that $1/r(\bar{\bfT}_\kappa)$ is the critical percolation threshold of the P\'olya point tree.

To prove Proposition~\ref{prop:supercriticality}, we suitably truncate the $\PPT$ and obtain the spectral norm of the offspring operator of the truncated $\PPT$. Then, we show that upon percolating with probability $\pi$, the truncated tree has an ancestral line of nodes whose ages converge to $0$ with positive probability, proving that the $\PPT$ also survives the percolation with positive probability.

\paragraph{Truncation}
In the $\PPT$, consider the subtree where the age of the children of a node $(x,s)$ is restricted to the range $(0,bx]$ for some constant $b>1$. We call this truncated subtree the $b-\PPT$. Note that the mean-offspring kernel $\kappa$ changes in the $b-\PPT$ to
\begin{equation}\label{eq:truncated-kernel}
	\kappa_b((x,s),(y,t))=\kappa((x,s),(y,t))\mathbf{1}_{\{y\leq bx\}}~.
\end{equation}

Now we show that with positive probability, there exists an infinite path from the root such that the age of the nodes converges to $0$ in the $b-\PPT$. First, we show that the integral operator $\bar{\mathbf{T}}_{\kappa_b}$ also has an eigenfunction corresponding to its spectral norm:

\begin{Lemma}[Spectral norm of truncated operator]\label{lem:spectral-norm:truncated-operator}
	The spectral norm of the integral operator $\bar{\mathbf{T}}_{\kappa_b}$ is
	\begin{equation}\label{eq:lemma:spectral-radius}
		r(\bar{\mathbf{T}}_{\kappa_b}) = \frac{(1+q)c_{\old\old}+\sqrt{((1-q)c_{\old\old})^2+4qc_{\old\young}c_{\young\old}}}{2\chi-1}~,
	\end{equation}
	where $q=1-b^{1/2-\chi}$. Furthermore, the eigenfunction of $\bar{\mathbf{T}}_{\kappa_b}$ corresponding to its spectral norm is given by
	\begin{equation}\label{eq:lemma:spectral-radius:efunc}
		h(x,s)=\frac{\mathbf{u}_s^b}{\sqrt{x}}~,
	\end{equation}
	where $\mathbf{u}^b=(\mathbf{u}_{\old}^b,\mathbf{u}_{\young}^b)$ is the eigenvector of $\mathbf{M}_b$ corresponding to its largest eigenvalue, and where $\mathbf{M}_b$ is defined as
	\[
	\mathbf{M}_b=\begin{bmatrix}
		c_{\old\old} & c_{\old\young}(1-b^{1/2-\chi})\\
		c_{\young\old} & c_{\young\young}(1-b^{1/2-\chi})
	\end{bmatrix}~.
	\]
\end{Lemma}

The proof of this lemma follows similarly to Theorem~\ref{thm:operator-norm:PPT}. Therefore, we defer the proof of this lemma to Section~\ref{sec:spectral_radius}.

Note that $q\to1$ as $b\to\infty$, and consequently $r(\bar{\mathbf{T}}_{\kappa_b})\to r(\bar{\mathbf{T}}_{\kappa})$ as $b\to\infty$. Therefore, for any $\pi>1/r(\bar{\mathbf{T}}_{\kappa})$, there exists a sufficiently large $b$ such that $\pi r(\bar{\mathbf{T}}_{\kappa_b})>1~.$ We perform a change of measure to prove that the truncated tree survives percolation with positive probability. For this change of measure argument, we define a martingale similar to \eqref{eq:def:supermartingale} as
\begin{equation}\label{eq:def:martingale}
	M^{(n)}_b (x,s) :=\frac{1}{\rho_b^n}\sum\limits_{t\in\{ {\old,\young} \}} \int\limits_0^\infty Y_{(b)}^{(n)}((x,s),(y,t))\frac{h(y,t)}{h(x,s)}\,dy~,
\end{equation}
where $\rho_b=\pi r(\bar{\mathbf{T}}_{\kappa_b})$ and $Y_{(b)}^{(n)}((x,s),(y,t))$ places a Dirac delta mass at the types of the $n$-th generation offspring of $(x,s)$ in the percolated $b-\PPT$. Since $h$ is an eigenfunction of $\bar{\mathbf{T}}_{\kappa_b}$, $M^{(n)}_b (x,s)$ is a martingale for any choice of $(x,s)\in\mathcal{S}_e$. Since ${M}^{(n)}_b (x,s)$ is a non-negative $L^1$ martingale, it converges almost surely to a non-negative martingale limit. We use the following Kesten-Stigum theorem for the $b-\PPT$ to ensure that the martingale limit is non-zero with positive probability:

\begin{Theorem}[Kesten-Stigum Theorem for $b-\PPT$]\label{K-S-multi-type}
	$M_b^{(n)}(x,s)$ converges almost surely to a non-zero limit under the following assumption
	\begin{equation}\label{K-S-cond}
		\sup\limits_{(x,s)\in\mathcal{S}_e}\mathbb{E}\left[ \left(M_b^{(1)}(x,s)\right)^{1+\eta} \right]<\infty~\text{for some }\eta>0.
	\end{equation}
\end{Theorem}

Next, we prove that $M^{(1)}_b (x,s)$ has a finite $(1+\eta)$-th moment for some $\eta>0$, making the martingale uniformly integrable. The following lemma provides us with this uniform finite $(1+\eta)$-th moment for some $\eta>0$. Consequently, our martingale satisfies the $L\log L$ condition for the Kesten-Stigum theorem, proved in Theorem~\ref{K-S-multi-type} for the $b-\PPT$, and hence $M^{(n)}_b (x,s)$ converges to a positive limit with positive probability:

\begin{Lemma}[Finite $(1+\eta)$-th moment]
	\label{thm:finite:higher-moment}
	Fix any $\eta<2\chi-1$. Then
	\begin{equation}\label{eq:thm:finite:higher-moment}
		\sup\limits_{(x,s)\in \mathcal{S}} \mathbb{E}\left[ \left(M_b^{(1)}(x,s)\right)^{1+\eta} \right]<\infty~.
	\end{equation}
\end{Lemma}

First, we proceed to prove Proposition~\ref{prop:supercriticality}, relying on Theorem~\ref{K-S-multi-type} and Lemma~\ref{thm:finite:higher-moment}. We will then establish Theorem~\ref{K-S-multi-type} and Lemma~\ref{thm:finite:higher-moment}. 

Let us denote $M_b(x,s)$ as the limit of $M^{(n)}_b(x,s)$ as $n \to \infty$. Since $M^{(1)}_b(x,s)$ has a uniformly bounded $(1+\eta)$-th moment, it can be shown that the limit $M_b(x,s)$ is positive with positive probability.

We next employ a spine decomposition argument. Let $\prob_{(x,s)}$ denote the law of the $b-\PPT$ rooted at $(x,s)$. Now, consider the $b-\PPT$ under the tilted measure
\eqn{\label{def:tilted-measure}
	\,d\Qbb_{(x,s)}=M_b(x,s)\,d\prob_{(x,s)}~,}
Given a $b-\PPT$ rooted at $(x,s)$, we construct a measure $\mu^\star$ on the set of all infinite paths $(x,s)=\bfx_0,\bfx_1,\ldots$ in the $b-\PPT$, such that, for any permissible sequence,
\eqn{\label{eq:mu:def}
	\mu\big( \{\bfx_0,\bfx_1,\ldots\}:\bfx_0=(x,s),\ldots,\bfx_n=(y_n,t_n) \big)= \rho_b^{-n} \frac{M_b(\bfx_n)}{M_b(\bfx_0)}~.}
Let $(t_n)_{n\geq 0}$ be the sequence of labels in an infinite path $(x_n)_{n\geq 0}$. The next lemma demonstrates that this label sequence is a stationary Markov chain with invariant distribution ${\bf{\upsilon}}^b=({\bf{\upsilon}}_\old^b,{\bf{\upsilon}}_\young^b)$ for some $({\bf{\upsilon}}_\old^b,{\bf{\upsilon}}_\young^b)$. The transition probability of the Markov chain is given by
\eqn{\label{eq:transition-prob:MC}
	p_{s,t}=\frac{(\bfM_b)_{s,t}\bfu_t^b}{\lambda_{\bfM}^{\sss (b)}\bfu_s^b}\qquad\text{for }s,t\in\{\Old,\Young\}~.}
\begin{Lemma}[Markov property of labels]\label{lem:MC:label}
	The random sequence of labels of nodes in a path chosen from $\mu^\star$ is a stationary Markov chain with transition probabilities given by \eqref{eq:transition-prob:MC}.
\end{Lemma}
\begin{proof}
	To prove the Markov property, we consider the probability of $t_{n+1}$ conditional on $\{t_0,t_1,\ldots,t_{n}\}$ and show that this probability depends only on $t_n$. Let $\bfa_{(n)}=\big(a^{(0)},a^{(1)},\ldots,a^{(n)}\big)\in\{\Old,\Young\}^{n}.$ Then, from the definition of conditional probability,
	\eqan{\label{for:lem:MC:01}
		&\Qbb\big(t_{n+1}=\Old|(t_0,t_1,\ldots,t_n)=\bfa_{(n)}\big)\nn\\
		&\hspace{1cm}=\frac{\Qbb\big( (t_0,t_1,\ldots,t_n,t_{n+1})=(\bfa_{(n)},\Old) \big)}{\Qbb\big((t_0,t_1,\ldots,t_n)=\bfa_{(n)}\big)}~.}
	Now, the numerator on the RHS of \eqref{for:lem:MC:01} can be simplified as
	\eqan{\label{for:lem:MC:02}
		&\Qbb\big( (t_0,t_1,\ldots,t_n,t_{n+1})=(\bfa_{(n)},\Old) \big)\nn\\
		=&\rho_b^{-(n+1)}\int\limits_{(0,\infty)^{n}}\int_0^\infty \pi\kappa_b\big((x_n,a^{(n)}),(y,\Old)\big)\frac{h(y,\Old)}{h(x_n,a^{(n)})}\,dy\nn\\ &\qquad\qquad\times\prod\limits_{i=1}^{n}\pi\kappa_b\big((x_{i-1},a^{(i-1)}),(x_{i},a^{(i)})\big)\frac{h(x_i,a^{(i)})}{h(x_{i-1},a^{(i-1)})}\,dx_i\nn\\
		=&p_{a^{(n)},\old}\rho_b^{-n}\int\limits_{(0,\infty)^{n}}\prod\limits_{i=1}^{n}\pi\kappa_b\big((x_{i-1},a^{(i-1)}),(x_{i},a^{(i)})\big)\frac{h(x_i,a^{(i)})}{h(x_{i-1},a^{(i-1)})}\,dx_i\nn\\
		=&p_{a^{(n)},\old} \Qbb\big((t_0,t_1,\ldots,t_n)=\bfa_{(n)}\big)~.
	}
	Since $\Qbb\big( (t_0,t_1,\ldots,t_n,t_{n+1})=(\bfa_{(n)},\Young) \big)=1-\Qbb\big( (t_0,t_1,\ldots,t_n,t_{n+1})=(\bfa_{(n)},\Old) \big)$, by \eqref{for:lem:MC:01} and \eqref{for:lem:MC:02}, the lemma follows immediately.
\end{proof}

We now investigate the age of the $n$-th node of the truncated P\'olya point tree in an infinite path sampled according to $\mu^\star$. These ages turn out to be multiplicative in nature:
\begin{Lemma}[Multiplicative property of the joint law of ages]\label{lem:multiplicative-property}
	Let $\{(X_0,t_0),(X_1,t_1),\ldots\}$ be an infinite path sampled from $\mu$. Then,
	\eqn{\label{eq:lem:multiplicative-property}
		(X_n)_{n\geq 0}\overset{d}{=}\Big(X_0\prod\limits_{i=1}^n R_i(t_i)\Big)_{n\geq 1}~,}
	for some independent random variables $(R_i(t))_{\{t\in\{\Old,\Young\},n\in\N\}}$.
\end{Lemma}
To prove Lemma~\ref{lem:multiplicative-property}, we compute the explicit distribution of the independent random variables, which will also be helpful for the final step in the proof of Proposition~\ref{prop:supercriticality}. First, we calculate the distribution of ${X_n}/{X_{n-1}}$ conditional on $X_{n-1},t_{n-1},t_n$, which turns out to be independent of $\big(X_{n-1},t_{n-1}\big)$. This establishes Lemma~\ref{lem:multiplicative-property}.

\begin{proof}[Proof of Lemma~\ref{lem:multiplicative-property}]
	To prove the lemma, we calculate the joint distribution of $\frac{X_n}{X_{n-1}}$ and $t_n$ conditionally on $X_{n-1}$ and $t_{n-1}$. We compute, for $a \in (0,1)$,
	\eqan{\label{for:lem:multiplicative-property:1}
		&\Qbb\left( \frac{X_n}{X_{n-1}} \geq a, t_n = \Old \mid X_{n-1} = x, t_{n-1} \right)\nn\\
		&\hspace{2cm} = \rho_b^{-1} \int\limits_{ax}^x \pi \kappa_b\left((x,t_{n-1}), (y,\Old)\right)\frac{h(y,\Old)}{h(x,t_{n-1})}\,dy\nn\\
		&\hspace{5cm} = \Qbb(t_n = \Old \mid t_{n-1})(1 - a^{\chi - \frac{1}{2}})~.
	}
	Similarly, for $a \in (1,b)$,
	\eqan{\label{for:lem:multiplicative-property:2}
		&\Qbb\left( \frac{X_n}{X_{n-1}} \geq a, t_n = \Young \mid X_{n-1} = x, t_{n-1} \right)\nn\\
		&\hspace{2cm} = \rho_b^{-1} \int\limits_{ax}^{bx} \pi \kappa_b\left((x,t_{n-1}), (y,\Young)\right)\frac{h(y,\Young)}{h(x,t_{n-1})}\,dy\nn\\
		&\hspace{4.5cm} = \Qbb(t_n = \Young \mid t_{n-1})\frac{a^{\frac{1}{2} - \chi} - b^{\frac{1}{2} - \chi}}{1 - b^{\frac{1}{2} - \chi}}~.
	}
	Therefore, by \eqref{for:lem:multiplicative-property:1} and \eqref{for:lem:multiplicative-property:2}, the tail distribution function of $\frac{X_n}{X_{n-1}}$, conditionally on $(X_{n-1}, t_{n-1}, t_n)$, does not depend on $(X_{n-1}, t_{n-1})$ and is given by
	\eqan{\label{for:lem:multiplicative-property:3}
		\Qbb\left( \frac{X_n}{X_{n-1}} \geq a \mid t_n = \Old, X_{n-1} = x, t_{n-1} \right) &= \Qbb\left( \frac{X_n}{X_{n-1}} \geq a \mid t_n = \Old \right)\nn\\
		&\quad= 1 - a^{\chi - \frac{1}{2}},~\text{for } a \leq 1~,\\
		\Qbb\left( \frac{X_n}{X_{n-1}} \geq a \mid t_n = \Young, X_{n-1} = x, t_{n-1} \right) &= \Qbb\left( \frac{X_n}{X_{n-1}} \geq a \mid t_n = \Young \right)\nn\\
		&\quad= \frac{a^{\frac{1}{2} - \chi} - b^{\frac{1}{2} - \chi}}{1 - b^{\frac{1}{2} - \chi}}~\text{for } a \in (1,b)~.\nn
	}
	By \eqref{for:lem:multiplicative-property:3}, we can express $X_n$ multiplicatively as
	\eqn{\label{for:lem:multiplicative-property:4}
		X_n \overset{d}{=} X_{n-1} R_n(t_n)~,
	}
	where $R_n(t_n)$ has the distribution function defined in \eqref{for:lem:multiplicative-property:3} depending on whether $t_n$ is $\Old$ or $\Young$. By iterating the same argument, we can express $X_n$ conditionally on $(t_0,\ldots,t_n)$ as
	\[
	X_n \overset{d}{=} X_0 \prod\limits_{i=1}^n R_i(t_i)~,
	\]
	where $\big(R_i(t_i)\big)_{i \geq 1}$ are (conditionally) independent random variables. Hence, $X_n$ has the desired multiplicative structure.
\end{proof}

Now we have all the necessary tools to prove Proposition~\ref{prop:supercriticality}. Before presenting the details, we provide an outline of the remaining proof. We show that $\Qbb$-almost surely, $\mu$-almost every path has the property that
\eqn{\label{for:prop:supercriticality:1}
	\lim\limits_{n \to \infty} \frac{\log(X_n)}{n} = c < 0~.
}
This implies that $\Qbb$-almost surely, the logarithmic values of the ages of nodes in an infinite path converge to $-\infty$. Therefore, there exists an ancestral line of nodes whose ages converge to $0$. For the $b-\PPT$ rooted at some $(x,s) \in \Scal$, it follows that there exists an ancestral line of particles whose ages converge to $0$ with positive probability. Therefore, $b-\PPT$ survives the percolation with positive probability.

\begin{proof}[Proof of Proposition~\ref{prop:supercriticality}]
	To prove \eqref{for:prop:supercriticality:1}, we use Lemma~\ref{lem:multiplicative-property} as follows:
	\eqn{\label{for:prop:supercriticality:2}
		\frac{\log(X_n)}{n} \overset{d}{=} \frac{\log(X_0) + \sum\limits_{i=1}^{n_\old} \log(R_i(\Old)) + \sum\limits_{j=1}^{n_\young} \log(R_j(\Young))}{n}~,}
	where $n_\old$ and $n_\young$ are the number of $\Old$ and $\Young$ labels in $\{t_1, t_2, \ldots, t_n\}$, respectively. Since $R_i(\Old)$ and $R_j(\Young)$ are independent random variables, and $\{t_0, t_1, \ldots\}$ is a stationary Markov chain with limiting distribution ${\bf{\upsilon}}^b = ({\bf{\upsilon}}_\old^b, {\bf{\upsilon}}_\young^b)$, by the strong law of large numbers, we have:
	\eqan{\label{for:prop:supercriticality:3}
		\frac{\log(X_n)}{n} \overset{a.s.}{\to} {\bf{\upsilon}}_\old^b \E[\log(R_1(\Old))] + {\bf{\upsilon}}_\young^b \E[\log(R_1(\Young))]~,}
	where ${\bf{\upsilon}}^b = ({\bf{\upsilon}}_\old^b, {\bf{\upsilon}}_\young^b)$ is the stationary distribution of $\{t_0, t_1, \ldots\}$. Using \eqref{for:lem:multiplicative-property:3}, we compute:
	\eqan{
		&\E[\log(R_1(\Old))] = -\frac{1}{\chi - \frac{1}{2}},\label{for:prop:supercriticality:4-1}\\
		\text{and} \qquad &\E[\log(R_1(\Young))] = \frac{1}{1 - b^{1/2 - \chi}} \left[ \frac{1 - b^{1/2 - \chi}}{\chi - \frac{1}{2}} - b^{1/2 - \chi} \log(b) \right]~.\label{for:prop:supercriticality:4-2}
	}
	Next, we show that ${\bf{\upsilon}}_\old^b > {\bf{\upsilon}}_\young^b$. To prove this, we use a general result for a 2-state Markov process.
	Let 
	\eqn{\label{for:clarity:matrix-1}
		A = \begin{bmatrix}
			e_1 & 1 - e_1 \\
			e_2 & 1 - e_2
	\end{bmatrix}}
	be a transition matrix, and let $\theta = (\theta_1, \theta_2)$ be its stationary distribution. Then,
	\eqn{\label{for:clarity:matrix-2}
		\frac{\theta_1}{\theta_2} = \frac{e_2}{1 - e_1}~.}
	Therefore, the necessary and sufficient condition for $\theta_1 > \theta_2$ is $e_1 + e_2 > 1$.
	Thus, ${\bf{\upsilon}}_\old^b > {\bf{\upsilon}}_\young^b$ holds if and only if $p_{\old\old} + p_{\young\old} > 1$. Substituting the values of $p_{\old\old}$ and $p_{\young\old}$ from \eqref{eq:transition-prob:MC}, we have
	\eqn{\label{for:prop:supercriticality:5}
		p_{\old\old} + p_{\young\old} = \frac{c_{\old\old}}{\lambda_{\sss \bfM}^{(b)}} + \frac{c_{\young\old}\bfu_{\old}^b}{\lambda_{\sss \bfM}^{(b)}\bfu_{\young}^b} = \frac{c_{\young\young}}{\lambda_{\sss \bfM}^{(b)}} + \frac{c_{\young\old}\bfu_{\old}^b}{\lambda_{\sss \bfM}^{(b)}\bfu_{\young}^b} > p_{\young\young} + p_{\young\old} = 1~.}
	Therefore, by \eqref{for:prop:supercriticality:3}--\eqref{for:prop:supercriticality:5}, we have that ${\log(X_n)}/{n} \to c$ for some $c < 0$.
	As $b \to \infty$, $\upsilon^{b} \to (1/2, 1/2)$, and the second term on the RHS of \eqref{for:prop:supercriticality:4-2} also converges to $0$, making the RHS of \eqref{for:prop:supercriticality:3} equal to $0$.
	As a result, under $\Qbb_{(x,s)}$, the ages of the nodes in $\PPT$ almost surely converge to $0$. $\Qbb_{(x,s)}$ is a tilted measure of $\prob_{(x,s)}$ with the martingale limit of $M_b^{(n)}(x,s)$ being the Radon-Nikodym derivative. Since $M_b^{(n)}(x,s)$ has a positive limit with positive probability, the ages of the nodes in the $n$-th generation of the $b-\PPT$ converge to $0$ with positive probability. Since $b-\PPT$ is a truncated $\PPT$, $\PPT$ also survives with positive probability.
\end{proof}

Now, we proceed to prove the Kesten-Stigum theorem for the $b-\PPT$ in Theorem~\ref{K-S-multi-type}. First, we prove Theorem~\ref{K-S-multi-type} and then prove Lemma~\ref{thm:finite:higher-moment} to satisfy the conditions for Theorem~\ref{K-S-multi-type}.


\begin{proof}[Proof of Theorem~\ref{K-S-multi-type}]
	We prove this theorem following the same line of proof done by Athreya in \cite{A2000}, adapted for our martingale as done in \cite{KS01}. For the sake of simplicity of the notations we use $W_n(y)$ for $y\in\Scal_e$, in the place of $M_b^{(n)}(x,s)$ for $(x,s)\in\Scal_e$ and $W(y)$ as the $n\to\infty$ limit of $W_n(y)$.
	
	Let $\Mcal\equiv\{ \mu:\mu(\cdot)=\sum_{i=1}^n \delta_{x_i}(\cdot)~\text{for some }n<\infty,~x_1,x_2,\ldots,x_n\in\Scal_e \}$ where $\delta_x(\cdot)$ is the delta measure at $x$, i.e., $\delta_x(A)=1$ if $x\in A$ and $0$ if $x\notin A$. Clearly, $\Mcal$ is closed under addition.
	Let $z_n$ denote the number of particles in the $n$-th generation of the tree, and for any $x\in\Scal_e,$ let $\xi^x$ denote the empirical measures of the children of $x$ in $\Scal_e$. Let $\{x_{ni}:i\in [z_n]\}$ denote the particles in the tree in $n$-th generation of the tree. Then adapting the notions of \cite{A2000}, we define $Z_n$ as
	\eqn{\label{eq:K-S:1}
		Z_n=\sum\limits_{i=1}^{z_n} \xi^{x_{ni}}~,}
	where $\xi^{x_{ni}}$ are independent $\Mcal$-valued random variable, and denote $V(\mu)$ as
	\[
	V(\mu)=\int h\,d\mu~,
	\]
	where $h$ is the eigenfunction defined in \eqref{eq:lemma:spectral-radius:efunc}. 
	For any initial value $z\in \Mcal,$
	\eqn{\label{eq:K-S:02}
		\Qbb(z,\,d\mu)=\frac{V(\mu)\prob(z,\,d\mu)}{\rho_b V(z)}~.} 
	Choosing $z=\delta_{x_0},$ we obtain the same $\Qbb_{x_0}$ measure as defined in \eqref{def:tilted-measure} for some $x_0\in\Scal_e$.
	From Corollary~$1$ of \cite{A2000}, for $x_0\in\Scal_e$,
	\eqn{\label{eq:K-S:021}
		\E_{x_0}[W(x_0)]=1~\text{under}~\prob_{x_0},~\text{if and only if}~\Qbb_{x_0}(W(x_0)=\infty)=0~.}
	The random variable $\tilde{\xi}^x$ has a size-biased distribution of $\xi^x$ in that
	\eqn{\label{eq:K-S:2}
		\prob\big( \tilde{\xi}^x\in \,dm \big)=\frac{V(m)\prob\big( {\xi}^x\in \,dm \big)}{\rho_b h(x)}~.}
	Under $\Qbb$ measure, $b-\PPT$ has an infinite line of nodes. This infinite line of nodes is termed as the spine' of the tree. Offspring distribution of the nodes of spine is the size-biased distribution defined in \eqref{eq:K-S:2}. Rest of the nodes in the tree has usual offspring distribution. Following the calculation in \cite[page~329-330]{A2000}, it can be shown that the markov chain $(Z_n)_{n=0}^\infty$ with transition function $\Qbb$ defined in \eqref{eq:K-S:02}, evolves in the following manner. Given $Z_n=(x_{n1},x_{n2},\ldots,x_{nz_n}),~Z_{n+1}$ is generated as follows:
	\begin{enumerate}
		\item Choose individual $x_{ni}$ with probability $\frac{h(x_{ni})\one_{[x_{ni}\sim x_{(n-1)j^\star}]}}{V(\xi^{x_{(n-1)j^\star}})}$ and name it $x_{nj^\star}$;
		\vspace{3pt}
		\item Choose its offspring $\tilde{\xi}^{x_{nj^\star}}$ according to the one in \eqref{eq:K-S:2};
		\vspace{3pt} 
		\item For all other individuals in the generation, choose the offspring process $\xi^{x_{ni}}$ according to the original probability distribution $\prob(\xi^{x_{ni}}\in\,dm)$;
		\vspace{3pt}
		\item $\tilde{\xi}^{x_{nj^\star}}$ and $\xi^{x_{ni}}$ are all chosen independently for $i\neq j^\star$;
		\vspace{3pt}
		\item Set $Z_{n+1}=\tilde{\xi}^{x_{nj^\star}}+\sum\limits_{i\neq j^\star}\xi^{x_{ni}}~.$
	\end{enumerate}
	Therefore, $\{x_{nj^\star},n=1,2,\ldots\}$ keeps track of the spine' component of the tree. Next we follow the line of proof in \cite[equation~$(15),(19a),(19b)$]{A2000} with the adapted martingale $W_n(x_0)$ to obtain
	\eqan{\label{eq:K-S:3}
		&\E_{\Qbb}\Big[ W_{n+1}(x_0)\mid \tilde{\xi}^{x_{1j^\star}},\tilde{\xi}^{x_{2j^\star}},\ldots \Big]\nn\\
		\leq& 1+\sum\limits_{r=0}^\infty \frac{V(\tilde{\xi}^{x_{rj^\star}})}{\rho_b^{r+1}h(x_0)}\nonumber\\
		=&1+\sum\limits_{r=0}^\infty \left(\frac{V(\tilde{\xi}^{x_{rj^\star}})}{\rho_b^{r+1}h(x_{rj^\star})}\right)\left(\frac{h(x_{rj^\star})}{h(x_0)}\right)
		\equiv W^\star(x_0)~.}
	Now we prove that there exists $\omega\in(0,1)$ such that $$\Dcal_r=\left\{\left(\frac{V(\tilde{\xi}^{x_{rj^\star}})}{\rho_b^{r+1}h(x_{rj^\star})}\right)\left(\frac{h(x_{rj^\star})}{h(x_0)}\right)>\omega^r\right\}$$
	happens at most finitely often, which in turn proves that $W^\star(x_0)$ is finite $\Qbb$ almost surely.	
	For proving that $\Dcal_r$ happens finitely often, define the following:
	\eqan{
		\Bcal_r=&\left\{ \frac{V(\tilde{\xi}^{x_{rj^\star}})}{\rho_b h(x_{rj^\star})}>(\rho_b\omega)^{r(1-\alpha)}  \right\}~,\label{eq:K-S:4}\\
		\text{and}\qquad\Ccal_r=& \left\{ \frac{h(x_{rj^\star})}{h(x_0)}>(\rho_b\omega)^{r\alpha} \right\}~,\label{eq:K-S:5}}
	for some $\alpha\in(0,1)$. Then by the union bound,
	\eqn{\label{eq:K-S:6}\Qbb(\Dcal_r~\text{i.o.})\leq \Qbb(\Bcal_r~\text{i.o.})+\Qbb(\Ccal_r~\text{i.o.})~.}
	The author has proved in \cite{A2000} that a sufficient condition to establish $\Qbb(\Bcal_r~\text{i.o.})=0$ is the finiteness of $\int\limits_1^\infty \bar{f}(\e^t)\,dt$, where 
	\eqn{\label{eq:K-S:07}
		\bar{f}(t)\equiv\sup\limits_x\prob\left(\frac{V(\tilde{\xi}^x)}{\rho_b h(x)}>t\right)~.}
	Although the proof in \cite{A2000} is there for a different denominator, but the argument here still follows exactly the same way as in \cite{A2000}.
	The following claim provides a sufficient condition for the finiteness of $\int\limits_1^\infty \bar{f}(\e^t)\,dt$:
	\begin{Claim}[A sufficient condition]\label{lem:equivalent:condition}
		$\int\limits_1^\infty \bar{f}(\e^t)\,dt$ is finite if for some $\eta>0$,
		\eqn{\label{eq:lem:equivalence}
			\sup\limits_{(x,s)\in\Scal_e}\E\Big[ M_b^{(1)}(x,s)^{1+\eta} \Big]<\infty~,}
		where $f$ is as defined in \eqref{eq:K-S:07}.
	\end{Claim}
	The proof follows immediately from Markov's inequality.
	\paragraph{Proof of Claim~\ref{lem:equivalent:condition}}
	First we simplify $f(\e^t)$ defined in \eqref{eq:K-S:07}. For any $t>1$ and $\eta>0$,
	\eqn{\label{lem:eqv:1}
		\bar{f}(\e^t)=\sup\limits_{x}\prob\Big( \frac{V(\tilde{\xi}^x)}{\rho_b h(x)}>\e^t \Big)=\sup\limits_{x}\prob\Big( \left(\frac{V(\tilde{\xi}^x)}{\rho_b h(x)}\right)^{\eta}>\e^{\eta t} \Big)~.}
	By Markov's inequality, we bound $\bar{f}(\e^t)$ by
	\eqn{\label{lem:eqv:2}
		\sup\limits_x \e^{-\eta t}\E\Big[ \left(\frac{V(\tilde{\xi}^x)}{\rho_b h(x)}\right)^{\eta} \Big]=\e^{-\eta t}\sup\limits_x \E\Big[ \left(\frac{V({\xi}^x)}{\rho_b h(x)}\right)^{1+\eta} \Big]~.}
	The equality follows from the fact that $\tilde{\xi}^x$ is size-biased version of $\xi^x$ as defined in \eqref{eq:K-S:2}. Since, $\sup\limits_x \E\Big[ \left(\frac{V({\xi}^x)}{\rho_b h(x)}\right)^{1+\eta} \Big]$ finite, we bound $\int\limits_1^\infty \bar{f}(\e^t)\,dt$ by
	\eqn{\label{eq:eqv:3}
		\int\limits_1^\infty \bar{f}(\e^t)\,dt \leq \sup\limits_x \E\Big[ \left(\frac{V({\xi}^x)}{\rho_b h(x)}\right)^{1+\eta} \Big] \e^{-\eta}/\eta<\infty~,}
	which completes the proof.

	By Lemma~\ref{lem:equivalent:condition}, finiteness of  $\sup\limits_x\E\left[ \left(\frac{V({\xi}^x)}{\rho_b h(x)}\right)^{1+\eta}\right]$ implies finiteness of $\int\limits_1^\infty \bar{f}(\e^t)\,dt$, and Lemma~\ref{thm:finite:higher-moment} provides us with the uniform bound on the $1+\eta$-th moment of $W_1(y)$. Therefore, 
	\eqn{\label{eq:K-S:7}
		\Qbb(\Bcal_r~\text{i.o.})=0~.}
	For the analysis of $\Ccal_r$, we use the properties of $b-\PPT$ under $\Qbb$, more specifically the proof of Proposition~\ref{prop:supercriticality}.
	Let $x_{rj^\star}$ has label $t_r\in\{\Old,\Young\}$ and age $a_{rj^\star}\in (0,\infty)$. Therefore, $\log h(x_{rj^\star})=\log(\bfu_{t_r}^b)+\log(a_{rj^\star})/2$, where $\bfu^b=(\bfu_\old^b,\bfu_\young^b)$ is as defined in Lemma~\ref{lem:spectral-norm:truncated-operator}.
	By \eqref{for:prop:supercriticality:2}-\eqref{for:prop:supercriticality:4-2}, we can prove that for any fixed $b>0$,
	\eqn{\label{eq:K-S:8}
		\frac{\log(h(x_{rj^\star}))-\log h(x_0)}{r}\overset{a.s.}{\to} \frac{{\bf{\upsilon}}_\old^b-{\bf{\upsilon}}_\young^b}{2\chi-1} + \frac{b^{1/2-\chi}\log(b){\bf{\upsilon}}_\young^b}{2(1-b^{1/2-\chi})}~.}
	Now using \eqref{for:clarity:matrix-1}-\eqref{for:clarity:matrix-2}, the stationary measure ${\bf{\upsilon}}^b=({\bf{\upsilon}}_\old^b,{\bf{\upsilon}}_\young^b)$ can be explicitly calculated as
	\[
	{\bf{\upsilon}}_\old^b=\frac{\frac{c_{\young\old}\bfu_{\old}^b}{\lambda_{\bfM}^{\sss(b)}\bfu_{\young}^b}}{\frac{c_{\young\old}\bfu_{\old}^b}{\lambda_{\bfM}^{\sss(b)}\bfu_{\young}^b}+\frac{c_{\old\young}(1-b^{1/2-\chi})\bfu_{\young}^b}{\lambda_{\bfM}^{\sss(b)}\bfu_{\old}^b}}\quad\text{and}\quad {\bf{\upsilon}}_\young^b=1-{\bf{\upsilon}}_\old^b~,
	\]
	where $\lambda_{\bfM}^{(b)}$ is as defined in \eqref{for:lem:spectra:truncated-operator:2} and $\bfu^b=(\bfu_\old^b,\bfu_{\young}^b)$ is the right eigenvector of $\bfM_b$ defined in Lemma~\ref{lem:spectral-norm:truncated-operator}. For any $b>0~,\bfu^b$ can be explicitly computed as
	\[
	\bfu_{\old}^b=\frac{c_{\old\young}(1-b^{1/2-\chi})}{\lambda_{\bfM}^{(b)}+c_{\old\young}(1-b^{1/2-\chi})-c_{\old\old}}\quad\text{and}\quad\bfu_{\young}^b=1-\bfu_{\old}^b~.
	\]
	Note that all of  $\{\bfu_\old^b,\bfu_{\young}^b,{\bf{\upsilon}}_\old^b,{\bf{\upsilon}}_\young^b\}$ are continuous in $b,$ and as $b\to \infty,~{\bf{\upsilon}}^b\to (1/2,1/2)$, while the RHS of \eqref{eq:K-S:8} decreases to $0$. 
	
	On the other hand, as $b\to\infty,~\rho_b$ increases to $\rho=\pi r(\bfT_{\kappa}),$ which is strictly greater than $1$. Therefore, there exists a $b$ large enough and suitable $\omega\in(1/\rho,1)$ such that $\log(\rho_b\omega)>\log(\rho\omega)/2$.
	Similarly, for $b$ large enough, RHS of \eqref{eq:K-S:8} can be shown to be less than $\log(\rho\omega)/4$.
	Hence $\Ccal_r$ occurs finitely often, and by \eqref{eq:K-S:6}, $\Qbb(\Dcal_r~\text{i.o.})=0$ completes the argument that $\Dcal_r$ happens finitely often $\Qbb_{x_0}$ almost surely.	
	Following the argument in \cite[Proof of Theorem~3]{A2000}, we prove 
	\eqn{\label{eq:K-S:04}
		\Qbb_{x_0}(W(x_0)<\infty)=1,\quad\text{or}\quad\Qbb_{x_0}(W(x_0)=\infty)=0~.}
	Therefore, by \eqref{eq:K-S:021}, we conclude
	\eqn{\label{eq:K-S:041}
		\E_{x_0}[W(x_0)]=1\quad\text{under}\quad\prob_{x_0}~.}
	Hence, $W(x_0)$ is non-zero random variable $\prob_{x_0}$ almost surely. 
\end{proof}

We now proceed to prove Lemma~\ref{thm:finite:higher-moment} to satisfy the sufficiency condition in Claim~\ref{lem:equivalent:condition}. For any $(x,s)$ in the $b$-P\'{o}lya point tree ($b$-PPT), we assign two bins $B_\old(x,s)$ and $B_\young(x,s) \subseteq [0,bx]\times\{\Old,\Young\}$ such that all the $\Old$-labelled children of $(x,s)$ are in $B_\old(x,s)$. Similarly, all the $\Young$-labelled children of $(x,s)$ with ages in $[x,bx]$ are in $B_\young(x,s)$. From the construction of the P\'{o}lya point tree, observe that only finitely many offspring of $(x,s)$ are in $B_t(x,s)$ for both $t \in \{\Old,\Young\}$. Let $n_t$ denote the (possibly random) number of offspring of $(x,s)$ in bin $B_t(x,s)$, and let $(A_{ti})_{i\in[n_t]}$ denote the types of offspring of $(x,s)$ in $B_t(x,s)$ for $t \in \{\Old,\Young\}$.

We express $M^{(1)}_b(x,s)$ as
\begin{equation}\label{eqn:martingale:independent-rv}
	M^{(1)}_b(x,s) = \frac{1}{\rho_b^n}\sum\limits_{t\in\{\old,\young\}}\sum\limits_{i=1}^{n_t} \frac{h(A_{ti})}{h(x,s)}~.
\end{equation}
This representation helps us to prove Lemma~\ref{thm:finite:higher-moment}, proving the finiteness of $(1+\eta)$-th moment of $M^{(1)}_b (x,s)$ for some $\eta>0$:
\begin{proof}[Proof of Lemma~\ref{thm:finite:higher-moment}]
	Fix any $(x,s)\in\Scal$. Then using \eqref{eqn:martingale:independent-rv}, $\big(M^{(1)}_b(x,s)\big)^{1+\eta}$ can be rewritten as
	\eqan{\label{for:lem:finite:higher-moment:1}
		&\big(M^{(1)}_b(x,s)\big)^{1+\eta}\nn\\
		=& (\rho_b h(x,s))^{-(1+\eta)} \Big( \sum\limits_{t\in\{\old,\young\}}\sum\limits_{i=1}^{n_t} {h(A_{ti})}\Big)^{1+\eta}\nn\\
		=&(\rho_b h(x,s))^{-(1+\eta)} \sum\limits_{t\in\{\old,\young\}}\sum\limits_{i=1}^{n_t} h(A_{ti}) \Big(\sum\limits_{l\in\{\old,\young\}}\sum\limits_{j=1}^{n_l} h(A_{lj})\Big)^\eta\\
		\leq&(\rho_b h(x,s))^{-(1+\eta)} \sum\limits_{t\in\{\old,\young\}}\sum\limits_{i=1}^{n_t} h(A_{ti})\Big[ \Big( \sum\limits_{j=1}^{n_\old} h(A_{\old j}) \Big)^\eta + \Big( \sum\limits_{j=1}^{n_\young} h(A_{\young j}) \Big)^\eta \Big]\nonumber~.
	}
	The inequality in \eqref{for:lem:finite:higher-moment:1} follows from the fact that $(a+b)^\eta \leq a^\eta + b^\eta$ for all $\eta < 1$ and $a, b > 0$. Since $\rho_b$ and $h(x, s)$ are constants, we omit these terms in \eqref{for:lem:finite:higher-moment:1} for the time being and focus on the random component.
	More precisely, instead of looking at $\E\Big[ \big(M^{(1)}_b(x,s)\big)^{1+\eta} \Big]$, we upper bound
	\eqan{\label{for:lem:finite:higher-moment:2}
		&(\rho_b h(x,s))^{1+\eta}\E\Big[ \big(M^{(1)}_b(x,s)\big)^{1+\eta} \Big]\nn\\
		\leq& ~\E\Big[\sum\limits_{t\in\{\old,\young\}}\sum\limits_{i=1}^{n_t} h(A_{ti})\Big(\sum\limits_{i=1}^{n_t} h(A_{ti})\Big)^{\eta}\Big]\\
		&\hspace{0.1cm}+\E\Big[\sum\limits_{i=1}^{n_\old} h(A_{\old i})\Big]\E\Big[ \Big( \sum\limits_{j=1}^{n_\young} h(A_{\young j}) \Big)^\eta \Big]+\E\Big[\sum\limits_{i=1}^{n_\young} h(A_{\young i})\Big]\E\Big[ \Big( \sum\limits_{j=1}^{n_\old} h(A_{\old j}) \Big)^\eta \Big]~.\nn
	}
	By the construction of the $b-\PPT$, the ages of the $\Old$ and $\Young$ labelled children of $(x,s)$ are independent. Therefore, the expectations in the second and third terms on the RHS of \eqref{for:lem:finite:higher-moment:2} can be separated. We handle the three sums in \eqref{for:lem:finite:higher-moment:2} separately.
	\paragraph{Upper bounding the first sum in \eqref{for:lem:finite:higher-moment:2}} 
	First we use a similar inequality, splitting between $j=i$ and $j\neq i$ to get 
	\eqan{\label{for:lem:finite:higher-moment:02}
		&\E\Big[ \sum\limits_{t\in\{\old,\young\}}\sum\limits_{i=1}^{n_t} h(A_{ti})\Big( \sum\limits_{j=1}^{n_t} h(A_{tj}) \Big)^\eta  \Big]\nn\\
		\leq& \E\Big[ \sum\limits_{t\in\{\old,\young\}} \sum\limits_{i=1}^{n_t} h(A_{ti})^{1+\eta}\Big] + \sum\limits_{t\in\{\old,\young\}}\E\Big[ \sum\limits_{i=1}^{n_t} h(A_{ti})\Big( \sum\limits_{\substack{j=1\\j\neq i}}^{n_t} h(A_{tj}) \Big)^\eta   \Big]~.}
	Note that $(A_{\old i})_{i\in[n_\old]}$ are independent by the construction of the P\'olya point tree. Therefore, for $k=\Old$, the second expectation in \eqref{for:lem:finite:higher-moment:02} can be simplified as
	\eqan{\label{for:lem:finite:higher-moment:03}
		\E\Big[ \sum\limits_{i=1}^{n_\old} h(A_{\old i})\Big( \sum\limits_{\substack{j=1\\j\neq i}}^{n_\old} h(A_{\old j}) \Big)^\eta   \Big] =& \sum\limits_{i=1}^{n_\old}\E\Big[  h(A_{\old i})\Big]\E \Big[\Big( \sum\limits_{\substack{j=1\\j\neq i}}^{n_\old} h(A_{\old j}) \Big)^\eta   \Big]  \nn\\
		\leq&\E\Big[ \sum\limits_{i=1}^{n_\old} h(A_{\old i})\Big]\E \Big[\Big( \sum\limits_{j=1}^{n_\old}h(A_{\old j}) \Big)^\eta   \Big] ~.
	}
	Since $0 < \eta < 1$, we apply Jensen's inequality for concave functions to obtain
	\begin{equation}\label{for:lem:finite:higher-moment:04}
		\E\left[ \sum\limits_{i=1}^{n_\old} h(A_{\old i})\left( \sum\limits_{\substack{j=1 \\ j \neq i}}^{n_\old} h(A_{\old j}) \right)^\eta \right] 
		\leq \E\left[ \sum\limits_{i=1}^{n_\old} h(A_{\old i})\right]^{1+\eta}~.
	\end{equation}
	
	All of the $(A_{\young i})_{i\in[n_\young]}$ have the label $\Young$, and the ages are the occurrence times given by a mixed-Poisson process on $[x,bx]$ with random intensity $\rho_{(x,s)}(y)$ as defined in \eqref{for:pointgraph:poisson:PPT}. Therefore, $n_\young$ is a mixed-Poisson random variable with intensity $\Gamma_{(x,s)}(b^{1-\chi}-1)$. Conditionally on $n_\young$, the ages of $(A_{\young i})_{i\in[n_\young]}$ are i.i.d.\ random variables independent of $n_\young$. Let $A_\young$ denote the distribution of $A_{\young i}$, conditioned on $n_\young$.
	Then, for $t=\Young$, the second expectation in \eqref{for:lem:finite:higher-moment:02} can be simplified as
	\eqan{\label{for:lem:finite:higher-moment:05}
		&\E\Big[ \sum\limits_{i=1}^{n_\young} h(A_{\young i})\Big( \sum\limits_{\substack{j=1\\j\neq i}}^{n_\young} h(A_{\young j}) \Big)^\eta   \Big]
		=\E\Big[ \sum\limits_{i=1}^{n_\young}\E_{n_\young}\Big[h(A_{\young i})\Big( \sum\limits_{\substack{j=1\\j\neq i}}^{n_\young} h(A_{\young j}) \Big)^\eta \Big]\Big]\nn\\
		&\hspace{4cm}=  \E\Big[ n_\young \E_{n_\young}[h(A_{\young 1})]\E_{n_\young}\Big[ \Big( \sum\limits_{j=1}^{n_\young -1} h(A_{\young j}) \Big)^\eta\Big]\Big]~,}
	where $\E_{n_\young}[\cdot]=\E[\cdot\mid{n_\young}]$. Again by Jensen's inequality for the concave function $x\mapsto x^\eta$, we bound \eqref{for:lem:finite:higher-moment:05} by
	\eqan{\label{for:lem:finite:higher-moment:06}
		&\E\Big[ n_\young \E_{n_\young}[h(A_{\young1})]\E_{n_\young}\Big[ \Big( \sum\limits_{j=1}^{n_\young-1} h(A_{\young j}) \Big)^\eta\Big]\Big]\nn\\
		&\hspace{1.5cm}\leq \E\Big[ n_\young \E[h(A_{\young})]\Big( \sum\limits_{j=1}^{n_\young-1} \E_{n_\young}[h(A_{\young j})] \Big)^\eta\Big]\nn\\
		&\hspace{3.5cm}=\E\Big[ n_\young(n_\young-1)^{\eta} \Big]\E[h(A_{\young})]^{1+\eta} ~.}
	Using Holder's inequality, the first expectation in \eqref{for:lem:finite:higher-moment:06} can be upper bounded as
	\eqan{\label{for:lem:finite:higher-moment:07}
		\E\Big[ n_\young(n_\young-1)^{\eta} \Big]\leq& \E[n_\young]^{1-\eta}\E[n_\young(n_\young-1)]^{\eta}\nn\\
		=&\E[n_\young]^{1-\eta}\E[\Gamma_{(x,s)}^2]^{\eta}(b^{1-\chi}-1)^{2\eta}~.}
	The last equality in \eqref{for:lem:finite:higher-moment:07} follows from the fact that $n_\young$  is a mixed-Poisson random variable with intensity $\Gamma_{(x,s)}(b^{1-\chi}-1)$. Note that, $\Gamma_{(x,s)}$ is a Gamma random variable with parameters $m+\delta+\one_{\{s=\young\}}$ and $1$. Since $m\geq 2$ and $\delta>0$, it can be shown that $\E[\Gamma_{(x,s)}^2]\leq 2\E[\Gamma_{(x,s)}]^2$. Therefore, \eqref{for:lem:finite:higher-moment:07} can be further upper bounded by
	\eqan{\label{for:lem:finite:higher-moment:08}
		\E\Big[ n_\young(n_\young-1)^{\eta} \Big]\leq& 2^\eta\E[n_\young]^{1-\eta}\Big(\E[\Gamma_{(x,s)}](b^{1-\chi}-1)\Big)^{2\eta}\nn\\
		=&2^{\eta}\E[n_\young]^{1+\eta}(b^{1-\chi}-1)^{2\eta}~.}
	By \eqref{for:lem:finite:higher-moment:08}, the LHS of \eqref{for:lem:finite:higher-moment:05} is upper bounded by
	\eqn{\label{for:lem:finite:higher-moment:09}
		\text{LHS of \eqref{for:lem:finite:higher-moment:05}}\leq 2^\eta \Big( \E[n_\young]\E[h(A_{\young})] \Big)^{1+\eta}(b^{1-\chi}-1)^{2\eta}~.}
	Recall that, conditionally on $n_\young,$ the ages of $(A_{\young i})_{i\in[n_\young]}$ are i.i.d.\ random variables, independent of $n_\young$. Let, further, $A_\young$ denote the distribution of $A_{\young i}$ conditionally on $n_\young$. Then, by Wald's identity
	\eqn{\label{for:lem:finite:higher-moment:10}
		\E\Big[ \sum\limits_{i=1}^{n_\young} h(A_{\young i}) \Big]=\E[n_\young ]\E[h(A_{\young})] ~.
	}
	Therefore, by \eqref{for:lem:finite:higher-moment:06}, \eqref{for:lem:finite:higher-moment:09} and \eqref{for:lem:finite:higher-moment:10}, the LHS of \eqref{for:lem:finite:higher-moment:05} can be upper bounded by 
	\eqn{\label{for:lem:finite:higher-moment:11}
		\E\Big[ \sum\limits_{i=1}^{n_\young} h(A_{\young i})\Big( \sum\limits_{\substack{j=1\\j\neq i}}^{n_\young} h(A_{\young j}) \Big)^\eta   \Big] \leq 2^\eta \E\Big[ \sum\limits_{i=1}^{n_\young} h(A_{\young i}) \Big]^{1+\eta}(b^{1-\chi}-1)^{2\eta}~.}
	Now, by \eqref{for:lem:finite:higher-moment:04} and \eqref{for:lem:finite:higher-moment:11}, we upper bound the second term in RHS of \eqref{for:lem:finite:higher-moment:02} by
	\eqan{\label{for:lem:finite:higher-moment:12}
		&\sum\limits_{t\in\{\old,\young\}}^\infty\E\Big[ \sum\limits_{i=1}^{n_t} h(A_{ti})\Big( \sum\limits_{\substack{j=1\\j\neq i}}^{n_t} h(A_{tj}) \Big)^\eta   \Big] \nn\\
		&\hspace{1cm}\leq 2^\eta (b^{1-\chi}-1)^{2\eta}  \sum\limits_{t\in\{\old,\young\}}\E\Big[ \sum\limits_{i=1}^{n_t} h(A_{ti}) \Big]^{1+\eta}\\
		&\hspace{2.5cm}\leq 2^\eta(b^{1-\chi}-1)^{2\eta} \E\Big[ \sum\limits_{t\in\{\old,\young\}}\sum\limits_{i=1}^{n_t} h(A_{ti}) \Big]^{1+\eta}~.\nn}
	Therefore, by \eqref{for:lem:finite:higher-moment:12}, the LHS of \eqref{for:lem:finite:higher-moment:02} is further upper bounded as
	\eqan{\label{for:lem:finite:higher-moment:13}
		&\E\Big[ \sum\limits_{t\in\{\old,\young\}}\sum\limits_{i=1}^{n_t} h(A_{ti})\Big( \sum\limits_{j=1}^{n_t} h(A_{tj}) \Big)^\eta \Big]\\
		\leq &\E\Big[ \sum\limits_{t\in\{\old,\young\}} \sum\limits_{i=1}^{n_t} h(A_{ti})^{1+\eta}\Big] + 2^\eta (b^{1-\chi}-1)^{2\eta} \E\Big[ \sum\limits_{t\in\{\old,\young\}}\sum\limits_{i=1}^{n_t} h(A_{ti}) \Big]^{1+\eta}~.\nn
	}
	\paragraph{Upper bounding the second and third sum in \eqref{for:lem:finite:higher-moment:2}}
	Now we move on to upper bounding the second and third sum in \eqref{for:lem:finite:higher-moment:2}. By Jensen's inequality,
	\eqan{\label{for:lem:finite:higher-moment:14}
		&\E\Big[\sum\limits_{i=1}^{n_\old} h(A_{\old i})\Big]\E\Big[ \Big( \sum\limits_{j=1}^{n_\young} h(A_{\young j}) \Big)^\eta \Big]+\E\Big[\sum\limits_{i=1}^{n_\young} h(A_{\young i})\Big]\E\Big[ \Big( \sum\limits_{j=1}^{n_\old} h(A_{\old j}) \Big)^\eta \Big]\nn\\
		&\hspace{1cm}\leq \E\Big[ \sum\limits_{t\in\{\old,\young\}}\sum\limits_{i=1}^{n_t} h(A_{ti}) \Big] \E\Big[ \Big( \sum\limits_{t\in\{\old,\young\}}\sum\limits_{i=1}^{n_t} h(A_{ti}) \Big)^\eta \Big]\nn\\
		&\hspace{2cm}\leq \E\Big[ \sum\limits_{t\in\{\old,\young\}}\sum\limits_{i=1}^{n_t} h(A_{ti})  \Big]^{1+\eta} ~.}
	\paragraph{Back to \eqref{for:lem:finite:higher-moment:1}}
	By \eqref{for:lem:finite:higher-moment:1}, \eqref{for:lem:finite:higher-moment:13} and \eqref{for:lem:finite:higher-moment:14}, $\E\big[ \big(M^{(1)}_b(x,s)\big)^{1+\eta} \big]$ can be upper bounded by
	\eqan{\label{for:lem:finite:higher-moment:15}
		\E\big[ \big(M^{(1)}_b(x,s)\big)^{1+\eta} \big]
		\leq& \rho_b^{-(1+\eta)}\Big[2\E\Big[ \sum\limits_{t\in\{\old,\young\}}\sum\limits_{i=1}^{n_t} \Big(\frac{h(A_{ti})}{h(x,s)}\Big)^{1+\eta}\Big]\\
		&\hspace{1.75cm} + 2^\eta (b^{1-\chi}-1)^{2\eta} \E\Big[ \sum\limits_{k\in\{\old,\young\}}\sum\limits_{i=1}^{n_k} \frac{h(A_{ki})}{h(x,s)} \Big]^{1+\eta}  \Big]~.\nn}
	Note that $$\E\Big[ \sum\limits_{k\in\{\old,\young\}}\sum\limits_{i=1}^{n_k} \frac{h(A_{ki})}{h(x,s)} \Big]=\rho_b~.$$ Therefore, we are left with upper bounding the first expectation in the RHS of \eqref{for:lem:finite:higher-moment:15}. We calculate the expectation in RHS of \eqref{for:lem:finite:higher-moment:14} explicitly as
	\eqan{\label{for:lem:finite:higher-moment:002}
		&\rho_b^{-(1+\eta)}\E\Big[ \sum\limits_{t\in\{\old,\young\}}^\infty \sum\limits_{i=1}^{n_t} \Big(\frac{h(A_{ti})}{h(x,s)}\Big)^{1+\eta}\Big]\\
		=&\rho_b^{-(1+\eta)}\sum\limits_{t\in\{ \old,\young \}}\int\limits_{0}^\infty \pi\kappa_b((x,s),(y,t))\frac{h(y,t)^{1+\eta}}{h(x,s)^{1+\eta}}\,dy\nn\\
		=&\nu(\Old,\Young)\Big[\frac{c_{s\old}\bfu_{\old}^b}{\bfu_s^b}x^{(1+\eta)/2-\chi}\int\limits_{0}^x  y^{\chi-3/2-\eta/2} \,dy\nn\\
		&\hspace{3cm}+\frac{c_{s\young}\bfu_{\young}^b}{\bfu_s^b}x^{\chi-1/2+\eta/2}\int\limits_{x}^{bx}  y^{-\chi-(1+\eta)/2}\,dy\Big]\nn\\
		=& \nu(\Old,\Young)\Big[\frac{c_{s\old}\bfu_{\old}^b}{\bfu_s^b} \frac{1}{\chi-1/2-\eta/2}+\frac{c_{s\young}(1-b^{1/2-\chi})\bfu_{\young}^b}{\bfu_s^b}\frac{1}{\chi-1/2+\eta/2}\Big]~, \nn
	}
	when $\eta<2\chi-1$ and $$\nu(\Old,\Young)=\pi\rho_b^{-(1+\eta)}\max\{ \big(\frac{\bfu_t^b}{\bfu_s^b}\big)^{\eta}:s,t\in\{\Old,\Young\}\}~.$$ Note that the RHS of \eqref{for:lem:finite:higher-moment:002} does not depend on $(x,s)$. Hence 
	$\E\big[\big(M^{(1)}_b(x,s)\big)^{1+\eta}\big]$ is finite and its upper bound is independent of $(x,s)$.
\end{proof}
Lemma~\ref{lem:subcriticality} and Proposition~\ref{prop:supercriticality} together prove that $1/r(\bar{\bfT}_{\kappa})$ is the critical percolation threshold for the P\'olya point tree, and its survival function is continuous at $1/r(\bar{\bfT}_{\kappa})$.
\section{Spectral norm of mean offspring operator of the P\'olya point tree}\label{sec:spectral_radius}
In \cite[Theorem~1.4]{RRR22}, it was proved that a broad class of preferential attachment models, including those considered in this thesis, converges locally to the P\'olya point tree described in \cite{BergerBorgs}. The P\'olya point tree is a multi-type branching process with a mixed discrete and continuous type space. 

In this section, we investigate certain spectral properties of the mean offspring operator of the P\'olya point tree $\bfT_{\kappa}$. As described in Section~\ref{chap:percolation_threshold_PPT:sec:main-theorem}, instead of working directly with $\bfT_{\kappa}$, we work with $\bar{\bfT}_\kappa$. Ultimately, we demonstrate that the spectral and operator norms of both $\bfT_{\kappa}$ and $\bar{\bfT}_{\kappa}$ are identical. The primary difference between the two is that we identify the explicit eigenfunction for $\bar{\bfT}_{\kappa}$ corresponding to $r(\bar{\bfT}_{\kappa})$, whereas no such eigenfunction is explicitly determined for $\bfT_{\kappa}$. A noteworthy property of these integral operators is that, despite being non-self-adjoint, their spectral and operator norms are equal.

First, we show that the operator norm of the integral operator is bounded above for $\delta > 0$. We then establish the existence of a positive eigenfunction for the integral operator corresponding to this upper bound. By applying Gelfand's formula, we further demonstrate that the spectral norm of a bounded integral operator is bounded above by its operator norm. This approach allows us to explicitly determine both the spectral norm and its associated eigenfunction.

In summary, we provide a comprehensive analysis of $\bar{\bfT}_\kappa$, highlighting its boundedness, the existence of eigenvalues, and the determination of the spectral norm for $\delta > 0$.
\begin{lemma}[Upper bound of the operator norm]\label{lem:upperbound}
	For $\delta>0,$ the operator norm of $\bar{\bfT}_{\kappa}$ and $\bfT_{\kappa}$ is upper bounded by 
	RHS of \eqref{nu-equality-PAM}.
\end{lemma}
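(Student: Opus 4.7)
The plan is to apply Schur's test with the weight $f(x,s)=\bfp_s/\sqrt{x}$, where $\bfp$ is the Perron right eigenvector of $\bfM$ that will be identified explicitly in Theorem~\ref{thm:operator-norm:PPT}. First I will evaluate $\bar{\bfT}_\kappa f$ by direct computation. Splitting the $t$-sum into Old- and Young-labelled contributions and using the explicit form of $\kappa$ in~\eqref{eq:kernel:offspring-operator:PPT},
\[
(\bar{\bfT}_\kappa f)(x,s)=c_{s\old}\bfp_\old\,x^{-\chi}\int_0^x y^{\chi-\frac{3}{2}}\,dy+c_{s\young}\bfp_\young\,x^{-(1-\chi)}\int_x^\infty y^{-\chi-\frac{1}{2}}\,dy.
\]
Both integrals converge precisely because $\delta>0$ forces $\chi>1/2$, and each evaluates to $1/(\chi-\tfrac{1}{2})$ times the appropriate power of $x$. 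The powers of $x$ cancel, and using $\bfM\bfp=\lambda_{\bfM}\bfp$ this produces
\[
(\bar{\bfT}_\kappa f)(x,s)=\frac{\lambda_{\bfM}}{\chi-\tfrac{1}{2}}\,f(x,s),
\]
with $\lambda_{\bfM}$ the Perron eigenvalue of $\bfM$. Substituting $1/(\chi-\tfrac{1}{2})=2(2m+\delta)/\delta$ together with the explicit expression for $\lambda_{\bfM}$ recovers the RHS of~\eqref{nu-equality-PAM}.

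Next, I will turn this eigenfunction identity into the operator-norm bound via Schur's test. For any $g\in L^2(\Scal_e)$, Cauchy--Schwarz applied pointwise in $(x,s)$ gives
\[
\bigl|(\bar{\bfT}_\kappa g)(x,s)\bigr|^2\leq (\bar{\bfT}_\kappa f)(x,s)\cdot\sum_t\int_0^\infty\kappa((x,s),(y,t))\,\frac{g(y,t)^2}{f(y,t)}\,dy.
\]
Integrating in $(x,s)$ with Fubini and inserting the dual identity
\[
\sum_s\int_0^\infty\kappa((x,s),(y,t))f(x,s)\,dx=\frac{\lambda_{\bfM}}{\chi-\tfrac{1}{2}}\,f(y,t),
\]
which is the same elementary integral computation with source and target swapped and which brings in the left Perron eigenvector $\bfq$ of $\bfM$, then yields $\|\bar{\bfT}_\kappa g\|_2\leq \bigl(\lambda_{\bfM}/(\chi-\tfrac{1}{2})\bigr)\|g\|_2$. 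When $\bfM$ is not symmetric the dual identity is arranged by a preliminary diagonal change of basis that symmetrises the $2\times 2$ coefficient matrix, using the geometric mean $\sqrt{c_{\old\young}c_{\young\old}}$. The bound for $\bfT_\kappa$ on the restricted type space $\Scal$ then follows immediately, because truncating the $y$-integration to $[0,1]$ only removes non-negative contributions from the kernel.

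The hard part will be handling the fact that the Perron eigenfunction $f(x,s)=\bfp_s/\sqrt{x}$ is not itself square-integrable on $\Scal_e$, so Schur's test must be applied with $f$ as a pointwise weight rather than as a test vector. I plan to implement this by a truncation to $[\epsilon,\epsilon^{-1}]\times\{\old,\young\}$ followed by the limit $\epsilon\to 0$, relying on the strict inequality $\chi-\tfrac{1}{2}>0$ produced by $\delta>0$ both to make the two elementary integrals in the first display finite and to absorb the boundary contributions arising from the truncation.
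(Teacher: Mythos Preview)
Your computation of the eigenfunction identity $\bar{\bfT}_\kappa f=\tfrac{2\lambda_{\sss\bfM}}{2\chi-1}f$, with $f(x,s)=\bfp_s/\sqrt x$ and $\bfp$ the right Perron eigenvector of $\bfM$, is correct and matches the paper; this is exactly the ingredient needed for the spectral-radius statement in Theorem~\ref{thm:operator-norm:PPT}. The overall Schur-test strategy is also the paper's.

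The gap is in converting this into an operator-norm bound. After Cauchy--Schwarz and Fubini your argument requires $\bar{\bfT}_\kappa^\ast f\le C f$, but $(\bar{\bfT}_\kappa^\ast f)(y,t)=\tfrac{2}{2\chi-1}(\bfM^\ast\bfp)_t/\sqrt y$, and $\bfM^\ast\bfp\neq\lambda_{\sss\bfM}\bfp$ since $c_{\old\young}\neq c_{\young\old}$. Your proposed fix---a diagonal conjugation on $\{\old,\young\}$ that symmetrises $\bfM$---does not rescue this: such a conjugation is not unitary on $L^2(\Scal_e)$, so it changes the very operator norm you are trying to bound. In fact the inequality as stated cannot hold: testing against $g_\varepsilon(x,s)=v_s\,x^{-1/2}\one_{[\varepsilon,1/\varepsilon]}(x)$ with $v$ the top right singular vector of $\bfM$ gives $\|\bar{\bfT}_\kappa g_\varepsilon\|_2/\|g_\varepsilon\|_2\to\tfrac{2}{2\chi-1}\sigma(\bfM)$, and the largest singular value $\sigma(\bfM)$ strictly exceeds $\lambda_{\sss\bfM}$ because $\bfM$ is not normal. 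The paper's own proof runs into the same obstruction---it asserts that $\lambda_{\sss\bfM}^2$ is an eigenvalue of $\bfM^\ast\bfM$, which one checks is false here. None of this affects the percolation threshold, which uses only the spectral radius; but what Schur's test with positive weights actually delivers is the bound $\tfrac{2}{2\chi-1}\sigma(\bfM)$, not the right-hand side of~\eqref{nu-equality-PAM}. (Your concern about $f\notin L^2$ is, by contrast, harmless: Schur's test uses $f$ only as a pointwise positive weight, not as a test vector, so no truncation is needed.)
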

\begin{proof}
	We use the Schur test \cite[Theorem (Schur's test, specific version)]{HV21} to upper bound the operator norm. Let $f((x,s))=\frac{\bfp_s}{\sqrt{x}}$ and $g((x,s))=\frac{\bfq_s}{\sqrt{x}},$ where $\bfp$ and $\bfq$ are to be chosen appropriately later on.
	Note that by \eqref{eq:kernel:offspring-operator:PPT},
	\eqan{\label{eq:schur:1}
		&\sum\limits_{t\in\{ {\old,\young} \}}\int\limits_0^\infty \kappa((x,s),(y,t))f((y,t))\,dy\nn\\
		&\hspace{1cm}= c_{s\old}\bfp_{\old}\int\limits_0^x \frac{1}{x^{\chi}y^{1-\chi}}\frac{\,dy}{\sqrt{y}}+ c_{s\young}\bfp_{\young}\int\limits_x^\infty \frac{1}{x^{1-\chi}y^{\chi}}\frac{\,dy}{\sqrt{y}}\nn\\
		&\hspace{5cm}=\frac{2}{2\chi-1}[c_{s\old}\bfp_{\old}+c_{s\young}\bfp_{\young}]\frac{1}{\sqrt{x}}~.}
	In the last equality, we use the fact that $\chi>1/2$ to compute the integral value. Similarly,
	\eqan{\label{eq:schur:2}
		&\sum\limits_{s\in\{ {\old,\young} \}}\int\limits_0^\infty \kappa((x,s),(y,t))g((x,s))\,dx\nn\\
		&\hspace{1cm}=\sum\limits_{s\in\{ {\old,\young} \}} \Big[ c_{s\old}q_s\one_{\{t=\old\}}\int\limits_{y}^\infty \frac{1}{x^\chi y^{1-\chi}}\frac{\,dx}{\sqrt{x}}+c_{s\young}q_s\one_{\{t=\young\}}\int\limits_{0}^y \frac{1}{x^{1-\chi} y^{\chi}}\frac{\,dx}{\sqrt{x}} \Big]\nn\\
		&\hspace{5cm}= \frac{2}{2\chi-1}[\bfq_{\old} c_{\old t}+\bfq_{\young} c_{\young t}]\frac{1}{\sqrt{y}}~.}
	Let us define the matrix $\bfM=(M_{s,t})_{s,t\in \{\old,\young\}}$ by
	\eqn{
		\label{def-bfM-PAM}
		\bfM
		=\left(\begin{matrix}
			c_{\old\old} &c_{\old\young}\\
			c_{\young\old} &c_{\young\young}
		\end{matrix}\right)
		.\nn}
	Therefore, \eqref{eq:schur:1} and \eqref{eq:schur:2} simplifies as
	\eqan{\label{eq:schur:3}
		\sum\limits_{t\in\{ {\old,\young} \}}\int\limits_0^\infty \kappa((x,s),(y,t))f((y,t))\,dy=& \frac{2}{2\chi-1} (\bfM\bfp)_s\frac{1}{\sqrt{x}},\\
		\label{eq:schur:4}
		\text{and}\qquad \sum\limits_{s\in\{ {\old,\young} \}}\int\limits_0^\infty \kappa((x,s),(y,t))g((x,s))\,dx=&\frac{2}{2\chi-1} (\bfM^*\bfq)_t\frac{1}{\sqrt{y}}~,} 
	where $\bfM^*$ is the transpose of $\bfM$. Thus, for the Schur-test in \cite[Theorem~(Schur’s test, specific version)]{HV21} with $p=p'=2$ to apply, we wish that 
	\eqn{ \label{eq:upperbound:requirement}
		\bfM \bfp=\lambda_{\sss\bfM}\bfq,
		\qquad{\text{and}}
		\qquad
		\bfM^* \bfq=\lambda_{\sss\bfM}\bfp.
	}
	Since $c_{\old\old}=c_{\young\young}$, the largest eigenvalue of $\bfM$ is given by
	\eqan{	\lambda_{\sss\bfM}\equiv c_{\old\old}+\sqrt{c_{\old\young}c_{\young\old}}=\frac{m(m+\delta)+\sqrt{m(m-1)(m+\delta)(m+1+\delta)}}{m+\delta} .\nonumber
	}
	By choosing $\bfp$ and $\bfq$ as the right eigenvectors of $\bfM^*\bfM$ and $\bfM\bfM^*$, respectively, corresponding to the eigenvalue $\lambda_{\sss\bfM}^2$, we conclude that \eqref{eq:upperbound:requirement} holds. Consequently, Lemma~\ref{lem:upperbound} follows immediately. The same calculation, adapted to the setting of $\bfT_{\kappa}$, demonstrates that the operator norm of $\bfT_{\kappa}$ is also bounded above by the same value.
\end{proof}

\begin{Remark}[Operator norm of ${\bfT}_{\kappa}$]\label{remark:operator-norm:restricted-space}
	\rm In the restricted type space $\Scal$, it can also be shown that ${\bfT}_{\kappa}$ is a bounded operator from $L^2(\Scal,\lambda)$ to itself. The upper bound on the operator norm has already been obtained in Lemma~\ref{lem:upperbound}. To establish a lower bound, we construct a sequence of $L^2(\Scal,\lambda)$-normalised functions $f_\vep(x,s)=\bfp_s/\sqrt{x\log(1/\vep)}\one_{\{x\geq \vep\}}$ and then use the inequality $\| {\bfT}_{\kappa} \|\geq \langle {\bfT}_{\kappa} f_\vep,{\bfT}_{\kappa} f_\vep \rangle$. By allowing $\vep$ to approach zero, we observe the convergence of $\langle {\bfT}_{\kappa} f_\vep,{\bfT}_{\kappa} f_\vep \rangle$ towards the square of the right-hand side of \eqref{nu-equality-PAM}, proving it to be the $L^2(\Scal,\lambda)$-operator norm of $\bar{\bfT}_{\kappa}$.\hfill$\blacksquare$
\end{Remark}

\begin{proof}[Proof of Theorem~\ref{thm:operator-norm:PPT}]
	We show that there exists an eigenfunction of $\bar{\bfT}_\kappa$ with eigenvalue $r(\bar{\bfT}_\kappa)$. If we choose $\bfp$ to be the right eigenvector of $\bfM$ with eigenvalue $\lambda_{\sss \bfM}$, then \eqref{eq:schur:3} simplifies to
	\eqan{\label{eq:eigenfunction:1}
		&\sum\limits_{t\in\{ {\old,\young} \}}\int\limits_0^\infty \kappa((x,s),(y,t))f((y,t))\,dy\nn\\
		&\hspace{2cm}= \frac{2}{2\chi-1} \lambda_{\sss\bfM}\bfp_s\frac{1}{\sqrt{x}}=r(\bar{\bfT}_\kappa)f((x,s))~.}
	The operator $\bar{\bfT}_\kappa$ possesses a positive eigenfunction with eigenvalue $r(\bar{\bfT}_\kappa)$. Since $\bar{\bfT}_\kappa$ is a bounded linear operator, we can apply Gelfand's theorem to assert that the operator norm of $\bar{\bfT}_\kappa$ is at least its spectral radius. Given that $r(\bar{\bfT}_{\kappa})$ is an eigenvalue of $\bar{\bfT}_{\kappa}$, both the spectral radius and the operator norm of $\bar{\bfT}_{\kappa}$ are bounded below by $r(\bar{\bfT}_{\kappa})$. Moreover, Lemma~\ref{lem:upperbound} establishes that the operator norm is also bounded above by this same value. Consequently, we confirm that $r(\bar{\bfT}_\kappa)$ indeed represents both the operator norm and the spectral radius of $\bar{\bfT}_\kappa$ in the extended type space $\Scal_e=[0,\infty)\times\{\Old,\Young\}$.
	
	Although we cannot explicitly determine the eigenfunction corresponding to $r(\bar{\bfT}_{\kappa})$, we demonstrate that the spectral radius of ${\bfT}_{\kappa}$ is also $r(\bar{\bfT}_{\kappa})$ by using an argument based on the numerical radius of ${\bfT}_{\kappa}$, denoted by $w({\bfT}_{\kappa})$. The numerical radius of any bounded operator is upper bounded by its operator norm. With the sequence $f_\vep$ defined in Remark~\ref{remark:operator-norm:restricted-space}, and taking $\vep$ to $0$, we obtain $r(\bar{\bfT}_{\kappa})$ as a lower bound for $w({\bfT}_{\kappa})$, thereby proving that $w({\bfT}_{\kappa})=\|{\bfT}_{\kappa}\|$. Since ${\bfT}_{\kappa}$ is a bounded operator on the Hilbert space $L^2(\Scal,\lambda)$, this equality implies that the spectral radius and the numerical radius of ${\bfT}_{\kappa}$ are equal.
\end{proof}

\begin{Remark}[Equality of operator norm and spectral norm]
	\rm Although $\bar{\bfT}_{\kappa}$ and $\bfT_{\kappa}$ are not self-adjoint operators, their spectral norm and operator norm are equal. \hfill $\blacksquare$
\end{Remark}


Following similar steps, we now prove Lemma~\ref{lem:spectral-norm:truncated-operator}.

\begin{proof}[Proof of Lemma~\ref{lem:spectral-norm:truncated-operator}]
	We proceed similarly to the proof of Theorem~\ref{thm:operator-norm:PPT}. It can easily be shown that, by the Schur test, we can upper bound the operator norm by $r(\bar{\bfT}_{\kappa_b})$. Now we show that $h(x,s)$ in \eqref{eq:lemma:spectral-radius:efunc} is the eigenfunction of $\bar{\bfT}_{\kappa_b}$ with eigenvalue $r(\bar{\bfT}_{\kappa_b})$. Therefore, for any $(x,s) \in \Scal_e$,
	\eqan{\label{for:lem:spectra:truncated-operator:1}
		&\sum\limits_{t\in\{\old,\young\}} \int\limits_0^\infty \kappa_b((x,s),(y,t))h(y,s)\,dy\nn\\
		&\hspace{2cm}=\sum\limits_{t\in\{\old,\young\}} \int\limits_0^{bx} \kappa((x,s),(y,t))h(y,s)\,dy\nn\\
		&\hspace{3cm}=\frac{x^{-1/2}}{\chi-\frac{1}{2}}\big[ c_{so}\bfu_{\old}^b +c_{s\young}(1-b^{1/2-\chi})\bfu_{\young}^b\big]~.
	}
	Since $\bfu^b$ is the eigenvector of $\bfM_b$ corresponding to its largest eigenvalue $\lambda_{\sss \bfM}^{\sss (b)}$, given by 
	\eqn{\label{for:lem:spectra:truncated-operator:2}
		\lambda_{\sss \bfM}^{\sss (b)} = \frac{(1+q)c_{\old\old}+\sqrt{((1-q)c_{\old\old})^2+4qc_{\old\young}c_{\young\old}}}{2}~,}
	where $q=1-b^{1/2-\chi}$, \eqref{for:lem:spectra:truncated-operator:1} leads to
	\eqn{\label{for:lem:spectra:truncated-operator:3}
		\Big(\bar{\bfT}_{\kappa_b}h\Big)(x,s)=\frac{\lambda_{\sss \bfM}^{\sss (b)}}{2\chi-1}\frac{\bfu_s^b}{\sqrt{x}}=r(\bar{\bfT}_{\kappa_b})h(x,s)~.}
	Hence, $h$ is the eigenfunction of the truncated integral operator $\bar{\bfT}_{\kappa_b}$ corresponding to its spectral norm $r(\bar{\bfT}_{\kappa_b})$ as in \eqref{eq:lemma:spectral-radius}.
\end{proof}


\chapter[Giant in Preferential Attachment model is local]{Giant in Preferential Attachment\\model is local}
\label{chap:local-giant}
\begin{flushright}
	\footnotesize{}Based on:\\
	\textbf{Percolation on preferential attachment models}\cite{RRR23}
\end{flushright}
\vspace{0.1cm}
\begin{center}
	\begin{minipage}{0.7 \textwidth}
		\footnotesize{\textbf{Abstract.}
			In this chapter, we study the percolation phase transition on preferential attachment models, where vertices enter with $m$ edges and attach proportionally to their degree plus $\delta$. We identify the critical percolation threshold as
			\[
			\pi_c = \frac{\delta}{2\left(m(m+\delta) + \sqrt{m(m-1)(m+\delta)(m+1+\delta)}\right)}
			\]
			for $\delta > 0$, and $\pi_c = 0$ for non-positive values of $\delta$. Therefore, the giant component is robust for $\delta \in (-m, 0]$, but not for $\delta > 0$.
			
			First, we show that preferential attachment graphs are large-set expanders, enabling us to verify the conditions outlined in \cite{ABS22}. Under these conditions, the proportion of vertices in the largest connected component of a graph sequence converges to the survival probability of percolation on the local limit. In particular, the critical percolation threshold for both the graph and its local limit are identical.
			
			In Chapter~\ref{chap:percolation_threshold_PPT}, we identified $\pi_c$ as the critical percolation threshold of the P\'olya point tree, which is the local limit of preferential attachment models, proving that $\pi_c$ is indeed the critical percolation threshold for a sequence of preferential attachment models.
			}
	\end{minipage}
\end{center}
\vspace{0.1cm}

\section{Introduction}\label{chap:local-giant:sec:intro}
Random graph models typically undergo a phase transition in their connectivity structure depending on the percolation probability $\pi$. Specifically, there exists a critical percolation threshold $\pi_c \in [0,1]$ such that for $\pi > \pi_c$, the proportion of vertices in the largest connected component is strictly positive, whereas for $\pi < \pi_c$, this proportion converges to zero. In other words,
\begin{equation}\label{def:critical-percolation}
	\frac{|\mathcal{C}_1^{(n)}(\pi)|}{n} \to c(\pi)~,
\end{equation}
where $c(\pi) > 0$ for $\pi > \pi_c$ and $c(\pi) = 0$ when $\pi < \pi_c$. Here, $\mathcal{C}_1^{(n)}(\pi)$ denotes the largest connected component of the percolated graph of size $n$. The value $\pi_c$ is known as the \emph{critical percolation threshold} of the random graph.

Percolation on random graphs has a extensive and diverse literature, to which we refer the reader for more detailed information \cite{BJR07,ER60,vdH1,JL09,JPRR18,MR95}. Local convergence techniques have become valuable tools for studying the size and uniqueness of the giant component in these graphs. However, it is important to note that these properties do not immediately follow from local convergence alone. They hold true under certain well-connectedness conditions of the graph.

Although percolation on static random graphs has been extensively studied, percolation on preferential attachment models is relatively less explored. In \cite{EMO21}, the authors studied the proportion of vertices in the largest connected component of the Bernoulli preferential attachment model defined in \cite{DM13}, where the attachment rule is a function of the number of incoming edges of a vertex. In \cite{ABS22}, the preferential attachment model for the independent edge attachment rule and $\delta = 0$ was studied. It was shown that the model is a large-set expander with bounded average degree, and its critical percolation threshold is $0$. Additionally, it was demonstrated that the phase transition is of infinite order in that case, as proved in \cite{EMO21} for the model in \cite{DM13}. The fact that $\delta = 0$ plays a crucial role in this proof, which mainly relies on functional analysis.

In this chapter, we prove that the giant component of several other preferential attachment models with affine attachment functions and a general $\delta$ parameter is local, and the critical percolation thresholds for these models are all equal and match that of their local limit, the P\'{o}lya point tree.

Throughout this part, we shall only consider the deterministic version of the preferential attachment models. For convenience, we describe the models here again. We focus on sequential preferential attachment models, specifically models (a), (b), and (d) as defined in \cite{vdH1}, excluding their tree cases. We fix $m \in \mathbb{N} \setminus \{1\}$ and $\delta > -m$. In these models, every new vertex is introduced to the existing graph with $m$ edges incident to it. The models are defined by their edge-connection probabilities.

We start from any finite graph with $2$ vertices and finitely many connections between them. Let $a_1$ and $a_2$ denote the degrees of vertices $1$ and $2$, respectively. We also allow for self-loops in the initial graph.
\paragraph{Model (a)} For each new vertex $v$ joining the graph and $j=1,2,\ldots,m$, the attachment probabilities are given by
\eqn{\label{def:model:a}
	\prob\Big( v\overset{j}{\rightsquigarrow} u\mid\PA^{(a)}_{v,j-1}(m,\delta) \Big)= \begin{cases}
		\frac{d_u(v,j-1)+\delta}{c_{v,j}^{(a)}}\hspace{1.15cm}\text{for}~u<v,\\
		\frac{d_v(v,j-1)+{j\delta}/{m}}{c_{v,j}^{(a)}}\hspace{0.65cm}\text{for}~u=v,
	\end{cases}
}
where $v\overset{j}{\rightsquigarrow} u$ denotes that vertex $v$ connects to $u$ with its $j$-th edge, $\PA^{(a)}_{v,j}({m},\delta)$ denotes the graph on $v$ vertices, \FC{with} the $v$-th vertex \FC{having} $j$ out-edges, and $d_u(v,j)$ denotes the degree of vertex $u$ in $\PA^{(a)}_{v,j}({m},\delta)$. We identify $\PA^{(a)}_{v+1,0}({m},\delta)$ with $\PA^{(a)}_{v,m}({m},\delta)$. The normalizing constant $c_{v,j}^{(a)}$ in \eqref{def:model:a} equals
\eqn{\label{eq:normali:1}
	c_{v,j}^{(a)} := a_{[2]}+2\delta+(2m+\delta)(v-3)+2(j-1)+1+\frac{j\delta}{m} \, ,
}
where $a_{[2]}=a_1+a_2$. We denote the above model by $\PA^{(a)}_{v}({m},\delta)$, which is equivalent to $\PA_v^{(m,\delta)}(a)$ as defined in \cite{vdH1}.
\paragraph{Model (b)}
For every new vertex $v$ joining the graph and $j=1,2,\ldots,m$, the attachment probabilities are given by
\eqn{\label{def:model:b}
	\prob\Big( v\overset{j}{\rightsquigarrow} u\mid\PA^{(b)}_{v,j-1}(m,\delta) \Big)= \begin{cases}
		\frac{d_u(v,j-1)+\delta}{c_{v,j}^{(b)}}\hspace{1.8cm}\text{for}~u<v,\\
		\frac{d_v(v,j-1)+{(j-1)\delta}/{m}}{c_{v,j}^{(b)}}\hspace{0.65cm}\text{for}~u=v,
	\end{cases}
}
with all notation as before, but now for model (b), for which
\eqn{\label{eq:normalb}
	c_{v,j}^{(b)} := a_{[2]}+2\delta+(2m+\delta)(v-3)+2(j-1)+\frac{(j-1)\delta}{m} \, .
}
We denote the above model as $\PA^{(b)}_{v}({m},\delta)$, which is equivalent to $\PA_v^{(m,\delta)}(b)$ as defined in \cite{vdH1}.

\begin{remark}[Difference between models (a) and (b)]
	\rm{Models (a) and (b) are different in that the first edge from every new vertex can create a self-loop in model (a) but not in model (b). Note that the edge probabilities are different for all $j$.}\hfill$\blacksquare$
\end{remark}

\paragraph{Model (d)}
For every new vertex $v$ joining the graph and $j=1,2,\ldots,m$, the attachment probabilities are given by
\eqn{\label{def:model:d}
	\prob\Big( v\overset{j}{\rightsquigarrow} u\mid\PA^{(d)}_{v,j-1}(m,\delta) \Big)= \frac{d_u(v,j-1)+\delta}{c_{v,j}^{(d)}}\hspace{1.5cm}\text{for}~u<v~,
}
with all notation as before, but now for model (d), for which
\eqn{\label{eq:normald}
	c_{v,j}^{(d)} := a_{[2]}+2\delta+(2m+\delta)(v-3)+(j-1)\, .
}
We denote the above model as $\PA^{(d)}_{v}({m},\delta)$, which is essentially equivalent to $\PA_v^{(m,\delta)}(d)$ as defined in \cite{vdH1}.

\begin{center}
	{\textbf{Organisation of the chapter}}
\end{center}
This chapter is organised as follows. {In Section~\ref{chap:local-giant:sec:LSE}, we describe a sufficient condition for the large-set expander property as proved in \cite{ABS22}}. In Section~\ref{chap:local-gian:sec:PA:LSE}, we demonstrate that a sequence of preferential attachment models satisfies the large-set expander property with bounded average degree. Lastly, in Section~\ref{chap:local-giant:sec:pi_c:PA}, we identify the critical percolation threshold for a sequence of preferential attachment models.

\section{Percolation on large-set expanders}\label{chap:local-giant:sec:LSE}
We define several notations that will be used throughout the chapter. Let $G=(V(G),E(G))$ be an undirected multigraph with self-loops and multiple edges. The degree of a vertex $u \in V(G)$ is denoted by $d_G(u)$. Each self-loop contributes 2 to the degree of the vertex. For any $S \subset V(G)$, the volume of the subset $S$ is defined as
\begin{equation}
	\label{def:vol:subset}
	\vol_G(S) = \sum_{v \in S} d_G(v),
\end{equation}
and the cutset of $S$ is defined as
\begin{equation}
	\label{def:cutset:subset}
	\cut_G(S, S^c) = \{ e \in E(G) \mid \text{$e$ has one end in $S$ and the other end in $S^c$} \}.
\end{equation}
For every $\varepsilon > 0$, we define the edge-expander constant of the graph $G$ as
\begin{equation}
	\label{def:expander-constant:graph}
	\alpha(G, \varepsilon) = \min_{\substack{S \subset V(G) \\ \varepsilon |V(G)| \leq |S| \leq |V(G)|/2}} \frac{|\cut_G(S, S^c)|}{|S|}.
\end{equation}
We call a graph $G$ an $(\alpha, \varepsilon, d)$-large-set expander if the graph has an average degree bounded above by $d$ and $\alpha(G, \varepsilon) \geq \alpha$. A sequence of (possibly random) graphs $(G_n)_{n \geq 1}$ is called a large-set expander sequence with bounded average degree if there exists a $d < \infty$, and for every $\varepsilon \in (0,1/2)$, there exists $\alpha > 0$ such that
\begin{equation}
	\label{def:large-set-expander-sequence}
	\prob(G_n \text{ is an } (\alpha, \varepsilon, d)\text{-large-set expander}) \to 1 \text{ as } n \to \infty.
\end{equation}

Alimohammadi, Borgs, and Saberi \cite{ABS22} studied percolation on such sequences of large-set expanders with bounded average degree. Let $(G_n)_{n \geq 1}$ be a (possibly random) sequence of large-set expanders with bounded average degree, and let $\mu$ be a probability distribution on $\mathcal{G}_\star$. For any graph $G \in \mathcal{G}_\star$, we use $G(\pi)$ to denote the subgraph of $G$ obtained after performing bond percolation on $G$ with percolation probability $\pi$. Suppose $(G_n)_{n \geq 1}$ converges locally to $(G, o) \in \mathcal{G}_\star$ with law $\mu$. We define $\Ccal_i^{\sss(n)}(\pi)$ as the $i$-th largest component of the graph $G_n(\pi)$ (breaking ties arbitrarily) and $\mathscr{C}(\pi)$ as the cluster containing the vertex $o$ in $G(\pi)$. Let $\zeta(\pi) := \mu\big( |\mathscr{C}(\pi)| = \infty \big)$. According to \cite[Theorem~1.1]{ABS22}, if $\pi \mapsto \zeta(\pi)$ is continuous at all $\pi \in [0,1]$, then
\begin{equation}
	\label{expander:percolation-result:ABS22}
	\frac{|\Ccal_1^{\sss(n)}(\pi)|}{n} \overset{\prob}{\to} \zeta(\pi),
\end{equation}
where $\overset{\prob}{\to}$ denotes convergence in probability with respect to both the random graph and percolation. Furthermore, for all $\pi \in [0,1]$, $\frac{|\Ccal_2^{\sss(n)}(\pi)|}{n} \overset{\prob}{\to} 0$, and this convergence is uniform in $\pi$ if $\pi$ is bounded away from $0$ and $1$.


\section{Preferential attachment models are large-set expanders}\label{chap:local-gian:sec:PA:LSE}
Fix $m \in \mathbb{N} \setminus \{1\}$ and $\delta > -m$. Consider an initial graph of size $2$, where one of the initial vertices has a degree of at most $m$. Without loss of generality, assume $a_2 \leq m$. We then construct a preferential attachment model with parameters $m$ and $\delta$, denoted by $G_{n}$ as $\PA^{(s)}_n(m, \delta)$ with $s \in \{a, b, d\}$. 

In this section, we establish the large-set expander property for models (a), (b), and (d) with bounded average degree, following the strategy of Mihail, Papadimitriou, and Saberi \cite{MPS06}:

\begin{Proposition}\label{prop:Cheeger-value:model-b}
	Let $(G_n)_{n \geq 2}$ be a sequence of preferential attachment models (a), (b), and (d) with $m \geq 2$. Then, for any constant $\epsilon > 0$, there exists $\alpha > 0$ such that $(G_{n})_{n \geq 1}$ is an $(\alpha, \epsilon, m)$-large-set expander. In particular, there exists a constant $c = c(\infty) > 0$ such that
	\begin{equation}\label{eq:prop:Cheeger-value:model-b}
		\Pr(\alpha(G_{n}, \epsilon) < \alpha) = o(e^{-cn})~.
	\end{equation}
\end{Proposition}

To prove this proposition, we adapt the Cheeger value calculation done for model (b) and $\delta = 0$ from \cite{MPS06} to models (a), (b), and (d) with all admissible $\delta$. 

Let us denote $\{1, 2, \ldots, n\}$ by $[n]$ for any $n \in \mathbb{N}$.

According to \cite[Chapter~5]{vdH2}, the graph $G_{n}$ can be constructed by \emph{collapsing} a preferential attachment tree $T_{m(n-2) + a_{[2]}}$, consisting of $m(n-2) + a_{[2]}$ vertices. We denote this as
\begin{equation}\label{eq:def:total-degree}
	m_{[n]} = m(n-2) + a_{[2]}~.
\end{equation}
These vertices of $T_{m_{[n]}}$ are referred to as \emph{mini-vertices}. A mini-vertex $u$ is associated with a vertex $v \in [n]$ if $u$ belongs to the set of mini-vertices that collapse into vertex $v$.

In summary, the construction of $G_{n}$ involves collapsing a preferential attachment tree, leading to mini-vertices being merged into vertices. The initial graph is then obtained by arranging the mini-vertices with self-loops before those creating connections between specific vertices.

In the tree $T_{m_{[n]}}$, a mini-vertex $u$ connects to mini-vertex $n(u)$. We will use this pre-collapsed version of the graph $G_{n}$ to prove Proposition~\ref{prop:Cheeger-value:model-b}. Similar to \cite{MPS06}, we define a set $S \subset [n]$ as \textit{BAD} if
\begin{equation}\label{def:bad-set}
	|\cut_{\sss G_{n}}(S, S^c)| < \alpha |S|~.
\end{equation}
For convenience, we also recall other notations used in \cite{MPS06}:
\begin{Definition}[\textit{FIT} and \textit{ILL} mini-vertices]
	\label{def:fit-ill-minivertex}
	Fix $S \subset [n]$. We call a mini-vertex $t \in [m_{[n]}]$ \textit{FIT} if
	\begin{itemize}
		\item[$\rhd$] $t$ is associated with a vertex in $S$ and $n(t)$ is associated with a vertex in $S^c$; or
		\item[$\rhd$] $t$ is associated with a vertex in $S^c$ and $n(t)$ is associated with a vertex in $S$.
	\end{itemize}
	A mini-vertex is \textit{ILL} if it is not \textit{FIT}. 
\end{Definition}
Note that \textit{FIT} mini-vertices create connections between $S$ and $S^c$, whereas \textit{ILL} mini-vertices are responsible for creating edges between two vertices within the same set, either $S$ or $S^c$. Therefore, bounding the number of \textit{FIT} mini-vertices will provide us with a bound on $|\cut_{\sss G_{n}}(S, S^c)|$. We prove the following intermediate lemma, which provides an upper bound on the probability of observing too few \textit{FIT} mini-vertices and will be useful for proving Proposition~\ref{prop:Cheeger-value:model-b}:
\begin{Lemma}[Upper bound on the probability for mini-vertices to be \textit{ILL} in models (a), (b), and (d)]
	\label{lem:intermediate:Cheeger-value}
	For preferential attachment models (a), (b), and (d), and for a fixed subset $S \subset [n]$ of cardinality $k$, as well as for a fixed subset $A \subset [m_{[n]}]$ such that $|A| \leq \alpha k$, the probability that all mini-vertices in $[m_{[n]}] \setminus A$ are \textit{ILL} and all mini-vertices in $A$ are \textit{FIT} with respect to $S$ is at most $(mn + \delta_0)^{\delta_0} \binom{mk}{\alpha k} / \binom{m_{[n]} - \alpha k}{mk - \alpha k}$ for some fixed $\delta_0$.
\end{Lemma}

\arxiversion{Since the proof of Lemma~\ref{lem:intermediate:Cheeger-value} is a minor extension of the proof of \cite[Lemma~2]{MPS06}, applying to $\delta=0$ and model (b), to models (a) (b) and (d) with general $\delta>-m$, we }
prove Proposition~\ref{prop:Cheeger-value:model-b} subject to Lemma~\ref{lem:intermediate:Cheeger-value} and then prove Lemma~\ref{lem:intermediate:Cheeger-value}. This proof also follows similar line of proof as in \cite[Proof of Theorem 1]{MPS06}. This step requires us to choose $m\geq 2$.
\begin{proof}[Proof of Proposition~\ref{prop:Cheeger-value:model-b}]
	The proof of Proposition~\ref{prop:Cheeger-value:model-b}	 follows the proof of \cite[Theorem~1]{MPS06} almost verbatim. Let $S\subset [n]$ be a set of vertices of size $k$ and $A\subset [m_{[n]}]$ be the set \rm{FIT} mini-vertices such that $|A|\leq \alpha k$ for some $\alpha\in(0,1)$. By Lemma~\ref{lem:intermediate:Cheeger-value},
	\eqan{\label{for:prop:Cheegar-value:model-b:1}
		\prob\Big( \bigcap\limits_{t\in[m_{[n]}]\setminus A}\{t ~\text{is \rm{ILL}}\} \Big)\leq& (mn+\delta_0)^{\delta_0} \binom{mk}{|A|}\binom{m_{[n]}-|A|}{mk-|A|}^{-1}\nn\\
		\leq& (mn+\delta_0)^{\delta_0} \binom{mk}{\alpha k}\binom{m_{[n]}-\alpha k}{mk-\alpha k}^{-1}~,
	}
	where the last inequality follows from the fact that $|A|\leq \alpha k$. Now we have $\binom{n}{k}$ many choices for $S$, whereas for fixed $S$ and $|A|$ being at most $\alpha k$, we have $\alpha k\binom{m_{[n]}}{\alpha k}$ many choices for $A$. Therefore, \eqref{for:prop:Cheegar-value:model-b:1} implies that
	\eqan{\label{for:prop:Cheegar-value:model-b:2}
		\prob\Big( \alpha(G_{n},\vep)<\alpha \Big)\leq& (mn+\delta_0)^{\delta_0} \sum\limits_{k=\vep n}^{n/2} \binom{n}{k}\alpha k\binom{m_{[n]}}{\alpha k}\frac{\binom{mk}{\alpha k}}{\binom{m_{[n]}-\alpha k}{mk-\alpha k}}~.
	}
	In the next few step, we bound the summands in the RHS of \eqref{for:prop:Cheegar-value:model-b:2}.
	Using that $\binom{n}{k}\binom{m_{[n]}-n-\alpha k}{mk-k-\alpha k}\leq \binom{m_{[n]}-\alpha k}{mk-\alpha k}$, we upper bound the summand in the RHS of \eqref{for:prop:Cheegar-value:model-b:2} by
	\eqn{\label{for:prop:Cheegar-value:model-b:03}
		\alpha k\binom{m_{[n]}}{\alpha k}\frac{\binom{mk}{\alpha k}}{\binom{m_{[n]}-n-\alpha k}{mk-k-\alpha k}}~.}
	Since binomial coefficients can be upper and lower bounded by
	\eqn{\label{for:prop:Cheegar-value:model-b:04}
		\Big(\frac{r}{s}\Big)^{s}\leq \binom{r}{s}\leq \Big(\frac{\e r}{s}\Big)^{s}~,}
	we can upper bound the expression in \eqref{for:prop:Cheegar-value:model-b:03} by
	\eqan{\label{for:prop:Cheegar-value:model-b:05}
		&~\alpha k\Big( \frac{\e^2 m_{[n]}m}{\alpha^2 k} \Big)^{\alpha k}\Big( \frac{(m-1)k-\alpha k}{m_{[n]}-n-\alpha k} \Big)^{(m-1-\alpha)k}\nn\\
		&\hspace{2cm}\leq~ \alpha k\Big( \frac{\e^2 m_{[n]}m}{\alpha^2 k} \Big)^{\alpha k}\Big( \frac{(m-1)k}{m_{[n]}-n} \Big)^{(m-1-\alpha)k}~.
	}
	Using the explicit expression for $m_{[n]}$ from \eqref{eq:def:total-degree}, we simplify the RHS of \eqref{for:prop:Cheegar-value:model-b:05} as
	\eqn{\label{for:prop:Cheegar-value:model-b:06}
		\alpha k \Big( \frac{\e m}{\alpha}\Big)^{2\alpha k}\Big( \frac{n+(a_{[2]}-2m)/m}{k} \Big)^{\alpha k}\Big( \frac{k}{n+(a_{[2]}-2m)/(m-1)} \Big)^{(m-1-\alpha)k}~.
	}
	Denoting $\delta_1=\min\{(a_{[2]}-2m)/m,(a_{[2]}-2m)/(m-1)\}$, we upper bound the expression in \eqref{for:prop:Cheegar-value:model-b:06} by
	\eqn{\label{for:prop:Cheegar-value:model-b:07}
		\alpha k \Big( \frac{\e m}{\alpha}\Big)^{2\alpha k}\Big( \frac{k}{n+\delta_1} \Big)^{(m-1-2\alpha)k}~.
	}
	Hence,
	\eqan{\label{for:prop:Cheegar-value:model-b:08}
		&\prob\Big( \alpha(G_{n},\vep)<\alpha \Big)\nn\\
		&\hspace{2cm}\leq (mn+\delta_0)^{\delta_0} \sum\limits_{k=\vep n}^{n/2}\alpha k \Big( \frac{\e m}{\alpha}\Big)^{2\alpha k}\Big( \frac{k}{n+\delta_1} \Big)^{(m-1-2\alpha)k}~.
	}
	The inequalities in \eqref{for:prop:Cheegar-value:model-b:03}-\eqref{for:prop:Cheegar-value:model-b:07} follow exactly from the calculations in \cite[Proof of Theorem~1]{MPS06}. There are $O(n)$ many summands in the sum. Thus, to prove the proposition, it suffices to bound the leading term by $o(\e^{-nc})$ for some $c>0$. Thus, now we study the function
	\eqn{\label{for:prop:Cheegar-value:model-b:3}
		g(x)=\alpha x \Big( \frac{\e m}{\alpha} \Big)^{2\alpha x} \Big( \frac{x}{n+\delta_1} \Big)^{(m-1-2\alpha)x}~,}
	for $x\in[\vep n,n/2]$. We argue that there exists $x_0\in[1,n/2]$ such that $g(x)$ is monotonically decreasing for $x\in[1,x_0]$ and monotonically increasing for $x\in[x_0,n/2]$. For this we first calculate the derivative of $g(x)$ as
	\eqn{\label{for:prop:Cheegar-value:model-b:4}
		g^\prime(x)=\frac{g(x)}{x}\Big[ 1+x\Big( 2\alpha \Big(1+\log\frac{m}{\alpha}\Big)+(m-1-2\alpha)\Big(\log\frac{x}{n+\delta_1}+1\Big) \Big) \Big]~.
	}
	Therefore, the sign of $g^\prime(x)$ is determined by the sign of 
	\eqan{\label{for:prop:Cheegar-value:model-b:5}
		g_1(x)&=\frac{xg^\prime(x)}{g(x)}\nn\\
		&=1+x\Big( 2\alpha \Big(1+\log\frac{m}{\alpha}\Big)+(m-1-2\alpha)\Big(\log\frac{x}{n+\delta_1}+1\Big) \Big)~.}
	Note that $g_1(1)<0$ and $g_1(n/2)>0$ for $n$ sufficiently large, while the first and second derivative of $g_1$ are
	\eqan{\label{for:prop:Cheegar-value:model-b:6}
		g_1^\prime(x)=&2\alpha \Big(1+\log\frac{m}{\alpha}\Big)+(m-1-2\alpha)\Big(\log\frac{x}{n+\delta_1}+1\Big)~,\nn\\
		g_1^{\prime\prime}(x)=&\frac{m-1-2\alpha}{x}>0~.}
	Therefore, $g_1$ is a convex function having a unique root in $[1,n/2]$. Consequently for some $x_0\in[1,n/2],~g(x)$ is decreasing in the interval $[1,x_0]$ and increasing in the interval $[x_0,n/2]$. We conclude that it is enough to show that $\max\{g(\vep n),g(n/2)\}=o(\e^{-nc}).$ Note that
	\eqan{\label{for:prop:Cheegar-value:model-b:7}
		g(\vep n)=&\alpha (\vep n)\Big[\Big( \frac{\e m}{\alpha} \Big)^{2\alpha} \vep^{(m-1-2\alpha)}\Big]^{\vep n}~,\nn\\
		\text{and}\qquad g(n/2)=&\alpha (n/2) \Big[\Big( \frac{\e m}{\alpha} \Big)^{2\alpha} \Big( \frac{1}{2} \Big)^{(m-1-2\alpha)}\Big]^{n/2}~.
	}
	For any $r\in(0,1),~g(r(n+\delta_1))$ drops exponentially first when 
	\eqn{\label{for:prop:Cheegar-value:model-b:8} 
		\Big( \frac{\e m}{\alpha} \Big)^{2\alpha}<r^{2\alpha-(m-1)}~.}
	Taking logarithms on both sides of \eqref{for:prop:Cheegar-value:model-b:8}, the condition simplifies to
	\eqn{\label{for:prop:Cheegar-value:model-b:9}
		\alpha< \frac{(m-1)\log \frac{1}{r}+2\alpha\log \alpha}{\log m-\log r +1}~.
	} 
	Since $2\alpha\log\alpha\to 0$ as $\alpha\to 0$ and for any $\gamma\leq 1/2$, we can choose $\alpha_\gamma>0$ such that 
	\[
	\alpha_\gamma\log \alpha_\gamma =\frac{m-1}{4}\log \gamma<0~. 
	\]
	By \eqref{for:prop:Cheegar-value:model-b:7}-\eqref{for:prop:Cheegar-value:model-b:9}, both $g(\vep n)$ and $g(n/2)$ are $o(\e^{-cn})$, for some $c>0$, if we choose 
	\[
	\alpha<\min\Big\{ \alpha_\vep,\alpha_{1/2},\frac{(m-1)\log \frac{1}{\vep}+2\alpha_\vep\log \alpha_\vep}{\log m-\log \vep +1},\frac{(m-1)\log 2+2\alpha_{1/2}\log \alpha_{1/2}}{\log m-\log \frac{1}{2} +1} \Big\}~.
	\]
	Therefore, the RHS of \eqref{for:prop:Cheegar-value:model-b:2} converges to $0$ as $n\to\infty$ for a suitable choice of positive $\alpha$ (depends only on $\vep$ and $m$). Hence $(G_{n})_{n\geq 2}$ is whp an $(\alpha,\vep,m)$-large-set expander. 
\end{proof}
The proof of Lemma~\ref{lem:intermediate:Cheeger-value} follows by adapting the proof of \cite[Lemma~2]{MPS06} for the case of model (a), (b) and (d) with general initial graph settings and any admissible $\delta$. 
\begin{proof}[Proof of Lemma~\ref{lem:intermediate:Cheeger-value}]
	Since models (a), (b), and (d) can all be derived using the collapsing procedure mentioned in \cite{vdH1}, we will prove the subsequent steps for all of these models. Recall the definitions of the \rm{BAD} set from \eqref{def:bad-set}, as well as \rm{ILL} and \rm{FIT} mini-vertices from Definition~\ref{def:fit-ill-minivertex}.
	
	Let $A_1 \subset A$ be the set of mini-vertices associated with vertices in $S$, and let $A_2 = A \setminus A_1$, where the mini-vertices in $A_2$ are associated with vertices in $S^c$. Denote $|A_1| = k_1$ and $|A_2| = k_2$, with $|A| = k_1 + k_2$. There are $mk - k_1$ mini-vertices associated with vertices in $S$ that are not in $A$. Similarly, there are $m_{[n]} - mk - k_2$ mini-vertices associated with vertices in $S^c$ that are not in $A$. Let $x_1 < x_2 < \cdots < x_{mk-k_1}$ be the mini-vertices associated with vertices in $S$, but not in $A$, and let $\Bar{x}_1 < \Bar{x}_2 < \cdots < \Bar{x}_{m_{[n]} - mk - k_2}$ be the mini-vertices associated with vertices in $S^c$ that are not in $A$. For $i \in [mk - k_1]$,
	\begin{equation}\label{for:lem:intermediate:Cheeger-value:1}
		x_i = y_i + z_i + 1,
	\end{equation}
	where $y_i$ is the number of mini-vertices from $A$ that appeared before $x_i$, and $z_i$ is the number of mini-vertices from $[m_{[n]}] \setminus A$ that appeared before $x_i$. Similarly, for $i \in [m_{[n]} - mk - k_2]$,
	\begin{equation}\label{for:lem:intermediate:Cheeger-value:2}
		\Bar{x}_i = \Bar{y}_i + \Bar{z}_i + 1,
	\end{equation}
	where $\Bar{y}_i$ is the number of mini-vertices from $A$ that appeared before $\Bar{x}_i$, and $\Bar{z}_i$ is the number of mini-vertices from $[m_{[n]}] \setminus A$ that appeared before $\Bar{x}_i$.
	
	Note that $z_i$ or $\Bar{z}_i$ counts the total number of $x_j$ and $\Bar{x}_j$ that appeared before $x_i$ or $\Bar{x}_i$, respectively. If we order the set $\{x_i\}_{i \in [mk - k_1]} \cup \{\Bar{x}_i\}_{i \in [m_{[n]} - mk - k_2]}$ in increasing order, then the corresponding $z_i$ or $\Bar{z}_i$ values increase by $1$ each time, ranging from $0$ to $m_{[n]} - k_1 - k_2$. Therefore,
	\begin{equation}\label{for:lem:intermediate:Cheeger-value:3}
		\bigcup\limits_{i=1}^{mk-k_1} \{z_i + 1\} \cup \bigcup\limits_{i=1}^{m_{[n]} - mk - k_2} \{\Bar{z}_i + 1\} = [m_{[n]} - k_1 - k_2] = [m_{[n]} - |A|].
	\end{equation}
	
	The total volume of the tree when the $t$-th mini-vertex joins is $2(t - 1)$. If $t = x_i$ (or $\Bar{x}_i$), then the total volume equals
	\begin{equation}\label{for:lem:intermediate:Cheeger-value:4}
		2z_i + 2y_i, \quad \text{or} \quad 2\Bar{z}_i + 2\Bar{y}_i.
	\end{equation}
	Now, we consider the following cases:
	\begin{enumerate}
		\item \textbf{Case 1:} Both vertices $1$ and $2$ are in the same set. In this scenario, we consider $S$ to be the set containing both vertices.
		\item \textbf{Case 2:} Vertices $1$ and $2$ are in different sets. In this scenario, we consider $S$ to be the set containing vertex $1$. Note that $a_2 \leq m$.
	\end{enumerate}
	
	Since we have a finite number of mini-vertices with deterministic connections (depending only on the structure of the initial graph), we can safely assume that the upper bound we are about to prove applies to every \rm{ILL} mini-vertex except for the first few. As we aim to establish an exponential bound, these few exceptions do not affect the final result. Therefore, from this point forward, we will focus on upper bounding the non-deterministic edges, assuming that the upper bound also applies to the deterministic edges.
	
	Consider the case where all mini-vertices in $A$ are FIT, while all mini-vertices in $[m_{[n]}]\setminus{A}$ are ILL. When $x_i$ arrives, the total volume of the mini-vertices associated with the vertices in $S$ consists of:
	\begin{enumerate}
		\item[(a)] All ILL mini-vertices that arrived prior to $x_i$ and are associated with a vertex in $S$. Each such mini-vertex contributes $2$ to the total volume of the mini-vertices associated with the vertices in $S$. There are $(i-1)$ such mini-vertices, contributing a total of $2(i-1)$ to the sum.
		\item[(b)] All FIT mini-vertices that arrived prior to $x_i$ and are associated with a vertex in $S$. Each such mini-vertex contributes $1$ to the total volume of the mini-vertices associated with the vertices in $S$. There are $y_i$ such mini-vertices, contributing a total of $y_i$ to the sum.
	\end{enumerate}
	Therefore, when $x_i$ arrives, the volume of the mini-vertices associated with the vertices in $S$ is $2(i-1) + y_i$. Next, we proceed to bound the probability of creating \rm{ILL} mini-vertices, given that all mini-vertices in $A$ are \rm{FIT}, for models (a), (b), and (d). We begin with model (b) and then adapt the bound for models (a) and (d) respectively. Recall that $a_{[2]} = a_1 + a_2$ represents the total degree of the initial graph.
	
	\paragraph{Model (b)}
	The probability that $x_i$ connects to mini-vertices associated with vertices in $S$, and becomes \rm{ILL} given that all mini-vertices in $A$ which arrived prior to $x_i$ are \rm{FIT}, and both $1$ and $2$ are in $S$, is 
	\eqn{\label{for:lem:intermediate:Cheeger-value:5}
		\frac{2(i-1)+y_i+(i-1+y_i-a_{[2]})\delta/m+2\delta}{2(y_i+z_i)+(y_i+z_i-a_{[2]})\delta/m+2\delta }~.}
	For any $\delta>-m$ and $y_i\geq 0$, the probability can be upper bounded by
	\eqan{\label{for:lem:intermediate:Cheegar-value:06}
		&\frac{2(i-1)+y_i+(i-1+y_i)\delta/m+\delta(2-a_{[2]}/m)}{2(y_i+z_i)+(y_i+z_i)\delta/m+\delta(2-a_{[2]}/m)}\nn\\
		&\hspace{2cm}\leq \frac{2(i-1+y_i)+(i-1+y_i)\delta/m+\delta(2-a_{[2]}/m)}{2(y_i+z_i)+(y_i+z_i)\delta/m+\delta(2-a_{[2]}/m) }~. }
	Since $2+\delta/m> 1$ and $|y_i|\leq |A|$, we further upper bound the probability in \eqref{for:lem:intermediate:Cheeger-value:5} by
	\eqan{\label{for:lem:intermediate:Cheeger-value:6}
		&\frac{i+|A|-1+\delta(2-a_{[2]}/m)/(2+\delta/m)}{z_i+|A|+\delta(2-a_{[2]}/m)/(2+\delta/m)}\nn\\
		&\hspace{2cm}\leq  \frac{i+|A|+\delta(2-a_{[2]}/m)/(2+\delta/m)}{z_i+|A|+1+\delta(2-a_{[2]}/m)/(2+\delta/m)}~.
	}
	Instead, for the case when $1\in S$, but $2\in S^c$, the edge-connection probability in \eqref{for:lem:intermediate:Cheeger-value:5} changes to
	\eqn{\label{for:lem:intermediate:Cheegar-value:08}
		\frac{2(i-1)+y_i+(i-1+y_i-a_{1})\delta/m+\delta}{2(y_i+z_i)+(y_i+z_i-a_{[2]})\delta/m+2\delta }~.}
	Since $a_2\leq m$, for $\delta\geq 0$ we bound the edge-connection probability by
	\eqn{\label{for:lem:intermediate:Cheegar-value:09}
		\frac{2(i-1)+y_i+(i-1+y_i-a_{1})\delta/m+\delta(1-a_{1}/m)}{2(y_i+z_i)+(y_i+z_i-a_{[2]})\delta/m+\delta(1-a_{1}/m) }~,}
	whereas, for $\delta\in(-m,0)$, the probability is upper bounded by
	\eqn{\label{for:lem:intermediate:Cheegar-value:10}
		\frac{2(i-1)+y_i+(i-1+y_i-a_{1})\delta/m+\delta(2-a_{[2]}/m)}{2(y_i+z_i)+(y_i+z_i-a_{[2]})\delta/m+\delta(2-a_{[2]}/m) }~.}
	Following a similar upper bound in \eqref{for:lem:intermediate:Cheegar-value:06} and \eqref{for:lem:intermediate:Cheeger-value:6}, we upper bound \eqref{for:lem:intermediate:Cheegar-value:09} by
	\eqn{\label{for:lem:intermediate:Cheegar-value:11}
		\frac{i+|A|-1+\delta(1-a_{1}/m)/(2+\delta/m)}{z_i+|A|+\delta(1-a_{1}/m)/(2+\delta/m)}~,}
	and upper bound \eqref{for:lem:intermediate:Cheegar-value:11} by
	\eqan{\label{for:lem:intermediate:Cheegar-value:12}
		&\frac{i+|A|-1+\delta(2-a_{[2]}/m)/(2+\delta/m)}{z_i+|A|+\delta(2-a_{[2]}/m)/(2+\delta/m)}\nn\\
		&\hspace{2cm}\leq\frac{i+|A|+\delta(2-a_{[2]}/m)/(2+\delta/m)}{z_i+|A|+1+\delta(2-a_{[2]}/m)/(2+\delta/m)}~.}
	Denote $\delta_0=\max\{0,\lceil\delta(2-a_{[2]}/m)/(2+\delta/m)\rceil,\lceil\delta(1-a_{1}/m)/(2+\delta/m)\rceil\}$. Then we bound the connection probability in \eqref{for:lem:intermediate:Cheeger-value:5} and \eqref{for:lem:intermediate:Cheegar-value:09} by
	\eqn{\label{for:lem:intermediate:Cheegar-value:13} \frac{i+|A|+\delta_0}{z_i+|A|+1+\delta_0}~.}
	\paragraph{Adaptation to model (a)}
	In model (a), the total volume of the graph is always one more than that in model (b). For model (a), $x_i$ can become \rm{ILL} either by creating a self-loop or connecting to the mini-vertices in $S$ and hence the probability in \eqref{for:lem:intermediate:Cheeger-value:5} becomes
	\eqn{\label{for:lem:intermediate:Cheeger-value:7}
		\frac{2(i-1)+y_i+1+(i+y_i-a_{[2]})\delta/m+2\delta}{2(y_i+z_i)+1+(y_i+z_i+1-a_{[2]})\delta/m+2\delta }~,}
	which can be bounded by
	\eqn{\label{for:lem:intermediate:Cheeger-value:7-1}
		\frac{2i+y_i+(i+y_i-a_{[2]})\delta/m+2\delta}{2(y_i+z_i+1)+(y_i+z_i+1-a_{[2]})\delta/m+2\delta }~.}
	Following exactly the same bounding technique in \eqref{for:lem:intermediate:Cheegar-value:06}-\eqref{for:lem:intermediate:Cheeger-value:6}, we upper bound \eqref{for:lem:intermediate:Cheeger-value:7-1} by the RHS of \eqref{for:lem:intermediate:Cheeger-value:6}.
	Similarly \eqref{for:lem:intermediate:Cheegar-value:08} is reformulated as
	\eqn{\label{for:lem:intermediate:Cheeger-value:14}
		\frac{2(i-1)+y_i+1+(i+y_i-a_{1})\delta/m+\delta}{2(y_i+z_i)+1+(y_i+z_i+1-a_{[2]})\delta/m+2\delta }~, }
	which can be bounded by
	\eqn{\label{for:lem:intermediate:Cheeger-value:14-1}
		\frac{2i+y_i+(i+y_i-a_{1})\delta/m+\delta}{2(y_i+z_i+1)+(y_i+z_i+1-a_{[2]})\delta/m+2\delta }~.}
	We follow \eqref{for:lem:intermediate:Cheegar-value:09}-\eqref{for:lem:intermediate:Cheegar-value:12} to further upper bound \eqref{for:lem:intermediate:Cheeger-value:14-1} by RHS of \eqref{for:lem:intermediate:Cheegar-value:12}. Therefore with the same $\delta_0$ chosen earlier for model (b), we upper bound the probabilities in \eqref{for:lem:intermediate:Cheeger-value:7} and \eqref{for:lem:intermediate:Cheeger-value:14} by \eqref{for:lem:intermediate:Cheegar-value:13}.
	\paragraph{Adaptation to model (d)}
	In model (d), the difference is that mini-vertices cannot attach to any mini-vertex associated with the same vertex. To address this, we associate a mark to each mini-vertex, counting the number of mini-vertices that appeared before it and are associated with the same vertex in the graph. Define $\{x_i, y_i, z_i\}$ and $\{\bar{x}_i, \bar{y}_i, \bar{z}_i\}$ similarly as before. Note that each mini-vertex $x_i$ (and $\bar{x}_i$) has a score $r_i$ (and $\bar{r}_i$), taking values in $[m]$, representing the edge that the mini-vertex creates from the vertex it is associated with. In fact, every mini-vertex has such a score, but we refrain from defining this explicitly to avoid complicating the notation. Note that in model (d), each mini-vertex $x_i$ and $\bar{x}_i$ can attach to any mini-vertex except for $r_i$ mini-vertices (including itself). 
	
	When $x_i$ (or $\bar{x}_i$) joins the graph, the total volume of the graph to which it can attach to is
	\eqn{\label{for:lem:model-d:1}
		2(y_i+z_i)-r_i\qquad\text{or}\qquad 2(\bar{y}_i+\bar{z}_i)-\bar{r}_i~.}
	When $x_i$ arrives, the total volume of $S$ to which $x_i$ may connect to is similarly calculated.
	For the case $1,2\in S$, the probability that $x_i$ becomes \rm{ILL} is
	\eqan{\label{for:lem:model-d:2}
		&\frac{2(i-1)+y_i-r_i+(i-1+y_i-r_i-a_{[2]})\delta/m+2\delta}{2(y_i+z_i)-r_i+(y_i+z_i-r_i-a_{[2]})\delta/m+2\delta}\nn\\
		&\qquad=\frac{2(i-1)+y_i-r_i(1+\delta/m)+(i-1+y_i-a_{[2]})\delta/m+2\delta}{2(y_i+z_i)-r_i(1+\delta/m)+(y_i+z_i-a_{[2]})\delta/m+2\delta}~,}
	whereas for the case $1\in S,$ but $2\notin S$, the probability is 
	\eqan{\label{for:lem:model-d:3}
		&\frac{2(i-1)+y_i-r_i+(i-1+y_i-r_i-a_{1})\delta/m+\delta}{2(y_i+z_i)-r_i+(y_i+z_i-r_i-a_{[2]})\delta/m+2\delta}\nn\\
		&\qquad=\frac{2(i-1)+y_i-r_i(1+\delta/m)+(i-1+y_i-a_{1})\delta/m+\delta}{2(y_i+z_i)-r_i(1+\delta/m)+(y_i+z_i-a_{[2]})\delta/m+2\delta}~.}
	Note that we also need to subtract a factor of $r_i$ in the denominator to account for the fact that $x_i$ cannot connect to the $r_i$ mini-vertices to become \text{ILL}. Since $\delta > -m$, we can upper bound the probabilities in \eqref{for:lem:model-d:2} and \eqref{for:lem:model-d:3} by \eqref{for:lem:intermediate:Cheeger-value:5} and \eqref{for:lem:intermediate:Cheegar-value:08}, respectively.
	
	\paragraph{Conclusion of the proof for all models}
	Similarly, we calculate the probability that $\Bar{x}_i$ becomes \rm{ILL}, given that all mini-vertices that arrived before $\Bar{x}_i$ and belong to $A$ are \rm{FIT}, while those belonging to $A^c$ are \rm{ILL} for models (a), (b), and (d) of preferential attachment models. We can bound these probabilities by 
	$$
	\frac{i+|A|+\delta_0}{\Bar{z}_i+|A|+1+\delta_0}~.
	$$ 
	The probability that all mini-vertices associated with $[m_{[n]}]\setminus A$ are \rm{ILL}, given that all mini-vertices in $A$ are \rm{FIT}, is at most
	\begin{equation}
		\label{for:lem:intermediate:Cheeger-value:9}
		\Big( \prod\limits_{i=1}^{mk-k_1}\frac{i+|A|+\delta_0}{z_i+1+|A|+\delta_0} \Big)\Big( \prod\limits_{i=1}^{m_{[n]}-mk-k_2}\frac{i+|A|+\delta_0}{\Bar{z}_i+1+|A|+\delta_0} \Big)
	\end{equation}
	Using the relation obtained in \eqref{for:lem:intermediate:Cheeger-value:3}, we simplify \eqref{for:lem:intermediate:Cheeger-value:9} as
	\begin{align}
		\label{for:lem:intermediate:Cheegar-value:15}
		&\Big( \prod\limits_{i=1}^{mk-k_1}\frac{i+|A|+\delta_0}{z_i+1+|A|+\delta_0} \Big)\Big( \prod\limits_{i=1}^{m_{[n]}-mk-k_2}\frac{i+|A|+\delta_0}{\Bar{z}_i+1+|A|+\delta_0} \Big) \nonumber \\
		&\qquad\qquad=\frac{\prod\limits_{i=1}^{mk-k_1}(i+|A|+\delta_0)\prod\limits_{i=1}^{m_{[n]}-mk-k_2}(i+|A|+\delta_0)}{\prod\limits_{i=1}^{m_{[n]}-|A|}(i+|A|+\delta_0)}~.
	\end{align}
	Now following the similar calculation in \cite[equations (8)-(10)]{MPS06}, the RHS of \eqref{for:lem:intermediate:Cheegar-value:15} is further simplified and upper bounded as
	\eqan{\label{for:lem:intermediate:Cheegar-value:16}
		\text{RHS of \eqref{for:lem:intermediate:Cheegar-value:15}}=&~\frac{\prod\limits_{i=1}^{mk-k_1+|A|+\delta_0}i\prod\limits_{i=1}^{m_{[n]}-mk-k_2+|A|+\delta_0}i}{\prod\limits_{i=1}^{m_{[n]}+\delta_0}i\prod\limits_{i=1}^{|A|+\delta_0} i}\nn\\
		=&~\frac{(mk+k_2+\delta_0)!(m_{[n]}-mk+k_1+\delta_0)!}{(m_{[n]}+\delta_0)! (|A|+\delta_0)!}\\
		=&~\prod\limits_{i=0}^{k_2-1}\Big( \frac{mk+k_2+\delta_0-i}{m_{[n]}+\delta_0-i} \Big) \prod\limits_{i=0}^{k_1-1}\Big( \frac{m_{[n]}-mk+k_1+\delta_0-i}{m_{[n]}-k_2+\delta_0-i} \Big)\nn\\
		&\hspace{5cm}\times\frac{(mk+\delta_0)!(m_{[n]}-mk+\delta_0)!}{(m_{[n]}-|A|+\delta_0)!(|A|+\delta_0)!}\nn\\
		\leq&~ \frac{(mk+\delta_0)!(m_{[n]}-mk+\delta_0)!}{(m_{[n]}-|A|+\delta_0)!(|A|+\delta_0)!}~,\nn
	}
	where the inequality follows from the fact that
	\eqan{\label{for:lem:intermediate:Cheegar-value:17}
		\frac{mk+k_2+\delta_0-i}{m_{[n]}+\delta_0-i}\leq&~ 1\qquad\text{for all }i\leq k_2-1~,\nn\\
		\mbox{and}\qquad\frac{m_{[n]}-mk+k_1+\delta_0-i}{m_{[n]}-k_2+\delta_0-i} \leq&~1\qquad\text{for all }i\leq k_1-1~.
	}
	Although the calculations of Proposition~\ref{prop:Cheeger-value:model-b} follow from the bound obtained in \eqref{for:lem:intermediate:Cheegar-value:17}, we upper bound it further to reduce a similar repetitive calculation as in \cite[Proof of Theorem~1]{MPS06} with minor modification. Since we aim to obtain exponential upper bound on the probability in Proposition~\ref{prop:Cheeger-value:model-b}, we use this leverage to further upper bound the probability as
	\eqan{\label{for:lem:intermediate:Cheegar-value:18}
		&\frac{(mk+\delta_0)!(m_{[n]}-mk+\delta_0)!}{(m_{[n]}-|A|+\delta_0)!(|A|+\delta_0)!}\nn\\
		=&~\prod\limits_{i=0}^{\delta_0-1}\Big( \frac{mk+\delta-i}{|A|+\delta_0-i} \Big)\prod\limits_{i=0}^{\delta_0-1}\Big( \frac{m_{[n]}-mk+\delta_0-i}{m_{[n]}-|A|+\delta_0-i} \Big)\frac{(mk)!(m_{[n]}-mk)!}{(m_{[n]}-|A|)!|A|!}\\
		\leq&~(mn+\delta_0)^{\delta_0}
		\binom{mk}{|A|}\binom{m_{[n]}-|A|}{mk-|A|}^{-1}~,\nn
	}
	where the inequality follows from the fact that $|A|\leq \alpha k<mk<mn$.
	Since $|A|\leq \alpha k$, and the upper bound is monotonically increasing in the size of $A$, the bound in the lemma follows immediately from \eqref{for:lem:intermediate:Cheegar-value:18}. 
\end{proof}
\section{Critical percolation threshold}\label{chap:local-giant:sec:pi_c:PA}
Fix $m \in \mathbb{N} \setminus \{1\}$ and $\delta > -m$. Let $(G_n)_{n \geq 2}$ denote $\big(\PA^{(s)}_n(m, \delta)\big)_{n \geq 2}$ with $s \in \{a, b, d\}$, starting from a finite initial graph of size 2, with the degree of at least one vertex at most $m$. We now state the main result of this chapter:

\begin{Theorem}[Critical percolation threshold for $\PA$ models]\label{thm:main:theorem}
	For any $\pi \in [0,1]$, let $\mathcal{C}_i^{(n)}(\pi)$ denote the $i$-th largest connected component of the percolated $G_n$. Then,
	\begin{equation}\label{eq:main:theorem}
		\frac{|\mathcal{C}_1^{(n)}(\pi)|}{n} \overset{\prob}{\to} \zeta(\pi),
		\quad \text{and} \quad \frac{|\mathcal{C}_2^{(n)}(\pi)|}{n} \overset{\prob}{\to} 0,
	\end{equation}
	where $\overset{\prob}{\to}$ denotes convergence in probability with respect to both the random graph and percolation. Furthermore, define 
	\begin{equation}\label{eq:def:pi-c}
		\pi_c = \begin{cases}
			0, &\text{for } \delta \in (-m, 0], \\
			\frac{\delta}{2\big(m(m+\delta) + \sqrt{m(m-1)(m+\delta)(m+1+\delta)}\big)}, &\text{for } \delta > 0
		\end{cases}.
	\end{equation}
	Then, $\zeta(\pi) > 0$ for $\pi > \pi_c$, whereas $\zeta(\pi) = 0$ for $\pi \leq \pi_c$. Therefore, $\pi_c$ in \eqref{eq:def:pi-c} is the critical percolation threshold for the sequence of preferential attachment models.
\end{Theorem}

\begin{proof}[Proof of Theorem~\ref{thm:main:theorem}]
	In Section~\ref{chap:local-gian:sec:PA:LSE}, we established the large-set expander property of models (a), (b), and (d) with bounded average degree, following the strategy of Mihail, Papadimitriou, and Saberi \cite{MPS06}, in Proposition~\ref{prop:Cheeger-value:model-b}. From \cite[Theorem~1.1]{ABS22}, we know that in such a scenario, the convergences in \eqref{eq:main:theorem} follow immediately for preferential attachment models, proving that the critical percolation thresholds for both preferential attachment models and the P\'{o}lya point tree are equal. In Chapter~\ref{chap:percolation_threshold_PPT}, we proved in Theorem~\ref{thm:critical-percolation:PPT} that the critical percolation threshold for the P\'{o}lya point tree is $\pi_c$ as defined in \eqref{eq:def:pi-c}. This, in turn, proves that $\pi_c$ in \eqref{eq:def:pi-c} is also the critical percolation threshold for the sequence of preferential attachment models.
\end{proof}


\paragraph{\textbf{Observations}} We make a few remarks about the above result:
\begin{enumerate}
	\item Our proof critically relies on the local limit of preferential attachment models. In \cite{RRR22} (see also \cite{BergerBorgs}), we observed that a large class of preferential attachment models with affine edge attachment rules have the same local limit. While we have specifically proven Theorem~\ref{thm:main:theorem} for models (a), (b), and (d), it is reasonable to expect that the result holds for other versions of preferential attachment models as well.
	\item We use the condition $m \geq 2$ crucially to prove Theorem~\ref{thm:main:theorem}. Our proof fails for $m = 1$ because trees are not expanders. In \cite{RTV07}, preferential attachment models were investigated from the perspective of continuous-time embeddings, which turns the model into a continuous-time branching process. Percolation on a continuous-time branching process is still a continuous-time branching process, which allows us to investigate the tree setting arising when $m=1$. We abstain from performing the explicit computations for this case in this chapter, and discuss this in Chapter~\ref{chap:subcritical}.
	\item The parameter $1/\pi_c$ can be interpreted as the spectral radius of the mean-offspring operator of the local limit of $\PA$s. It can be expected to play a crucial role in other structural properties of $\PA$s. For example, \cite{vdH2} provides a lower bound on typical distances for $\PA$s with $\delta > 0$ of order $\log n/\log (1/\pi_c)$. See also \cite{DSvdH} for related distance results in $\PA$s.
\end{enumerate}
\chapter{Component sizes in the sub-critical regime}
\label{chap:subcritical}
\begin{flushright}
\footnotesize{}Based on:\\
\textbf{An ongoing work}\\
\end{flushright}
\vspace{1cm}
\begin{center}
	\begin{minipage}{0.7 \textwidth}
		\footnotesize{\textbf{Abstract.}
		In this chapter, we study the sub-critical regime of preferential attachment models. Unlike configuration model with power-law degrees, we observe that the preferential attachment graph with finite variance degrees exhibits a larger largest connected component in the sub-critical regime than its maximum degree. In this ongoing work, we have established a lower bound on the size of the largest connected component by utilising the continuous-time branching process embedding of preferential attachment trees. We also provide a simulation study to support our proof of this lower bound.
		}
	\end{minipage}
\end{center}
\vspace{0.1cm}

\section{Introduction}\label{chap:subcritical:sec:intro}

Sub-critical percolation for a sequence of random graphs refers to the percolation regime when the edge-retention probability is below the critical percolation threshold. In this phase, the graph typically consists of small, isolated sub-linear clusters, and the probability of finding a giant component that spans a significant portion of the graph tends to zero.

The study of sub-critical percolation is crucial for understanding the transition to the critical and super-critical phases, where the largest connected component gradually grows from a sub-linear to a linear scale, see for example \cite{B01,JLR00,BDvdHS20,DvdHvLS17}. This concept also has applications in various fields, such as network resilience in \cite{KY08,LLMSSG22}, where sub-critical percolation models the behaviour of a network under random failures.

In Chapter~\ref{chap:local-giant}, we explicitly computed the critical percolation threshold for a sequence of preferential attachment models with parameters \(m \geq 2\) and \(\delta>-m\). For non-positive \(\delta\), the critical percolation threshold is zero, whereas for positive \(\delta\), the critical percolation threshold is \(1/r(\mathbf{T}_\kappa)\), as defined in Theorem~\ref{thm:main:theorem}. In this chapter, we analyse the size of the largest connected component in percolated preferential attachment models with a percolation probability less than the critical percolation threshold.

In \cite{B01}, Bollobás demonstrated that in an Erd\H{o}s-R\'{e}nyi random graph, the size of the largest connected component in the sub-critical regime is \( \Ocal(\log n) \). \cite{vdH2} provides a comprehensive review of the sizes of the largest and second-largest connected components in Erd\H{o}s-R\'{e}nyi graphs. In \cite{D07}, Durrett observed that this behaviour does not hold when the underlying graph has a power-law degree distribution. In fact, the largest degree in such graphs is of the order \( n^{1/(\tau-1)} \), where the empirical degree distribution follows a power law with exponent \( \tau \). Consequently, the size of the largest connected component in such graphs is at least of the order of the maximum degree. Durrett conjectured in \cite[Conjecture~3.3.1]{D07} that, for a specific class of random graph models with a power-law degree distribution, \( n^{1/(\tau-1)} \) represents the correct order of the largest connected component.

Janson \cite{J09} demonstrated that in configuration models with a given degree sequence following a power-law distribution with exponent \( \tau > 3 \), the size of the largest connected component is of the order \( n^{1/(\tau-1)} \) in the sub-critical percolation regime. Pittel \cite{P08} showed that a similar behaviour can be observed in the largest connected component of the percolated configuration model with a \emph{subpower-law} degree distribution.

For the preferential attachment models discussed in \cite{DM13}, near-critical behaviour was studied in \cite{EMO21}, where it was shown that, in the finite-variance regime of the asymptotic degree distribution, the size of the giant component decays exponentially as one approaches the critical edge density. However, the study of the sub-critical phase in preferential attachment models, described in Chapter~\ref{chap:local-giant:sec:intro}, has been much less explored. Unlike other power-law random graphs discussed earlier, preferential attachment models exhibit different behaviour in the sub-critical regime. In this regime, we have shown that the size of the largest connected component in a preferential attachment model (d) is significantly larger than \( n^{1/(\tau-1)} \), where \( \tau = 3 + \delta/m \). Although the exact exponent is not yet fully understood, simulation studies support our finding of a larger largest connected component.

\begin{center}
	{\textbf{Organisation of the chapter}}
\end{center}
This chapter is organized as follows. In Section~\ref{sec:lower:bound}, prove that the size of the largest connected component has larger exponent than that of the maximum degree in sub-critical preferential attachment models. In Section~\ref{sec:simulation}, we provide some simulation study done to support our claim.

\section{Lower bound}\label{sec:lower:bound}
Let \(D^{(n)}(m,\delta)\) denote the maximal degree of a preferential attachment model of size \(n\) with parameters \(m \geq 1\) and \(\delta > -m\). Recall that \(\chi = \frac{m + \delta}{2m + \delta}\). In \cite[Theorem~8.14]{vdH1}, it is proven that \(D^{(n)}(m,\delta)\) satisfies
\eqn{\label{eq:maximal-degree-exponent}
	D^{(n)}(m,\delta) n^{\chi-1}\overset{\mbox{a.s.}}{\longrightarrow} D^{(\infty)}~,
}
where \(D^{(\infty)}\) is an almost surely positive random variable.

The following theorem establishes that the connected component of a preferential attachment tree containing an initial vertex has a size greater than \( n^{1-\chi} \).

\begin{Theorem}[Lower bound for largest connected component]\label{thm:lower-bound}
	For fixed $m \geq 1$ and $\delta > -m$, the largest connected component in the $\pi$-percolated preferential attachment model (d) is at least $\Ocal(n^{1-\chi+\pi\chi+o(1)})$ for any percolation probability $\pi \in (0,1]$. For preferential attachment trees, the size of the largest connected component is of the order \(\Wcal n^{1-\chi+\pi\chi+o(1)}\), where \(\Wcal\) is a finite, almost surely positive, random variable.
\end{Theorem}

As an immediate consequence of Theorem~\ref{thm:lower-bound}, we can determine the critical percolation threshold for preferential attachment trees for any $\delta > -1$.

\begin{Corollary}[Critical percolation threshold for preferential attachment trees]
	The critical percolation threshold for preferential attachment trees with parameter $\delta > -1$ is $1$.
\end{Corollary}

We first prove the theorem for preferential attachment trees and then extend it to the graph cases using the collapsing process. To prove the theorem in the tree case, we utilise the continuous-time branching process (CTBP) representation of preferential attachment models, a technique introduced in \cite{AGS08}. By leveraging this representation, we show that for any \( \pi \in (0,1) \), the connected component containing vertex \(1\) has a size of order \( n^{1-\chi+\pi\chi} \).

\paragraph{CTBP Representation}

For the CTBP representation of model~(d) with parameter \(\delta\), we consider the rate function \(f(\ell) = \ell + \delta + 1\) for \(\ell \in \mathbb{N} \cup \{0\}\).

We start with an initial graph of size \(2\), consisting of an edge connecting two vertices. Let \(\sigma_\omega\) denote the birth time of vertex \(\omega\) in the CTBP, and \(\Pi_{\omega}[0,t]\) denote the number of children of \(\omega\) at time \(t > 0\) after its birth at \(\sigma_\omega\). Conditionally on \(\Pi_{\omega}[0,t] = \ell\), the birth time of the \((\ell + 1)\)-st offspring of \(\omega\) follows an exponential distribution with rate \(f(\ell)\). All these offspring then initiate their own birth processes with the same rate function.

The birth process for vertex \(1\) is slightly different with respect to the rate function. Conditionally on \(\Pi_1[0,t] = \ell\), the birth time of the \((\ell + 1)\)-st offspring has an exponential distribution with rate \(f(\ell) - 1\). The offspring of vertex \(1\) then start their own birth processes as described earlier, using the rate function \(f\).

Let \((T^{\sss (d)}(t))_{t \geq 0}\) denote the resulting tree, and $Z(t)$ denote the size of \(T^{\sss (d)}(t)\). Define a stopping time 
\[
	\tau_n = \inf \{t \geq 0 : Z(t) \geq n\}~,
\] 
where \(\tau_n\) represents the time at which the CTBP reaches size \(n\). \cite[Theorem~2.1]{AGS08} proves that \(\PA^{\sss\rm{(d)}}_n(1,\delta)\) has the same distribution as \((T^{\sss (d)}(\tau_n))\) for all \(n \geq 1\). We now proceed to compute the \emph{Malthusian parameter} for this birth process.
For any $\lambda \geq 0$, define
\eqn{\label{def:phi:lambda}
	\phi(\lambda)=\E\Big[ \int\limits_0^\infty \e^{-\lambda t}\Pi_1(\,dt) \Big]=\E\Big[ \sum\limits_{k=1}^\infty \e^{-\lambda\gamma_k} \Big]~,}
where $\gamma_k$ is the birth time of the $k$-th child of vertex \(1\) in our birth process. As mentioned in \cite[Section~3]{BDO23}, the Malthusian parameter for the CTBP is defined as the unique solution to $\phi(\lambda)=1$.

\begin{Proposition}[Malthusian Parameter for CTBP]\label{prop:malthusian-parameter}
	Fix any $\delta > -1$ and $\pi \in (0,1]$, and let \((T^{\sss (d)}_{\pi}(t))_{t \geq 0}\) denote the \(\pi\)-percolated \((T^{\sss (d)}(t))_{t \geq 0}\). The Malthusian parameter for \( (T^{\sss (d)}_{\pi}(t))_{t \geq 0} \) is given by
	\eqn{\label{eq:prop:malthusian-parameter}
		\lambda_{\pi}=1+\pi(1+{\delta})~.}
	
\end{Proposition}

To prove this proposition, we make use of the following lemma:

\begin{Lemma}\label{lem:gamma-ratio:sum}
	For $b > a + 1 > 0$, 
	\eqn{\label{eq:lem:gamma-ratio:sum}
		\sum\limits_{k \geq 1}\frac{\Gamma(k+a)}{\Gamma(k+b)}=\frac{\Gamma(a+1)}{(b-a-1)\Gamma(b)}~.
	}
\end{Lemma}

Now, we prove Proposition~\ref{prop:malthusian-parameter} subject to Lemma~\ref{lem:gamma-ratio:sum}.

\begin{proof}[Proof of Proposition~\ref{prop:malthusian-parameter}]
	Note that from the construction of \((T^{\sss (d)}(t))_{t\geq 0},~\gamma_k=\sum_{i=0}^{k-1}E_i\), where \((E_i)_{i\geq 0}\) are independent exponential random variables with rate parameters \(i+1+\delta\). 
	
	Upon percolating \((T^{\sss (d)}(t))_{t\geq 0}\) with probability $\pi \in (0,1]$, let \(\gamma_k(\pi)\) denote the birth time of the $k$-th child of vertex $1$ in \((T^{\sss (d)}_{\pi}(t))_{t\geq 0}\). Then,
	\eqn{\label{for:prop:malthusian:1}
		\gamma_k(\pi)=\sum\limits_{i=1}^{N(k)}E_i~,
	}
	where $N(k)$ follows a negative-binomial distribution with parameters \(\pi\) and \(k\). Therefore, to obtain the Malthusian parameter of \((T^{\sss (d)}_{\pi}(t))_{t\geq 0}\), define
	\eqn{\label{for:prop:malthusian:2}
		\phi(\lambda,\pi)=\sum\limits_{k=1}^\infty \E\Big[ \e^{-\lambda\gamma_k(\pi)} \Big]~.
	}
	Recall that in \((T^{\sss (d)}_{\pi}(t))_{t\geq 0}\), we percolate each edge independently with probability \(\pi\). Therefore, \(\phi(\lambda,\pi)\) can be alternatively written as
	\eqan{\label{for:prop:malthusian:03}
		\phi(\lambda,\pi)= \E\Big[ \sum\limits_{k=1}^{\infty} B_i\e^{-\lambda\gamma_k} \Big]~,
	}
	where \((B_i)_{i\geq 1}\) is a sequence of i.i.d.\ ${\rm Bernoulli}(\pi)$ random variables, and independent of \((\gamma_k)_{k\geq 1}\). 
	Since \(E_i\) are independent exponential random variables, we simplify $\phi(\lambda,\pi)$ as
	\eqan{\label{for:prop:malthusian:3}
		\phi(\lambda,\pi)=&\E\Big[ \sum\limits_{k=1}^\infty B_i\e^{-\lambda\gamma_k} \Big]\nn\\
		=&\sum\limits_{k=1}^{\infty}\E[B_k]\E[\e^{-\lambda\gamma_k}]\nn\\
		=&\pi\sum\limits_{\ell=1}^\infty \frac{\Gamma(\ell+1+\delta)}{\Gamma(1+\delta)}\frac{\Gamma(1+\delta+\lambda)}{\Gamma(\ell+1+\delta+\lambda)}~.
	}
	By Lemma~\ref{lem:gamma-ratio:sum}, we obtain the following explicit form for $\phi(\lambda,\pi)$:
	\eqn{\label{for:prop:malthusian:4}
		\phi(\lambda,\pi)=\pi\frac{\Gamma(1+\delta+\lambda)}{\Gamma(1+\delta)}\times\frac{\Gamma(2+\delta)}{(\lambda-1)\Gamma(1+\delta+\lambda)}=\pi\frac{1+\delta}{\lambda-1}~.
	}
	Now, solving for $\phi(\lambda,\pi)=1$, we obtain $\lambda_\pi=1+\pi(1+\delta)$, as required.
\end{proof}

Note that, by choosing $\pi=1$, we obtain the Malthusian parameter for the non-percolated CTBP, \( (T^{\sss (d)}(t))_{t\geq 0} \), as \(\lambda_1=2+\delta=\tau-1\). 
Nerman proved in \cite[Theorem~5.4]{N81} that 
\eqn{\label{for:martingale:2}
	M(t)=Z(t)\e^{-\lambda_1 t}\overset{\mbox{a.s.}}{\longrightarrow}W_1~,}
where \(W_1\) is the almost surely finite non-negative limit. Rudas, T\'{o}th and Valk\'{o} proved in \cite[Lemma~1]{RTV07} that \((T^{\sss (d)}(t))_{t\geq 0}\) satisfies the regularity condition of \cite[Theorem~5.3]{JN84}, and hence $W_1$ is positive almost surely.
This, in turn, proves that 
\eqn{\label{for:martingale:3}
	\frac{\tau_n}{\log n}\overset{\mbox{a.s.}}{\longrightarrow} \frac{1}{\lambda_1}~,}
where $\lambda_1$ is the Malthusian parameter of the CTBP. 
Now, we move on to prove Theorem~\ref{thm:lower-bound}.
\begin{proof}[Proof of Theorem~\ref{thm:lower-bound}]
For any \(\pi\in(0,1]\), let \( (Z_{\pi}(t))_{t\geq0}\) denote the size of the connected component of vertex \(1\) in \( (T^{\sss (d)}_{\pi}(t))_{t\geq0} \). 
Similar to \eqref{for:martingale:2}, we construct \(M_{\pi}(t)=Z_{\pi}(t)\e^{-\lambda_{\pi} t}\), and by \cite[Theorem~5.4]{N81}, \(M_{\pi}(t)\) converges to an almost surely non-negative limit, denoted by \(W_{\pi}\). The same computation as in \cite[Lemma~1]{RTV07} can be performed to prove the regularity condition of \cite{JN84} for \((T^{\sss (d)}_{\pi}(t))_{t\geq 0}\). Therefore, \(W_\pi\) is almost surely a positive random variable.
Thus,
\eqan{\label{from:martingale:1}
	\frac{M_{\pi}(t)}{M_1(t)}=\frac{Z_{\pi}(t)}{Z(t)}\e^{(\lambda_1-\lambda_{\pi})t}\overset{\mbox{a.s.}}{\longrightarrow}\frac{W_{\pi}}{W_1}~.
}
Substituting \(t=\tau_n\) and using the values of \(\lambda_1\) and \(\lambda_\pi\) from \eqref{eq:prop:malthusian-parameter}, we simplify the LHS of \eqref{from:martingale:1} as
\eqan{\label{from:martingale:2}
	\frac{Z_{\pi}(\tau_n)}{n}\e^{(1+\delta)(1-\pi)\tau_n}=&Z_{\pi}(\tau_n)\e^{\log n\big(-1+(1+\delta)(1-\pi)\frac{\tau_n}{\log n} \big)}\nn\\
	=&Z_{\pi}(\tau_n) n^{-1+\frac{1+\delta}{2+\delta} (1-\pi)+o(1)}\nn\\
	=&Z_{\pi}(\tau_n) n^{-1+\chi (1-\pi)+o(1)}\nn\\
	=&Z_{\pi}(\tau_n) n^{-(\chi-1+\pi\chi)+o(1)}~.
}
From \eqref{for:martingale:3}, we have that \(\tau_n\to\infty\) as \(n\) grows to infinity. Therefore, by \eqref{for:martingale:3} and \eqref{from:martingale:1},
\eqn{\label{from:martingale:3}
	Z_{\pi}(\tau_n) n^{-(\chi-1+\pi\chi)+o(1)}\overset{\mbox{a.s.}}{\longrightarrow} \frac{W_{\pi}}{W_1}~,}
proving that the connected component in \(\PA^{\sss \rm (d)}_n(1,\delta)\) is \(\Ocal(n^{-(\chi-1+\pi\chi)+o(1)})\). Note that from the construction of the CTBP, it immediately follows that the size of any connected component not containing the oldest vertex, \(1\), is always stochastically dominated by the size of the connected component containing the oldest vertex, \(1\). Therefore, in preferential attachment trees, the size of the largest connected component is \(\Wcal n^{-(\chi-1+\pi\chi)+o(1)}\), where $\Wcal$ is an almost sure positive finite random variable.

To return to the graph case, we now perform collapsing on the preferential attachment tree of size \(mn\). Since the collapsing procedure reduces the graph size by a factor of \(m\), the lower bound for the largest connected component still holds true without any correction in the exponent of \(n\). Therefore, the largest connected component is \(\Ocal(n^{1-\chi+\pi\chi+o(1)})\) almost surely.
\end{proof}

\begin{remark}
	\rm {Theorem~\ref{thm:lower-bound} holds true for any $m\in\N$ and $\delta>-m$. For $\pi> \pi_c$, although this is a lower bound for the largest connected component, the size of the largest connected component is $\Ocal(n)$, and hence Theorem~\ref{thm:lower-bound} does not contradict Theorem~\ref{thm:main:theorem}. Theorem~\ref{thm:lower-bound} provides a non-trivial lower bound when $\pi<\pi_c$.}\hfill$\blacksquare$
\end{remark}

Now, we remain to prove Lemma~\ref{lem:gamma-ratio:sum}. This follows by exchanging the sum and integrals using the Fubini's theorem.
\begin{proof}[Prrof Lemma~\ref{lem:gamma-ratio:sum}]
	By properties of $\Beta$ functions, for any \(\alpha,\beta>0\),
	\eqn{\label{for:gamma:sum:1}
		\frac{\Gamma(\alpha)\Gamma(\beta)}{\Gamma(\alpha+\beta)}=\int\limits_{0}^{1} u^{\alpha-1}(1-u)^{\beta-1}\,du~.
	}
	Therefore, for $b>a+1>0$,
	\eqan{\label{for:gamma:sum:2}
		\sum\limits_{k\geq 1}\frac{\Gamma(k+a)}{\Gamma(k+b}=&\frac{1}{\Gamma(b-a)}\sum\limits_{k=1}^{\infty} \frac{\Gamma(k+a)\Gamma(b-a)}{\Gamma(k+b)}\nn\\
		=&\frac{1}{\Gamma(b-a)}\sum\limits_{k=1}^{\infty} \int\limits_{0}^{\infty} u^{{k+a-1}}(1-u)^{b-a-1}\,du~.
	}
	Since the integrand is positive in the RHS of \eqref{for:gamma:sum:2}, by Fubini's theorem,
	\eqan{\label{for:gamma:sum:3}
		\sum\limits_{k\geq 1}\frac{\Gamma(k+a)}{\Gamma(k+b}=&~\frac{1}{\Gamma(b-a)}\int\limits_{0}^{\infty}(1-u)^{b-a-1}\sum\limits_{k=1}^{\infty}  u^{{k+a-1}}\,du\nn\\
		=&~\frac{1}{\Gamma(b-a)}\int\limits_{0}^{\infty}(1-u)^{b-a-2}u^{a}\,du\nn\\
		=&~\frac{\Gamma(a+1)\Gamma(b-a-1)}{\Gamma(b-a)\Gamma(b)},\nn\\
		=&~\frac{\Gamma(a+1)}{(b-a-1)\Gamma(b)}~,
	} 
	as required.
\end{proof}

\section{Simulations}\label{sec:simulation}

We conducted a simulation study to support our theorem for preferential attachment trees. In the simulation, we compared the size of the largest connected component in sub-critical preferential attachment models with the maximum degree in the non-percolated graph. van der Hofstad showed in \cite{vdH1} that the maximum degree of the preferential attachment graph satisfies
\begin{equation}
	D^{\sss(n)} = W_{\infty} n^{1-\chi} + o(1)~,
	\label{eq:max-deg:1}
\end{equation}
where \(D^{\sss(n)}\) denotes the maximum degree of the preferential attachment tree, and \(W_\infty\) is an almost surely finite positive random variable. On the other hand, in Theorem~\ref{thm:lower-bound}, we proved that for preferential attachment models, the size of the largest connected component satisfies
\begin{align}
	|\Ccal^{(n)}_1(p)| =& \Wcal n^{1-\chi+\pi\chi+o(1)}~\text{for }m=1~,\\
	\text{and}\quad|\Ccal^{(n)}_1(p)| \geq& \Wcal n^{1-\chi+\pi\chi+o(1)}~\text{for }m>1~.
	\label{eq:pLCC:1}
\end{align}
We plotted \(\log |\Ccal^{(n)}_1(p)|\) and \(\log D^{\sss(n)}\) against \(\log n\). We then fitted straight lines and computed the slope of each fitted line. To reduce variation in the slope due to the effect of the random variables \(\Wcal\) and \(W_\infty\), we computed their difference and compared it with the theoretical value \(\chi \pi\) observed in \eqref{eq:pLCC:1}. We compiled the simulation results for several values of \(\chi\) and \(\pi\) in this section.

\paragraph{Simulation details} 
We defined the following sequence of integers to represent the graph sizes:
\eqn{\label{eq:size:seq}
	{\bf N} = \{ x \cdot 10^y : x \in [9],~\text{and } y \in \{2,3,4 \}\}.
}

For preferential attachment trees, we considered four percolation probabilities \( \{0.4,~0.5,~0.6,~0.7\} \). For preferential attachment graphs with \(m=2\), we examined two percolation probabilities \(0.85\pi_c\) and \(0.9\pi_c\), where \(\pi_c\) is the critical percolation threshold. Probabilities below \(0.8\pi_c\) were not simulated, as they approach zero and could introduce numerical errors.

For each combination of \(m\), \(\delta\), percolation probability \(\pi\), and \(n \in {\bf N}\), we simulated 200 preferential attachment models of size \(n\). We percolated each graph with probability \(\pi\) and computed the maximum degree and the size of the largest connected component. Averages of these metrics provided estimates for the largest connected component size in the \(\pi\)-percolated graph and the maximum degree in the unpercolated graph.

\paragraph{Preferential attachment tree with $\delta=1.5$}

Table~\ref{tab:slope_differences:0.714} and Figure~\ref{fig:tau=4.5} show simulation results for  $m=1$ and $\chi=0.714~(\tau=4.5)$: 


\begin{figure}[H]
	\centering
	\begin{subfigure}[b]{0.48\textwidth}
		\centering
		\includegraphics[width=\textwidth]{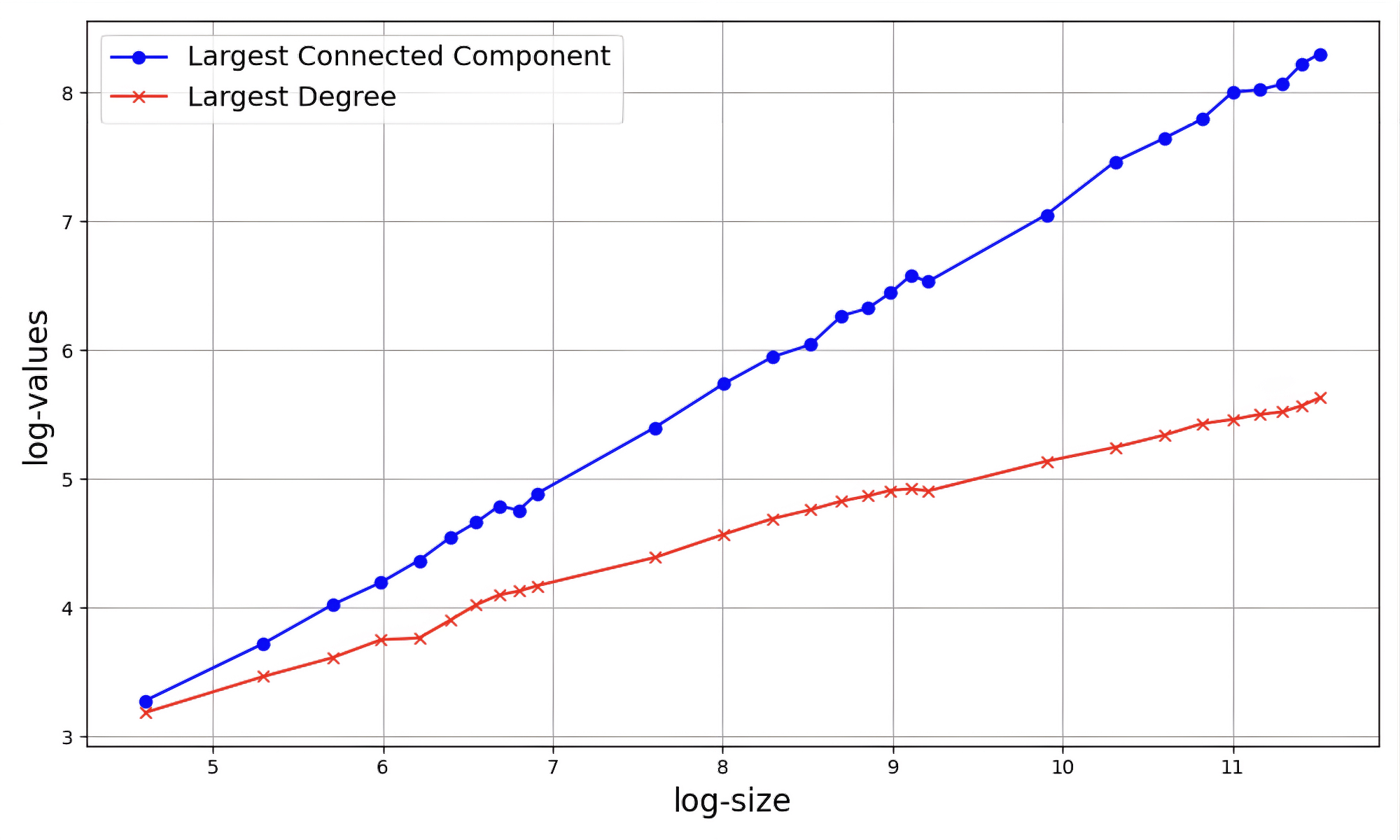}
		\caption{$\pi=0.4$}
		\label{fig:tau=4.5:fig1}
	\end{subfigure}
	\hfill
	\begin{subfigure}[b]{0.48\textwidth}
		\centering
		\includegraphics[width=\textwidth]{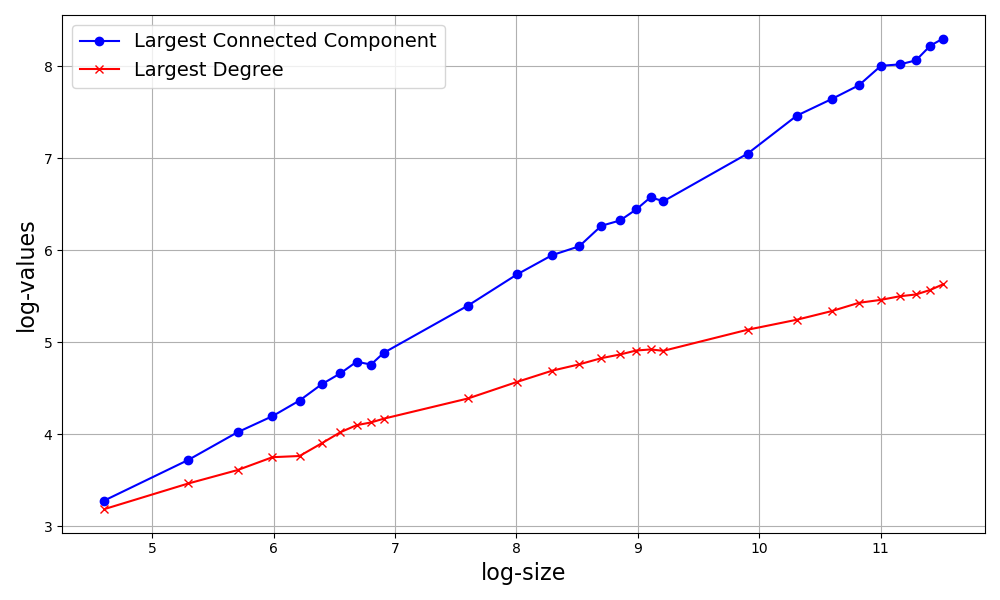}
		\caption{$\pi=0.5$}
		\label{fig:tau=4.5:fig2}
	\end{subfigure}
	
	\vspace{1em} 
	
	\begin{subfigure}[b]{0.48\textwidth}
		\centering
		\includegraphics[width=\textwidth]{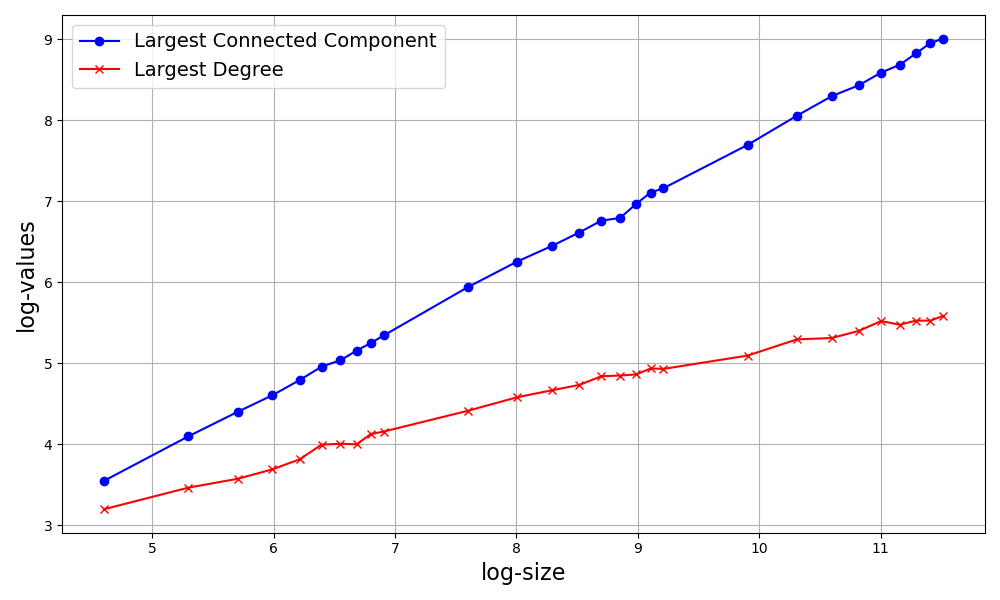}
		\caption{$\pi=0.6$}
		\label{fig:tau=4.5:fig3}
	\end{subfigure}
	\hfill
	\begin{subfigure}[b]{0.48\textwidth}
		\centering
		\includegraphics[width=\textwidth]{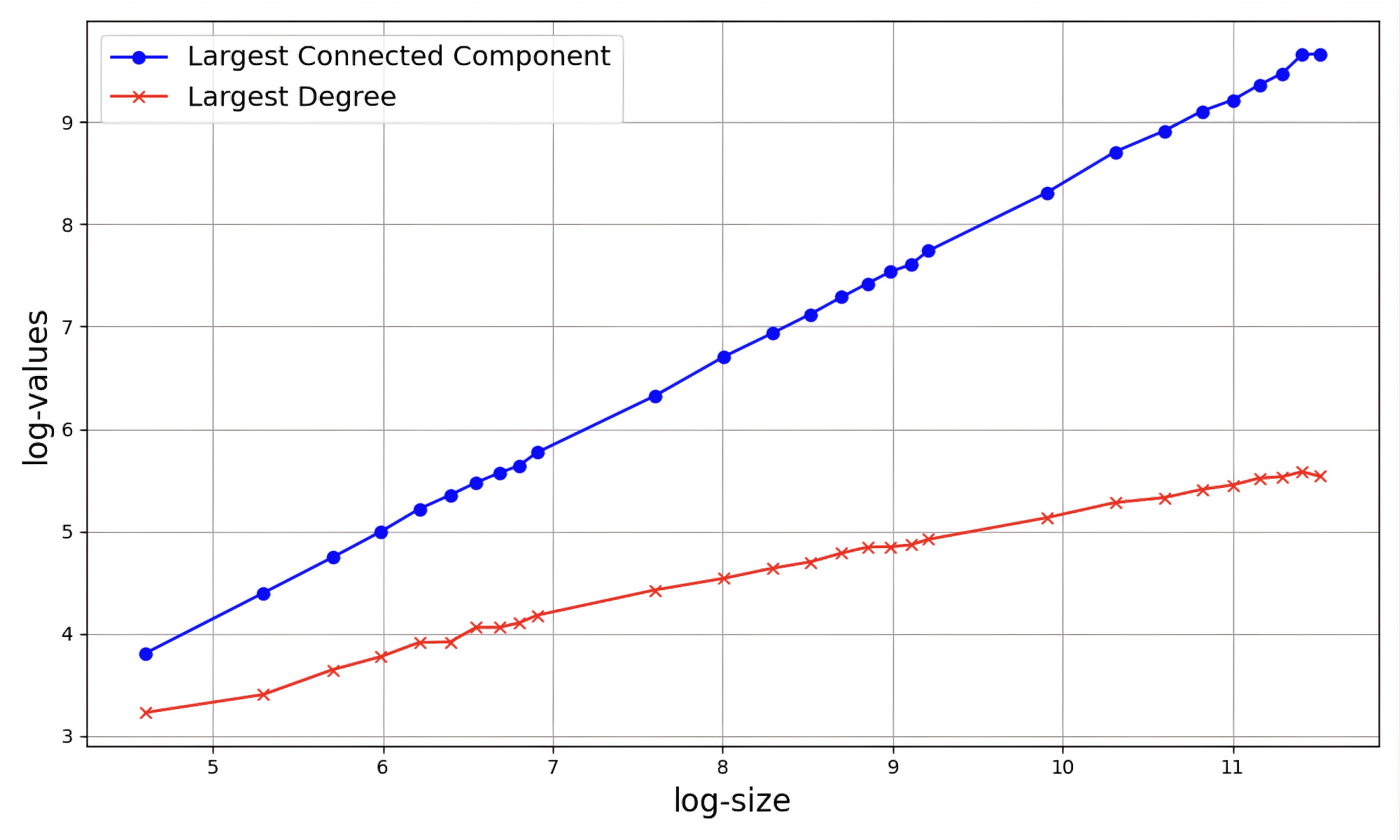}
		\caption{$\pi=0.7$}
		\label{fig:tau=4.5:fig4}
	\end{subfigure}
	
	\caption{Log-log plot of the largest connected component and maximum degree vs. graph size for various percolation probabilities, with $m=1$ and $\chi=0.714$.}
	\label{fig:tau=4.5}
\end{figure}
\begin{table}[H]
	\centering
	\begin{tabular}{|c|c|c|}
		\hline
		$\pi$ & $\pi\chi$ & \text{Slope Difference (Observed)} \\
		\hline
		0.4 & 0.2856 & 0.34842444065633 \\
		0.5 & 0.357 & 0.3865458326732401 \\
		0.6 & 0.4284 & 0.4407357438744405 \\
		0.7 & 0.4998 & 0.5074026514894612 \\
		\hline
	\end{tabular}
	\caption{Comparison of slope differences for $m=1$ and $\chi=0.714$.}
	\label{tab:slope_differences:0.714}
\end{table}

\paragraph{Preferential attachment tree with $\delta=3$}

Table~\ref{tab:slope_differences:0.8} and Figure~\ref{fig:tau=6} show simulation result for $m=1$ and $\chi=0.8~(\tau=6)$:

\begin{figure}[H]
	\centering
	\begin{subfigure}[b]{0.48\textwidth}
		\centering
		\includegraphics[width=\textwidth]{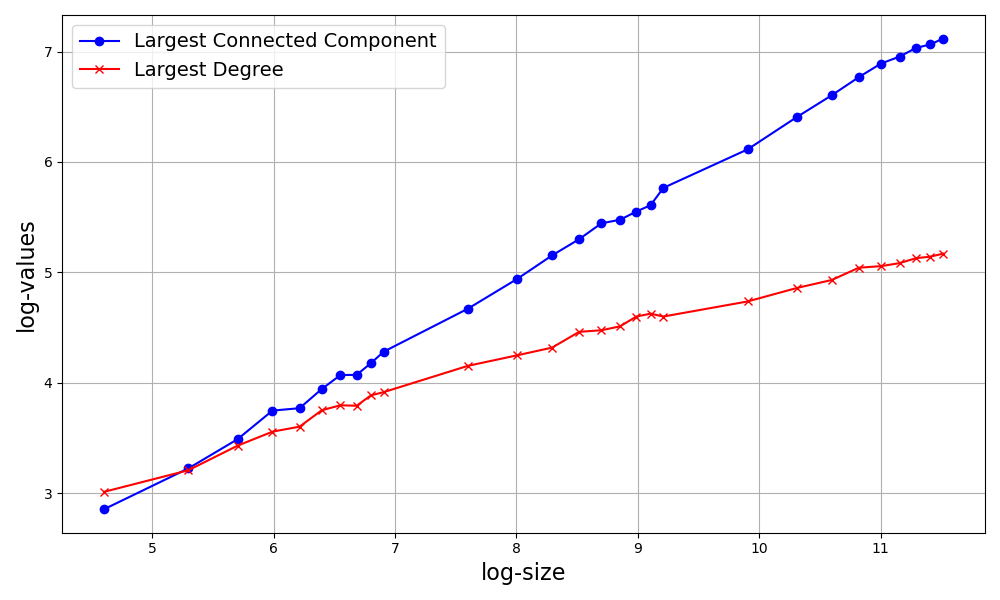}
		\caption{$\pi=0.4$}
		\label{fig:tau=6:fig1}
	\end{subfigure}
	\hfill
	\begin{subfigure}[b]{0.48\textwidth}
		\centering
		\includegraphics[width=\textwidth]{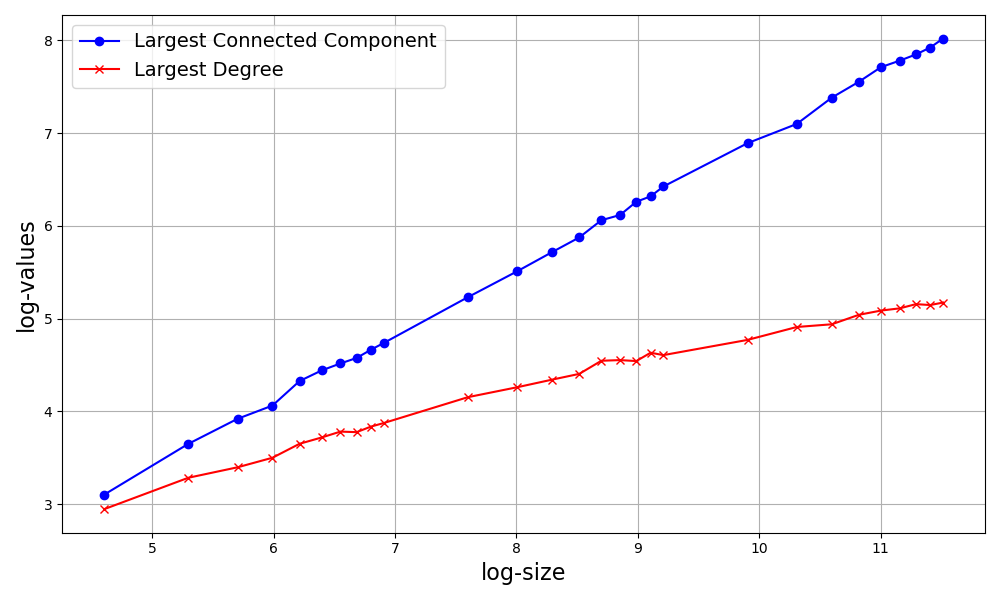}
		\caption{$\pi=0.5$}
		\label{fig:tau=6:fig2}
	\end{subfigure}
	
	\vspace{1em} 
	
	\begin{subfigure}[b]{0.48\textwidth}
		\centering
		\includegraphics[width=\textwidth]{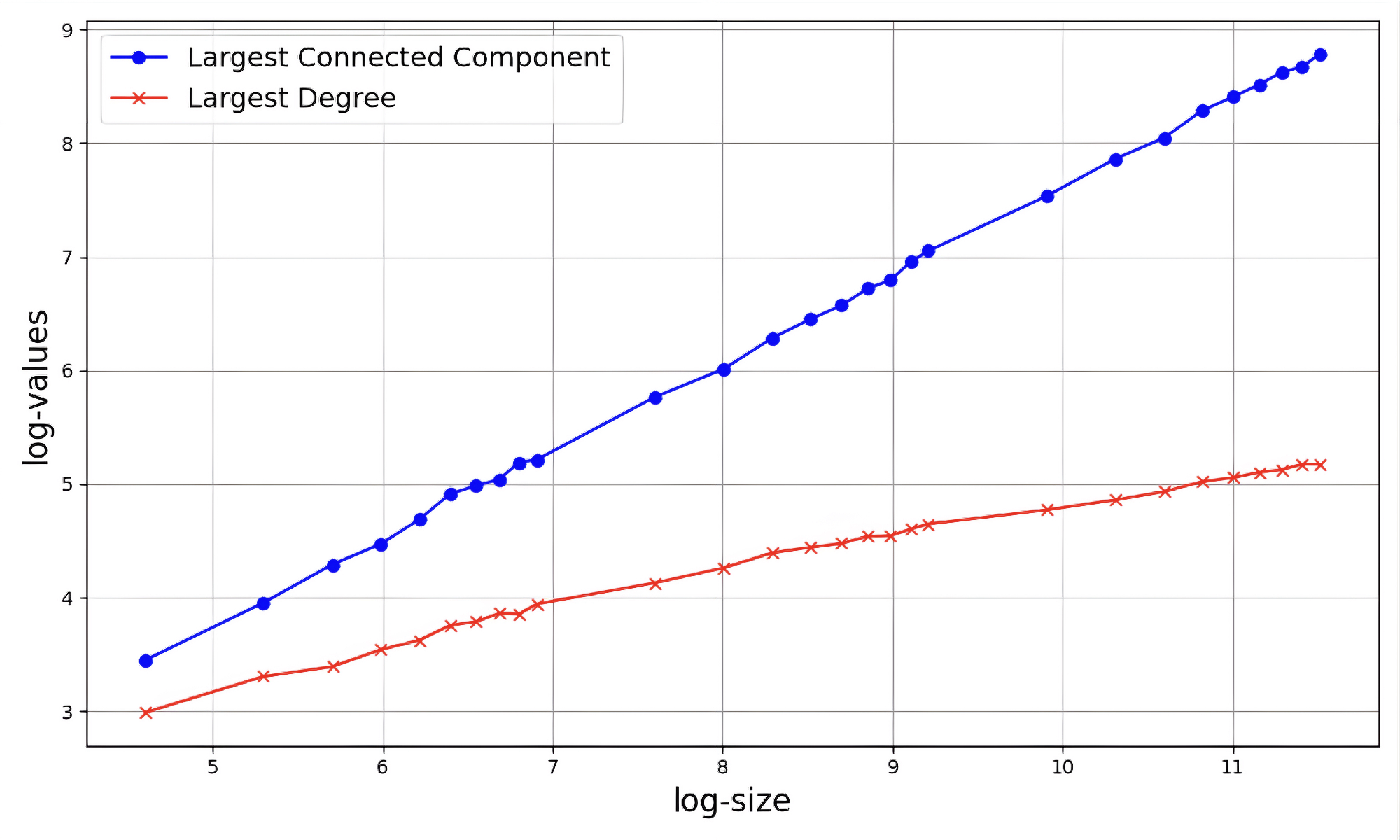}
		\caption{$\pi=0.6$}
		\label{fig:tau=6:fig3}
	\end{subfigure}
	\hfill
	\begin{subfigure}[b]{0.48\textwidth}
		\centering
		\includegraphics[width=\textwidth]{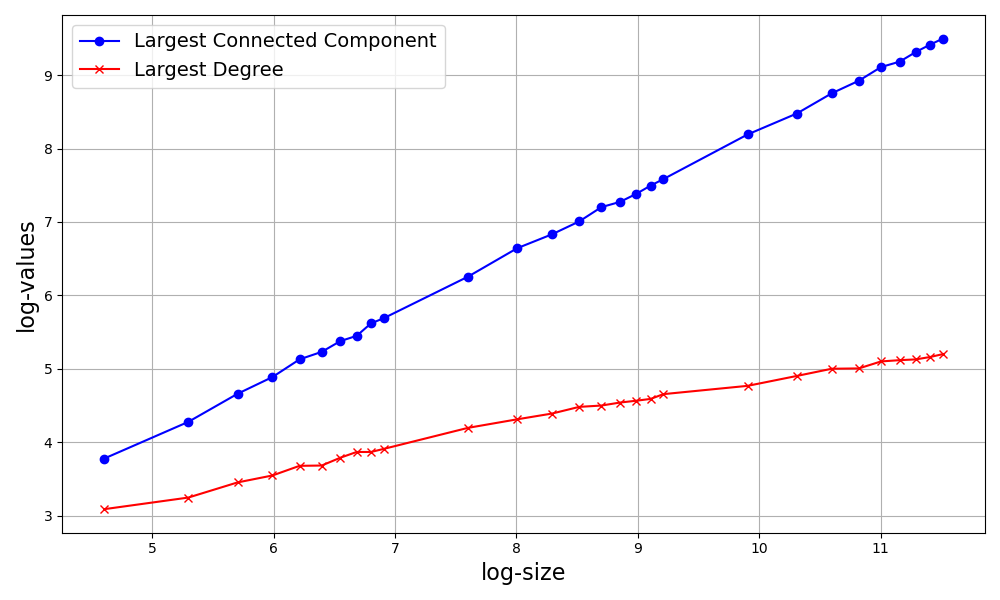}
		\caption{$\pi=0.7$}
		\label{fig:tau=6:fig4}
	\end{subfigure}
	
	\caption{Log-log plot of the largest connected component and maximum degree vs. graph size for various percolation probabilities, with $m=1$ and $\chi=0.8$.}
	\label{fig:tau=6}
\end{figure}

\begin{table}[H]
	\centering
	\begin{tabular}{|c|c|c|}
		\hline
		$\pi$ & $\pi\chi$ & \text{Slope Difference (Observed)} \\
		\hline
		0.4 & 0.32 & 0.32887537637208575 \\
		0.5 & 0.4 & 0.4031197660032112 \\
		0.6 & 0.48 & 0.4705659866292104 \\
		0.7 & 0.56 & 0.5426519248091503 \\
		\hline
	\end{tabular}
	\caption{Comparison of slope differences for $\chi=0.8$.}
	\label{tab:slope_differences:0.8}
\end{table}

\paragraph{Conclusion for the tree model}
Figures~\ref{fig:tau=4.5} and \ref{fig:tau=6} suggest that in the log-log plots, the slopes of the straight lines fitting the size of the largest connected component of the \(\pi\)-percolated tree against the size of the tree are significantly greater than those fitting the maximum degree in the tree against the size of the tree. Furthermore, tables~\ref{tab:slope_differences:0.714} and \ref{tab:slope_differences:0.8} suggest that the differences between these two slopes are close to \(\chi\pi\), as proposed in Theorem~\ref{thm:lower-bound}.


\paragraph{Preferential attachment graph with $m=2$ and $\delta=9$}

Table~\ref{tab:slope_differences:0.846_2} and Figure~\ref{fig:tau=0.846_2} show simulation result for $m=2$ and $\chi=0.846~(\tau=7.5)$:

\begin{figure}[H]
	\centering
	\begin{subfigure}[b]{0.48\textwidth}
		\centering
		\includegraphics[width=\textwidth]{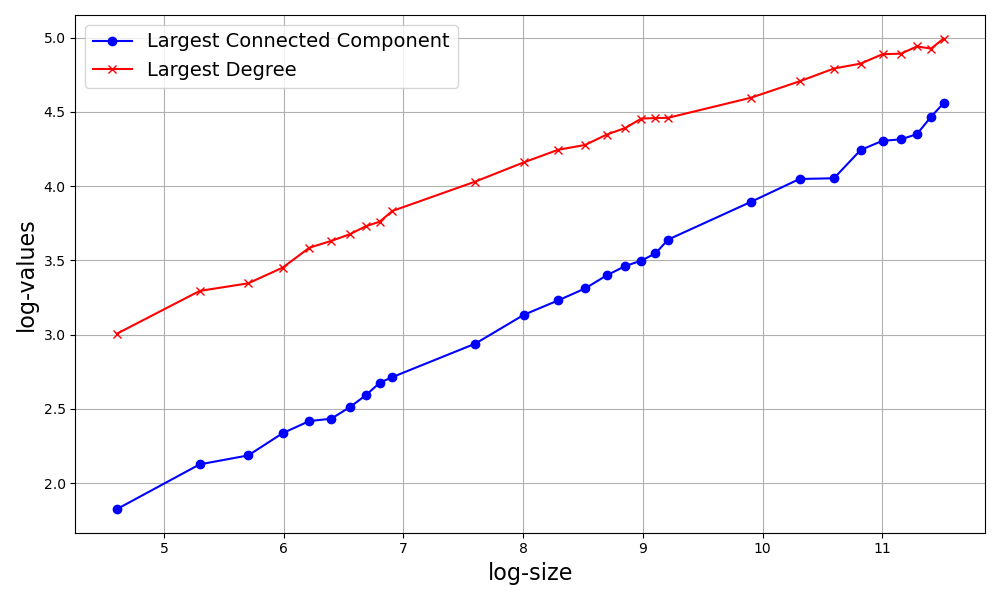}
		\caption{$\pi=0.85\pi_{c}$}
		\label{fig:tau=7.5_2:fig1}
	\end{subfigure}
	\hfill
	\begin{subfigure}[b]{0.48\textwidth}
		\centering
		\includegraphics[width=\textwidth]{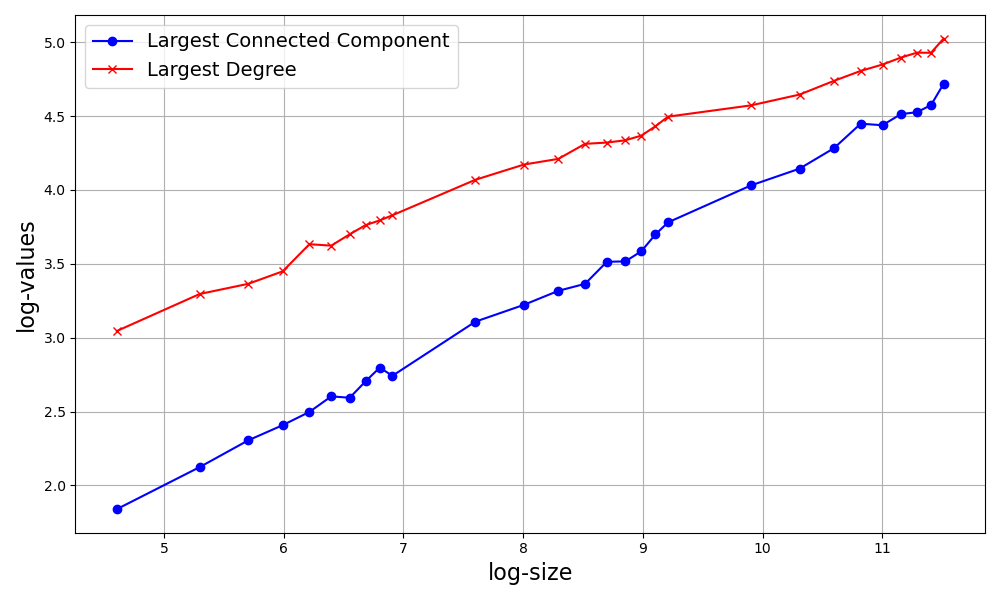}
		\caption{$\pi=0.9\pi_{c}$}
		\label{fig:tau=7.5_2:fig2}
	\end{subfigure}

	\caption{Log-log plot of the largest connected component and maximum degree vs. graph size for various percolation probabilities, with $m=2$ and $\chi=0.846$}
	\label{fig:tau=0.846_2}
\end{figure}

\begin{table}[H]
	\centering
	\begin{tabular}{|c|c|c|}
		\hline
		$\pi$ & $\pi\chi$ & \text{Slope Difference (Observed)} \\
		\hline
		0.85 & 0.08462 & 0.11255188908782127 \\
		0.9 & 0.08960 & 0.14480838267871132 \\
		\hline
	\end{tabular}
	\caption{Comparison of slope differences for $\chi=0.846$.}
	\label{tab:slope_differences:0.846_2}
\end{table}

\paragraph{Preferential attachment graph with $m=2$ and $\delta=6$}

Table~\ref{tab:slope_differences:0.846_2} and Figure~\ref{fig:tau=0.846_2} show simulation result for $m=2$ and $\chi=0.8~(\tau=6)$:

\begin{figure}[H]
	\centering
	\begin{subfigure}[b]{0.48\textwidth}
		\centering
		\includegraphics[width=\textwidth]{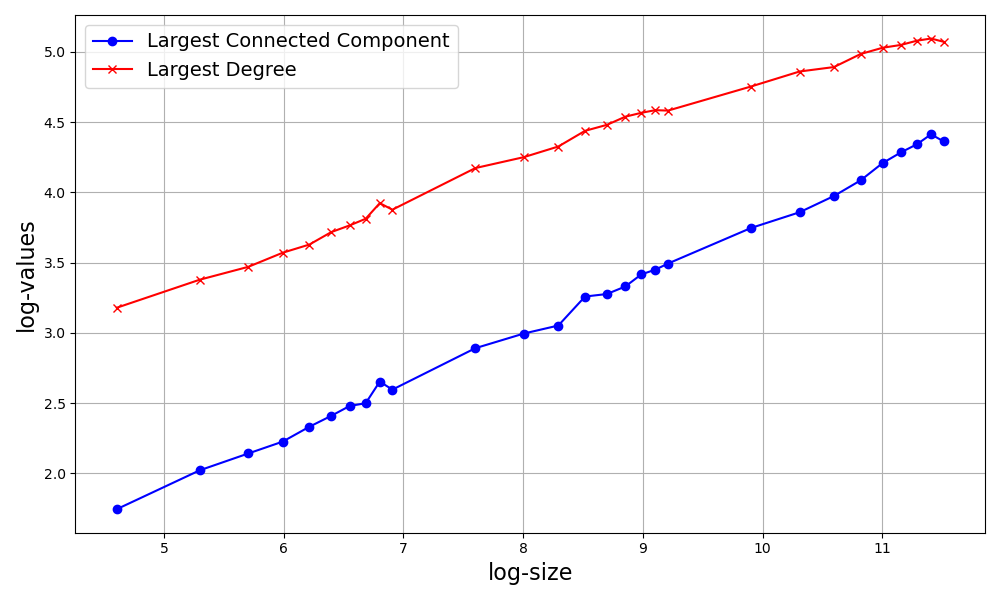}
		\caption{$\pi=0.85\pi_{c}$}
		\label{fig:tau=6_2:fig1}
	\end{subfigure}
	\hfill
	\begin{subfigure}[b]{0.48\textwidth}
		\centering
		\includegraphics[width=\textwidth]{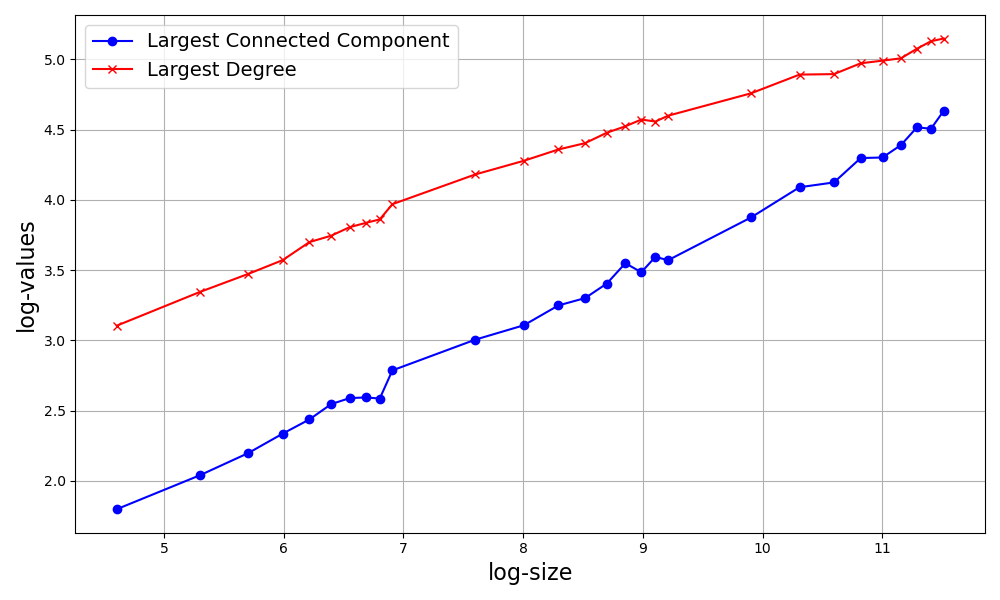}
		\caption{$\pi=0.9\pi_{c}$}
		\label{fig:tau=6_2:fig2}
	\end{subfigure}

	\caption{Log-log plot of the largest connected component and maximum degree vs. graph size for various percolation probabilities, with $m=2$ and $\chi=0.8$}
	\label{fig:tau=0.8_2}
\end{figure}

\begin{table}[H]
	\centering
	\begin{tabular}{|c|c|c|}
		\hline
		$\pi$ & $\pi\chi$ & \text{Slope Difference (Observed)} \\
		\hline
		0.85 & 0.08462 & 0.10040842114736509 \\
		0.9 & 0.08960 & 0.1159294432125863 \\
		\hline
	\end{tabular}
	\caption{Comparison of slope differences for $\chi=0.8$.}
	\label{tab:slope_differences:7.5_2}
\end{table}

\paragraph{Conclusion for non-tree model}
Figures~\ref{fig:tau=0.846_2} and \ref{fig:tau=0.8_2} also indicate that the difference between the slopes of the lines fitting the largest connected component in the \(\pi\)-percolated graph against the size of the graph in log scale is significantly greater than those fitting the maximum degree in the graph against the size of the graph. Furthermore, Tables~\ref{tab:slope_differences:0.846_2} and \ref{tab:slope_differences:7.5_2} show that the difference is considerably large and exceeds $\chi \pi$. This may occur either due to a slower rate of convergence or due to a different exponent of the largest connected component in non-tree settings.

Thus, we may expect a larger largest connected component in non-tree settings compared to tree settings. Although the exact size of the largest connected component in non-tree settings remains inconclusive, the lower bound can still be inferred from the simulation.

\begin{remark}
	\rm 
We have explicitly computed the size of the largest connected component in percolated preferential attachment trees, while for non-tree settings of preferential attachment models in the sub-critical percolation regime, we have obtained a lower bound for this size. 
In our future work, it remains to be checked whether the obtained lower bound is the actual size of the largest connected component in the non-tree settings within the sub-critical percolation regime. The collapsed CTBP representation of preferential attachment models from \cite{BDO23} may be crucial in establishing the upper bound. \hfill$\blacksquare$
\end{remark}

\chapter{Ising critical temperature}
\label{chap:ising}
\begin{flushright}
	\footnotesize{}Based on:\\
	\textbf{Ising model on preferential attachment models}
\end{flushright}
\vspace{0.1cm}
\begin{center}
	\begin{minipage}{0.7 \textwidth}
		\footnotesize{\textbf{Abstract.}
			In this chapter, we study the Ising model on preferential attachment models. Building on the work of Dembo, Montanari, and Sun (2013), we compute the explicit thermodynamic limit of pressure per particle by utilising the local limit of preferential attachment models. We use the convexity properties of the internal energy and magnetisation to determine their thermodynamic limits as the size of the graph tends to infinity. To identify the inverse critical temperature, we prove that the inverse critical temperature for a sequence of graphs and its local limit are equal. Finally, we show that $\beta_c(m,\delta)$ is the inverse critical temperature for the P\'{o}lya point tree with parameters $m$ and $\delta$, using results from Lyons. This part of the proof heavily relies on the critical percolation threshold for P\'{o}lya point trees established earlier.
		}
	\end{minipage}
\end{center}
\vspace{0.1cm}

\section{Introduction}\label{chap:ising:sec:introduction}
The Ising model, originally introduced to describe ferromagnetism in statistical mechanics, represents a system of spins on a lattice, where each spin can take one of two states. The interactions between neighbouring spins determine the system's overall magnetic properties. This model has since found applications beyond physics, including in sociology, biology, and finance, where it helps to explain the collective behaviour of interconnected entities. The Ising model has been extensively studied in the literature; see, for example, \cite{G06}. Recently, the focus in this field has extended to the study of the Ising model on random graphs. Dembo and Montanari studied the Ising model on locally tree-like graphs in \cite{DM10}, restricting their analysis to cases where the average degree has finite variance. Under certain continuity assumptions, the authors examined the Ising measure on uniformly sparse locally tree-like graphs. The Ising model on Erd\H{o}s-Renyi graphs in both the zero and high-temperature regimes was studied in \cite{GdS08}. The Ising model on configuration models has been analysed in \cite{DM10,DGvdH10}. Dembo, Montanari and Sun also studied factor models on locally tree-like graphs, shedding light on the Ising model in these contexts in \cite{DMS13}.

\begin{center}
	{\textbf{Organisation of the chapter}}
\end{center}
This chapter is organized as follows. In Section~\ref{sec:preliminary:results}, we describe some frequently used lemma that we use in this chapter without proof. Proofs to the lemmata described in this section could be obtained in the references provided following the statements of the lemma. In Section~\ref{sec:thermodynamic:limit}, we prove thermodynamic limits of several thermodynamic quantities. Lastly, in Section~\ref{sec:inv:critical-temperature}, we compute the inverse critical temperature for preferential attachment models.

\section{Preliminary results}\label{sec:preliminary:results}
In this section, we state several preliminary results that we frequently use in this section. These lemmas hold true under very mild conditions, and we verify that these conditions are met in our case. We do not provide proofs for these lemmas but instead cite the articles where they are proven in detail.

\begin{lemma}[GKS inequality]\label{lem:GKS}
	Consider two Ising measures $\mu$ and $\mu^\prime$ on graphs $G = (V, E)$ and $G^\prime= (V, E^\prime)$, with inverse temperatures $\beta$ and $\beta^\prime$, and external fields $\underline{B}$ and $\underline{B}^\prime$, respectively. If $E\subseteq E^\prime$, $\beta \leq \beta^\prime$, and $0 \leq B_i \leq B^\prime_i$ for all $i \in V$, then, for any $U \subset V$,
	\begin{equation}\label{eq:l1em:GKS}
		0\leq \left\langle{\prod\limits_{i\in U}\sigma_i}\right\rangle_{\mu}\leq \left\langle{\prod\limits_{i\in U}\sigma_i}\right\rangle_{\mu^\prime}.
	\end{equation}
\end{lemma}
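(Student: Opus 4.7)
The plan is to prove this via the classical Griffiths--Kelly--Sherman--Ginibre argument, splitting the statement into two familiar sub-claims: (GKS-I) non-negativity of any product correlator when all couplings and fields are non-negative, and (GKS-II) monotonicity of such correlators in each individual coupling and each field. Since $E \subseteq E'$, I would first embed both models in the common edge set $E'$ by assigning coupling $0$ to every edge $e \in E' \setminus E$ in the smaller model. The inequality then reduces to showing that $\left\langle \prod_{i \in U}\sigma_i\right\rangle$ is non-negative and jointly non-decreasing as the per-edge couplings $\{\beta_e\}_{e \in E'}$ and per-vertex fields $\{B_i\}_{i \in V}$ move componentwise upward.

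For GKS-I, I would expand each factor $e^{\beta_e \sigma_u \sigma_v}$ and $e^{B_v \sigma_v}$ as its Taylor series. Since $\beta_e, B_v \geq 0$, every coefficient in the resulting expansion of the numerator of $\left\langle \prod_{i \in U}\sigma_i\right\rangle$ is non-negative. After collecting terms, each surviving monomial factorises over $v \in V$ as $\prod_v \sum_{\sigma_v = \pm 1}\sigma_v^{a_v}$ for non-negative integer exponents $a_v$, which equals $2^n$ if every $a_v$ is even and $0$ otherwise. Consequently the numerator is a non-negative series; since the partition function is strictly positive, GKS-I follows.

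For GKS-II, I would compute the derivatives
$$\frac{\partial}{\partial \beta_e}\left\langle \prod_{i\in U}\sigma_i\right\rangle = \left\langle \prod_{i \in U}\sigma_i\cdot \sigma_u\sigma_v\right\rangle - \left\langle \prod_{i\in U}\sigma_i\right\rangle\left\langle \sigma_u\sigma_v\right\rangle,$$
together with its analogue for $B_v$, and show the resulting truncated correlations are non-negative. This is the main obstacle, and I would resolve it through Ginibre's duplicate-variable trick: introduce an independent copy $\sigma'$ under the product measure $\mu \otimes \mu$ and perform the change of variables $(\eta_v, \tau_v) := (\sigma_v, \sigma_v\sigma'_v) \in \{\pm 1\}^2$. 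A direct computation shows that, in the new variables, the joint Boltzmann weight becomes a product of exponentials whose arguments are sums of monomials in $(\eta, \tau)$ with non-negative coefficients, and that the truncated correlator equals the expectation of a manifestly non-negative polynomial in $(\eta, \tau)$ against such a weight. A GKS-I–type expansion in this enlarged system then yields the desired non-negativity.

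Finally, I would integrate these derivatives along the straight-line path in parameter space from $(\{\beta_e\}_{e \in E'}, \underline{B})$ to $(\{\beta'_e\}_{e \in E'}, \underline{B}')$, where $\beta_e = \beta$ for $e \in E$ and $\beta_e = 0$ for $e \in E'\setminus E$, while $\beta'_e \equiv \beta'$. Since every coordinate is non-decreasing along this path, the inequality $\left\langle \prod_{i \in U}\sigma_i\right\rangle_\mu \leq \left\langle \prod_{i \in U}\sigma_i\right\rangle_{\mu'}$ follows, and combined with GKS-I this completes the proof.
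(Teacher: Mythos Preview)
The paper does not actually prove this lemma: it is stated as a preliminary result and immediately attributed to Griffiths \cite{G67} and Kelly--Sherman \cite{KS68} without argument. Your outline is the classical GKS/Ginibre proof and is essentially correct, so there is nothing to compare against.

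One small imprecision worth flagging: with the substitution $(\eta_v,\tau_v)=(\sigma_v,\sigma_v\sigma'_v)$, the doubled Hamiltonian becomes $\sum_e \beta_e\,\eta_u\eta_v(1+\tau_u\tau_v)+\sum_v B_v\,\eta_v(1+\tau_v)$, and these are \emph{not} monomials with non-negative coefficients in $(\eta,\tau)$, since $\eta_u\eta_v$ can be $-1$. The argument still goes through, but in two steps rather than one: for each fixed $\tau$, the effective couplings $\beta_e(1+\tau_u\tau_v)\ge 0$ and fields $B_v(1+\tau_v)\ge 0$ are non-negative, so GKS-I applied to the $\eta$-sum gives $\langle \sigma_{A\cup B}\rangle_\tau\ge 0$; then the prefactor $(1-\tau_B)\in\{0,2\}$ is non-negative, and summing over $\tau$ yields GKS-II. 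With that adjustment your plan is complete.
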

\cite{G67} proves this result in a restricted settings and \cite{KS68} generalised this result. 
The following lemma simplifies the computation of the Ising measure on a tree by reducing it to the computation of Ising measures on subtrees:

\begin{lemma}[Tree pruning]\label{lem:tree-pruning}
	For $U$ a subtree of a finite tree $T$, let $\partial U$ be the subset of vertices in $U$ that connect to a vertex in $W \equiv T \setminus U$. Denote by $\langle \sigma_u \rangle_{\mu_{W,u}}$ the magnetisation of vertex $u \in \partial U$ in the Ising model on $W \cup \{u\}$. Then, the marginal Ising measure on $U$, $\mu_U^{T}$, is equivalent to the Ising measure on $U$ with the magnetic fields
	\begin{equation}\label{eq:lem:tree-pruning}
		B_u^\prime =\begin{cases}
			\arctanh\big( \langle \sigma_u \rangle_{\mu_{W,u}} \big), & u \in \partial U,\\
			B_u, & u \in U \setminus \partial U.
		\end{cases}
	\end{equation}
\end{lemma}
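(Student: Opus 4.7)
The plan is to use the tree structure of $T$ to factorise the Boltzmann weight along the boundary $\partial U$. Since $T$ is a tree, each connected component of the induced subgraph $T[W]$ attaches to $U$ through exactly one vertex of $\partial U$. For each $u \in \partial U$, I let $W_u \subseteq W$ be the union of those connected components of $T[W]$ that are joined to $u$ by an edge of $T$. The $W_u$'s then partition $W$, each $W_u \cup \{u\}$ forms a subtree of $T$, and $W_u \cap W_{u'} = \emptyset$ for $u \neq u'$.

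With this decomposition in hand, I would split the Ising weight
\begin{equation*}
\exp\Big\{\beta \sum_{(i,j) \in E(T)} \sigma_i \sigma_j + \sum_{i \in V(T)} B_i \sigma_i\Big\}
\end{equation*}
into terms internal to $U$ and, for each $u \in \partial U$, terms involving $\sigma_{W_u}$ together with the edges from $u$ into $W_u$. Summing over $\sigma_W$ to obtain the marginal on $U$, the resulting sum factorises as $\prod_{u \in \partial U} \tilde Z_u(\sigma_u)$, where $\tilde Z_u$ depends on $\sigma_U$ only through $\sigma_u \in \{-1,+1\}$. Since any positive function of $\sigma_u$ has the form $A_u e^{h_u \sigma_u}$, the effective field $h_u$ is determined by $e^{2h_u} = \tilde Z_u(+1)/\tilde Z_u(-1)$.

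To identify $h_u$, I would next unpack $\langle \sigma_u \rangle_{\mu_{W,u}}$. Because $W \cup \{u\}$ equipped with the edges of $T$ it inherits decouples into $W_u \cup \{u\}$ and the remaining $W_{u'}$'s (which do not touch $u$), the magnetisation at $u$ under $\mu_{W,u}$ coincides with the magnetisation at $u$ in the Ising model on $W_u \cup \{u\}$ with the original field $B_u$ kept at $u$. A direct computation gives
\begin{equation*}
\langle \sigma_u \rangle_{\mu_{W,u}} = \frac{e^{B_u}\,\tilde Z_u(+1) - e^{-B_u}\,\tilde Z_u(-1)}{e^{B_u}\,\tilde Z_u(+1) + e^{-B_u}\,\tilde Z_u(-1)} = \tanh(B_u + h_u),
\end{equation*}
so $B_u + h_u = \arctanh \langle \sigma_u \rangle_{\mu_{W,u}}$. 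Substituting back into the marginal and combining the $B_u \sigma_u$ term already present in the $U$-part with the $h_u \sigma_u$ produced by pruning yields exactly the Ising measure on $U$ with the fields $B'_u$ prescribed in \eqref{eq:lem:tree-pruning}.

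The main point to watch is the accounting of $B_u$ at boundary vertices: it is absorbed into the Ising model on $W \cup \{u\}$, and hence into the inverse hyperbolic tangent, and must not be double-counted as an additive term on top of it. Beyond this bookkeeping, the argument is essentially algebraic and driven entirely by the tree property, which guarantees that the pruned pieces decouple once $\sigma_{\partial U}$ is fixed.
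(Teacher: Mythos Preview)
Your argument is correct and is the standard proof of this fact. The paper does not actually supply its own proof of this lemma; it simply cites \cite[Lemma~4.1]{DM10}, and your factorisation-and-effective-field computation is exactly the content of that reference.
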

This lemma is proved in details in \cite[Lemma~4.1]{DM10}.
\section{Thermodynamic limits}\label{sec:thermodynamic:limit}

In this section, we state and prove the convergence of the thermodynamic limits of various quantities, such as pressure per particle, magnetisation, and internal energy. We obtain the thermodynamic limit of the pressure using the \emph{belief propagation} method from \cite{DMS13}. Throughout this chapter, we will primarily consider a fixed external magnetic field acting on all the vertices of the graph, resulting in $B_v = B$ for all $v \in V_n$. Note that, under the assumption of a fixed magnetic field, the Boltzmann distribution function in \eqref{eq:def:boltzmann:seq}, is symmetric about the sign of $B$. Thus, without loss of generality, we assume $B > 0$ and denote the partition function and pressure per particle as $Z_n(\beta, B)$ and
\begin{equation}\label{eq:def:pressure-per-particle}
	\psi_n(\beta, B) = \frac{1}{n} \log Z_n(\beta, B),
\end{equation}
respectively. In Section~\ref{sec:thermodynamic-limit:subsec:belief-propagation}, we describe the belief propagation idea, as outlined in \cite{DMS13}, and prove a fixed-point lemma that holds in great generality. In Section~\ref{sec:thermodynamic-limit:subsec:convergence:pressure}, we prove the convergence of pressure per particle. In Section~\ref{sec:thermodynamic-limit:subsec:convergence:thermodynamic-quantity}, we establish the thermodynamic limits of magnetisation and internal energy.


\subsection{Belief propagation on trees}\label{sec:thermodynamic-limit:subsec:belief-propagation}
For a rooted tree $T$ and $t\in\mathbb{N}$, let $T_t$ denote the tree $T$ pruned at height $t$, and let $\nu_T^{t,+}$ denote the root marginal for the Ising model on $T_t$ with $+$ boundary condition. Similarly, we define $\nu_T^{t,f}$ as the root marginal for the Ising model on $T_t$ with free boundary condition. From \cite[Lemma~4.1]{DMS13}, we know that the limits $\nu_T^{t,+}$ and $\nu_T^{t,f}$ exist as $t\to\infty$, and we denote these limiting distributions as $\nu_T^{+}$ and $\nu_T^{f}$, respectively.

Let $T_{x\to y}$ denote the subtree of $T$ obtained by deleting the edge $\{x , y\}$ and rooting it at $x$. For a P\'{o}lya point tree $T$ and $j\in[D(\emp)]$, $T_{\emp\to\emp j}$ represents the subtree of the P\'{o}lya point tree rooted at $\emp$ with the component hanging from $\emp j$ deleted (including $\emp j$). On the other hand, $T_{\emp j\to\emp }$ is a P\'{o}lya point tree rooted at $\emp j$. Essentially, for any $j\in[D(\emp)]$,
\eqn{\label{eq:BP:def:1}
	T=T_{\emp \to \emp j}\sqcup T_{\emp j\to\emp}~,
}
where $\sqcup$ represents the disjoint union operator. We now define the following root marginals:
\eqn{\label{eq:root-marginal}
	\nu_{\emp\to\emp j}^{\dagger}\equiv \nu_{T_{\emp\to\emp j}}^{\dagger}\quad\text{and}\quad\nu_{\emp j\to\emp}^{\dagger}\equiv \nu_{T_{\emp j\to\emp}}^{\dagger}~,
}
for $\dagger\in\{+,f\}$.
By \cite[Lemma~5.15]{RvdHSF},
\[
\nu_T^{+}=\nu_T^{f},~\quad\mu~\text{almost surely}.
\]
Let us denote $\magn{\to\emp j}=2\nu_{\emp\to\emp j}^{+}(+1)-1$, and $\magn{\emp j\to}=2\nu_{\emp j\to\emp}^{+}(+1)-1$. Note that $\magn{\to \emp j}$ is the root magnetisation in $T_{\emp\to\emp j}$. Similarly, $\magn{ \emp j\to }$ is the root magnetisation in the P\'{o}lya point tree $T_{\emp j\to\emp}$.
Root magnetisation of any locally finite tree satisfies the following property:
\begin{lemma}\label{lem:h:magnetization}
	Let $\Tbold$ be a multi-type branching process with a general type-space $\Sbold$, and assume that $\Tbold$ is almost surely locally finite. Furthermore, label the nodes of $\Tbold$ using the Ulam-Harris notation. Consider a distributional functional $\h$ as a fixed point solution to the following recursion:
	\begin{equation}\label{eq:test:1}
		\h^{(\ell+1)}(\omega) = B + \sum\limits_{i=1}^{D_{\ell}(\omega)} \arctanh\Big( \tanh(\beta)\tanh\big( \h^{(\ell)}(\omega i) \big) \Big),
	\end{equation}
	where $\big(D_{\ell}(\omega)\big)_{\ell\geq1}$ is a sequence of $D(\omega)$, the degree of the node $\omega$ in $\Tbold$, for some $B > 0$ and for all $\omega \in \Sbold$ and $\ell \geq 0$. Then, denoting by $\magn{\omega\to}$ the root magnetisation of $\Tbold$, rooted at $\omega$ under the inverse critical temperature $\beta$ and external magnetic field $B > 0$, for all $\omega \in \Sbold$, we have
	\begin{equation}\label{eq:prop:root-magnetization:1}
		\h(\omega) = \magn{\omega\to},\quad \text{a.s.}
	\end{equation}
\end{lemma}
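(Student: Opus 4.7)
The plan is to prove Lemma~\ref{lem:h:magnetization} by combining the standard belief-propagation recursion on trees with a truncation-and-uniqueness argument.

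For each $\omega \in \Sbold$ and $t \in \N$, I would first introduce $\h_t(\omega)$ as the effective field at the root of the subtree of $\Tbold$ rooted at $\omega$ and truncated at depth $t$, equipped with the Ising measure at inverse temperature $\beta$ and constant external field $B > 0$ with free boundary condition at depth $t$; set $\h_0(\omega) := B$. The first substantive step is to verify that $\h_t$ satisfies the recursion \eqref{eq:test:1} in $t$. For this, I would apply the tree-pruning Lemma~\ref{lem:tree-pruning} with $U$ the star consisting of $\omega$ together with its children $\omega 1, \ldots, \omega D(\omega)$, and $W$ the disjoint union of the subtrees hanging below the children. The lemma rewrites the marginal Ising measure on $U$ as an Ising model on the star with field $B$ at $\omega$ and field $\arctanh\langle \sigma_{\omega i}\rangle_{\mu_{W_i, \omega i}} = \h_{t-1}(\omega i)$ at each $\omega i$, and a one-vertex integration on the star then delivers exactly the recursion \eqref{eq:test:1}.

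Second, I would let $t \to \infty$. Since $\Tbold$ is almost surely locally finite and $B > 0$, Lemma~\ref{lem:GKS} implies that the $+$ and free-boundary marginals at depth $t$ monotonically sandwich the finite-truncation marginals, and the two limits coincide almost surely (for instance by the argument in \cite[Lemma~5.15]{RvdHSF}). Hence $\h_t(\omega)$ converges almost surely to a limit which, up to the identification used in the lemma, is $\magn{\omega\to}$. Since the sum in \eqref{eq:test:1} ranges over the finite set $\{1,\ldots,D(\omega)\}$ and each summand is bounded by $\beta$, dominated convergence lets me pass to the limit in the recursion, so $\magn{\omega\to}$ is itself a fixed point of \eqref{eq:test:1}.

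The final step, which I anticipate to be the main obstacle, is to upgrade this to uniqueness, so that any $\h$ solving \eqref{eq:test:1} must coincide with $\magn{\omega\to}$ almost surely. The function $x \mapsto \arctanh(\tanh(\beta)\tanh x)$ is strictly monotone increasing with derivative bounded by $\tanh\beta < 1$, and $B > 0$ breaks the $\pm$ symmetry among possible fixed points. Coupling the iteration starting from $\h^{(0)} \equiv 0$, which converges upward to the free-boundary fixed point, with the iteration starting from $\h^{(0)} \equiv +\infty$, which converges downward to the $+$-boundary fixed point, yields the smallest and largest solutions of \eqref{eq:test:1}, and the GKS sandwiching above forces them to agree. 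The delicate point is to carry out this uniqueness argument at the distributional level on the random tree $\Tbold$, simultaneously across all $\omega \in \Sbold$; once this is in place, any admissible $\h$ is pinned to the common value, yielding $\h(\omega) = \magn{\omega\to}$ almost surely.
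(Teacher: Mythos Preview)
Your proposal is correct and follows essentially the same route as the paper: both arguments iterate the recursion from the free ($\h^{(0)}\equiv B$) and $+$-boundary ($\h^{(0)}\equiv\infty$) initializations, use GKS for monotonicity, show the two limits coincide (the paper invokes the quantitative bound of \cite[Lemma~3.1]{DGvdH10} rather than \cite[Lemma~5.15]{RvdHSF}), and then sandwich an arbitrary fixed point between them by coupling the $D_\ell(\omega)$. Your remark about the derivative bound $\tanh\beta$ is true but inessential, since the sum over $D(\omega)$ children spoils the contraction; the paper, like you, relies solely on the sandwiching for uniqueness.
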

\medskip

Proof to Lemma~\ref{lem:h:magnetization} follows a similar approach to that in \cite[Proposition~5.13]{RvdHSF}, with adaptations made for a more general setting. Our goal is to demonstrate that \eqref{eq:prop:root-magnetization:1} is the unique fixed-point solution to the recursive equations in \eqref{eq:test:1}.

\begin{proof}[Proof of Lemma~\ref{lem:h:magnetization}]
	We prove Lemma~\ref{lem:h:magnetization} for a very general case where the tree is locally finite almost surely. Let $\Tbold_{\ell}(\omega)$ denote $\Tbold$ rooted at $\omega \in \Sbold$, truncated at depth $\ell$, and let $\magn{\omega,\ell,+/f}$ denote the root magnetization given $\Tbold_{\ell}(\omega)$ with an external magnetic field per vertex $B > 0$, under either the $+$ or free boundary condition. Since $\Tbold$ is locally finite almost surely, by \cite[Lemma~3.1]{DGvdH10} there exists $M < \infty$ such that for all $\ell \geq 1$,
	\begin{equation}
		\label{for:lem-h:1}
		|\magn{\omega,\ell,+} - \magn{\omega,\ell,f}| \leq \frac{M}{\ell}, \quad \text{a.s. for all } \omega \in \Sbold.
	\end{equation}
	Therefore, as $\ell \to \infty$, both $\magn{\omega,\ell,+}$ and $\magn{\omega,\ell,f}$ converge to the same limit, which we denote by $\magn{\omega \to}$, defined in Section~\ref{sec:thermodynamic-limit:subsec:belief-propagation}.
	
	Note that, for all $\omega \in \Sbold$, $\underline{\h}^{(\ell)}(\omega) \equiv \arctanh\big( \magn{\omega,\ell,f} \big)$, with initialisation $\underline{\h}^{(0)}(\omega) \equiv B$ for all $\omega \in \Sbold$, satisfies the recursive equation in \eqref{eq:test:1} due to Lemma~\ref{lem:tree-pruning}. By the GKS inequality in Lemma~\ref{lem:GKS}, $\magn{\omega,\ell,f}$ is monotonically increasing in $\ell$. Furthermore, from \eqref{eq:test:1}, for all $\omega \in \Sbold$, and $\ell\geq 1$,
	\[
	B = \underline{\h}^{(0)}(\omega) \leq \underline{\h}^{(\ell)}(\omega) \leq B + D(\emp) < \infty,
	\]
	for all $\ell \geq 1$, almost surely. Therefore, by the monotone convergence theorem, $\underline{\h}^{(\ell)}(\omega)$ converges to some limit $\underline{\h}(\omega)$ almost surely. Hence, $\underline{\h}$ is a fixed point of \eqref{eq:test:1} (from \cite[{Proof of Lemma~2.3}]{DM10}).
	
	Similarly, for all $\omega \in \Sbold$, $\bar{\h}^{(\ell)}(\omega) \equiv \magn{\omega,\ell,+}$ also satisfies \eqref{eq:test:1}, with initialisation $\bar{\h}^{(0)}(\omega) \equiv \infty$ for all $\omega \in \Sbold$. Then, $\bar{\h}^{(\ell)}(\omega)$ is monotonically decreasing and, for all $\omega \in \Sbold$, and $\ell\geq1$,
	\[
	B \leq \bar{\h}^{(\ell)}(\omega) \leq \bar{\h}^{(1)}(\omega) = B + D(\emp) < \infty,
	\]
	almost surely. Therefore, $\bar{\h}^{(\ell)}(\omega)$ also converges to some limit $\bar{\h}(\omega)$ for all $\omega \in \Sbold$.
	Therefore, by \cite[Lemma~3.1]{DGvdH10}, for all $\omega \in \Sbold$,
	\begin{align}
		\label{for:lem:h-uniqueness:8}
		\big| \tanh\big(\underline{\h}^{(\ell)}(\omega)\big) - \tanh\big( \bar{\h}^{(\ell)}(\omega) \big) \big| \leq \big| \magn{\omega,\ell,f} - \magn{\omega,\ell,+} \big|,
	\end{align}
	and hence converges to $0$ as $\ell \to \infty$. This relation holds almost surely, proving that both the limits are unique.
	
	Let $\h$ be a fixed point of \eqref{eq:test:1}, and $\h_\star^{(0)} \equiv \h$. Then, $\h^{(\ell)}_\star$ also converges to some limit $\h_\star$ when applying \eqref{eq:test:1}. Note that for all $\omega \in \Sbold$, $\underline{\h}^{(0)}(\omega) \leq \h_\star^{(0)}(\omega) \leq \bar{\h}^{(0)}(\omega)$. Now, coupling $\underline{\h}^{(\ell)}(\omega)$, $\h_\star^{(\ell)}(\omega)$, and $\bar{\h}^{(\ell)}(\omega)$ with the same sequence of $D_{\ell}(\omega)$ while applying \eqref{eq:test:1}, the inequality is preserved by the GKS inequality as
	\[
	\underline{\h}^{(\ell)}(\omega) \leq \h_\star^{(\ell)}(\omega) \leq \bar{\h}^{(\ell)}(\omega),
	\]
	for all $\omega \in \Sbold$ and $\ell \geq 1$. 
	Since \eqref{for:lem:h-uniqueness:8} holds almost surely and for any realisation of $\h_\star$, for all $\omega \in \Sbold$, $\underline{\h}(\omega)$, $\h_\star(\omega)$, and $\bar{\h}(\omega)$ are equal in distribution, proving that $\h(\omega) = \arctanh\big(\magn{\omega \to}\big)$ is the unique fixed point solution of \eqref{eq:test:1}.
\end{proof} 

\subsection{Convergence of pressure per particle}\label{sec:thermodynamic-limit:subsec:convergence:pressure}

In this section, we state and prove the theorem identifying the explicit thermodynamic limit of the pressure per particle for preferential attachment models. Proof to this theorem follows from the belief propagation proof provided by Dembo, Montannari and Sun in \cite{DMS13}, and some explicit properties of the local limit of the preferential attachment models, the P\'{o}lya point tree.

\medskip

\begin{Theorem}[Thermodynamic limit of the pressure]\label{thm:thermodynamic-limit:pressure}
	Fix $m\geq 2$ and $\delta>0$, and consider $\PA_n(m,\delta)$ (any of the three models (a), (b), and (d) described in Section~\ref{chap:intro:sec:model}). Then for all $0\leq \beta<\infty$ and $B\in\R$, the thermodynamic limit of the pressure exists and is deterministic:
	\eqn{\label{eq:thm:thermodynamic-limit:pressure}
		\psi_n(\beta,B)\overset{\prob}{\to} \varphi(\beta,B)~,}
	where $\varphi(\beta,B)$ is a constant.
	The thermodynamic limit of the pressure satisfies $\varphi(\beta,B)=\varphi(\beta,-B)$ for $B>0$, and $\varphi(\beta,0)=\lim\limits_{B\searrow 0}\varphi(\beta,B)$. For $B>0$, it is given by
	\eqan{\label{eq:thm:thermodynamic-limit:pressure:explicit}
		\varphi(\beta,B)&= \frac{\E[D(\emp)]}{2}\log \cosh(\beta)\nn\\
		&\hspace{0.5cm}-\frac{{\E[D(\emp)]}}{2}\E\Big[\log\big\{1+\tanh(\beta) \tanh(\h(\hat{\emp} 1))\tanh(\h(\hat{\emp})) \big\}\Big]\\
		&\hspace{0.5cm}+\E\Big[ \log\Big( \e^B\prod\limits_{i=1}^{D(\emp)}\big\{1+\tanh(\beta)\tanh(\h(\emp i))\big\}\nn\\
		&\hspace{3.5cm}+\e^{-B}\prod\limits_{i=1}^{D(\emp)}\big\{1-\tanh(\beta)\tanh(\h(\emp i))\big\} \Big) \Big]~,\nn	
	}
	where $\hat{\emp}=(U,\Young)$ and $U\sim\Unif[0,1]$, and $\{\h(\omega):\omega\in \Scal\}$ are independent copies of the fixed point functional $\h^\star(\omega)=\h^\star(\omega,\beta,B)$ of the distributional recursion
	\eqn{\label{eq:distributional-recursion:1}
		\h^{(t+1)}(\omega)\overset{d}{=}B+\sum\limits_{i=1}^{D_t(\omega)}\arctanh \Big(\tanh(\beta)\tanh\big(\h^{(t)}(\omega i)\big)\Big)~,}
	where $\h^{(0)}(\omega)=B$ for all $\omega\in\Scal,$ and $\big(D_t(\omega)\big)_{t\geq 1}$ are i.i.d. random variables with distribution $D(\omega)$, and $\h^{(t)}(\omega)$ is independent of $D_t(\omega)$.
\end{Theorem}


With Lemma~\ref{lem:h:magnetization} established, we now prove Theorem~\ref{thm:thermodynamic-limit:pressure}. The convergence part of the proof follows from \cite[Theorem~1.4]{DGvdH10}, while the identification of the explicit thermodynamic limit follows from \cite[Theorem~1.9]{DMS13}. To derive the explicit formula for the thermodynamic limit of the pressure, we must use two distributional properties of the nodes of the P\'{o}lya point tree. 
By the definition of the P\'{o}lya point tree, it can be shown to be a locally finite tree. Then, Lemma~\ref{lem:h:magnetization} proves that for the P\'{o}lya point tree,
\begin{equation}\label{eq:test:2}
	\h(\emp j) = \magn{\emp j\to},\quad \text{for all}~j\in [D(\emp)]~.
\end{equation}
Further, for any $j\in[D(\emp)]$, let $\h_{\sss -j}(\emp)$ be the fixed point solution to the recursion relation in \eqref{eq:test:1} for $T_{\emp\to\emp j}$. Since $T$ is locally finite, and $T_{\emp\to\emp j}$ is a subgraph of $T$,~$T_{\emp\to\emp j}$ is also locally finite. Therefore, by Lemma~\ref{lem:h:magnetization},
\begin{equation}\label{eq:test:3}
	\h_{\sss -j}(\emp) = \magn{\to\emp j}~.
\end{equation}
The following lemma proves a distributional equivalence related to $\Old$-labelled children of the root in the P\'{o}lya point tree.

\begin{Lemma}[Distributional equivalence of $\Old$-labelled children]
	\label{lem:old:distributional:equivalence}
	Let us denote $\hat{\emp}=(U,\Young)$, where $U\sim\Unif[0,1]$. Then, for any $i\in[m]$,
	\eqn{\label{eq:lem:old:distributional:equivalence}
		\big( \h_{-i}(\emp),\h(\emp i) \big)\overset{d}{=}\big( \h(\hat{\emp}),\h(\hat{\emp} 1) \big)~.}
\end{Lemma}

The next lemma proves a similar distributional equivalence for $\Young$-labelled children of the root.

\begin{Lemma}[Distributional equivalence of $\Young$-labelled children]\label{lem:young:distributional:equivalence}
	Let $\tilde{\emp}$ have label $\Old$ and an age distributed according to $\gamma(a)$, where $\gamma(a)$ is given by
	\eqn{\label{eq:def:gamma(a)} \gamma(a)=\frac{m+\delta}{m}(a^{\chi-1}-1)\quad\mbox{for }a\in[0,1]~.}
	Conditionally on $\tilde{\emp}$ having age $a$, let $\tilde{\emp}\tilde{1}$ have an age distributed according to $f_a(x)$ and label $\Young$, where $f_a(x)$ is defined as
	\eqn{\label{eq:def:f(a)}
		f_a(x)=(1-\chi)\frac{x^{-\chi}}{1-a^{1-\chi}}\one_{\{x\geq a\}}~.}
	Then,
	\eqn{\label{eq:lem:young:distributional:equivalence}
		\big( \h(\tilde{\emp}\tilde{1}),\h(\tilde{\emp}) \big)\overset{d}{=}\big( \h(\hat{\emp}),\h(\hat{\emp} 1) \big)~.
	}
\end{Lemma}

The following lemma establishes a convenient size-biasing argument related to the in-degree of the root:

\begin{Lemma}[Size-biased mixed Poisson]\label{lem:size-biased:poisson}
	Let $\tilde{\emp}$ have label $\Old$ and its age be distributed according to the density $\gamma(a)$ defined in \eqref{eq:def:gamma(a)}. Then, the following size-biasing result regarding the in-degree of the root holds:
	\eqn{\label{eq:lem:size-biased:poisson}
		\ell\prob\Big( d_\emp^{\rm(in)}=\ell\mid A_\emp=a \Big) = \E\big[ d_\emp^{\rm(in)} \big]\prob\Big( d_{\tilde{\emp}}^{\rm(in)}=\ell-1\mid \tilde{\emp}=(a,\Old) \Big)\gamma(a)~.}
\end{Lemma}

We will first prove Theorem~\ref{thm:thermodynamic-limit:pressure} using these lemmas and then proceed to prove the lemmas themselves.

\begin{proof}[Proof of Theorem~\ref{thm:thermodynamic-limit:pressure}]
	The proof of the convergence of the pressure to its thermodynamic limit, as shown in \cite[Theorem~1.4]{DGvdH10}, does not rely on any specific properties of the configuration model, except for the condition $|E_n|/n \leq c$ for some constant $c \in \R$ in \cite[(1.26)]{DGvdH10}, where $E_n$ is the edge set of the configuration model of size $n$. This condition also holds true for preferential attachment models. Since the remainder of the proof is independent of the particular random graph model being considered, it follows for preferential attachment models as well. Alternatively, this part of the proof also follows from \cite[Theorem~1.9]{DMS13}.
	
	We now proceed to prove the explicit expression for the thermodynamic limit of the pressure, which will follow from \cite[Theorem~1.9]{DMS13}. By \cite[(1.6)-(1.8)]{DMS13},
	\begin{align}\label{for:thm:pressure-limit:1}
		&\varphi(\beta,B) \\
		= &\ \mathbb{E}_{\mu}\left[ \log\left\{ \sum\limits_{\sigma\in\{0,1\}} \exp\left(B\sigma\right) \prod\limits_{j=1}^{D(\emp)} \left( \sum\limits_{\sigma_j\in\{0,1\}} \exp\left(\beta\sigma\sigma_j\right) \nu_{\emp j\to\emp}(\sigma_j) \right) \right\}\right.\nonumber \\
		&\hspace{2cm}\left.-\frac{1}{2}\sum\limits_{j=1}^{D(\emp)} \log\left\{ \sum\limits_{\sigma,\sigma_j\in\{0,1\}} \exp\left(\beta\sigma\sigma_j\right) \nu_{\emp j\to\emp}(\sigma_j) \nu_{\emp\to\emp j}(\sigma) \right\} \right]~.\nn
	\end{align}
	To derive an expression similar to \eqref{eq:thm:thermodynamic-limit:pressure:explicit} from \eqref{for:thm:pressure-limit:1}, we simplify the terms on the right-hand side of \eqref{for:thm:pressure-limit:1} step by step.
	We start by writing the first term as
	\eqan{\label{for:thm:pressure-limit:02}
		&\E_\mu\Big[ \log\Big\{ \e^B\prod\limits_{j=1}^{D(\emp)}\Big( \e^\beta\nu_{\emp j\to\emp}(+1)+\e^{-\beta}\nu_{\emp j\to\emp}(-1) \Big)\nn\\
		&\hspace{4cm}+\e^{-B}\prod\limits_{j=1}^{D(\emp)}\Big( \e^{-\beta}\nu_{\emp j\to\emp}(+1)+\e^{\beta}\nu_{\emp j\to\emp}(-1) \Big) \Big\} \Big]\nn\\
		=&\E_\mu\Bigg[ \log\Bigg\{ \e^B\prod\limits_{j=1}^{D(\emp)}\frac{2\Big( \e^\beta\nu_{\emp j\to\emp}(+1)+\e^{-\beta}\nu_{\emp j\to\emp}(-1) \Big)}{\e^{\beta}+\e^{-\beta}}\\
		&\hspace{2.5cm}+\e^{-B}\prod\limits_{j=1}^{D(\emp)}\frac{2\Big( \e^{-\beta}\nu_{\emp j\to\emp}(+1)+\e^{\beta}\nu_{\emp j\to\emp}(-1) \Big)}{\e^{\beta}+\e^{-\beta}} \Bigg\}\nn\\
		&\hspace{8cm}+D(\emp)\log\Bigg\{ \frac{\e^{\beta}+\e^{-\beta}}{2} \Bigg\} \Bigg]~.\nn
	}
	Using that $\big(2\big( \e^\beta\nu_{\emp j\to\emp}(+1)+\e^{-\beta}\nu_{\emp j\to\emp}(-1) \big)\big)/\big(\e^{\beta}+\e^{-\beta}\big)$ equals $\big( 1+\tanh(\beta)\big(2\nu_{\emp j\to\emp}(+1)-1 \big) \big)$, we arrive at
	\eqan{\label{for:thm:pressure-limit:2}
		&\E_\mu[D(\emp)]\log\cosh(\beta)\nn\\
		&\hspace{1cm}+\E\left[ \log\Bigg\{ \e^B\prod\limits_{j=1}^{D(\emp)}\Big( 1+\tanh(\beta)\big(2\nu_{\emp j\to\emp}(+1)-1 \big) \Big)\right.\nn\\
		&\hspace{3cm}\left. +\e^{-B}\prod\limits_{j=1}^{D(\emp)}\Big( 1-\tanh(\beta)\big(2\nu_{\emp j\to\emp}(+1)-1 \big) \Big) \Bigg\} \right]~.
	}
	Note that, $\magn{\emp j\to}=2\nu_{\emp j\to\emp}(+1)-1$ by the definition of the magnetization. Therefore, using Lemma~\ref{lem:h:magnetization}, we simplify \eqref{for:thm:pressure-limit:2} as
	\eqan{\label{for:thm:pressure-limit:3}
		&\E_\mu[D(\emp)]\log\cosh(\beta)\nn\\
		&\hspace{1cm}+\E\Bigg[ \log\Big\{ \e^B\prod\limits_{j=1}^{D(\emp)}\Big( 1+\tanh(\beta)\tanh\big(\h(\emp j)\big) \Big)\nn\\
		&\hspace{3cm} +\e^{-B}\prod\limits_{j=1}^{D(\emp)}\Big( 1-\tanh(\beta)\tanh\big(\h(\emp j)\big) \Big) \Big\} \Bigg]~.
	}
	Similarly, we simplify the second term on the RHS of \eqref{for:thm:pressure-limit:1} as
	\eqan{\label{for:thm:pressure-limit:4}
		&\E_\mu\Bigg[ \sum\limits_{j=1}^{D(\emp)}\log\Big\{ \sum\limits_{\sigma,\sigma_j\in\{0,1\}}\e^{\beta\sigma\sigma_j}\nu_{\emp j\to\emp}(\sigma_j)\nu_{\emp\to\emp j}(\sigma) \Big\} \Bigg]\nn\\
		=&\E_{\mu}\Bigg[ \sum\limits_{j=1}^{D(\emp)} \log\Bigg\{ \e^{\beta}\nu_{\emp j\to\emp}(+1)\nu_{\emp \to\emp j}(+1)+\e^{-\beta}\nu_{\emp j\to\emp}(-1)\nu_{\emp \to\emp j}(+1)\nn\\
		&\hspace{3cm}+\e^{-\beta}\nu_{\emp j\to\emp}(+1)\nu_{\emp \to\emp j}(-1)+\e^{\beta}\nu_{\emp j\to\emp}(-1)\nu_{\emp \to\emp j}(-1) \Bigg\} \Bigg]\nn\\
		=&\E_{\mu}\Bigg[ \sum\limits_{j=1}^{D(\emp)} \log\Bigg( \frac{2}{\e^{\beta}+\e^{-\beta}}\Big\{ \e^{\beta}\nu_{\emp j\to\emp}(+1)\nu_{\emp \to\emp j}(+1)\nn\\
		&\hspace{2.5cm}+\e^{-\beta}\nu_{\emp j\to\emp}(-1)\nu_{\emp \to\emp j}(+1)+\e^{-\beta}\nu_{\emp j\to\emp}(+1)\nu_{\emp \to\emp j}(-1)\nn\\
		&\hspace{4.5cm}+\e^{\beta}\nu_{\emp j\to\emp}(-1)\nu_{\emp \to\emp j}(-1) \Big\}\Bigg) \Bigg]\nn\\
		&\hspace{7cm}+\E_\mu[D(\emp)]\log\Big\{ \frac{\e^{\beta}+\e^{-\beta}}{2} \Big\}\nn\\
		=&\E_{\mu}[D(\emp)]\log\cosh(\beta)\\
		&\hspace{1cm}+\E_{\mu}\Bigg[ \sum\limits_{j=1}^{D(\emp)}\log\Big\{ 1+\tanh(\beta)\big( 2\nu_{\emp j\to\emp}(+1)-1 \big)\big( 2\nu_{\emp\to\emp j}(+1)-1 \big) \Big\} \Bigg]~.\nn
	}
	Again using definition of $\magn{\to \emp j}$ and Lemma~\ref{lem:h:magnetization}, we can rewrite the LHS of \eqref{for:thm:pressure-limit:4} as
	\eqan{\label{for:thm:pressure-limit:5}
		&\E_\mu[D(\emp)]\log\cosh(\beta)\nn\\
		&\hspace{1cm}+\E_{\mu}\Bigg[ \sum\limits_{j=1}^{D(\emp)}\log\Big\{ 1+\tanh(\beta)\tanh\big(\h_{-j}(\emp)\big)\tanh\big(\h(\emp j)\big) \Big\} \Bigg]~.
	}
	Now, plugging in the simplified forms obtained in \eqref{for:thm:pressure-limit:3} and \eqref{for:thm:pressure-limit:5} in \eqref{for:thm:pressure-limit:1}, we obtain
	\eqan{\label{for:thm:pressure-limit:6}
		&\varphi(\beta,B)\\
		=& \frac{\E_\mu[D(\emp)]}{2}\log\cosh(\beta)\nn\\
		&\hspace{0.5cm}-\frac{1}{2}\E_{\mu}\Bigg[ \sum\limits_{j=1}^{D(\emp)}\log\Big\{ 1+\tanh(\beta)\tanh\big(\h_{-j}(\emp)\big)\tanh\big(\h(\emp j)\big) \Big\} \Bigg]\nn\\
		&\hspace{0.75cm}+\E_\mu\Bigg[ \log\Big\{ \e^B\prod\limits_{j=1}^{D(\emp)}\Big( 1+\tanh(\beta)\tanh\big(\h(\emp j)\big) \Big)\nn\\
		&\hspace{3cm}+\e^{-B}\prod\limits_{j=1}^{D(\emp)}\Big( 1-\tanh(\beta)\tanh\big(\h(\emp j)\big) \Big) \Big\} \Bigg]~.\nn
	}
	Next, we use the distributional properties of the nodes in the P\'{o}lya point tree, as outlined in Lemmas~\ref{lem:old:distributional:equivalence} and \ref{lem:young:distributional:equivalence}, to conclude that the RHS of \eqref{for:thm:pressure-limit:6} further simplifies to the RHS of \eqref{eq:thm:thermodynamic-limit:pressure:explicit}. Note that $D(\emp)=m+d_\emp^{\rm(in)}$, where $d_\emp^{\rm(in)}$ is the number of $\Young$-labelled children of the root in the P\'{o}lya point tree. Therefore, the second term on the RHS of \eqref{for:thm:pressure-limit:6} can be divided into two parts: one comprising the contribution from the $\Old$-labelled children of the root, and the other comprising the contribution from the $\Young$-labelled children of the root.
	\eqan{\label{for:thm:pressure-limit:7}
		&\E_{\mu}\Bigg[ \sum\limits_{j=1}^{D(\emp)}\log\Big\{ 1+\tanh(\beta)\tanh\big(\h_{-j}(\emp)\big)\tanh\big(\h(\emp j)\big) \Big\} \Bigg]\\
		=&\E_{\mu}\Bigg[ \sum\limits_{j=1}^{m}\log\Big\{ 1+\tanh(\beta)\tanh\big(\h_{-j}(\emp)\big)\tanh\big(\h(\emp j)\big) \Big\} \Bigg]\nn\\
		&\hspace{2cm}+\E_{\mu}\Bigg[ \sum\limits_{j=m+1}^{m+d_\emp^{\rm(in)}}\log\Big\{ 1+\tanh(\beta)\tanh\big(\h_{-j}(\emp)\big)\tanh\big(\h(\emp j)\big) \Big\} \Bigg]~.\nn
	}
	Using Lemma~\ref{lem:old:distributional:equivalence}, we simplify the first sum in the RHS of \eqref{for:thm:pressure-limit:7} as
	\eqn{\label{for:thm:pressure-limit:8}
		m\E_{\mu}\Big[ \log\Big\{ 1+\tanh(\beta)\tanh\big(\h(\hat{\emp})\big)\tanh\big(\h(\hat{\emp} 1)\big) \Big\} \Big]~.}
	Next, we simplify the second term on the RHS of \eqref{for:thm:pressure-limit:7}. From the definition of the P\'{o}lya point tree, we know that $d_\emp^{\rm(in)}$ is a mixed-Poisson random variable with intensity parameter $\Gamma\big(A_\emp^{\chi-1}-1\big)$, where $\Gamma \sim \text{Gamma}(m+\delta, 1)$ and $A_\emp \sim \text{Unif}(0,1)$. Furthermore, conditionally on $\{A_\emp = a\}$ and $d_\emp^{\rm(in)} = \ell$, the ages of $\{\emp(m+1), \ldots, \emp(m+\ell)\}$ are i.i.d.\ with density $f_a$ defined in \eqref{eq:def:f(a)}. Conditionally on $\{A_\emp = a\}$ and $d_{\emp}^{\rm(in)} = \ell$, the age of $\emp\tilde{1}$ follows the distribution $f_a$ and has the label $\Young$.
	Therefore, second term in the RHS of \eqref{for:thm:pressure-limit:7} simplifies as
	\eqan{\label{for:thm:pressure-limit:9}
		&\int\limits_0^1\sum\limits_{\ell=1}^\infty \prob\big( d_\emp^{\rm(in)}=\ell\mid A_\emp=a \big)\\
		&\hspace{-0.5cm}\times\E_{\mu}\Big[ \sum\limits_{j=m+1}^{m+\ell}\log\big\{ 1+\tanh(\beta)\tanh\big(\h_{-j}(\emp)\big)\tanh\big(\h(\emp j)\big) \big\}\mid A_\emp=a,d_\emp^{\rm(in)}=\ell \Big]\,da\nn\\
		=&\int\limits_0^1\sum\limits_{\ell=1}^\infty \E_{\mu}\Bigg[ \log\Big\{ 1+\tanh(\beta)\tanh\big(\h_{-\tilde{1}}(\emp)\big)\tanh\big(\h(\emp \tilde{1})\big) \Big\}\mid A_\emp=a,d_\emp^{\rm(in)}=\ell \Bigg]\nn\\
		&\hspace{8cm}\times\ell\prob\big( d_\emp^{\rm(in)}=\ell\mid A_\emp=a \big)\,da~.\nn
	}
	Note that, conditionally on $\{A_\emp = a, d_\emp^{\rm(in)} = \ell\}$, $\h(\emp\tilde{1})$ and $\h_{-\tilde{1}}(\emp)$ are independent. Let $\tilde{\emp}$ have label $\Old$ and age with density $\gamma(a)$. Conditionally on $A_{\tilde{\emp}} = a$, $\tilde{\emp}\tilde{1}$ has label $\Young$ and its age is distributed according to $f_a$. From the definitions in \eqref{eq:test:1}, we have
	
	\begin{equation}
		\label{for:thm:pressure-limit:10}
		\begin{aligned}
			\h_{-\tilde{1}}(\emp) \mid \{A_\emp = a, d_\emp^{\rm(in)} = \ell\} ~ &\overset{d}{=} ~ \h(\tilde{\emp}) \mid \{\tilde{\emp} = (a, \Old), d_{\tilde{\emp}}^{\rm(in)} = \ell - 1\}, \\
			\h(\emp\tilde{1}) \mid \{A_\emp = a, d_\emp^{\rm(in)} = \ell\} ~ &\overset{d}{=} ~ \h(\tilde{\emp}\tilde{1}) \mid \{\tilde{\emp} = (a, \Old), d_{\tilde{\emp}}^{\rm(in)} = \ell - 1\}.
		\end{aligned}
	\end{equation}
	
	Now, conditionally on $\tilde{\emp} = (a, \Old)$, drawing $\h(\tilde{\emp})$ and $\h(\tilde{\emp}\tilde{1})$ independently leads to the distributional equality
	\begin{align}
		\label{for:thm:pressure-limit:11}
		&\big(\h(\tilde{\emp}), \h(\tilde{\emp}\tilde{1})\big) \mid \{\tilde{\emp} = (a, \Old), d_{\tilde{\emp}}^{\rm(in)} = \ell - 1\}\nn\\
		&\hspace{2cm}\overset{d}{=} ~ \big(\h_{-\tilde{1}}(\emp), \h(\emp\tilde{1})\big) \mid \{A_\emp = a, d_\emp^{\rm(in)} = \ell\}.
	\end{align}
	
	Hence, using \eqref{for:thm:pressure-limit:11} and Lemma~\ref{lem:size-biased:poisson}, \eqref{for:thm:pressure-limit:10} can be simplified as
	\eqan{\label{for:thm:pressure-limit:12}
		&\E_{\mu}\Bigg[ \sum\limits_{j=m+1}^{m+d_\emp^{\rm(in)}}\log\Big\{ 1+\tanh(\beta)\tanh\big(\h_{-j}(\emp)\big)\tanh\big(\h(\emp j)\big) \Big\} \Bigg]\\
		=&\int\limits_0^1\sum\limits_{\ell=1}^\infty \E\big[ d_{\emp}^{\rm(in)} \big]\prob\big( d_{\tilde{\emp}}^{\rm(in)}=\ell-1\mid \tilde{\emp}=(a,\Old) \big)\gamma(a)\nn\\
		&\hspace{-0.5cm}\times \E_{\mu}\Bigg[ \log\Big\{ 1+\tanh(\beta)\tanh\big(\h(\tilde{\emp}\tilde{1})\big)\tanh\big(\h( \tilde{\emp})\big) \Big\}\mid \tilde{\emp}=(a,\Old),d_{\tilde{\emp}}^{\rm(in)}=\ell-1 \Bigg]\,da\nn\\
		=&\E\big[ d_{\emp}^{\rm(in)} \big]\E_{\mu}\Big[ \log\Big\{ 1+\tanh(\beta)\tanh\big(\h(\tilde{\emp}\tilde{1})\big)\tanh\big(\h( \tilde{\emp})\big) \Big\} \Big]~.
	}
	Now, by Lemma~\ref{lem:young:distributional:equivalence} and \eqref{for:thm:pressure-limit:7},\eqref{for:thm:pressure-limit:8} and \eqref{for:thm:pressure-limit:12}, we conclude that
	\eqan{\label{for:thm:pressure-limit:13}
		&\E_{\mu}\Bigg[ \sum\limits_{j=1}^{D(\emp)}\log\Big\{ 1+\tanh(\beta)\tanh\big(\h_{-j}(\emp)\big)\tanh\big(\h(\emp j)\big) \Big\} \Bigg]\nn\\
		=&\Big(m+\E_{\mu}\big[ d_{\emp}^{\rm(in)} \big]\Big)\E_{\mu}\Big[ \log\Big\{ 1+\tanh(\beta)\tanh\big(\h({\emp}{1})\big)\tanh\big(\h( \hat{\emp})\big) \Big\} \Big]\nn\\
		=&\E_\mu\big[ D(\emp) \big]\E_{\mu}\Big[ \log\Big\{ 1+\tanh(\beta)\tanh\big(\h({\emp}{1})\big)\tanh\big(\h( \hat{\emp})\big) \Big\} \Big]~.}
	Therefore, substituting \eqref{for:thm:pressure-limit:13} in \eqref{for:thm:pressure-limit:6}, we obtain the expression in \eqref{eq:thm:thermodynamic-limit:pressure:explicit} for $\varphi(\beta,B)$.
\end{proof}

Now that we have proved Theorem~\ref{thm:thermodynamic-limit:pressure} subject to Lemma~\ref{lem:old:distributional:equivalence}, \ref{lem:young:distributional:equivalence} and \ref{lem:size-biased:poisson}, we provide the proof to these lemmata one by one. First, we prove Lemma~\ref{lem:old:distributional:equivalence}:
\begin{proof}[Proof of Lemma~\ref{lem:old:distributional:equivalence}]
	Note that in the P\'{o}lya point tree, ages of the $\Old$ labeled children are an exchangeable sequence of random variables, i.e., for any permutation $\Upsilon:\N\to\N$,
	\eqn{\label{for:lem:old:distributional:equivalence:1}
		\big( A_{\emp 1},\ldots,A_{\emp m} \big)\overset{d}{=}\big( A_{\emp \Upsilon(1)},\ldots,A_{\emp \Upsilon(m)} \big)~.}
	Fix any $i\in[m]$ and choose 
	\[
	\Upsilon_i(x)=\begin{cases}
		i \quad \mbox{if }x=1,\\
		1 \quad \mbox{if }x=i,\\
		x \quad \mbox{otherwise.} 
	\end{cases}
	\]
	Therefore, from the distributional recursion in \eqref{eq:test:1} and using the definition of $\h_{\sss -i}(\emp)$ in Section~\ref{sec:thermodynamic-limit:subsec:belief-propagation}, we obtain
	\eqan{\label{for:lem:old:distributional:equivalence:2}
		&\big(\h(\emp i),\h_{-i}(\emp)\big)\nn\\
		&\hspace{0.5cm}\overset{d}{=}\Big( \h(\emp \Upsilon_i(1)), B+\sum\limits_{j=2}^{D(\emp)}\arctanh\big\{\tanh(\beta)\tanh\big(\h(\emp \Upsilon_i(j))\big)\big\} \Big)~.}
	Further, for any $\omega\in\Scal$, the distribution of $h(\omega)$ is dependent only on $\omega$. Hence using \eqref{for:lem:old:distributional:equivalence:1},
	\eqan{\label{for:lem:old:distributional:equivalence:3}
		&\Big( \h(\emp \Upsilon_i(1)), B+\sum\limits_{j=2}^{D(\emp)}\arctanh\big\{\tanh(\beta)\tanh\big(\h(\emp \Upsilon_i(j))\big)\big\} \Big)\nn\\
		&\hspace{2cm}\overset{d}{=}\Big( \h(\emp 1), B+\sum\limits_{j=2}^{D(\emp)}\arctanh\big\{\tanh(\beta)\tanh\big(\h(\emp j)\big)\big\} \Big)~.}
	Note that $\hat{\emp}$ is labelled $\Young$, and from the definition of the P\'{o}lya point tree, we obtain that the root and $\Young$-labelled nodes of the P\'{o}lya point tree are i.i.d. Furthermore, the number of $\Old$-labelled children of the root is one more than the number of $\Young$-labelled children in the P\'{o}lya point tree. Therefore, with $\hat{\emp}$ having the label $\Young$ and age $A_\emp \sim \text{Unif}[0,1]$, we obtain
	\begin{align}
		\label{for:lem:old:distributional:equivalence:4}
		B + \sum_{j=2}^{D(\emp)} \arctanh\left\{ \tanh(\beta) \tanh\left( \h(\emp j) \right) \right\} \overset{d}{=} \h(\hat{\emp}).
	\end{align}
	Furthermore, conditionally on $\{A_\emp = a\}$, $\h(\emp1)$ and $\{\h(\emp2), \ldots, \h(\emp D(\emp))\}$ are independent, and $\h(\emp 1)$ is equal in distribution with $\h(\hat{\emp}1)$. Therefore, conditionally on $\{A_\emp = a\}$, drawing $\h(\hat{\emp})$ and $\h(\hat{\emp}1)$ independently leads to
	\begin{align}
		\label{for:lem:old:distributional:equivalence:5}
		&\left( \h(\emp1), B + \sum_{j=2}^{D(\emp)} \arctanh\left\{ \tanh(\beta) \tanh\left( \h(\emp j) \right) \right\} \right)\nn\\
		&\hspace{1cm}\overset{d}{=} \left( \h(\hat{\emp}1), \h(\hat{\emp}) \right).
	\end{align}
	Hence, Lemma~\ref{lem:old:distributional:equivalence} follows immediately from \eqref{for:lem:old:distributional:equivalence:5}.
\end{proof}
Next, we prove Lemma~\ref{lem:young:distributional:equivalence}:
\begin{proof}
	To prove this lemma, we show that the joint distributions of the types of $\big( \tilde{\emp}\tilde{1},\tilde{\emp} \big)$ and $\big( \hat{\emp},\hat{\emp}1 \big)$ are equal. First, we compute the joint distribution of the types of $\big(\hat{\emp},\hat{\emp}1\big)$ as
	\eqan{\label{for:lem:distributional:equivalence:1}
		\prob\big( A_{\hat{\emp}}\leq x,A_{\hat{\emp}{1}}\leq y \big)&=~\int\limits_0^x\prob\big( A_{\hat{\emp}{1}}\leq y\mid A_{\hat{\emp}}=t \big)\,dt \nn\\
		&=~	\int\limits_0^x	\prob\big( U^{1/\chi}t\leq y\mid A_{\hat{\emp}}=t \big)\,dt~.
	}
	Note that $U$ and $A_{\hat{\emp}}$ are independent. Hence, upon further simplification,
	\eqn{\label{for:lem:distributional:equivalence:2}
		\prob\big( A_{\hat{\emp}}\leq x,A_{\hat{\emp}{1}}\leq y \big)=~\begin{cases}
			x &\mbox{if }x<y~,\\
			\frac{1}{1-\chi}y^{\chi}x^{1-\chi}-\frac{\chi}{1-\chi}y\hspace{3pt} &\mbox{if }x\geq y~.
	\end{cases}}
	Next, we compute the joint distribution of $\big(\tilde{\emp}\tilde{1},\tilde{\emp}\big)$ as
	\eqan{\label{for:lem:distributional:equivalence:3}
		\prob\big(A_{\tilde{\emp}\tilde{1}}\leq x,A_{\tilde{\emp}}\leq y\big)&=~\int\limits_0^y \prob\big(A_{\tilde{\emp}\tilde{1}}\leq x\mid A_{\tilde{\emp}}=t \big)\gamma(t)\,dt\nn\\
		&=~ \int\limits_0^y \int\limits_0^x f_t(s)\gamma(t)\,ds\,dt~.
	}	
	The last equality in \eqref{for:lem:distributional:equivalence:3} is due to the fact that conditionally on $\{\tilde{\emp}=(t,\Old)\},~A_{\tilde{\emp}\tilde{1}}$ has density $f_t(\cdot)$. Therefore, after simplifying \eqref{for:lem:distributional:equivalence:3},
	\eqn{\label{for:lem:distributional:equivalence:4}
		\prob\big(A_{\tilde{\emp}\tilde{1}}\leq x,A_{\tilde{\emp}}\leq y\big)=\begin{cases}
			x &\mbox{if }x<y~,\\
			\frac{1}{1-\chi}y^{\chi}x^{1-\chi}-\frac{\chi}{1-\chi}y\hspace{3pt} &\mbox{if }x\geq y~,
	\end{cases}}
	proving the distributional equivalence of $\big( \tilde{\emp}\tilde{1},\tilde{\emp} \big)$ and $\big( \hat{\emp},\hat{\emp}1 \big)$. Since $\{\h(\omega):\omega\in\Scal\}$ are drawn independently, the joint distributions of $\big( \h(\tilde{\emp}\tilde{1}),\h(\tilde{\emp}) \big)$ and $\big( \h(\hat{\emp}),\h(\hat{\emp}1) \big)$ are equal.
\end{proof}
Lastly, we address the proof of Lemma~\ref{lem:size-biased:poisson}. This lemma involves a size-biasing argument for the in-degree of the root of the P\'{o}lya point tree. Since the in-degree of the P\'{o}lya point tree follows a mixed-Poisson distribution, the size-biasing argument introduces an additional term alongside its expectation:

\begin{proof}[Proof of Lemma~\ref{lem:size-biased:poisson}]
	From the definition of the P\'{o}lya point tree, we know that $d_\emp^{\rm(in)}$ is a mixed-Poisson random variable with a mixing distribution $\Gamma_{\emp}\big(A_\emp^{\chi-1}-1\big)$, where $A_\emp$ is the age of the root of the P\'{o}lya point tree. Therefore, conditionally on $A_\emp = a$, $d_\emp^{\rm(in)}$ is a mixed Poisson random variable with a mixing distribution $\Gamma\big(a^{\chi-1}-1\big)$, where $\Gamma$ is a Gamma random variable with parameters $m + \delta$ and $1$. Thus,
	\eqan{\label{for:lem:size-bias:1}
		\ell \prob\big( d_\emp^{\rm(in)}=\ell \mid A_\emp=a \big)&=\ell \prob\big(\Poi\big( \Gamma(a^{\chi-1}-1) \big)=\ell  \big)\\
		&=\int\limits_0^\infty \ell \prob\big(\Poi\big( \z(a^{\chi-1}-1) \big)=\ell  \big)\z^{m+\delta-1}\frac{\e^{-\z}}{\Gamma(m+\delta)}\,d\z~,\nn
	}
	where $\Gamma(c)$ is the gamma function evaluated at $c$ for any $c \in \mathbb{R}^+$. From a direct computation of the probability mass function of a Poisson random variable with a fixed parameter $\lambda$, the following holds for all $\ell \in \mathbb{N}$:
	
	\begin{equation}
		\label{for:lem:size-bias:2}
		\ell \, \prob(\Poi(\lambda) = \ell) = \lambda \, \prob(\Poi(\lambda) = \ell - 1).
	\end{equation}
	
	Using \eqref{eq:def:gamma(a)} and\eqref{for:lem:size-bias:2}, RHS of \eqref{for:lem:size-bias:1} can be simplified as
	\eqan{\label{for:lem:size-bias:3}
		&\int\limits_0^\infty \ell \prob\big(\Poi\big( \z(a^{\chi-1}-1) \big)=\ell  \big)\z^{m+\delta-1}\frac{\e^{-\z}}{\Gamma(m+\delta)}\,d\z\\
		=&\int\limits_0^\infty \prob\big(\Poi\big( \z(a^{\chi-1}-1) \big)=\ell -1 \big)\frac{m}{m+\delta}\big(a^{\chi-1}-1 \big)\z^{m+\delta}\frac{\e^{-\z}}{\Gamma(m+\delta)}\,d\z~.\nn
	}
	Since $d_\emp^{\rm(in)}$ is a mixed-Poisson random variable with mixing distribution $\Gamma_{\emp}\big(A_\emp^{\chi-1}-1\big)$, 
	\eqan{\label{for:lem:size-bias:4}
		\E\big[d_\emp^{\rm(in)}\big]&=\E\big[\Gamma_{\emp}\big(A_\emp^{\chi-1}-1\big)\big]=\E[\Gamma_{\emp}]\E\big[A_\emp^{\chi-1}-1\big]\nn\\
		&=(m+\delta)(1/\chi-1)=m~.}
	Therefore, using the identity $x\Gamma(x)=\Gamma(x+1)$ and \eqref{for:lem:size-bias:4}, we simplify the RHS of \eqref{for:lem:size-bias:3} further as
	\eqan{\label{for:lem:size-bias:5}
		&\int\limits_0^\infty \prob\big(\Poi\big( \z(a^{\chi-1}-1) \big)=\ell -1 \big)\frac{m}{m+\delta}\gamma(a)\z^{m+\delta}\frac{\e^{-\z}}{\Gamma(m+\delta)}\,d\z\nn\\
		=&\E\big[ d_\emp^{\rm(in)} \big]\gamma(a)\int\limits_0^\infty \prob\big(\Poi\big( \z(a^{\chi-1}-1) \big)=\ell -1 \big)\z^{m+\delta}\frac{\e^{-\z}}{\Gamma(m+\delta+1)}\,d\z\nn\\
		=& \E\big[ d_\emp^{\rm(in)} \big]\gamma(a) \prob\big(\Poi\big( \Gamma_{\rm{in}}(m+1)(a^{\chi-1}-1) \big)=\ell -1 \big)~,
	}
	where $\Gamma_{\rm{in}}(m+1)$ is a $\rm{Gamma}$ random variable with parameters $m+\delta+1$ and $1$. Note that, based on the construction of the P\'{o}lya point tree, conditionally on $\tilde{\emp} = (a, \Old)$, $d_{\tilde{\emp}}^{\rm(in)}$ is a mixed Poisson random variable with mixing distribution $\Gamma_{\tilde{\emp}}(a^{\chi-1} - 1)$, where $\Gamma_{\tilde{\emp}}$ is a $\rm{Gamma}$ random variable with parameters $m + \delta + 1$ and $1$. Therefore,
	\begin{equation}
		\label{for:lem:size-bias:6}
		d_{\tilde{\emp}}^{\rm(in)} \mid \big\{ \tilde{\emp} = (a, \Old) \big\} \quad \overset{d}{=} \quad \Poi\big( \Gamma_{\rm{in}}(m+1)(a^{\chi-1} - 1) \big)~.
	\end{equation}
	Hence, the lemma follows immediately from \eqref{for:lem:size-bias:1}, \eqref{for:lem:size-bias:3}, \eqref{for:lem:size-bias:5}, and \eqref{for:lem:size-bias:6}.
\end{proof}

\subsection{Convergence of thermodynamic quantities}\label{sec:thermodynamic-limit:subsec:convergence:thermodynamic-quantity}

In this section, we state and prove the thermodynamic limits of two thermodynamic quantities, namely internal energy and magnetisation for the Ising model on preferential attachment models:

\medskip

\begin{Theorem}[Thermodynamic quantities]\label{thm:limit:thermodynamic-quantities}
	Let $\{G_n\}_{n\geq 1}$ be a sequence of preferential attachment graphs with parameters $m$ and $\delta$. Then, for all $\beta\geq 0$ and $B\in\R$, 
	\begin{enumerate}
		\item[(a)]\label{thm:magnetization} {\bf Magnetisation.}
		Let $M_n(\beta,B)=\frac{1}{n}\langle\sum_{i\in[n]}\sigma_i\rangle_{\mu_n}$ be the magnetisation per vertex. Then, its thermodynamic limit exists and is given by
		\eqn{\label{eq:def:magnetization}
			M_n(\beta,B)\overset{\prob}{\to}M(\beta,B)=\frac{\partial}{\partial B}\varphi(\beta,B)=\E[\tanh(\h(\emp))]~.
		}
		\item[(b)]\label{thm:internal:energy} {\bf Internal energy.}
		Let $U_n(\beta,B)=-\frac{1}{n} \langle\sum_{(u,v)\in E_n} \sigma_u\sigma_v\rangle_{\mu_n}$ denote the internal energy per vertex. Then, its thermodynamic limit exists and is given by
		\eqan{\label{eq:def:internal:energy}
			U_n(\beta,B)\overset{\prob}{\to}U(\beta,B)=&-\frac{\partial}{\partial\beta}\varphi(\beta,B)\\
			=&-m\E\left[ \frac{\tanh(\beta)+\tanh(\h(\hat{\emp}))\tanh(\h(\hat{\emp} 1))}{1+\tanh(\beta)\tanh(\h(\hat{\emp}))\tanh(\h(\hat{\emp} 1))} \right]~.\nn
		}
	\end{enumerate}
\end{Theorem}

We now proceed to prove Theorem~\ref{thm:limit:thermodynamic-quantities}. To prove this theorem, we utilise the fact that $\varphi(\beta, B)$ is a convex function with respect to both $\beta$ and $B$. The following lemma establishes this convexity property of $\varphi$:

\begin{Lemma}[Convexity of $\psi_n(\beta, B)$]\label{lem:convexity:pressure-limit}
	For any $\beta \in\R$ and $B \in \mathbb{R}$, let $\psi_n(\beta, B)$ be as defined in Theorem~\ref{thm:thermodynamic-limit:pressure}. Then, both $\beta \mapsto \psi_n(\beta, B)$ and $B \mapsto \psi_n(\beta, B)$ are convex functions of $\beta$ and $B$, respectively.
\end{Lemma}

To obtain the explicit expressions in Theorem~\ref{thm:limit:thermodynamic-quantities}, we use the following lemma. This lemma relies on the fact that preferential attachment models locally converge in probability to the P\'{o}lya point tree.

\begin{Lemma}\label{lem:limiting:edge-measure}
	Let $E_n$ denote the edge set of the preferential attachment model of size $n$ (denoted by $G_n$), and let $\mu_n$ denote the Boltzmann distribution on $G_n$. Then,
	\begin{align}
		\frac{1}{|E_n|}\sum_{\{i, j\} \in E_n}\langle \sigma_i \sigma_j \rangle_{\mu_n} &\overset{\prob}{\to} \E\left[ \frac{\tanh(\beta) + \tanh(\h(\hat{\emp})) \tanh(\h(\hat{\emp }1))}{1 + \tanh(\beta) \tanh(\h(\hat{\emp})) \tanh(\h(\hat{\emp }1))} \right]~, \label{eq:lem:limiting:edge-measure-1} \\
		\text{and} \qquad \frac{1}{n}\sum_{v \in [n]}\langle \sigma_v \rangle_{\mu_n} &\overset{\prob}{\to} \E[\tanh(\h(\emp))]~. \label{eq:lem:limiting:edge-measure-2}
	\end{align}
\end{Lemma}

With Lemmas~\ref{lem:convexity:pressure-limit} and \ref{lem:limiting:edge-measure} in place, we now proceed to prove Theorem~\ref{thm:limit:thermodynamic-quantities}:

\begin{proof}[Proof of Theorem~\ref{thm:limit:thermodynamic-quantities}]
	First, we prove the theorem for the internal energy. 
	To prove the convergence part, we use the convexity argument from Lemma~\ref{lem:convexity:pressure-limit}. Since $\beta \mapsto \psi_n(\beta, B)$ is a convex function in $\beta$, for any $\vep > 0$,
	\begin{align}\label{for:prop:limiting:ie:01}
		\frac{1}{\vep}\left[ \psi_n(\beta, B) - \psi_n(\beta - \vep, B) \right] &\leq \frac{\partial}{\partial \beta} \psi_n(\beta, B) \nn\\
		&\hspace{1cm}\leq \frac{1}{\vep}\left[ \psi_n(\beta + \vep, B) - \psi_n(\beta, B) \right]~.
	\end{align}
	Taking limits as $n \to \infty$ in \eqref{for:prop:limiting:ie:01}, and using Theorem~\ref{thm:thermodynamic-limit:pressure}, we obtain, for any $\vep > 0$,
	\begin{align}
		&\lim_{n \to \infty} \frac{1}{\vep} \left[ \psi_n(\beta, B) - \psi_n(\beta - \vep, B) \right] \overset{\prob}{\to} \frac{1}{\vep} \left[ \varphi(\beta, B) - \varphi(\beta - \vep, B) \right], \label{for:prop:limiting:ie:02}\\
		\text{and}~ &\lim_{n \to \infty} \frac{1}{\vep} \left[ \psi_n(\beta + \vep, B) - \psi_n(\beta, B) \right] \overset{\prob}{\to} \frac{1}{\vep} \left[ \varphi(\beta + \vep, B) - \varphi(\beta, B) \right]. \label{for:prop:limiting:ie:03}
	\end{align}
	Since the inequality in \eqref{for:prop:limiting:ie:01} holds for any $\vep > 0$, we take the limit $\vep \to 0$ to obtain $\frac{\partial}{\partial \beta} \varphi(\beta, B)$ as the limit of the right-hand sides of both \eqref{for:prop:limiting:ie:02} and \eqref{for:prop:limiting:ie:03}. Therefore, from \eqref{for:prop:limiting:ie:01}, \eqref{for:prop:limiting:ie:02}, and \eqref{for:prop:limiting:ie:03}, 
	\begin{align}\label{for:prop:limiting:ie:04}
		\frac{\partial}{\partial \beta} \psi_n(\beta, B) \overset{\prob}{\to} \frac{\partial}{\partial \beta} \varphi(\beta, B)~.
	\end{align}
	We now need to prove the second part of \eqref{eq:def:internal:energy}. To do this, we follow the same strategy as in \cite[Lemma~5.2]{DGvdH10}. Let $E_n$ denote the edge set of the preferential attachment graph of size $n$. We simplify
	\begin{equation}\label{for:prop:limiting:ie:05}
		\frac{\partial}{\partial \beta} \psi_n(\beta, B) = \frac{1}{n} \sum_{(i, j) \in E_n} \langle \sigma_i \sigma_j \rangle_{\mu_n} = \frac{|E_n|}{n} \cdot \frac{1}{|E_n|} \sum_{(i, j) \in E_n} \langle \sigma_i \sigma_j \rangle_{\mu_n}~.
	\end{equation}
	Therefore, using \eqref{for:prop:limiting:ie:05} and Lemma~\ref{lem:limiting:edge-measure}, we obtain
	\begin{equation}\label{for:prop:limiting:ie:06}
		\frac{\partial}{\partial \beta} \psi_n(\beta, B) \overset{\prob}{\to} m \E \left[ \frac{\tanh(\beta) + \tanh(\h(\hat{\emp})) \tanh(\h(\hat{\emp} 1))}{1 + \tanh(\beta) \tanh(\h(\hat{\emp})) \tanh(\h(\hat{\emp} 1))} \right]~.
	\end{equation}
	Similarly, from Lemma~\ref{lem:convexity:pressure-limit}, we obtain that $B \mapsto \psi_n(\beta, B)$ is a convex function. Next, using a similar calculation as done in \eqref{for:prop:limiting:ie:01}-\eqref{for:prop:limiting:ie:03}, 
	\begin{equation}\label{for:prop:limiting:ie:07}
		\frac{\partial}{\partial B} \psi_n(\beta, B) \overset{\prob}{\to} \frac{\partial}{\partial B} \varphi(\beta, B)~.
	\end{equation}
	Therefore, by Lemma~\ref{lem:limiting:edge-measure}, we obtain the RHS of \eqref{eq:lem:limiting:edge-measure-2} as the thermodynamic limit of magnetisation per vertex.
\end{proof}
\begin{Remark}[Similarity with Configuration model result]
	\rm Note that, from the description of the P\'{o}lya point tree, the expected degree of the root in the P\'{o}lya point tree is $2m$. Therefore, substituting $m$ with $\E[D(\emp)]/2$ in \eqref{eq:def:internal:energy}, we obtain a result similar in flavour to that obtained in \cite[Corollary~1.6(b)]{DGvdH10}. Similarly, using the distributional recursion property of the $\h$ random variables in \eqref{eq:distributional-recursion:1}, it can be shown that the explicit expression in \eqref{eq:def:magnetization} is the same as the one obtained in \cite[Corollary~1.6(a)]{DGvdH10}.\hfill$\blacksquare$
\end{Remark}
\medskip
Lastly, we turn to the proof of Lemma~\ref{lem:convexity:pressure-limit} and \ref{lem:limiting:edge-measure}. This is a standard result in thermodynamics.
The proof of Lemma~\ref{lem:limiting:edge-measure} follows similarly to the proof provided in \cite[Proof of Lemma~5.2]{DGvdH10}.
\begin{proof}[Proof of Lemma~\ref{lem:limiting:edge-measure}]
	The left-hand side (LHS) of \eqref{eq:lem:limiting:edge-measure-1} can be viewed as the expectation of the correlation $\langle \sigma_u \sigma_v \rangle_{\mu_n}$ with respect to a uniformly chosen edge $\{u,v\}$. For a uniformly chosen edge $\{u,v\}$, denote by $B_{\{u,v\}}(\ell)$ the subgraph of $G_n$ consisting of all vertices at a distance of at most $\ell$ from either $u$ or $v$. Therefore, by the GKS inequality in Lemma~\ref{lem:GKS},
	\begin{equation}
		\label{for:lem:limiting:edge-measure:1}
		\langle \sigma_u \sigma_v \rangle_{B_{\{u,v\}}(\ell)}^f \leq \langle \sigma_u \sigma_v \rangle_{\mu_n} \leq \langle \sigma_u \sigma_v \rangle_{B_{\{u,v\}}(\ell)}^+,
	\end{equation}
	where $\langle \sigma_u \sigma_v \rangle_{B_{\{u,v\}}(\ell)}^{f/+}$ represents the correlation in the Ising model on $B_{\{u,v\}}(\ell)$ with \emph{free} or $+$ boundary conditions.
	
	Note that every new vertex joins the graph with $m$ new edges. Therefore, $G_n$ has $n(m+o(1))$ edges. Hence, a uniformly chosen edge from $G_n$ can be viewed as a uniformly chosen out-edge from a uniformly random vertex. A finite neighbourhood of this uniformly chosen out-edge from a uniformly random vertex converges locally to the neighbourhood of the uniformly chosen out-edge of the root $\emp$ in the P\'{o}lya point tree. For any $\ell \geq 1$, the $\ell$-neighbourhood of a uniformly chosen edge in $G_n$ converges locally to the $\ell$-neighbourhood of $\hat{\emp}$ and $\emp1$, connected by an edge. Let us denote this tree as $\T(\ell)$. Consequently, as a result of local convergence and \cite[Lemma~6.4]{DM10}, for all $\ell \geq 1$, almost surely,
	\begin{equation}
		\label{for:lem:limiting:edge-measure:2}
		\lim_{n \to \infty} \E_n\Big[ \langle \sigma_u \sigma_v \rangle_{B_{(u,v)}(\ell)}^{f/+} \Big] = \E\Big[ \langle \sigma_u \sigma_v \rangle_{\T(\ell)}^{f/+} \Big].
	\end{equation}
	Now, using Lemma~\ref{lem:tree-pruning} and Lemma~\ref{lem:h:magnetization}, as $\ell \to \infty$,
	\begin{equation}
		\label{for:lem:limiting:edge-measure:3}
		\lim_{\ell \to \infty} \E\Big[ \langle \sigma_u \sigma_v \rangle_{\T(\ell)}^{f/+} \Big] = \E\Big[ \langle \sigma_u \sigma_v \rangle_{\nu_2^\prime} \Big],
	\end{equation}
	where $\nu_2^\prime(\sigma)$ is defined as
	\begin{equation}
		\label{for:lem:limiting:edge-measure:4}
		\nu_2^\prime(\sigma_1, \sigma_2) = \frac{1}{Z_2(\beta, \h(\hat{\emp}), \h(\hat{\emp}1))} \exp\Big[ \beta \sigma_1 \sigma_2 + \h(\hat{\emp}) \sigma_1 + \h(\hat{\emp}1) \Big].
	\end{equation}
	Simplifying the right-hand side (RHS) of \eqref{for:lem:limiting:edge-measure:4}, we obtain
	\eqan{\label{for:lem:limiting:edge-measure:5}
		&\E\big[\langle \sigma_1\sigma_2 \rangle_{\nu_2^\prime}\big]\nn\\
		&\hspace{1cm}=\E\left[ \frac{\e^{\beta+\h(\hat{\emp})+\h(\hat{\emp }1)}-\e^{-\beta-\h(\hat{\emp})+\h(\hat{\emp }1)}-\e^{-\beta+\h(\hat{\emp})-\h(\hat{\emp }1)}+\e^{\beta-\h(\hat{\emp})-\h(\hat{\emp }1)}}{\e^{\beta+\h(\hat{\emp})+\h(\hat{\emp }1)}+\e^{-\beta-\h(\hat{\emp})+\h(\hat{\emp }1)}+\e^{-\beta+\h(\hat{\emp})-\h(\hat{\emp }1)}+\e^{\beta-\h(\hat{\emp})-\h(\hat{\emp }1)}} \right]\nn\\
		&\hspace{1cm}=\E\left[ \frac{\tanh(\beta)+\tanh(\h(\hat{\emp}))\tanh(\h(\hat{\emp }1))}{1+\tanh(\beta)\tanh(\h(\hat{\emp}))\tanh(\h(\hat{\emp }1))} \right]~,}
	thus proving \eqref{eq:lem:limiting:edge-measure-1}. Equation~\eqref{eq:lem:limiting:edge-measure-2} can be proved in a similar manner. Following the same steps as in \eqref{for:lem:limiting:edge-measure:1}--\eqref{for:lem:limiting:edge-measure:3} and using Lemma~\ref{lem:h:magnetization}, we can show that
	\begin{equation}
		\label{for:lem:limiting:edge-measure:6}
		\E_n[\langle \sigma_v \rangle_{\mu_n}] \overset{\prob}{\longrightarrow} \E\big[ \sigma_{\emp} \big] = \E[\magn{\emp}] = \E[\tanh(\h(\emp))],
	\end{equation}
	thus completing the proof of the lemma.
\end{proof}

To prove Lemma~\ref{lem:convexity:pressure-limit}, we show that the partial second derivatives of $\psi_n$ with respect to $\beta$ and $B$ are variances of some random variables, which essentially proves their non-negativity. 
\begin{proof}[Proof of Lemma~\ref{lem:convexity:pressure-limit}]
	We first perform the computation with respect to $\beta$; the computation for $B$ follows identically. For any $n \in \N$, differentiating $\psi_n$ with respect to $\beta$ gives
	\eqan{
		\frac{\partial}{\partial\beta}\psi_n(\beta,B)~=&~\frac{1}{n} \E_{\mu_n}\Big[ \sum\limits_{\{i,j\}\in E_n} \sigma_i\sigma_j \Big],\label{for:lem:convexity:1}\\
		\mbox{and}\quad \frac{\partial^2}{\partial\beta^2}\psi_n(\beta,B)~=&~\frac{1}{n}\Big[ \E_{\mu_n}\Big[ \Big( \sum\limits_{(i,j)\in E_n} \sigma_i\sigma_j \Big)^2 \Big]\label{for:lem:convexity:2}\\
		&\hspace{1cm}-\Big( \E_{\mu_n}\Big[ \sum\limits_{(i,j)\in E_n} \sigma_i\sigma_j \Big] \Big)^2 \Big]~,\nn
	}
	where $\E_{\mu_n}$ denotes the expectation with respect to the $\mu_n$ measure. Note that the right-hand side of \eqref{for:lem:convexity:2} simplifies to ${\var}_{\mu_n}\left( \sum_{(i,j) \in E_n} \sigma_i \sigma_j \right)/n$. Therefore, $\frac{\partial^2}{\partial \beta^2} \psi_n(\beta, B)$ is non-negative almost surely, proving that $\beta \mapsto \psi_n(\beta, B)$ is a convex function. Similarly, it can be shown that
	\begin{equation}
		\frac{\partial^2}{\partial B^2} \psi_n(\beta, B) = \frac{1}{n} {\var}_{\mu_n} \left( \sum_{i \in [n]} \sigma_i \right). \label{for:lem:convexity:3}
	\end{equation}
	Hence, using a similar argument, we obtain that $B \mapsto \psi_n(\beta, B)$ is also a convex function.
\end{proof}

\section{Inverse critical temperature}\label{sec:inv:critical-temperature}

In this section, we identify the inverse critical temperature for preferential attachment models with parameters $m$ and $\delta$.
We define $\beta_c(m,\delta)$ as the Ising inverse critical temperature for a sequence of preferential attachment models with parameters $m$ and $\delta$, defined as
\eqn{\label{eq:def:inverse-critical-temperature:PA}
	\beta_c=\inf\limits_{\beta}\Big\{\lim\limits_{B\searrow 0}M(\beta,B)>0\Big\}~.}
The final theorem of this chapter addresses the inverse critical temperature for preferential attachment models. In the following theorem, we explicitly identify the almost sure inverse critical temperature for the preferential attachment model.

\medskip 

\begin{Theorem}[Ising inverse critical temperature]\label{thm:inv:critical-temp:PA}
	Fix $m\geq 2$ and $\delta>0$, and let $\beta_c(m,\delta)$ be the Ising inverse critical temperature for the P\'{o}lya point tree with parameters $m$ and $\delta$. Then, for $\delta>0$
	\eqn{\label{eq:thm:inv:critical-temp:PA}
		\hspace{-0.25cm}\beta_c(m,\delta)= \arctanh\left\{ \frac{\delta}{2\big( m(m+\delta)+\sqrt{m(m-1)(m+\delta)(m+\delta+1)} \big)} \right\},}
	whereas for $\delta\in(-m,0],~\beta_c(m,\delta)=0$ almost surely.
\end{Theorem}

\medskip

The proof of Theorem~\ref{thm:inv:critical-temp:PA} is based on the inverse critical temperature for the P\'{o}lya point tree.
The inverse critical temperature for a rooted tree is defined as follows:

Let $\mu^{\sss (\beta,B)}$ denote the Boltzmann distribution on a tree $(\Tbold, o)$ with inverse temperature parameter $\beta$ and external magnetic field $B$. Define
\begin{equation}
	\label{eq:def:Boltzmann:zero-mag-field}
	\mu^{\sss (\beta,0)}(\cdot) = \lim_{B \searrow 0} \mu^{\sss (\beta,B)}(\cdot)~.
\end{equation}
The \emph{inverse critical temperature} for a rooted tree $(\Tbold, o)$, denoted $\beta_c(\Tbold, o)$, is defined as
\begin{equation}
	\label{eq:def:inverse-critical-temperature}
	\beta_c(\Tbold, o) = \inf\left\{ \beta \mid \mu^{(\beta,0)}(\sigma_o = +1) - \mu^{(\beta,0)}(\sigma_o = -1) > 0 \right\}~,
\end{equation}
where $\mu^{(\beta,0)}(\cdot)$ is the Boltzmann distribution on $(\Tbold, o)$ under zero external magnetic field, as defined in \eqref{eq:def:Boltzmann:zero-mag-field}.

The following proposition identifies the inverse critical temperature for the P\'{o}lya point tree with parameters $m \geq 2$ and $\delta > 0$:

\medskip
\begin{Proposition}[Inverse critical temperature for the PPT]
	\label{prop:inv:critical-temp:PPT}
	Fix $m \in \mathbb{N} \setminus \{1\}$ and $\delta > 0$. Then, the inverse critical temperature for the P\'{o}lya point tree with parameters $m$ and $\delta$ is given by $\beta_c(m, \delta)$ as defined in \eqref{eq:thm:inv:critical-temp:PA}.
\end{Proposition}

First, we prove Theorem~\ref{thm:inv:critical-temp:PA} using Proposition~\ref{prop:inv:critical-temp:PPT}, and then we proceed to prove the proposition in detail.

\begin{proof}[Proof of Theorem~\ref{thm:inv:critical-temp:PA}]
	Proposition~\ref{prop:inv:critical-temp:PPT} identifies $\beta_c(m,\delta)$ as the inverse critical temperature for the P\'{o}lya point tree with parameters $m$ and $\delta$. Therefore,
	\begin{align}
		\mu^{(\beta,0)}(\sigma_o=+1) - \mu^{(\beta,0)}(\sigma_o=-1) &> 0, \quad \text{for all } \beta > \beta_c(m,\delta), \label{for:thm:PA:inv-temp:1-1} \\
		\text{and} \quad \mu^{(\beta,0)}(\sigma_o=+1) - \mu^{(\beta,0)}(\sigma_o=-1) &= 0, \quad \text{for all } \beta < \beta_c(m,\delta). \label{for:thm:PA:inv-temp:1-2}
	\end{align}
	Note that, by Theorem~\ref{thm:magnetization} and Lemma~\ref{lem:h:magnetization},
	\begin{align}\label{for:thm:PA:inv-temp:2}
		&\mu^{(\beta,0)}(\sigma_o=+1) - \mu^{(\beta,0)}(\sigma_o=-1) \nn\\
		=& \lim_{B \searrow 0} \mathbb{E}[\magn{\emp \to}] = \lim_{B \searrow 0} M(\beta,B).
	\end{align}
	Therefore, from \eqref{for:thm:PA:inv-temp:1-1}, \eqref{for:thm:PA:inv-temp:1-2}, and \eqref{for:thm:PA:inv-temp:2}, we obtain
	\begin{equation}\label{for:thm:PA:inv-temp:3}
		\beta_c(m,\delta) = \inf \Big\{ {\beta}: \lim_{B \searrow 0} M(\beta,B) > 0 \Big\},
	\end{equation}
	proving that $\beta_c(m,\delta)$ is the inverse critical temperature for preferential attachment models with parameters $m$ and $\delta$.
\end{proof}

The proof of Proposition~\ref{prop:inv:critical-temp:PPT} follows from \cite[Theorem~2.1]{Lyo89}, with adaptations to our specific setting. The proof employs concepts such as the branching number and the growth of trees. We recall the definitions of the branching number and the growth of trees as established in \cite{Lyo89,Lyo90}:

\begin{definition}[Branching number and growth of tree]\label{def:branching-number:growth}
	The \emph{branching number} of a tree $\Gamma$, denoted by $\rm{br}(\Gamma)$, is defined as
	\eqan{\label{eq:def:branching-number}
		\rm{br}(\Gamma)=&~\inf\Big\{ \lambda>0:\liminf\limits_{\Pi\to\infty}\sum\limits_{u\in\Pi} \lambda^{-|u|}=0 \Big\}\\
		=&~\sup\Big\{ \lambda>0:\liminf\limits_{\Pi\to\infty}\sum\limits_{u\in\Pi} \lambda^{-|u|}=\infty \Big\}\nn\\
		=&~\inf\Big\{ \lambda>0:\inf\limits_{\Pi}\sum\limits_{u\in\Pi} \lambda^{-|u|} \Big\}
		~,\nn}
	whereas the \emph{growth} of $\Gamma$, denoted by $\rm{gr}(\Gamma)$, is defined as
	\eqn{\label{eq:def:growth}
		{\rm{gr}}(\Gamma)=\inf\Big\{ \lambda>0:\liminf\limits_{n\to \infty} {M_n}\lambda^{-n}=0\Big\}=\liminf\limits_{n\to\infty}M_n^{1/n}
		~,}
	where $M_n$ is the size of the $n$-th generation in the tree $\Gamma$.
\end{definition}
Notation used in \cite{Lyo89} differs slightly from those employed here. Adapting to our notation, we can deduce from \cite[Theorem~2.1]{Lyo89} that
\begin{equation}\label{for:thm:inv:critical-temp:1}
	\tanh\big(\beta_c(m,\delta)\big) = \frac{1}{{\rm{br}}\big(\PPT(m,\delta)\big)},
\end{equation}
where ${\rm{br}}\big(\PPT(m,\delta)\big)$ is the branching number of the P\'{o}lya point tree with parameters $m$ and $\delta$. We now identify the explicit expression for ${\rm{br}}\big(\PPT(m,\delta)\big)$. We prove and use the following proposition, which is similar in spirit to \cite[Proposition~6.4]{Lyo89}.
\medskip
\begin{Proposition}[Branching number of the P\'{o}lya point tree]\label{prop:branching-number:PPT}
	Let $\bfT_{\kappa}$ denote the mean offspring operator of the P\'{o}lya point tree with parameters $m$ and $\delta$. Then, for $m \geq 2$ and $\delta > 0$,
	\begin{equation}\label{eq:prop:branching-number:PPT}
		{\rm{br}}\big(\PPT(m,\delta)\big) \geq r(\bfT_{\kappa})~\mbox{a.s.},
	\end{equation}
	where $r(\bfT_{\kappa})$ is the spectral radius of $\bfT_{\kappa}$.
\end{Proposition}
\medskip
To prove the proposition, we follow a strategy similar to that in \cite[Theorem~6.2]{Lyo89}. Theorem~\ref{thm:critical-percolation:PPT} shows that for $m \geq 2$ and $\delta > 0$, the critical percolation threshold for $\PPT(m,\delta)$ is ${1}/{r(\bfT_{\kappa})}$. We now prove that $\PPT(m,\delta)$ almost surely dies out when percolated with $\pi < {1}/{{\rm{br}}\big(\PPT(m,\delta)\big)}$. Therefore, the critical percolation threshold of $\PPT(m,\delta)$ is at least ${1}/{{\rm{br}}\big(\PPT(m,\delta)\big)}$, completing the proof of Proposition~\ref{prop:branching-number:PPT}.

\begin{proof}[Proof of Proposition~\ref{prop:branching-number:PPT}]
	To reduce notational complexity, we shall refer the $\PPT(m,\delta)$ by $\Gamma$ in this proof, and for any $\pi\in[0,1]$, we use $\Gamma(\pi)$ to denote the $\pi$ percolated $\PPT(m,\delta)$.
	For any cutset $\Pi$ and $\lambda>0$, let
	\eqn{\label{for:prop:branching-number:1}
		Z_{\Pi}(\lambda)=\sum\limits_{u\in\Pi}\lambda^{-|u|}~,}
	and for any $\pi\in[0,1],$ define
	\eqn{\label{for:prop:branching-number:2}
		Z_{\Pi(\pi)}(\lambda)=\sum\limits_{u\in\Pi(p)}\lambda^{-|u|}~,}
	where $\Pi(\pi)=\Pi\cap\Gamma(\pi).$ Then,
	\eqn{\label{for:prop:branching-number:3}
		\E_{\pi}\Big[ Z_{\Pi(\pi)}(1) \Big]=\sum\limits_{u\in\Pi} \prob_{\Gamma}\big( u\in\Gamma(\pi) \big)=\sum\limits_{u\in \Pi}\pi^{|u|}=Z_{\Pi}(1/\pi)~,}
	where $\E_{\pi}$ is the expectation with respect to the percolation.
	If $1/\pi>\br{\Gamma}$, then there exists a sequence $\Pi_n\to\infty$ such that $Z_{\Pi_n}(1/\pi)\to0.$ Hence, from Fatou's lemma applied to \eqref{for:prop:branching-number:3}, we obtain conditionally on $\Gamma$,
	\eqn{\label{for:prop:branching-number:4}
		\liminf\limits_{n\to\infty} Z_{\Pi_n(\pi)}(1)=0\quad \text{a.s.}}
	Therefore, conditionally on $\Gamma,~\Gamma(\pi)$ is finite almost surely. From definition of the critical percolation threshold, $\pi_c$, we can upper bound the random variable $1/\br{\Gamma}$ by $\pi_c$. Further Theorem~\ref{thm:critical-percolation:PPT} identifies $\pi_c$ of P\'{o}lya point tree as $1/r(\bfT_{\kappa})$. Hence,
	\eqn{\label{for:prop:branching-number:5}
		\frac{1}{\br{\Gamma}}\leq \pi_c=\frac{1}{r(\bfT_{\kappa})}\quad\text{a.s.}}
	Therefore we are done with proving the almost sure lower bound of $\br{\Gamma}$. From Lemma~\ref{lem:growth:PPT}, we have proved $\gr{\Gamma}$ is at most $r(\bfT_{\kappa})$ almost surely, which serves as an almost sure upper bound for $\br{\Gamma}$ as well. Hence, $\br{\PPT(m,\delta)}$ is $r(\bfT_{\kappa})$ almost surely.
\end{proof}

On the other hand, based on the definitions of growth and branching numbers for a tree, it follows that ${\rm{br}}\big(\PPT(m,\delta)\big)$ is at most ${\rm{gr}}\big(\PPT(m,\delta)\big)$. The next lemma computes ${\rm{gr}}\big(\PPT(m,\delta)\big)$ explicitly.
\medskip
\begin{Lemma}[Growth of the P\'{o}lya point tree]\label{lem:growth:PPT}
	For $m \geq 2$ and $\delta > 0$,
	\begin{equation}\label{eq:lem:growth:PPT}
		{\rm{gr}}\big(\PPT(m,\delta)\big) = r(\bfT_{\kappa}) \text{ a.s.}
	\end{equation}
\end{Lemma}


\medskip
From Proposition~\ref{prop:branching-number:PPT}, we obtain that $\gr{\PPT(m,\delta)} \geq r(\bfT_{\kappa})$ almost surely, and now we prove $\E\big[\gr{\PPT(m,\delta)}\big]$ is at most $r(\bfT_{\kappa})$, proving that $\gr{\PPT(m,\delta)}$ is $r(\bfT_{\kappa})$ almost surely. 
\begin{proof}[Proof of Lemma~\ref{lem:growth:PPT}]
	Let $M_n(x,s)$ denote the number of nodes in $\PPT(m,\delta)$ rooted at $(x,s)\in\Scal$. From the definition in \eqref{eq:def:growth},
	\begin{equation}\label{for:lem:growth:PPT:0}
		\E\big[ \gr{\PPT(m,\delta)} \big] = \E\big[\liminf\limits_{n\to \infty} M_n(\emp)^{1/n}\big] \leq \liminf\limits_{n\to \infty} \E\Big[ M_n(\emp)^{1/n} \Big]~,
	\end{equation}
	where the inequality follows from Fatou's lemma. Using the fact that $x\mapsto x^{1/n}$ is a concave function, we apply Jensen's inequality to obtain
	\begin{equation}\label{for:lem:growth:PPT:1}
		\E\big[ \gr{\PPT(m,\delta)} \big] \leq \liminf\limits_{n\to \infty} \E\Big[ M_n(\emp) \Big]^{1/n}~.
	\end{equation}
	
	We first aim to show that the LHS of \eqref{for:lem:growth:PPT:1} is upper bounded by $r(\bfT_{\kappa})$. To prove this, we demonstrate
	\begin{equation}\label{for:lem:growth:PPT:2}
		M_n(\emp) \overset{d}{=} M_n(U_1, \Young) + M_{n-1}(U_1 U_2^{1/\chi}, \Old)~,
	\end{equation}
	where $U_1, U_2$ are i.i.d.\ $\Unif[0,1]$ random variables. To establish this result, we view the P\'{o}lya point tree rooted at $\emp$ from the perspective of a uniformly chosen out-edge of the root $\emp$. 
	
	Let $\emp u$ be the uniformly chosen $\Old$ neighbour of $\emp$. Let $M^{(-u)}_n(\emp)$ denote the number of nodes in the $n$-th generation of $\PPT$ rooted at $\emp$, ignoring the size of the sub-tree rooted at $\emp u$. Note that, by the construction of the P\'{o}lya point tree, conditionally on the age of $\emp$, $M_{n-1}(\emp u)$ and $M^{(-u)}_n(\emp)$ are independent. Conditionally on the age of $\emp$, $A_\emp=a$, the sub-tree rooted at $\emp$, excluding the sub-tree rooted at $\emp u$, has $m-1$ many $\Old$-labeled children in the first generation. Furthermore, since the strength of any $\Young$-labeled node is identically distributed to the root, it follows that conditionally on $A_\emp=a$, $\emp$ and $(a, \Young)$ have identical offspring distributions. Hence, conditionally on $A_\emp=a$,
	\begin{equation}\label{for:lem:growth:PPT:3}
		M_n(a, \Young) \overset{d}{=} M^{(-u)}_n(\emp)~.
	\end{equation}
	
	On the other hand, conditionally on $A_\emp=a$, the age of $\emp U$ is distributed as $a U_2^{1/\chi}$, where $U_2 \sim \Unif[0,1]$ and labeled $\Old$, and is independent of $A_\emp$. Therefore, conditionally on $A_\emp=a$,
	\begin{equation}\label{for:lem:growth:PPT:4}
		M_{n-1}(\emp u) \overset{d}{=} M_{n-1}(a U_2^{1/\chi}, \Old)~.
	\end{equation}
	
	Since $A_\emp \sim \Unif[0,1]$ and $M^{(-u)}_n(\emp)$ and $M_{n-1}(\emp u)$ are mutually independent and independent of $A_\emp$, we obtain
	\begin{equation}\label{for:lem:growth:PPT:5}
		\left( M_{n-1}(\emp u), M^{(-u)}_n(\emp) \right) \overset{d}{=} \left( M_{n-1}\big(U_1 U_2^{1/\chi}, \Old\big), M_{n}\big(U_1, \Young\big) \right)~,
	\end{equation}
	where $U_1 \sim \Unif[0,1]$ and is independent of $U_2$. Thus, \eqref{for:lem:growth:PPT:1} follows immediately.
	
	Next, for any $(x,s) \in \Scal$ and $n \in \N$,
	\begin{equation}\label{for:lem:growth:PPT:6}
		\E[M_n(x,s)] = \langle \one_{(x,s)}, \bfT_{\kappa}^n \ubar{\one} \rangle_{\Scal},
	\end{equation}
	where $\ubar{\one}(y,t) \equiv 1$ for all $(y,t) \in \Scal$, and $\langle f, g \rangle_\Scal = \int_{\Scal} f(z) g(z) \, dz$ for any functions $f$ and $g$ from $\Scal$ to $\R$. In Section~\ref{chap:percolation_threshold_PPT:sec:main-theorem}, we defined $\kappa$ to be the mean offspring generator for the P\'{o}lya point tree with parameters $m$ and $\delta$. Recall that $\Scal=[0,1]\times\{\Old,\Young\}$. In Theorem~\ref{thm:operator-norm:PPT}, we proved that the integral operator $\bar{\bfT}_{\kappa}$, with kernel $\kappa$, admits an eigenfunction $h$ corresponding to its spectral norm $r(\bfT_{\kappa})$ in the extended type-space $\Scal_e=[0,\infty)\times\{\Old,\Young\}$. The eigenfunction $h$ is given by
	\[
	h(x,s)=\frac{\bfp_s}{\sqrt{x}},~\mbox{for all }(x,s)\in\Scal_e~, 
	\]
	where $\bfp=(\bfp_{\old},\bfp_{\young})$ is as defined in Theorem~\ref{thm:operator-norm:PPT}.
	
	Since $\kappa(u,v) \geq 0$ for all $u, v \in \Scal$, it can further be shown that
	$\bfT_{\kappa} f(u) \geq \bfT_{\kappa} g(u)$ for all $u \in \Scal$, if $f(v) \geq g(v)$ for all $v \in \Scal$. Note that for all $(x,s) \in \Scal$, $\bfc h(x,s) \geq 1$, where $\bfc = \max\{1/\bfp_\old, 1/\bfp_\young\}$. Therefore,
	\begin{align}\label{for:lem:growth:PPT:7}
		\langle \one_{(x,s)}, \bfT_{\kappa}^n \ubar{\one} \rangle_{\Scal} \leq \bfc \langle \one_{(x,s)}, \bfT_{\kappa}^n h \rangle_{\Scal} \leq& \bfc \langle \one_{(x,s)}, \bar{\bfT}_{\kappa}^n h \rangle_{\Scal_e}\nn\\
		=& \bfc r(\bfT_{\kappa})^n h(x,s)~.
	\end{align}
	
	Since $r(\bfT_{\kappa}) > 1$, from \eqref{for:lem:growth:PPT:2}, \eqref{for:lem:growth:PPT:6}, and \eqref{for:lem:growth:PPT:7}, we obtain
	\begin{equation}\label{for:lem:growth:PPT:8}
		\E[M_n(\emp)] \leq \bfc r(\bfT_{\kappa})^n \left[ \E[h(U_1, \Young)] + \E\left[h\big(U_1 U_2^{1/\chi}, \Young\big)\right] \right] \leq \bfc_1 r(\bfT_{\kappa})^n~,
	\end{equation}
	where $\bfc_1 = \bfc/2$. Plugging this upper bound for $\E[M_n(\emp)]$ into \eqref{for:lem:growth:PPT:1}, we obtain $r(\bfT_{\kappa})$ as an almost sure upper bound for $\gr{\PPT(m,\delta)}$. 
\end{proof}

Now that we have all the necessary results, we can prove Proposition~\ref{prop:inv:critical-temp:PPT}. We use \cite[{Theorem~2.1}]{Lyo89} to establish the result. Note that the notations used in \cite{Lyo89} differ from those in this chapter. Therefore, to apply the conclusions from \cite{Lyo89}, we equate \( \frac{J}{kT} \) in \cite{Lyo89} with \( \beta \) in this chapter.

\begin{proof}[Proof of Proposition~\ref{prop:inv:critical-temp:PPT}]
	From definition of branching number and growth of a tree, the following relation holds true almost surely:
	\begin{equation}\label{for:thm:inv:temp:00}
		\br{\Gamma}\leq\gr{\Gamma}~ \mbox{for all infinite tree }\Gamma~.
	\end{equation}
	By Lemma~\ref{lem:growth:PPT}, Proposition~\ref{prop:branching-number:PPT} and \eqref{for:thm:inv:temp:00},
	\eqn{\label{for:thm:inv:temp:01}
		\br{\PPT(m,\delta)}=r(\bfT_{\kappa})
		~.}
	By \cite[Theorem~2.1]{Lyo89},
	\begin{equation}\label{for:thm:inv:temp:1}
		\tanh\left( \frac{J}{kT_c} \right)\br{\PPT(m,\delta)} = 1~,
	\end{equation}
	where \( T_c \) is the critical temperature described in \cite{Lyo89}. Equating these notations to our settings, we obtain
	\begin{equation}\label{for:thm:inv:temp:2}
		\tanh(\beta_c)\br{\PPT(m,\delta)} = 1~.
	\end{equation}
	By \eqref{for:thm:inv:temp:01},
	\begin{equation}\label{for:thm:inv:temp:3}
		\beta_c = \arctanh\left( \frac{1}{r(\bfT_{\kappa})} \right)~.
	\end{equation}
	Substituting the explicit value of \( r(\bfT_{\kappa}) \) from Theorem~\ref{thm:operator-norm:PPT} into \eqref{for:thm:inv:temp:3}, we obtain Proposition~\ref{prop:inv:critical-temp:PPT} immediately.
\end{proof}

\EndThumbs

\StartThumbs

\addtocounter{chapter}{+1}

\cleardoublepage\phantomsection
\addcontentsline{toc}{chapter}{Discussions}

\chapter*{Discussion}

Research in mathematics, rather than narrowing the scope of open questions, tends to exponentially increase them. Our work on preferential attachment models follows this trend, revealing a range of new research directions in the field. Throughout this thesis, we have explored the dynamics of preferential attachment models, focusing on both graph dynamics within these models and the effects of different settings. 

In Part I, we examined how i.i.d.\ out-edge settings influence the dynamics of these models and their impact on the local limit. Although we successfully established the local limit for the generalised models, our study of stochastic processes in Part II is limited to deterministic out-edge settings. Consequently, we believe that a vast array of open questions remains to be explored, presenting a rich landscape for future research.
In this section, we try to put up some of these open directions. We divide these in two directions: local limit direction and stochastic processes direction.

\paragraph{Local limit direction}
Lo studied the case of the preferential attachment tree, where each new vertex joins the graph with an i.i.d.\ random fitness parameter in \cite{tiffany2021}. The author suggested that this result could be generalised to the fixed out-edge preferential attachment graph case as well. This finding is particularly interesting as it provides the rate of convergence in the total variation metric. It would therefore be intriguing to extend this work to random fitness distributions, along with random out-edge distributions and to study the rate of convergence of these generalised preferential attachment models to the local limit, some further generalised version of the random P\'{o}lya point tree.

\paragraph{Percolation direction}

In Chapter~\ref{chap:percolation_threshold_PPT}, we explicitly computed the critical percolation threshold for the P\'{o}lya point tree with parameters \( m\geq 2\) and \(\delta>0\). The computation of the spectral radius of the mean offspring operator for the P\'{o}lya point tree can also be extended to the random P\'{o}lya point tree. Investigating the critical percolation threshold for the random P\'{o}lya point tree using the spectral radius of its mean offspring operator would be very interesting. The main challenge in this computation lies in proving the \emph{Kesten-Stigum condition} for super-criticality. Furthermore, we hypothesise that the critical percolation threshold for the random P\'{o}lya point tree is \(0\) when $\delta$ is non-positive.

We also believe that our proof in Chapter~\ref{chap:local-giant} can be extended to other preferential attachment models, such as the independent models. For instance, a similar technique may prove useful in identifying critical percolation thresholds for preferential attachment models with i.i.d.\ random out-edges, as studied in \cite{DEH09,GV17,RRR22}. Our proof of the large-set expander property for the preferential attachment model extensively relies on the fact that \(m\) is fixed and at least 2. Therefore, subject to the critical percolation threshold of the random P\'{o}lya point tree, the main challenge in determining the critical percolation threshold for generalised preferential attachment models lies in proving the large-set expander property of these models.

Although we have identified the exact critical percolation thresholds for several preferential attachment models, our techniques do not reveal the order of the phase transition. Alimohammadi, Borgs, and Saberi have proved that this phase transition is of infinite order for \(\delta=0\), i.e.\ \(\zeta(\pi)\) is infinitely differentiable at \(\pi_c\). Eckhoff, M\"orters, and Ortgiese proved in \cite{EMO21} that the phase transition in Bernoulli preferential attachment models defined in \cite{DM13} is also of infinite order. Thus, we conjecture that the phase transition is of infinite order for other preferential attachment models and any admissible \(\delta\).

In Chapter~\ref{chap:subcritical}, we discussed our recent ongoing work on the sub-critical percolation regime of preferential attachment models, where we observe a very interesting behaviour in the sub-critical largest connected component. Unlike the typical behaviour in other random graphs with power-law degree distributions, such as configuration models, the sub-critical largest connected component in preferential attachment models is significantly larger than its maximum degree. It would be interesting to study the \emph{critical regime} of preferential attachment models, as we believe that some intriguing phenomena may also emerge within the \emph{critical window} of these models.

\paragraph{Ising model direction}

We studied the Ising model in the quenched setting of preferential attachment models and identified the inverse critical temperature for these models. However, we could not determine the order of the phase transition in this setting. Moreover, the annealed setting on preferential attachment models would also be of significant interest. From the work of Dembo, Montanari, and Sun \cite{DMS13}, we gain insights into the thermodynamic limits of the pressure per particle in various factor models on locally tree-like graphs. Consequently, the study of Potts models on preferential attachment models, in both quenched and annealed settings, would be highly intriguing for the statistical physics community.

\pagestyle{plain}


\cleardoublepage\phantomsection
\addcontentsline{toc}{chapter}{Bibliography}

\bibliographystyle{acm}
\bibliography{bibliofile}

\begin{thebibliography}{100}

\bibitem{ABrB}
{\sc Albert, R., and Barab\'{a}si, A.-L.}
\newblock Statistical mechanics of complex networks.
\newblock {\em Rev. Modern Phys. 74}, 1 (2002), 47--97.

\bibitem{Aldous2004}
{\sc Aldous, D., and Steele, J.~M.}
\newblock The objective method: probabilistic combinatorial optimization and
  local weak convergence.
\newblock In {\em Probability on discrete structures}, vol.~110 of {\em
  Encyclopaedia Math. Sci.} Springer, Berlin, 2004, pp.~1--72.

\bibitem{ABS22}
{\sc Alimohammadi, Y., Borgs, C., and Saberi, A.}
\newblock Locality of random digraphs on expanders.
\newblock {\em Ann. Probab. 51}, 4 (2023), 1249--1297.

\bibitem{ABS04}
{\sc Alon, N., Benjamini, I., and Stacey, A.}
\newblock Percolation on finite graphs and isoperimetric inequalities.
\newblock {\em Ann. Probab. 32}, 3A (2004), 1727--1745.

\bibitem{AHH19}
{\sc Angel, O., {\swap{Hofstad}{~van~der~}}, R., and Holmgren, C.}
\newblock Limit laws for self-loops and multiple edges in the configuration
  model.
\newblock {\em Ann. Inst. Henri Poincar\'e{} Probab. Stat. 55}, 3 (2019),
  1509--1530.

\bibitem{NA73}
{\sc Athanasiou, R., and Yoshioka, G.~A.}
\newblock The spatial character of friendship formation.
\newblock {\em Environment and behavior 5}, 1 (1973), 43.

\bibitem{A2000}
{\sc Athreya, K.~B.}
\newblock Change of measures for {M}arkov chains and the {$L\log L$} theorem
  for branching processes.
\newblock {\em Bernoulli 6}, 2 (2000), 323--338.

\bibitem{A07}
{\sc Athreya, K.~B.}
\newblock Preferential attachment random graphs with general weight function.
\newblock {\em Internet Math. 4}, 4 (2007), 401--418.

\bibitem{AGS08}
{\sc Athreya, K.~B., Ghosh, A.~P., and Sethuraman, S.}
\newblock Growth of preferential attachment random graphs via continuous-time
  branching processes.
\newblock {\em Proc. Indian Acad. Sci. Math. Sci. 118}, 3 (2008), 473--494.

\bibitem{AN72}
{\sc Athreya, K.~B., and Ney, P.~E.}
\newblock {\em Branching processes}, vol.~Band 196 of {\em Die Grundlehren der
  mathematischen Wissenschaften}.
\newblock Springer-Verlag, New York-Heidelberg, 1972.

\bibitem{BST09}
{\sc Ball, F., Sirl, D., and Trapman, P.}
\newblock Threshold behaviour and final outcome of an epidemic on a random
  network with household structure.
\newblock {\em Adv. in Appl. Probab. 41}, 3 (2009), 765--796.

\bibitem{BST10}
{\sc Ball, F., Sirl, D., and Trapman, P.}
\newblock Analysis of a stochastic {SIR} epidemic on a random network
  incorporating household structure.
\newblock {\em Math. Biosci. 224}, 2 (2010), 53--73.

\bibitem{BDO23}
{\sc Banerjee, S., Deka, P., and Olvera-Cravioto, M.}
\newblock Local weak limits for collapsed branching processes with random
  out-degrees.
\newblock {\em arXiv Preprint: arXiv:2302.00562\/} (02 2023).

\bibitem{BO22}
{\sc Banerjee, S., and Olvera-Cravioto, M.}
\newblock Page{R}ank asymptotics on directed preferential attachment networks.
\newblock {\em Ann. Appl. Probab. 32}, 4 (2022), 3060--3084.

\bibitem{albert1999}
{\sc Barab\'{a}si, A.-L., and Albert, R.}
\newblock Emergence of scaling in random networks.
\newblock {\em Science 286}, 5439 (1999), 509--512.

\bibitem{basu2014}
{\sc Basu, A., Shannigrahi, S., Chhabra, S.~S., and Brundavanam, A.}
\newblock On the rise and fall of online social networks.
\newblock {\em arXiv Preprint: arXiv:1403.5617\/} (2014).

\bibitem{BC78}
{\sc Bender, E.~A., and Canfield, E.~R.}
\newblock The asymptotic number of labeled graphs with given degree sequences.
\newblock {\em J. Combinatorial Theory Ser. A 24}, 3 (1978), 296--307.

\bibitem{BNP11}
{\sc Benjamini, I., Nachmias, A., and Peres, Y.}
\newblock Is the critical percolation probability local?
\newblock {\em Probab. Theory Related Fields 149}, 1-2 (2011), 261--269.

\bibitem{Benjamini_LL}
{\sc Benjamini, I., and Schramm, O.}
\newblock Recurrence of distributional limits of finite planar graphs.
\newblock {\em Electron. J. Probab. 6\/} (2001), no. 23, 13.

\bibitem{BBCS05}
{\sc Berger, N., Borgs, C., Chayes, J.~T., and Saberi, A.}
\newblock On the spread of viruses on the internet.
\newblock In {\em Proceedings of the {S}ixteenth {A}nnual {ACM}-{SIAM}
  {S}ymposium on {D}iscrete {A}lgorithms\/} (2005), ACM, New York,
  pp.~301--310.

\bibitem{BergerBorgs}
{\sc Berger, N., Borgs, C., Chayes, J.~T., and Saberi, A.}
\newblock Asymptotic behavior and distributional limits of preferential
  attachment graphs.
\newblock {\em Ann. Probab. 42}, 1 (2014), 1--40.

\bibitem{B07}
{\sc Bhamidi, S.}
\newblock Universal techniques to analyze preferential attachment trees :
  Global and local analysis.
\newblock {\em \emph{Preprint}\/} (2007).

\bibitem{BDvdHS20}
{\sc Bhamidi, S., Dhara, S., {\swap{Hofstad}{~van~der~}}, R., and Sen, S.}
\newblock Universality for critical heavy-tailed network models: metric
  structure of maximal components.
\newblock {\em Electron. J. Probab. 25\/} (2020), Paper No. 47, 57.

\bibitem{B02}
{\sc Bianconi, G.}
\newblock Mean field solution of the {I}sing model on a {B}arab\'asi-{A}lbert
  network.
\newblock {\em Phys. Lett. A 303}, 2-3 (2002), 166--168.

\bibitem{Bianconi}
{\sc Bianconi, G., and Barab{\'a}si, A.-L.}
\newblock Competition and multiscaling in evolving networks.
\newblock In {\em The Structure and Dynamics of Networks}. Princeton University
  Press, 2011, pp.~361--367.

\bibitem{bingham_goldie_teugels_1987}
{\sc Bingham, N.~H., Goldie, C.~M., and Teugels, J.~L.}
\newblock {\em Regular variation}, vol.~27 of {\em Encyclopedia of Mathematics
  and its Applications}.
\newblock Cambridge University Press, Cambridge, 1987.

\bibitem{B80}
{\sc Bollob\'as, B.}
\newblock A probabilistic proof of an asymptotic formula for the number of
  labelled regular graphs.
\newblock {\em European J. Combin. 1}, 4 (1980), 311--316.

\bibitem{B01}
{\sc Bollob\'as, B.}
\newblock {\em Random graphs}, second~ed., vol.~73 of {\em Cambridge Studies in
  Advanced Mathematics}.
\newblock Cambridge University Press, Cambridge, 2001.

\bibitem{BJR07}
{\sc Bollob\'{a}s, B., Janson, S., and Riordan, O.}
\newblock The phase transition in inhomogeneous random graphs.
\newblock {\em Random Structures Algorithms 31}, 1 (2007), 3--122.

\bibitem{BolRio04b}
{\sc Bollob\'{a}s, B., and Riordan, O.}
\newblock The diameter of a scale-free random graph.
\newblock {\em Combinatorica 24}, 1 (2004), 5--34.

\bibitem{BR11}
{\sc Bollob\'as, B., and Riordan, O.}
\newblock Sparse graphs: metrics and random models.
\newblock {\em Random Structures Algorithms 39}, 1 (2011), 1--38.

\bibitem{BRST01}
{\sc Bollob\'{a}s, B., Riordan, O., Spencer, J., and Tusn\'{a}dy, G.}
\newblock The degree sequence of a scale-free random graph process.
\newblock {\em Random Structures Algorithms 18}, 3 (2001), 279--290.

\bibitem{Borgs}
{\sc Borgs, C., Chayes, J., Daskalakis, C., and Roch, S.}
\newblock First to market is not everything: an analysis of preferential
  attachment with fitness.
\newblock In {\em S{TOC}'07---{P}roceedings of the 39th {A}nnual {ACM}
  {S}ymposium on {T}heory of {C}omputing\/} (2007), ACM, New York,
  pp.~135--144.

\bibitem{BCKL13}
{\sc Borgs, C., Chayes, J., Kahn, J., and Lov\'asz, L.}
\newblock Left and right convergence of graphs with bounded degree.
\newblock {\em Random Structures Algorithms 42}, 1 (2013), 1--28.

\bibitem{BCLSV06}
{\sc Borgs, C., Chayes, J., Lov\'asz, L., S\'os, V.~T., and Vesztergombi, K.}
\newblock Counting graph homomorphisms.
\newblock In {\em Topics in discrete mathematics}, vol.~26 of {\em Algorithms
  Combin.} Springer, Berlin, 2006, pp.~315--371.

\bibitem{BCLSV08}
{\sc Borgs, C., Chayes, J.~T., Lov\'asz, L., S\'os, V.~T., and Vesztergombi,
  K.}
\newblock Convergent sequences of dense graphs. {I}. {S}ubgraph frequencies,
  metric properties and testing.
\newblock {\em Adv. Math. 219}, 6 (2008), 1801--1851.

\bibitem{BCLSV12}
{\sc Borgs, C., Chayes, J.~T., Lov\'asz, L., S\'os, V.~T., and Vesztergombi,
  K.}
\newblock Convergent sequences of dense graphs {II}. {M}ultiway cuts and
  statistical physics.
\newblock {\em Ann. of Math. (2) 176}, 1 (2012), 151--219.

\bibitem{B06}
{\sc Bovier, A.}
\newblock {\em Statistical mechanics of disordered systems}, vol.~18 of {\em
  Cambridge Series in Statistical and Probabilistic Mathematics}.
\newblock Cambridge University Press, Cambridge, 2006.
\newblock A mathematical perspective.

\bibitem{BKLS89}
{\sc Bricmont, J., Kesten, H., Lebowitz, J.~L., and Schonmann, R.~H.}
\newblock A note on the {I}sing model in high dimensions.
\newblock {\em Comm. Math. Phys. 122}, 4 (1989), 597--607.

\bibitem{BDM06}
{\sc Britton, T., Deijfen, M., and Martin-L\"of, A.}
\newblock Generating simple random graphs with prescribed degree distribution.
\newblock {\em J. Stat. Phys. 124}, 6 (2006), 1377--1397.

\bibitem{BC19}
{\sc Broido, A., and Clauset, A.}
\newblock Scale-free networks are rare.
\newblock {\em Nature Communications 10\/} (03 2019).

\bibitem{BKNKOS22}
{\sc Bybee, C., Kleyko, D., Nikonov, D.~E., Khosrowshahi, A., Olshausen, B.~A.,
  and Sommer, F.~T.}
\newblock Efficient optimization with higher-order ising machines.
\newblock {\em Nature Communications 14\/} (2022).

\bibitem{C17}
{\sc Can, V.~H.}
\newblock Critical behavior of the annealed {I}sing model on random regular
  graphs.
\newblock {\em J. Stat. Phys. 169}, 3 (2017), 480--503.

\bibitem{CGGvdH22}
{\sc Can, V.~H., Giardin\`a, C., Giberti, C., and {\swap{Hofstad}{~van~der~}},
  R.}
\newblock Annealed {I}sing model on configuration models.
\newblock {\em Ann. Inst. Henri Poincar\'e{} Probab. Stat. 58}, 1 (2022),
  134--163.

\bibitem{CdHP12}
{\sc Caravenna, F., den Hollander, F., and P\'etr\'elis, N.}
\newblock Lectures on random polymers.
\newblock In {\em Probability and statistical physics in two and more
  dimensions}, vol.~15 of {\em Clay Math. Proc.} Amer. Math. Soc., Providence,
  RI, 2012, pp.~319--393.

\bibitem{CarGarHof}
{\sc Caravenna, F., Garavaglia, A., and van~der Hofstad, R.}
\newblock Diameter in ultra-small scale-free random graphs.
\newblock {\em Random Structures Algorithms 54}, 3 (2019), 444--498.

\bibitem{CMMP06}
{\sc Caron-Lormier, G., Masson, J.-P., M{\'e}nard, N., and Pierre, J.-S.}
\newblock A branching process, its application in biology: influence of
  demographic parameters on the social structure in mammal groups.
\newblock {\em Journal of theoretical biology 238 3\/} (2006), 564--74.

\bibitem{CG16}
{\sc Cerf, R., and Gorny, M.}
\newblock A {C}urie-{W}eiss model of self-organized criticality.
\newblock {\em Ann. Probab. 44}, 1 (2016), 444--478.

\bibitem{choisung87}
{\sc Choi, B.~D., and Sung, S.~H.}
\newblock Almost sure convergence theorems of weighted sums of random
  variables.
\newblock {\em Stochastic Anal. Appl. 5}, 4 (1987), 365--377.

\bibitem{CHL09}
{\sc Chung, F., Horn, P., and Lu, L.}
\newblock Percolation in general graphs.
\newblock {\em Internet Math. 6}, 3 (2009), 331--347.

\bibitem{CL02}
{\sc Chung, F., and Lu, L.}
\newblock Connected components in random graphs with given expected degree
  sequences.
\newblock {\em Ann. Comb. 6}, 2 (2002), 125--145.

\bibitem{CL03}
{\sc Chung, F., and Lu, L.}
\newblock The average distance in a random graph with given expected degrees.
\newblock {\em Internet Math. 1}, 1 (2003), 91--113.

\bibitem{CG13}
{\sc Contucci, P., and Giardin\`a, C.}
\newblock {\em Perspectives on spin glasses}.
\newblock Cambridge University Press, Cambridge, 2013.

\bibitem{CZSHT18}
{\sc Cook, C., Zhao, H., Sato, T., Hiromoto, M., and Tan, S. X.-D.}
\newblock Gpu based parallel ising computing for combinatorial optimization
  problems in vlsi physical design.
\newblock {\em arXiv Preprint: arXiv:1807.10750\/} (2018).

\bibitem{cooper2003}
{\sc Cooper, C., and Frieze, A.}
\newblock A general model of web graphs.
\newblock {\em Random Structures Algorithms 22}, 3 (2003), 311--335.

\bibitem{GdS08}
{\sc De~Sanctis, L., and Guerra, F.}
\newblock Mean field dilute ferromagnet: high temperature and zero temperature
  behavior.
\newblock {\em J. Stat. Phys. 132}, 5 (2008), 759--785.

\bibitem{DEH09}
{\sc Deijfen, M., van~den Esker, H., {\swap{Hofstad}{~van~der~}}, R., and
  Hooghiemstra, G.}
\newblock A preferential attachment model with random initial degrees.
\newblock {\em Ark. Mat. 47}, 1 (2009), 41--72.

\bibitem{DM10}
{\sc Dembo, A., and Montanari, A.}
\newblock Ising models on locally tree-like graphs.
\newblock {\em Ann. Appl. Probab. 20}, 2 (2010), 565--592.

\bibitem{DMS13}
{\sc Dembo, A., Montanari, A., and Sun, N.}
\newblock Factor models on locally tree-like graphs.
\newblock {\em Ann. Probab. 41}, 6 (2013), 4162--4213.

\bibitem{der16}
{\sc Dereich, S.}
\newblock Preferential attachment with fitness: unfolding the condensate.
\newblock {\em Electron. J. Probab. 21\/} (2016), Paper No. 3, 38.

\bibitem{DSMCM}
{\sc Dereich, S., M\"{o}nch, C., and M\"{o}rters, P.}
\newblock Typical distances in ultrasmall random networks.
\newblock {\em Adv. in Appl. Probab. 44}, 2 (2012), 583--601.

\bibitem{Der2009}
{\sc Dereich, S., and M\"{o}rters, P.}
\newblock Random networks with sublinear preferential attachment: degree
  evolutions.
\newblock {\em Electron. J. Probab. 14\/} (2009), no. 43, 1222--1267.

\bibitem{DM13}
{\sc Dereich, S., and M\"{o}rters, P.}
\newblock Random networks with sublinear preferential attachment: the giant
  component.
\newblock {\em Ann. Probab. 41}, 1 (2013), 329--384.

\bibitem{der2014}
{\sc Dereich, S., and Ortgiese, M.}
\newblock Robust analysis of preferential attachment models with fitness.
\newblock {\em Combin. Probab. Comput. 23}, 3 (2014), 386--411.

\bibitem{DvdHvLS17}
{\sc Dhara, S., {\swap{Hofstad}{~van~der~}}, R., van Leeuwaarden, J. S.~H., and
  Sen, S.}
\newblock Critical window for the configuration model: finite third moment
  degrees.
\newblock {\em Electron. J. Probab. 22\/} (2017), Paper No. 16, 33.

\bibitem{DGGvdHP16}
{\sc Dommers, S., Giardin\`a, C., Giberti, C., {\swap{Hofstad}{~van~der~}}, R.,
  and Prioriello, M.~L.}
\newblock Ising critical behavior of inhomogeneous {C}urie-{W}eiss models and
  annealed random graphs.
\newblock {\em Comm. Math. Phys. 348}, 1 (2016), 221--263.

\bibitem{DGvdH10}
{\sc Dommers, S., Giardin\`a, C., and {\swap{Hofstad}{~van~der~}}, R.}
\newblock Ising models on power-law random graphs.
\newblock {\em J. Stat. Phys. 141}, 4 (2010), 638--660.

\bibitem{DSvdH}
{\sc Dommers, S., {\swap{Hofstad}{~van~der~}}, R., and Hooghiemstra, G.}
\newblock Diameters in preferential attachment models.
\newblock {\em J. Stat. Phys. 139}, 1 (2010), 72--107.

\bibitem{dorogovtsev2008}
{\sc Dorogovtsev, S.~N., Goltsev, A.~V., and Mendes, J. F.~F.}
\newblock Critical phenomena in complex networks.
\newblock {\em Rev. Mod. Phys. 80\/} (2008), 1275--1335.

\bibitem{dorogovtsev2000}
{\sc Dorogovtsev, S.~N., Mendes, J. F.~F., and Samukhin, A.~N.}
\newblock Structure of growing networks with preferential linking.
\newblock {\em Phys. Rev. Lett. 85\/} (2000), 4633--4636.

\bibitem{D07}
{\sc Durrett, R.}
\newblock {\em Random graph dynamics}, vol.~20 of {\em Cambridge Series in
  Statistical and Probabilistic Mathematics}.
\newblock Cambridge University Press, Cambridge, 2007.

\bibitem{EH23}
{\sc Easo, P., and Hutchcroft, T.}
\newblock The critical percolation probability is local.
\newblock {\em arXiv Preprint: arXiv:2310.10983\/} (2023).

\bibitem{EMO21}
{\sc Eckhoff, M., M\"{o}rters, P., and Ortgiese, M.}
\newblock Near critical preferential attachment networks have small giant
  components.
\newblock {\em J. Stat. Phys. 173}, 3-4 (2018), 663--703.

\bibitem{EP1923}
{\sc Eggenberger, F., and P{\'o}lya, G.}
\newblock {\"U}ber die statistik verketteter vorg{\"a}nge.
\newblock {\em Zamm-zeitschrift Fur Angewandte Mathematik Und Mechanik 3\/}
  (1923), 279--289.

\bibitem{EE88}
{\sc Eisele, T., and Ellis, R.~S.}
\newblock Multiple phase transitions in the generalized {C}urie-{W}eiss model.
\newblock {\em J. Statist. Phys. 52}, 1-2 (1988), 161--202.

\bibitem{E06}
{\sc Ellis, R.~S.}
\newblock {\em Entropy, large deviations, and statistical mechanics}.
\newblock Classics in Mathematics. Springer-Verlag, Berlin, 2006.
\newblock Reprint of the 1985 original.

\bibitem{ER59}
{\sc Erd\H{o}s, P., and R\'enyi, A.}
\newblock On random graphs. {I}.
\newblock {\em Publ. Math. Debrecen 6\/} (1959), 290--297.

\bibitem{ER60}
{\sc Erd\H{o}s, P., and R\'{e}nyi, A.}
\newblock On the evolution of random graphs.
\newblock {\em Magyar Tud. Akad. Mat. Kutat\'{o} Int. K\"{o}zl. 5\/} (1960),
  17--61.

\bibitem{FvdHdHH17}
{\sc Federico, L., {\swap{Hofstad}{~van~der~}}, R., Den~Hollander, F., and
  Hulshof, T.}
\newblock Expansion of percolation critical points for {H}amming graphs.
\newblock {\em Combin. Probab. Comput. 29}, 1 (2020), 68--100.

\bibitem{F07}
{\sc Fountoulakis, N.}
\newblock Percolation on sparse random graphs with given degree sequence.
\newblock {\em Internet Math. 4}, 4 (2007), 329--356.

\bibitem{FJP22}
{\sc Fountoulakis, N., Joos, F., and Perarnau, G.}
\newblock Percolation on random graphs with a fixed degree sequence.
\newblock {\em SIAM J. Discrete Math. 36}, 1 (2022), 1--46.

\bibitem{FS19}
{\sc Frahm, K.~M., and Shepelyansky, D.~L.}
\newblock Ising-pagerank model of opinion formation on social networks.
\newblock {\em Physica A: Statistical Mechanics and its Applications 526\/}
  (2019), 121069.

\bibitem{GV17}
{\sc Gao, F., and van~der Vaart, A.}
\newblock On the asymptotic normality of estimating the affine preferential
  attachment network models with random initial degrees.
\newblock {\em Stochastic Process. Appl. 127}, 11 (2017), 3754--3775.

\bibitem{GarThe}
{\sc Garavaglia, A.}
\newblock {\em Preferential attachment models for dynamic networks}.
\newblock PhD thesis, Mathematics and Computer Science, Jan. 2019.

\bibitem{RRR22}
{\sc Garavaglia, A., Hazra, R.~S., {\swap{Hofstad}{~van~der~}}, R., and Ray,
  R.}
\newblock Universality of the local limit of preferential attachment models.
\newblock {\em arXiv Preprint: arXiv.2212.05551\/} (2022).

\bibitem{GHL20}
{\sc Garavaglia, A., {\swap{Hofstad}{~van~der~}}, R., and Litvak, N.}
\newblock Local weak convergence for {P}age{R}ank.
\newblock {\em Ann. Appl. Probab. 30}, 1 (2020), 40--79.

\bibitem{G59}
{\sc Gilbert, E.~N.}
\newblock Random graphs.
\newblock {\em Ann. Math. Statist. 30\/} (1959), 1141--1144.

\bibitem{G61}
{\sc Gilbert, E.~N.}
\newblock Random plane networks.
\newblock {\em J. Soc. Indust. Appl. Math. 9\/} (1961), 533--543.

\bibitem{G67}
{\sc Griffiths, R.~B.}
\newblock Correlations in ising ferromagnets. ii. external magnetic fields.
\newblock {\em Journal of Mathematical Physics 8}, 3 (03 1967), 484--489.

\bibitem{Grimmet99}
{\sc Grimmett, G.}
\newblock {\em Percolation}, second~ed., vol.~321 of {\em Grundlehren der
  mathematischen Wissenschaften [Fundamental Principles of Mathematical
  Sciences]}.
\newblock Springer-Verlag, Berlin, 1999.

\bibitem{G06}
{\sc Grimmett, G.}
\newblock {\em The random-cluster model}, vol.~333 of {\em Grundlehren der
  mathematischen Wissenschaften [Fundamental Principles of Mathematical
  Sciences]}.
\newblock Springer-Verlag, Berlin, 2006.

\bibitem{allangut}
{\sc Gut, A.}
\newblock {\em Probability: a graduate course}, second~ed.
\newblock Springer Texts in Statistics. Springer, New York, 2013.

\bibitem{H63}
{\sc Harris, T.~E.}
\newblock {\em The theory of branching processes}, vol.~Band 119 of {\em Die
  Grundlehren der mathematischen Wissenschaften}.
\newblock Springer-Verlag, Berlin; Prentice Hall, Inc., Englewood Cliffs, NJ,
  1963.

\bibitem{RRR23}
{\sc Hazra, R.~S., {\swap{Hofstad}{~van~der~}}, R., and Ray, R.}
\newblock Percolation on preferential attachment models.
\newblock {\em arXiv Preprint: arXiv:2312.14085\/} (2023).

\bibitem{vdH1}
{\sc {\swap{Hofstad}{~van~der~}}, R.}
\newblock {\em Random graphs and complex networks. {V}ol. 1}, vol.~[43] of {\em
  Cambridge Series in Statistical and Probabilistic Mathematics}.
\newblock Cambridge University Press, Cambridge, 2017.

\bibitem{RvdHSF}
{\sc {\swap{Hofstad}{~van~der~}}, R.}
\newblock Stochastic processes on random graphs.
\newblock {\em Lecture notes for the 47th Summer School in Probability
  Saint-Flour 2017\/} (2017).

\bibitem{vdH23}
{\sc {\swap{Hofstad}{~van~der~}}, R.}
\newblock The giant in random graphs is almost local.
\newblock {\em arXiv Preprint: arXiv:2103.11733\/} (2023).

\bibitem{vdH2}
{\sc {\swap{Hofstad}{~van~der~}}, R.}
\newblock {\em Random graphs and complex networks. {V}ol. 2}.
\newblock Cambridge Series in Statistical and Probabilistic Mathematics.
  Cambridge University Press, 2024.

\bibitem{HP24}
{\sc {\swap{Hofstad}{~van~der~}}, R., and Pandey, M.}
\newblock Are giants in random digraphs `almost' local?
\newblock {\em arXiv Preprint: arXiv:2403.02137\/} (2024).

\bibitem{HV21}
{\sc Holighaus, N., and Voigtlaender, F.}
\newblock Schur-type {B}anach modules of integral kernels acting on mixed-norm
  {L}ebesgue spaces.
\newblock {\em J. Funct. Anal. 281}, 9 (2021), Paper No. 109197, 65.

\bibitem{I1925}
{\sc Ising, E.}
\newblock Beitrag zur theorie des ferromagnetismus.
\newblock {\em Zeitschrift f{\"u}r Physik 31\/} (1925), 253--258.

\bibitem{JN84}
{\sc Jagers, P., and Nerman, O.}
\newblock The growth and composition of branching populations.
\newblock {\em Adv. in Appl. Probab. 16}, 2 (1984), 221--259.

\bibitem{J09}
{\sc Janson, S.}
\newblock On percolation in random graphs with given vertex degrees.
\newblock {\em Electron. J. Probab. 14\/} (2009), no. 5, 87--118.

\bibitem{JL09}
{\sc Janson, S., and Luczak, M.~J.}
\newblock A new approach to the giant component problem.
\newblock {\em Random Structures Algorithms 34}, 2 (2009), 197--216.

\bibitem{JLR00}
{\sc Janson, S., {\L}uczak, T., and Rucinski, A.}
\newblock {\em Random graphs}.
\newblock Wiley-Interscience Series in Discrete Mathematics and Optimization.
  Wiley-Interscience, New York, 2000.

\bibitem{JL18}
{\sc Janson, S., and Warnke, L.}
\newblock On the critical probability in percolation.
\newblock {\em Electron. J. Probab. 23\/} (2018), Paper No. 1, 25.

\bibitem{JP18}
{\sc Joos, F., and Perarnau, G.}
\newblock Critical percolation on random regular graphs.
\newblock {\em Proc. Amer. Math. Soc. 146}, 8 (2018), 3321--3332.

\bibitem{JPRR18}
{\sc Joos, F., Perarnau, G., Rautenbach, D., and Reed, B.}
\newblock How to determine if a random graph with a fixed degree sequence has a
  giant component.
\newblock {\em Probab. Theory Related Fields 170}, 1-2 (2018), 263--310.

\bibitem{jordan06}
{\sc Jordan, J.}
\newblock Preferential attachment graphs with co-existing types of different
  fitnesses.
\newblock {\em J. Appl. Probab. 55}, 4 (2018), 1211--1227.

\bibitem{KS68}
{\sc Kelly, D.~G., and Sherman, S.}
\newblock General griffiths' inequalities on correlations in ising
  ferromagnets.
\newblock {\em Journal of Mathematical Physics 9}, 3 (03 1968), 466--484.

\bibitem{KA02}
{\sc Kimmel, M., and Axelrod, D.~E.}
\newblock {\em Branching Processes in Biology}, 1st~ed., vol.~19 of {\em
  Interdisciplinary Applied Mathematics}.
\newblock Springer, New York, 2002.

\bibitem{KPW13}
{\sc Kochma'nski, M.~S., Paszkiewicz, T., and Wolski, S.}
\newblock Curie–{W}eiss magnet—a simple model of phase transition.
\newblock {\em European Journal of Physics 34\/} (2013), 1555 -- 1573.

\bibitem{KY08}
{\sc Kong, Z., and Yeh, E.~M.}
\newblock Percolation processes and wireless network resilience.
\newblock In {\em 2008 Information Theory and Applications Workshop\/} (2008),
  pp.~461--470.

\bibitem{KTS20}
{\sc Krishnan, J., Torabi, R., Schuppert, A., and Di~Napoli, E.}
\newblock A modified {I}sing model of {B}arab\'asi-{A}lbert network with
  gene-type spins.
\newblock {\em J. Math. Biol. 81}, 3 (2020), 769--798.

\bibitem{KLS20}
{\sc Krivelevich, M., Lubetzky, E., and Sudakov, B.}
\newblock Asymptotics in percolation on high-girth expanders.
\newblock {\em Random Structures Algorithms 56}, 4 (2020), 927--947.

\bibitem{KS01}
{\sc Kyprianou, A.~E., and Rahimzadeh~Sani, A.}
\newblock Martingale convergence and the functional equation in the multi-type
  branching random walk.
\newblock {\em Bernoulli 7}, 4 (2001), 593--604.

\bibitem{LC18}
{\sc Lee, B.~A., and Campbell, K.~E.}
\newblock Neighbor networks of black and white americans.
\newblock In {\em Networks in the Global Village: Life in Contemporary
  Communities}, B.~Wellman, Ed. Taylor \& Francis, 2018, pp.~119--146.

\bibitem{L1920}
{\sc Lenz, W.}
\newblock Beitrag zum verst{\"a}ndnis der magnetischen erscheinungen in festen
  k{\"o}rpern.
\newblock {\em European Physical Journal A 21\/} (1920).

\bibitem{LLMSSG22}
{\sc Liu, X., Li, D., Ma, M., Szymanski, B.~K., Stanley, H.~E., and Gao, J.}
\newblock Network resilience.
\newblock {\em Physics Reports 971\/} (2022), 1--108.
\newblock Network Resilience.

\bibitem{tiffany2021}
{\sc Lo, T. Y.~Y.}
\newblock Local weak limit of preferential attachment random trees with
  additive fitness.
\newblock {\em Advances in Applied Probability\/} (2024), 1–40.

\bibitem{LS06}
{\sc Lov\'asz, L., and Szegedy, B.}
\newblock Limits of dense graph sequences.
\newblock {\em J. Combin. Theory Ser. B 96}, 6 (2006), 933--957.

\bibitem{Lyo89}
{\sc Lyons, R.}
\newblock The {I}sing model and percolation on trees and tree-like graphs.
\newblock {\em Comm. Math. Phys. 125}, 2 (1989), 337--353.

\bibitem{Lyo90}
{\sc Lyons, R.}
\newblock Random walks and percolation on trees.
\newblock {\em Ann. Probab. 18}, 3 (1990), 931--958.

\bibitem{LPP95}
{\sc Lyons, R., Pemantle, R., and Peres, Y.}
\newblock Conceptual proofs of {$L\log L$} criteria for mean behavior of
  branching processes.
\newblock {\em Ann. Probab. 23}, 3 (1995), 1125--1138.

\bibitem{MPS06}
{\sc Mihail, M., Papadimitriou, C., and Saberi, A.}
\newblock On certain connectivity properties of the internet topology.
\newblock {\em J. Comput. System Sci. 72}, 2 (2006), 239--251.

\bibitem{MR95}
{\sc Molloy, M., and Reed, B.}
\newblock A critical point for random graphs with a given degree sequence.
\newblock In {\em Proceedings of the {S}ixth {I}nternational {S}eminar on
  {R}andom {G}raphs and {P}robabilistic {M}ethods in {C}ombinatorics and
  {C}omputer {S}cience, ``{R}andom {G}raphs '93'' ({P}ozna\'{n}, 1993)\/}
  (1995), vol.~6, pp.~161--179.

\bibitem{MPW63}
{\sc Montroll, E.~W., Potts, R.~B., and Ward, J.~C.}
\newblock Correlations and spontaneous magnetization of the two-dimensional
  {I}sing model.
\newblock {\em J. Mathematical Phys. 4\/} (1963), 308--322.

\bibitem{mori2005maximum}
{\sc M\'{o}ri, T.~F.}
\newblock The maximum degree of the {B}arab\'{a}si-{A}lbert random tree.
\newblock {\em Combin. Probab. Comput. 14}, 3 (2005), 339--348.

\bibitem{N81}
{\sc Nerman, O.}
\newblock On the convergence of supercritical general ({C}-{M}-{J}) branching
  processes.
\newblock {\em Z. Wahrsch. Verw. Gebiete 57}, 3 (1981), 365--395.

\bibitem{Newman-book}
{\sc Newman, M.}
\newblock {\em {Networks: An Introduction}}.
\newblock Oxford University Press, 03 2010.

\bibitem{N02}
{\sc Newman, M. E.~J.}
\newblock Spread of epidemic disease on networks.
\newblock {\em Phys. Rev. E (3) 66}, 1 (2002), 016128, 11.

\bibitem{NR06}
{\sc Norros, I., and Reittu, H.}
\newblock On a conditionally {P}oissonian graph process.
\newblock {\em Adv. in Appl. Probab. 38}, 1 (2006), 59--75.

\bibitem{O44}
{\sc Onsager, L.}
\newblock Crystal statistics. {I}. {A} two-dimensional model with an
  order-disorder transition.
\newblock {\em Phys. Rev. (2) 65\/} (1944), 117--149.

\bibitem{P08}
{\sc Pittel, B.~G.}
\newblock On the largest component of a random graph with a subpower-law degree
  sequence in a subcritical phase.
\newblock {\em Ann. Appl. Probab. 18}, 4 (2008), 1636--1650.

\bibitem{resnick1992adventures}
{\sc Resnick, S.}
\newblock {\em Adventures in stochastic processes}.
\newblock Birkh{\"a}user Boston, Inc., Boston, MA, 1992.

\bibitem{RTV07}
{\sc Rudas, A., T\'{o}th, B., and Valk\'{o}, B.}
\newblock Random trees and general branching processes.
\newblock {\em Random Structures Algorithms 31}, 2 (2007), 186--202.

\bibitem{S21}
{\sc Sarkar, S.}
\newblock A note on the local weak limit of a sequence of expander graphs.
\newblock {\em Electron. Commun. Probab. 26\/} (2021), Paper No. 32, 6.

\bibitem{Delphin21}
{\sc S\'enizergues, D.}
\newblock Geometry of weighted recursive and affine preferential attachment
  trees.
\newblock {\em Electron. J. Probab. 26\/} (2021), Paper No. 80, 56.

\bibitem{T07}
{\sc Trapman, P.}
\newblock On analytical approaches to epidemics on networks.
\newblock {\em Theoretical Population Biology 71}, 2 (2007), 160--173.

\bibitem{voitalov2019}
{\sc Voitalov, I., {\swap{Hoorn}{~van~der~}}, P., {\swap{Hofstad}{~van~der~}},
  R., and Krioukov, D.}
\newblock Scale-free networks well done.
\newblock {\em Phys. Rev. Research 1\/} (Oct 2019), 033034.

\bibitem{waclaw-sokolov}
{\sc Waclaw, B., and Sokolov, I.~M.}
\newblock Finite-size effects in {B}arab\'{a}si-{A}lbert growing networks.
\newblock {\em Phys. Rev. E (3) 75}, 5 (2007), 056114, 10.

\bibitem{WB16}
{\sc Weber, M., and Buceta, J.}
\newblock The cellular ising model: a framework for phase transitions in
  multicellular environments.
\newblock {\em Journal of The Royal Society Interface 13\/} (2016).

\bibitem{WW90}
{\sc Wellman, B., and Wortley, S.}
\newblock Different strokes from different folks: Community ties and social
  support.
\newblock {\em American Journal of Sociology - AMER J SOCIOL 96\/} (11 1990).

\bibitem{WPR06}
{\sc Wong, L.~H., Pattison, P., and Robins, G.}
\newblock A spatial model for social networks.
\newblock {\em Physica A: Statistical Mechanics and its Applications 360}, 1
  (2006), 99--120.

\end{thebibliography}


\EndThumbs
\cleardoublepage\phantomsection
\addcontentsline{toc}{chapter}{Summary}
\markboth{Summary}{Summary}
\chapter*{ 
\vspace*{\emptychapterdist}Summary}
\vspace{10pt}

In real life, networks are dynamic in nature; they grow over time and often exhibit power-law degree sequences. To model the evolving structure of the internet, Barab\'{a}si and Albert introduced a simple dynamic model with a power-law degree distribution. This model has since been generalised, giving rise to a large class of affine preferential attachment models, where each new vertex connects to existing vertices with a probability proportional to the current degree of the vertex. While several studies have analysed global and local properties of these random graphs, their dynamic nature and the dependencies in edge-connection probabilities have posed significant analytical challenges.

In this thesis, we explore various properties of these dynamic random graphs in a broad context. First, we extend the preferential attachment models to allow each new vertex to connect with a random number of initial neighbours. To understand the local structure of these generalised preferential attachment models, we aim to identify their local limits as described by Benjamini and Schramm in \(2001\). In \(2014\), Berger et al.\ proved that the standard preferential attachment models are locally tree-like, meaning that the finite neighbourhood of a uniformly chosen vertex resembles a branching process, termed the P\'{o}lya point tree. We demonstrate that, even after generalisation, preferential attachment models retain this tree-like property. The finite neighbourhood of a uniformly chosen vertex in these graphs resembles another branching process, which we name the generalised P\'{o}lya point tree. Additionally, we prove that, despite the strong dependencies in edge connections, certain generalised preferential attachment models exhibit conditional independence. This result facilitates a more detailed analysis of these models.

The second part of this thesis focuses on stochastic processes on preferential attachment models. These stochastic processes introduce an additional layer of randomness to the random graphs. Examples include bond and site percolation, random walks, the Ising and Potts models, and Gaussian processes on random graphs. While these processes are global properties of the graphs and often require more than just local behaviour known from the local limit, under certain conditions, they can be analysed using only local properties. For instance, in vertex-transitive graph sequences, bond percolation can be studied using local properties alone. In this thesis, we specifically examine bond percolation and the Ising model on preferential attachment models. We identify the critical probability \(\pi_c\) such that when the model is percolated with a probability higher than \(\pi_c\), a single largest connected component of the order of the graph's size emerges. Conversely, for percolation probabilities below \(\pi_c\), no such large connected component forms. We show that this property can indeed be studied using the local properties of the preferential attachment models. We further investigate the size of the largest connected component when the graph is percolated with a probability below \(\pi_c\), uncovering surprising results compared to similar static random graphs with power-law degree distributions. Finally, we study the Ising ferromagnetic model on preferential attachment models. Building on the work of Dembo, Montanari, and Sun (\(2014\)) and Lyons (\(1990\)), we demonstrate that various thermodynamic quantities of interest in the Ising model can also be analysed using the local limit of the graph sequence.

Our study indicates that while preferential attachment models satisfy most of the regularity conditions necessary for examining stochastic processes through local limits, their dynamic nature and the strong dependencies in edge-connection probabilities may lead to unexpected results for certain stochastic processes on these models.

\pagestyle{empty}
\cleardoublepage\null\cleardoublepage

\end{document}